\documentclass[dvipdfmx,11pt,a4paper]{amsart}

\usepackage{
    amsfonts, amsmath, amssymb, ascmac,
    comment,
    empheq, enumitem,
    hyperref,
    mathrsfs, mathtools, mleftright,
    tikz
}

\setlist[enumerate]{%
    topsep=2pt,
    leftmargin=6ex
}
\setlist[itemize]{%
    topsep=2pt,
    leftmargin=4ex
}

\allowdisplaybreaks

\makeatletter
\def\@lbibitem[#1]#2{%
  \item[\@biblabel{#1}]%
  \if@filesw
    {\let\protect\noexpand
     \immediate\write\@auxout{\string\bibcite{#2}{#1}}}%
  \fi
  \ignorespaces}
\makeatother

\newtheorem{thm}{Theorem}[section]
\newtheorem{cor}[thm]{Corollary}
\newtheorem{lem}[thm]{Lemma}
\newtheorem{prop}[thm]{Proposition}

\theoremstyle{definition}
\newtheorem{defn}[thm]{Definition}
\newtheorem{exam}[thm]{Example}
\newtheorem*{defn*}{Definition}
\newtheorem*{exam*}{Example}

\theoremstyle{remark}
\newtheorem{rem}[thm]{Remark}

\newcommand{\bigast}{\mathop{\scalebox{1.6}{\raisebox{-0.16ex}{$\ast$}}}}
\newcommand{\bigwtensor}{\mathop{\bar\bigotimes}}

\newcommand{\nS}{\mathcal{S}_{\ast}}
\newcommand{\sa}{\mathrm{sa}}
\newcommand{\wlim}{\text{weak*-}\lim}

\newcommand{\wtensor}{\mathbin{\bar\otimes}}
\newcommand{\C}{\ifmmode C^{\ast}\else C${}^{\ast}$\fi}
\newcommand{\W}{\ifmmode W^{\ast}\else W${}^{\ast}$\fi}
\newcommand{\I}{\mathrm{I}}
\newcommand{\II}{\mathrm{II}}
\newcommand{\III}{\mathrm{III}}

\renewcommand{\hat}{\widehat}
\renewcommand{\mod}{\operatorname{mod}}
\renewcommand{\tilde}{\widetilde}

\DeclareMathOperator{\Ad}{Ad}

\DeclareMathOperator{\Aut}{Aut}

\DeclareMathOperator{\clco}{\overline{co}}
\DeclareMathOperator{\clInt}{\overline{Int}}

\DeclareMathOperator{\Int}{Int}
\DeclareMathOperator{\id}{id}

\DeclareMathOperator*{\esssup}{ess~sup}

\newcommand{\N}{\mathcal{N}}
\newcommand{\U}{\mathcal{U}}
\newcommand{\Z}{\mathcal{Z}}
\newcommand{\VC}{\mathcal{VC}_{\mathrm{lc}}}
\DeclareMathOperator{\SL}{SL}

\title{Group actions on von Neumann algebras with compact open subgroups}
\author{Takumi Nishihara}
\address{RIMS, Kyoto University, 606-8502 Japan}
\email{nishihar@kurims.kyoto-u.ac.jp}
\subjclass[2020]{Primary~46L40; Secondary~46L10, 46L55}% 46L10: General theory of von Neumann algebras, 46L40: Automorphisms of selfadjoint operator algebras, 46L55: Noncommutative dynamical systems
\date{August 28, 2026}

\begin{document}

\begin{abstract}
    We study strictly outer actions of locally compact groups with a compact open subgroup on von Neumann factors.
    For amenable groups, we prove 2-cohomology vanishing and obtain classification results using a description of the central sequence algebra and Rohlin-type observations.
    We also characterize the inclusions of factors associated with group actions, and extend M.~Choda's result to this locally compact setting.
\end{abstract}

\maketitle

\renewcommand{\thethm}{\Alph{thm}}
\section*{Introduction}

In this paper, we are interested in actions of a second-countable locally compact group $G$ admitting a compact open subgroup $K<G$ on a factor.

Group actions constitute one of the fundamental subjects in the study of operator algebras.
Historically, the classification theory for actions of discrete amenable groups on the hyperfinite $\II_1$ factor $R$ has developed through the work of Connes on automorphisms \cite{Co75, Co77}, Jones on finite group actions \cite{Jo80}, and Ocneanu on actions of countable discrete amenable groups \cite{Oc85}.
A key feature of this theory is the use of asymptotic properties encoded in central sequence algebras.
In particular, the strong outerness condition, called central freeness, plays a crucial role.
Motivated by this, we use the following analogue for continuous group actions.

\begin{defn*}
    An action $\alpha\colon G\curvearrowright M$ is called \emph{centrally free} if the induced action $G\curvearrowright M_{\omega,\alpha}$ on the equicontinuous part of the central sequence algebra is faithful.
\end{defn*}

In the discrete setting, noncommutative Rohlin lemma is a basic tool.
Amenability provides the underlying F\o lner sets, and local quantization supplies the analytic input needed to construct the associated towers.
These towers exhibit an approximate shift structure for the induced action on the central sequence algebra, and by a Shapiro-type argument, this structure leads to cohomology vanishing results in sequence algebras.
In addition, careful estimates show that analogous cohomology vanishing results hold on the original algebras, and the Rohlin towers are also used in the intertwining arguments for classification of actions.

For nondiscrete groups, one of the most extensively studied class is flows, namely $\mathbb{R}$-actions.
Kishimoto \cite{Ki96} has introduced the Rohlin property for flows on \C-algebras, and Kawamuro \cite{Ka00} on the hyperfinite $\II_1$ factor.
Later, Masuda and Tomatsu \cite{MT16} have classified Rohlin flows on general von Neumann algebras.
They have further shown that central freeness is equivalent to the Rohlin property (\cite[Theorem 7.8]{MT16}).
More recently, the author has established an analogous framework for compact group actions in \cite{Ni26}.
These developments provide a unified viewpoint on compact and discrete group actions through their behavior on central sequence algebras.
Indeed, when a topological group $G$ has a compact open subgroup $K$, the following two ingredients are available:
\begin{itemize}
    \item The Rohlin property of compact group actions is applied to the restriction to $K$; in particular, Proposition \ref{prop:cpt-cent.free->Rohlin} gives a $K$-equivariant embedding $L^{\infty}(K)\hookrightarrow M_{\omega,\alpha}$; and
    \item The discrete $G$-space $G/K$ admits F\o lner-type approximations when $G$ is amenable.
\end{itemize}
Thus, the compact open subgroup setting combines a compact group direction, controlled by $K$, with a discrete homogeneous direction, controlled by $G/K$.

A basic model case is that of strictly outer actions on the hyperfinite $\II_1$ factor $R$.
We show that every strictly outer actions of a locally compact group with a compact open subgroup on $R$ is centrally free (Proposition \ref{prop:str.outer->cent.free}).
The same conclusion holds for strictly outer cocycle actions.
This connects the standard outerness hypothesis with the central sequence formulation needed in the subsequent arguments.

Cocycle actions and cocycle crossed products arise naturally as generalizations of genuine actions and ordinary crossed products.
A basic question is whether a cocycle action can be perturbed into a genuine action.
Sutherland \cite{Su80} has shown that every 2-cocycle on a properly infinite von Neumann algebra is a coboundary, so the essential difficulty lies in the finite algebra case.
For countable discrete amenable groups, Ocneanu \cite{Oc85} has proved 2-cohomology vanishing on the hyperfinite $\II_1$ factor, and Popa \cite{Po21} has generalized this to free cocycle actions on arbitrary $\II_1$ factors.
Analogous results for continuous groups has been obtained in \cite{MT16, Ni26}.
Building on these ideas, we prove that centrally free cocycle actions of amenable locally compact groups with compact open subgroups have vanishing 2-cohomology.
Moreover, Popa's observation concerning amalgamations extends to this setting.

\begin{thm}\label{main:2-coh-vanish}
    Let $\VC$ denote the class of all second-countable locally compact groups such that every centrally free cocycle action on every factor has vanishing 2-cohomology.
    Then $\VC$ contains every amenable locally compact group with a compact open subgroup, and $\VC$ is closed under countable amalgamated free products over a common compact open subgroup.
\end{thm}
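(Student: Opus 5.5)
The theorem has two parts, and I would prove them separately.

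\emph{First part: amenable $G$ with a compact open subgroup $K$ lies in $\VC$.} Let $(\alpha,u)$ be a centrally free cocycle action of such a $G$ on a factor $M$. By Sutherland's result, only the case where $M$ is finite (a $\II_1$ factor) needs work, so I assume this. The plan follows the Ocneanu--Popa scheme in three steps.

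\begin{enumerate}
    \item \textbf{Vanishing in the central sequence algebra.} First I show that the 2-cocycle is a coboundary in $M^{\omega}$, or at least modulo $M_{\omega,\alpha}$. Central freeness together with Proposition~\ref{prop:cpt-cent.free->Rohlin}, applied to the restriction to $K$, gives a $K$-equivariant embedding $L^\infty(K)\hookrightarrow M_{\omega,\alpha}$. Since $G/K$ is a discrete amenable $G$-space, I would combine this with F\o lner sets $F\subset G/K$. Refining the $K$-Rohlin projections by a Popa-type local quantization, I would build a partition of unity $\{e_{x}\}_{x\in F}$ in $M_{\omega,\alpha}$. Its translates $\alpha_g(e_x)$ should approximately equal $e_{gx}$, uniformly for $g$ in compact sets, and it should carry a compatible $L^\infty(K)$-structure. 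Altogether this yields an approximately $G$-equivariant copy of $L^\infty(F\cdot K)\subset L^\infty(G)$.
    \item \textbf{Shapiro-type averaging.} Using these towers, I average the cocycle over the tower in the style of Shapiro's lemma. This produces unitaries $w_g$ in $M^\omega$ with $u(g,h)\approx w_g\alpha_g(w_h)w_{gh}^*$, with errors controlled by the F\o lner ratio. The construction should also make $g\mapsto w_g$ equicontinuous, and the compact open subgroup makes this manageable: on $K$ the towers are exactly continuous, and continuity is only needed within cosets.
    \item \textbf{Lifting to $M$.} Next I lift from $M^\omega$ to $M$ by the standard successive-approximation argument. At each stage I obtain a unitary perturbation that reduces the coboundary defect on a larger compact set, and I take care to keep Borel measurability and continuity in $g$. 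Convergence of the infinite product of perturbations then gives a genuine coboundary.
\end{enumerate}

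\emph{Second part: closure under amalgamated free products.} Let $G=\ast_{K}G_n$, with each $G_n\in\VC$ sharing the common compact open subgroup $K$. Given a centrally free cocycle action of $G$, its restriction to each $G_n$ is still centrally free, since $M_{\omega,\alpha}$ for $G$ sits inside $M_{\omega,\alpha|_{G_n}}$. Faithfulness passes over because the relevant element already lies in the $G_n$-equicontinuous part. Hence each restriction is perturbed to a genuine action by some $v^{(n)}$. As in Popa's observation, I first perturb on $K$ once and for all. Since $K\in\VC$ (compact groups are amenable, and $K$ is its own compact open subgroup), I can normalize all $v^{(n)}$ to agree on $K$. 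I would do this by choosing the $G_n$-perturbations relative to an already genuine $K$-action, which requires a relative version of the first part for cocycles trivial on $K$. The genuine actions of the $G_n$ then agree on $K$, so by the universal property of amalgamated free products they assemble into a genuine action of $G$. The resulting 1-cochain is measurable because $G$ is a countable union of open sets of words, each homeomorphic to a product.

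\emph{Main obstacle.} I expect the hardest step to be step 1: producing Rohlin towers indexed by F\o lner sets in $G/K$ that are compatible with the continuous $K$-equivariant embedding, while keeping equicontinuity uniform on compact subsets of $G$. My first attempt would be to induce. I would take the $K$-Rohlin embedding and place it into $M_{\omega,\alpha}$ along coset representatives, using central freeness of the whole $G$-action (via a Popa-type maximality argument) to make the pieces for distinct cosets orthogonal.
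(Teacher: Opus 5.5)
There is a genuine gap in the amalgamation part. You propose to make the $G_n$-perturbations trivial on $K$ through ``a relative version of the first part for cocycles trivial on $K$''. But the first part only concerns amenable groups. The $G_n$ are arbitrary members of $\VC$ and need not be amenable (the iterated amalgams behind Corollary~\ref{cor:on-tree} are not), and membership in $\VC$ says nothing about how the coboundary looks on $K$. Nor does $K\in\VC$ help: it is a 2-cohomology statement, whereas making the various $v^{(n)}|_K$ agree is a 1-cohomology problem.

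The missing idea is 1-cohomology vanishing for the compact part:
\begin{enumerate}
\item First perturb $u$ to be $K$-invariant, then take any $v^{(n)}\colon G_n\to\U(M)$ supplied by $G_n\in\VC$.
\item Its restriction to $K$ is a cocycle for the genuine action $\alpha|_K$. This action is centrally free, since $K$ is open and so $M_{\omega,\alpha|_K}=M_{\omega,\alpha}$. Hence it is Rohlin by Proposition~\ref{prop:cpt-cent.free->Rohlin}, and $v^{(n)}_k=w_n\alpha_k(w_n^{\ast})$ for some $w_n\in\U(M)$ by \cite[Theorem 4.1]{Ni26}.
\item Replacing $v^{(n)}_g$ by $w_n^{\ast}v^{(n)}_g\alpha_g(w_n)$ preserves the coboundary equation on $G_n$ and gives $v^{(n)}|_K\equiv1$. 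No relative version of anything is needed for $G_n$.
\item Then, as in the paper, glue the homomorphisms $G_n\ni g\mapsto v^{(n)}_g\lambda^{\alpha,u}_g\in\U(M\rtimes_{\alpha,u}G)$, which all restrict to $k\mapsto\lambda^{\alpha,u}_k$ on $K$.
\end{enumerate}
Glue these unitary representations, not merely the automorphisms $\Ad v^{(n)}\circ\alpha$. That is what guarantees the resulting $v$ satisfies $v_g\alpha_g(v_h)u_{g,h}v_{gh}^{\ast}=1$ on all of $G$, rather than only up to scalars.

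Your first part follows the paper's overall scheme: towers over $K$-invariant F\o lner sets in $G/K$ from local quantization and central freeness, combined with the $K$-Rohlin embedding; Shapiro averaging; and successive approximation. However, Step~3 lacks the estimate that makes the infinite product converge. An approximate coboundary on a larger compact set is not enough; the new perturbation must also be close to $1$ wherever $u$ was already close to $1$. The Shapiro cochain $V_g=\sum_{s\in S'}u_{g,s}^{\ast}\alpha_g(e_s)$, built from a F\o lner set $S'$ for the larger compact set, involves values of $u$ outside the controlled region and need not be near $1$. The paper restores control by a second averaging (Lemma~\ref{lem:approx-1coh-vanish}) over the smaller F\o lner set $S$. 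This yields $|v_p-1|_1<4\varepsilon^{1/2}+3\sup_{h\in\bigcup pS}|u_{h^{-1},p}-1|_1+\delta$ in Lemma~\ref{lem:approx-2coh-vanish}(b), which is summable along the induction.

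Note also that the paper first makes $u$ $K$-invariant, using the compact case from \cite{Ni26} plus the extension lemma in Section~\ref{sec:3}. This makes $u$ continuous and lets the Shapiro sum run over the discrete index set $G/K$. The equicontinuity problem you single out as the main obstacle then largely disappears, and $L^\infty(K)$ is needed only afterwards, to make the averaged cochain right $K$-invariant.
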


Therefore, the class $\VC$ also contains certain groups acting on trees (Corollary \ref{cor:on-tree}).
To the best of the author's knowledge, these provide the first nonamenable, nondiscrete examples in $\VC$.

Following the strategy of \cite{Oc85, Ma13, MT16}, we also obtain the following classification theorem.

\begin{thm}\label{main:classification}
    Let $G$ be an amenable locally compact group which has a compact open subgroup $K<G$, and let $\alpha,\beta\colon G\curvearrowright M$ be centrally free actions on a factor $M$.
    Suppose that $\beta_g\circ\alpha_g^{-1}\in\clInt(M)$ for all $g\in G$.
    Then $\alpha$ and $\beta$ are strongly cocycle conjugate.
    Moreover, the cocycle implementing the cocycle conjugacy may be chosen to be trivial on $K$.

    In particular, a strictly outer action of $G$ on the hyperfinite $\II_1$ factor $R$ is unique up to cocycle conjugacy.
\end{thm}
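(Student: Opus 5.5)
This follows the Ocneanu--Masuda--Tomatsu intertwining strategy. Its inputs are the Rohlin-type results (Proposition \ref{prop:cpt-cent.free->Rohlin} for the compact direction $K$, and Følner approximations of $G/K$), together with Theorem \ref{main:2-coh-vanish}. The first step is a \emph{1-cohomology vanishing} statement in the equicontinuous central sequence algebra. For a centrally free action $\gamma\colon G\curvearrowright M$, every $\gamma$-cocycle $w$ in $M_{\omega,\gamma}$ (or in $M^\omega$ relative to $M$) should be a coboundary, $w_g=v\gamma_g(v^*)$.

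To prove this, we first treat $K$. The $K$-equivariant embedding $L^\infty(K)\hookrightarrow M_{\omega,\gamma}$ lets us average against the Haar measure, giving $v_0=\int_K w_k\gamma_k(e_{k})\,dk$-type expressions as in \cite{Ni26}. Perturbing $w$ by $v_0$, we may assume $w|_K=1$, so $w$ descends to a function on $G/K$ that is $K$-equivariant. Next we treat the discrete direction $G/K$. We use Følner sets $F\subset G/K$ and a Rohlin tower in $M_{\omega,\gamma}$ indexed by the Følner set. This tower is built from central freeness via local quantization on the $K$-fixed part, using that $G$ acts on the countable set $G/K$ with finite $K$-orbits. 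Then we apply the Shapiro-type formula $v=\sum_{x\in F}E_x w_{x}$ as in Ocneanu's argument, with an error of order $|\partial F|/|F|$. The usual index selection trick turns the approximate solution into an exact one in the ultrapower.

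The second step passes from the ultrapower to $M$, giving approximate vanishing of 1-cocycles in $M$. For any $\varepsilon$ and any compact subset of $G$ (which, modulo $K$, is a finite subset of $G/K$), and for any $\alpha$-cocycle $u$ in $M$ that is almost central relative to the given data, there is a unitary $v\in M$ with $u_g\alpha_g(v)v^*$ close to $1$ uniformly on the compact set. The usual reindexation argument from \cite{MT16} does this, using equicontinuity so that uniform estimates hold on compacts. We need to choose $v$ so that the perturbed cocycle remains trivial on $K$. To achieve this, after each perturbation we re-average over $K$ using the compact-case 1-cohomology vanishing from \cite{Ni26}. This keeps $K$-triviality exactly, and it is the source of the ``trivial on $K$'' refinement.

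The third step is the Bratteli--Elliott--Evans--Kishimoto intertwining. Since $\beta_g\alpha_g^{-1}\in\clInt(M)$, the hypothesis gives unitaries approximately implementing $\beta_g\alpha_g^{-1}$ on compacts. From these we build an approximate cocycle. Theorem \ref{main:2-coh-vanish} (2-cohomology vanishing, which also works approximately) corrects it to an honest $\alpha$-cocycle $u$ with $\mathrm{Ad}\,u_g\circ\alpha_g\approx\beta_g$. We then alternate between $\alpha$ and $\beta$, using step two to make successive corrections small. This yields convergent sequences of unitaries $v_n$ and automorphisms $\theta=\lim\mathrm{Ad}(v_n\cdots v_1)\in\clInt(M)$, so that $\theta\circ\alpha^{u}_g\circ\theta^{-1}=\beta_g$ with $u|_K=1$.

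For the final statement, Proposition \ref{prop:str.outer->cent.free} gives that strictly outer actions on $R$ are centrally free. In addition, $\operatorname{Aut}(R)=\clInt(R)$, so the hypothesis holds automatically. The main obstacle I expect is step one's discrete part: constructing Rohlin towers over Følner sets in $G/K$ compatibly with the $K$-action, since $G/K$ carries only a $G$-action and not a group structure. I would first try to work with $K$-invariant Følner sets and the induced action of $G$ on $(M_{\omega,\gamma})^{K}$, reducing to an Ocneanu-type lemma for the discrete homogeneous space.
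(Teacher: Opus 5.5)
\textbf{There is a gap in the compact direction.} Your overall architecture matches the paper's: Rohlin towers over $G/K$ combined with the Rohlin property of $\alpha|_K$, controlled 1- and 2-cohomology vanishing, an Evans--Kishimoto intertwining, and $\Aut(R)=\clInt(R)$ with Proposition~\ref{prop:str.outer->cent.free} for the last sentence. The paper's proof, however, begins by invoking \cite[Theorem 3.4]{Ni26}, the classification of centrally free (equivalently, Rohlin) actions of compact groups, to reduce to the case $\alpha|_K=\beta|_K$. You never make this reduction, and without it your scheme is inconsistent. At every stage you want an honest cocycle $u$ with $\Ad u_g\circ\alpha_g\approx\beta_g$ on compact sets, which contain $K$, and you also insist on keeping $u|_K=1$. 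Together these force $\alpha_k\approx\beta_k$ for $k\in K$, which is not given. Moreover, your end result $\theta\circ\alpha^u_k\circ\theta^{-1}=\beta_k$ with $u|_K=1$ and $\theta\in\clInt(M)$ already says that $\alpha|_K$ and $\beta|_K$ are conjugate by an approximately inner automorphism. That is the compact classification theorem itself. It cannot come from ``re-averaging over $K$'' with 1-cohomology vanishing, because that only concerns cocycles of a single action and never relates two different $K$-actions.

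\textbf{How the reduction makes the machinery apply.} Once $\alpha|_K=\beta|_K$, the map $g\mapsto\beta_g\circ\alpha_g^{-1}$ is right $K$-invariant. So the approximate implementers $v^{(0)}$ can be chosen right $K$-invariant with $v^{(0)}|_K=1$, as in Lemma~\ref{lem:classify_step1}. Then $\partial v^{(0)}$ is a $K$-invariant 2-cocycle, which is precisely the hypothesis of Theorem~\ref{thm:2coh-vanish}(2) and Lemma~\ref{lem:approx-1coh-vanish}. The tower-built unitary then lies in $(M^K)^{\omega}$ automatically, so no re-averaging is needed.

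\textbf{Two quantitative inputs you also need.} First, Theorem~\ref{main:2-coh-vanish} alone is not enough. You need the commutator bound $\|[v_p,\psi]\|<8\varepsilon^{1/2}+\sup_{h}\|[u_{h,h^{-1}}^{\ast}u_{h,h^{-1}p},\psi]\|+\delta$ to know that the corrected cocycle still approximately implements $\beta_g\circ\alpha_g^{-1}$. Second, your Step 2 must bound the non-centrality of the perturbing unitary by $\sup_{s\in S}\|\beta_s\circ\alpha_s^{-1}(\psi)-\psi\|$, as in Lemma~\ref{lem:classify_step1}(c). Otherwise $\Ad(v_n\cdots v_1)$ need not converge.

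\textbf{The alternative route and a side remark.} You could instead run the intertwining with cocycles nontrivial on $K$, using controlled compact-group cohomology at each step. You would then restore $K$-triviality only at the end, via exact vanishing of $\alpha|_K$-cocycles in $M$. That amounts to reproving the compact classification inside your argument. Finally, $G$ does not act on $(M_{\omega,\gamma})^K$, since $\alpha_g$ maps it onto the $gKg^{-1}$-fixed points. The towers you need are $K$-permuted projections $e_s\in(M_{\omega,\alpha})^{K_S}$ with $\alpha_k(e_s)=e_{ks}$, and Theorem~\ref{thm:tower} already provides them.
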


Our next interest is inclusions of von Neumann algebras associated with group actions.
A strictly outer (cocycle) action $G\curvearrowright M$ gives rise to a regular irreducible inclusion $(M\subset M\rtimes G)$.
Moreover, the quotient $\N_{M\rtimes G}(M)/\U(M)$ recovers $G$ as a topological group (Proposition \ref{prop:Weyl-group}).
These observations lead to a natural question:
\begin{center}
    How is the group-theoretic structure of $G$ reflected in the crossed product inclusion?
\end{center}
For strictly outer actions of amenable groups on $R$, the preceding uniqueness theorem leads to the following description of the relative automorphism group.

\begin{thm}\label{main:short-exact}
    Let $\alpha\colon G\curvearrowright R$ be a strictly outer action of an amenable locally compact group with a compact open subgroup.
    Then there exists a natural short exact sequence of groups
    \[1\longrightarrow\clInt(R\subset R\rtimes G)\longrightarrow\Aut(R\subset R\rtimes G)\longrightarrow\Aut(G)\longrightarrow 1.\]
\end{thm}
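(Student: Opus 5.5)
The plan is to build the sequence from the Weyl group description (Proposition \ref{prop:Weyl-group}) and then obtain surjectivity from the uniqueness part of Theorem \ref{main:classification}. Write $N=R\rtimes_\alpha G$ and let $\Aut(R\subset N)$ be the automorphisms $\theta$ of $N$ with $\theta(R)=R$. Such a $\theta$ maps $\N_N(R)$ onto itself and $\U(R)$ onto itself, so it induces an automorphism of the topological group $\N_N(R)/\U(R)$. By Proposition \ref{prop:Weyl-group}, this quotient is identified with $G$ via $u\,\U(R)\mapsto g$ whenever $u\in\U(R)\lambda_g$. This gives a homomorphism $\pi\colon\Aut(R\subset N)\to\Aut(G)$. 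Continuity of $\pi(\theta)$ should follow because $\theta$ is a homeomorphism on the unitary group in the strong topology, which is the topology used in Proposition \ref{prop:Weyl-group}.

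For exactness in the middle, I will describe $\ker\pi$. If $\pi(\theta)=\id$, then $\theta(\lambda_g)=v_g\lambda_g$ with $v_g\in\U(R)$. Setting $\sigma=\theta|_R$, the relation $\theta(\lambda_g x\lambda_g^*)=\theta(\alpha_g(x))$ gives $\Ad v_g\circ\alpha_g\circ\sigma=\sigma\circ\alpha_g$, and $v$ is an $\alpha$-cocycle. I read $\clInt(R\subset N)$ as the closure, in $\Aut(R\subset N)$, of the inner automorphisms $\Ad w$ with $w\in\U(R)$ (the relative approximately inner automorphisms). I must then show that $\ker\pi$ is exactly this closure. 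The inclusion $\supseteq$ holds because each $\Ad w$ fixes the classes $\lambda_g\U(R)$, and $\ker\pi$ is closed by continuity of the induced action on the quotient. For $\subseteq$, note that $\sigma\in\clInt(R)=\Aut(R)$, since $R$ is hyperfinite $\II_1$. Using central freeness of $\alpha$ (Proposition \ref{prop:str.outer->cent.free}) and a Rohlin/Shapiro-type argument in the approximating sequence, as in the proof of Theorem \ref{main:classification}, I will approximate $\sigma$ by $\Ad w_n$ so that $w_n\alpha_g(w_n^*)\to v_g$, uniformly on compacts of $g$. Then $\Ad w_n\to\theta$ on both $R$ and the $\lambda_g$. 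This approximate coboundary step is \emph{the main obstacle}. I would first try the approximate vanishing of 1-cohomology built into the intertwining argument for Theorem \ref{main:classification}, namely that every cocycle is an approximate coboundary for centrally free actions. The expected ingredients are the compact Rohlin embedding $L^\infty(K)\hookrightarrow R_{\omega,\alpha}$ and Følner sets in $G/K$.

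For surjectivity, fix $\phi\in\Aut(G)$ and consider $\beta=\alpha\circ\phi\colon G\curvearrowright R$. This action is still strictly outer, and hence centrally free. By the uniqueness statement of Theorem \ref{main:classification}, $\beta$ is cocycle conjugate to $\alpha$: there are $\sigma\in\Aut(R)$ and an $\alpha$-cocycle $u$ with $\sigma\circ\alpha_{\phi(g)}\circ\sigma^{-1}=\Ad u_g\circ\alpha_g$. Since $\phi$ is a homeomorphism, it preserves Haar measure up to a constant (the modulus). So the map
\[\lambda_{\phi(g)}\mapsto u_g\lambda_g,\qquad x\mapsto\sigma(x),\]
should extend to an automorphism of $N$ preserving $R$. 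I will check this using the covariant representation on $L^2(R)\otimes L^2(G)$, and the unitary $f\mapsto f\circ\phi^{-1}$ (rescaled by the modulus) to transport $\lambda$. The resulting $\theta$ is built so that $\pi(\theta)=\phi$, or $\phi^{-1}$ depending on convention; adjusting the convention gives surjectivity. Injectivity at the left end is trivial. Naturality amounts to the statement that $\pi$ is canonically defined via Proposition \ref{prop:Weyl-group}.
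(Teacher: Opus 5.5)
The overall structure matches the paper. The map to $\Aut(G)$ is induced on $\N_{R\rtimes G}(R)/\U(R)$; note that the topological identification with $G$ is Lemma \ref{lem:N/U}, not Proposition \ref{prop:Weyl-group} alone. Surjectivity comes from cocycle conjugacy of $\alpha$ and $\alpha\circ\phi$ via Theorem \ref{main:classification}, and $\clInt(R\subset R\rtimes G)\subseteq\ker\pi$ is clear.

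\textbf{The gap.} The inclusion $\ker\pi\subseteq\clInt(R\subset R\rtimes G)$ is flagged but not proved, and the statement you propose to use does not suffice. Knowing that every $\alpha$-cocycle in $R$ is an approximate coboundary gives $w_n$ with $w_n\alpha_g(w_n^{\ast})\to v_g$, but no control on $\Ad{w_n}|_R$. You need one sequence with $\Ad{w_n}\to\sigma$ and $w_n\alpha_g(w_n^{\ast})\to v_g$ simultaneously. The missing idea is to first implement $\sigma$ by a suitable ultrapower unitary $W$. The discrepancy $W^{\ast}v\alpha(W)$ is then a cocycle in the equicontinuous central sequence algebra. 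There you have \emph{exact} vanishing (Theorem \ref{thm:1-coh-vanish}), and the correcting unitary commutes with $R$.

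\textbf{How the paper does it.} First perturb $\theta$ by some $\Ad{w}$, $w\in\U(R)$, so that $v$ is $K$-invariant; this uses that $v|_K$ is a coboundary for the compact part. Then $\sigma=\theta|_R$ commutes with $\alpha|_K$.

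Since $\Aut(R)=\clInt(R)$, pick $\Ad{w_n}\to\sigma$. Average via the Rohlin embedding $L^\infty(K)\wtensor R\subset R^\omega_\alpha$, with $W_n(k)=\alpha_k(w_n)$, to arrange $w_n\in\U(R^K)$. Then $W=(w_n)^\omega\in\U((R^K)^\omega)$ is $\alpha$-equicontinuous because $K$ is open.

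Set $V_g=W^{\ast}v_g\alpha_g(W)$. It satisfies $\Ad{V_g}|_R=\sigma^{-1}\circ\Ad{v_g}\circ\alpha_g\circ\sigma\circ\alpha_g^{-1}=\id$, so $V$ is an $\alpha$-cocycle with values in $\U(R_{\omega,\alpha})$. Theorem \ref{thm:1-coh-vanish} gives $W'\in\U((R_{\omega,\alpha})^K)$ with $V_g=W'\alpha_g(W'^{\ast})$. Since $W'$ commutes with $R$, the unitary $WW'$ still implements $\sigma$ and satisfies $WW'\alpha_g((WW')^{\ast})=v_g$. A representing sequence of $WW'$ then gives inner approximants of $\theta$ on $R\rtimes G$.

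The $K$-invariance and averaging steps are not cosmetic. For arbitrary approximants $w_n$ of $\sigma$, the cocycle $W^{\ast}v\alpha(W)$ lies in $R_\omega$ but need not be $\alpha$-equicontinuous. Then the vanishing theorem is not available, and you would not get the needed convergence uniformly on compact sets of $g$.
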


A central theme in the relationship between group structure and von Neumann algebra inclusions is the Galois correspondence.
This goes back to H.~Choda \cite{Ch78}, who has treated intermediate subfactors admitting conditional expectations for outer actions of countable discrete groups.
Izumi, Longo, and Popa \cite{ILP98} later have obtained the full Galois correspondence in the discrete setting.
Boutonnet and Brothier \cite{BB18} have established a locally compact analogue for actions of totally disconnected groups under suitable outerness hypotheses, which Lemma \ref{lem:outerness} shows to be automatic for strictly outer actions.
In all these settings, the position of an intermediate subfactor inside the ambient inclusion determines its crossed product decomposition.
Besides, M.~Choda \cite{Ch79} has characterized regular irreducible inclusions with a faithful normal conditional expectation as cocycle crossed products by countable discrete groups.
We obtain the following extension of this characterization to locally compact groups with compact open subgroups.

\begin{thm}\label{main:factrization}
    Let $G$ be a locally compact group with a compact open subgroup $K$.
    Then an inclusion of von Neumann algebras $(M\subset M_1)$ is isomorphic to $(M\subset M\rtimes G)$ for some strictly outer cocycle action $G\curvearrowright M$ if and only if the following conditions hold.
    \begin{enumerate}
        \item The inclusion $(M\subset M_1)$ is regular and irreducible.
        \item There exists a topological group isomorphism $\N_{M_1}(M)/\U(M)\cong G$.
        \item There exists a faithful normal conditional expectation from $M_1$ onto $q^{-1}(K)''$, where $q\colon\N_{M_1}(M)\to\N_{M_1}(M)/\U(M)=G$ denotes the canonical quotient.
    \end{enumerate}
    In particular, if $G=K$ is compact, then the condition \textup{(3)} is automatic.
\end{thm}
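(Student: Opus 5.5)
We prove both directions, with the reverse implication carrying most of the work. For the forward direction, let $\alpha\colon G\curvearrowright M$ be a strictly outer cocycle action with 2-cocycle $u$, and set $M_1=M\rtimes_{\alpha,u}G$. Strict outerness gives $M'\cap M_1=\mathbb{C}$, so the inclusion is irreducible. Proposition \ref{prop:Weyl-group} identifies $\N_{M_1}(M)/\U(M)$ with $G$ as a topological group: the normalizers are exactly the elements $v\lambda_g$ with $v\in\U(M)$. This group generates $M_1$, so the inclusion is regular, which gives (1) and (2). For (3), note that $q^{-1}(K)''$ is generated by $M$ and $\{\lambda_k\}_{k\in K}$, so it equals $M\rtimes_{\alpha,u}K$. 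Because $K$ is open, the indicator $1_K$ is continuous on $G$, and restricting Fourier coefficients to $K$ defines a faithful normal conditional expectation $M\rtimes G\to M\rtimes K$. Concretely, on $L^2(G,H)$ compress by the projection onto $L^2(K,H)$; equivalently, restrict the dual-weight construction to the discrete coset space $G/K$.

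For the converse, assume (1)--(3) and write $\N=\N_{M_1}(M)$. First we handle $K$. Put $N=q^{-1}(K)''$, so that $M\subset N$ is regular and irreducible with $\N_N(M)/\U(M)\cong K$ compact. By the compact case (the last sentence of the theorem, which must be proved first), $N\cong M\rtimes_{\beta,w}K$ for a strictly outer cocycle action of $K$. To get this, choose a Borel (and ideally locally continuous) section $k\mapsto v_k\in q^{-1}(k)$. Then $\beta_k=\Ad v_k|_M$ and $w(k,l)=v_kv_lv_{kl}^*\in\U(M)$. Irreducibility together with $M'\cap N=\mathbb{C}$ gives faithfulness of $\beta$ and strict outerness.

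The expectation onto $M$ is then obtained by averaging over the compact group: the map $x\mapsto\int_K\hat\beta$-type dual action is unavailable a priori. Instead we use the action of $\hat K$ constructed from the grading $N=\overline{\operatorname{span}}\bigcup_k Mv_k$. A cleaner route is to realize $N$ on a Hilbert space and show that the map $Mv_k\ni xv_k\mapsto$ the corresponding element of the crossed product is isometric for a suitable trace or weight. We expect to do this via the Haar-measure average $E(x)=\int_K \Ad(\text{twisted }v_k)$, showing that $M$ is the fixed points of an appropriate compact coaction. Uniqueness of such a faithful expectation follows from irreducibility, as in Lemma \ref{lem:outerness}.

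Next we extend to $G$ using condition (3). Choose coset representatives $\{g_i\}$ for the countable discrete set $G/K$ and unitaries $u_i\in q^{-1}(g_i)$. Then $M_1$ is generated by $N$ and the $u_i$, and $E_N(u_i^*xu_j)=0$ for $i\neq j$ and $x\in N$. The vanishing holds because any nonzero value would produce an element of $N$ intertwining $\Ad u_i$ and $\Ad u_j$ modulo $K$, contradicting irreducibility and the fact that $g_i^{-1}g_j\notin K$. This is M.~Choda's discrete argument applied relative to $N$. It shows that $M_1$ is the discrete ``crossed product'' of $N$ along $G/K$, and that $\{u_i\}$ is an orthonormal basis of $M_1$ over $N$. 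Combining this with the $K$-structure, define the section $g_ik\mapsto u_iv_k$. This gives a Borel section $G\to\N$, hence a cocycle action $(\alpha,c)$ of $G$ on $M$ that extends $(\beta,w)$. The map $\sum_{i}\int_K x_{i}(k)\lambda_{g_ik}\,dk\mapsto\sum_i\int_K x_i(k)u_iv_k\,dk$ then intertwines the expectations: on one side $E_K$ followed by the compact-group expectation onto $M$, and on the other $E_N$ followed by the expectation $N\to M$. It is therefore an isomorphism $M\rtimes_{\alpha,c}G\cong M_1$ fixing $M$. Strict outerness of $\alpha$ follows from $M'\cap M_1=\mathbb{C}$.

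We expect the main obstacle to be the compact case, namely producing a faithful normal expectation $N\to M$ and identifying $N$ with the crossed product without assuming one. Our first attempt will average over $K$ using the continuity of the quotient topology on $\N/\U(M)$. For this we will show that $k\mapsto \Ad v_k$ admits a measurable lift and that the spectral subspaces $\{x\in N: xy=\beta_k(y)x\ \forall y\in M\}=Mv_k$ form a continuous field. The expectation onto the $k=e$ component is then constructed through the dual $\hat K$-structure.
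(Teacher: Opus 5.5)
There is a genuine gap in the compact step, and the route you sketch for it cannot work when $K$ is infinite. Your forward direction and the overall two-step plan are the paper's: handle $K$ first, then extend over $G/K$ using the expectation in (3). The problem is that you want a faithful normal conditional expectation from $N=q^{-1}(K)''$ onto $M$ (``the $k=e$ component''), and later you compose $E_K$ with ``the compact-group expectation onto $M$''. No such normal expectation exists.

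To see this, let $E\colon N\to M$ be a normal conditional expectation and take $w\in q^{-1}(K)\setminus\U(M)$. Then $w^{\ast}E(w)$ commutes with $M$, so it lies in $M'\cap N=\mathbb{C}1$, and hence $E(w)\in\mathbb{C}w\cap M=\{0\}$. When $K$ is infinite, $q^{-1}(K)/\U(M)\cong K$ is not discrete, so $\U(M)$ is not open in $q^{-1}(K)$. Hence there are $w_j\in q^{-1}(K)\setminus\U(M)$ with $w_j\to1$ strongly, and normality would give $0=E(w_j)\to E(1)=1$. This is exactly the openness mechanism of Lemma \ref{lem:regular-irreducible}(2).

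A Haar average of $\Ad v_k$ does not help: it does not fix $M$ pointwise. A dual $\hat K$-coaction on $N$ is only available once $N\cong M\rtimes K$ is already known. Your claim that irreducibility yields strict outerness is also circular. The equality $M'\cap N=\mathbb{C}1$ only shows that $\beta$ is outer, and for compact groups outerness does not imply strict outerness (Remark \ref{rem:need-minimality}). Strict outerness is a property of the abstract crossed product, which you have not yet identified with $N$.

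The missing idea, used in Lemma \ref{lem:cpt-factrization}, is to avoid any expectation onto $M$ and instead map the abstract crossed product onto $N$. Take a Borel lift $k\mapsto v_k\in q^{-1}(k)$ and realize $M\rtimes_{\beta,w}K$ as $\{x\otimes1,\,v_k\otimes\lambda_k\}''$ on $L^2(N)\otimes L^2(K)$. The projection $1\otimes p_K$ onto the constants commutes with this algebra. Compressing by it gives a normal surjective homomorphism $x\lambda_k\mapsto xv_k$ onto $N$; surjectivity holds because $q^{-1}(K)=\U(M)\{v_k\}_k$ generates $N$.

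The support $z$ of this homomorphism is central, and irreducibility of $M\subset N$ makes $z$ a minimal projection of $M'\cap(M\rtimes_{\beta,w}K)$. Now stabilize by $\mathbb{B}$ to a genuine outer action. By \cite[Lemma 4.3 and Theorem 4.1]{BMO20}, a non-diffuse relative commutant forces that action to be minimal. Hence $z=1$, $N=M\rtimes_{\beta,w}K$, and the action is strictly outer. Proposition \ref{prop:Weyl-group} then gives $\N_N(M)/\U(M)=K$, which your outline uses without proof.

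Your extension step then goes through as in Lemma \ref{lem:regular-irreducible}(2), provided you intertwine only $E_K$ and $E_N$ (for instance through $L^2(M_1)=\bigoplus_i u_iL^2(N)$) rather than descending to $M$. Also correct the orthogonality claim: $E_N(u_i^{\ast}xu_j)=0$ fails for general $x\in N$ when $g_iK\neq g_jK$ lie in the same double coset. Take $x=v_k$ with $kg_jK=g_iK$; then $u_i^{\ast}v_ku_j\in q^{-1}(K)$. What is true, and all you need, is $E_N(u_i^{\ast}u_j)=0$ for $i\neq j$.
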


When $M=R$ and $G$ is amenable, Theorems \ref{main:2-coh-vanish} and \ref{main:classification} further show that the resulting crossed product inclusion depends only on $G$ up to isomorphism, giving a complete characterization of the crossed product inclusion in this case.

This paper is organized as follows.
Section \ref{sec:1} fixes notation and collects the necessary preliminaries.
In Section \ref{sec:2}, we study centrally free actions, and compare central freeness with other outerness conditions.
We also construct Rohlin towers for amenable group actions.
Section \ref{sec:3} establishes 1- and 2-cohomology vanishing results, containing the proof of Theorem \ref{main:2-coh-vanish}.
In Section \ref{sec:4}, we prove Theorem \ref{main:classification} and discuss further applications concerning McDuff factors.
Finally, Section \ref{sec:5} addresses crossed product inclusions and concludes Theorems \ref{main:short-exact} and \ref{main:factrization}.

{
\def\addcontentsline#1#2#3{}\relax
\subsection*{Acknowledgements}
The auther is supported by JST SPRING, Grant Number JPMJSP2110.

\subsection*{AI statement}
ChatGPT 5.6 Sol assisted with exploratory mathematical discussions, including the development of Proposition \ref{prop:cpt-cent.free->Rohlin}, and with language editing.
The author independently verified all mathematical arguments and takes full responsibility for the content of the paper.
}

\tableofcontents

\renewcommand{\thethm}{\thesection.\arabic{thm}}
\section{Preliminaries}\label{sec:1}

\subsection{General notations}

Throughout this paper, $G$ denotes a second-countable locally compact group with a fixed compact open subgroup $K<G$.
Every von Neumann algebra is assumed to have separable predual unless it is ultraproduct.

The closed unit ball of a Banach space $B$ is denoted by $(B)_1$.
For a von Neumann algebra $M$, we denote its center, unitary group, space of normal states, automorphism group, and inner automorphism group by $\Z(M)$, $\U(M)$, $\nS(M)$, $\Aut(M)$, and $\Int(M)$, respectively.
The predual $M_{\ast}$ carries the canonical $M$-bimodule structure given by $(x\varphi y)(z)=\varphi(yzx)$ for $x,y,z\in M$ and $\varphi\in M_{\ast}$.
For $x\in M$ and $\varphi\in M_{\ast}$, we write $[x,\varphi]=x\varphi-\varphi x\in M_{\ast}$.
The group $\Aut(M)$ acts isometrically on the predual $M_{\ast}$ via $\alpha(\varphi)=\varphi\circ\alpha^{-1}$ for $\alpha\in\Aut(M)$ and $\varphi\in M_{\ast}$.
We equip $\Aut(M)$ with the $u$-topology:
A net $\alpha_i$ in $\Aut(M)$ converges to $\alpha$ if and only if $\|\alpha_i(\varphi)-\alpha(\varphi)\|\to0$ for every $\varphi\in M_{\ast}$, or equivalently, for every $\varphi\in\nS(M)$.
Note that if a sequence $(\alpha_n)_n$ in $\Aut(M)$ satisfies that $(\alpha_n(\varphi))_n$ and $(\alpha_n^{-1}(\varphi))_n$ are both Cauchy sequences in $M_{\ast}$, then $(\alpha_n)_n$ converges in the $u$-topology to an automorphism $\alpha\in\Aut(M)$.
Since the action $\Aut(M)\curvearrowright M_{\ast}$ is isometric, convergence is uniform on compact subsets of $M_{\ast}$.
With the $u$-topology, $\Aut(M)$ is a Polish group whenever $M$ has separable predual.
The approximately inner automorphism group $\clInt(M)$ is the closure of $\Int(M)$ in $\Aut(M)$.
We denote the hyperfinite $\II_1$ factor by $R$ and the separable type $\I_{\infty}$ factor by $\mathbb{B}=\mathbb{B}(\ell^2)$.
The symbol $\wtensor$ denotes the von Neumann tensor product.

We use $u\colon G\times G\to A$, $v\colon G\to A$ and $w\in A$ as generic notation for a 2-cochain, a 1-cochain and an element (0-cochain), respectively.
The evaluation value of a map $\alpha\colon X\to\Aut(M)$ (or $n$-cochain $c$) may be denoted by $\alpha_x$ instead of $\alpha(x)$ (resp.\ $c_{g_1,\ldots,g_n}$).
Given $\alpha\colon X\to\Aut(M)$, $v\colon X\to\U(M)$ and $\theta\in\Aut(M)$, we will suppress pointwise notation in compositions, for instance, we write $\theta\circ\Ad{v}\circ\alpha\circ\theta^{-1}$ for the map $X\ni x\mapsto \theta\circ\Ad{v_x}\circ\alpha_x\circ\theta^{-1}$.

For a normal state $\varphi$ on $M$, we consider the following seminorms on $M$ defined by
\begin{align*}
    \|x\|_{\varphi} &= \varphi(x^{\ast}x)^{1/2},& \|x\|_{\varphi}^{\sharp} &= \varphi\left(\frac{x^{\ast}x+xx^{\ast}}{2}\right)^{1/2},\\
    |x|_{\varphi} &= \|x\varphi\|_{M_{\ast}}, & |x|_{\varphi}^{\sharp} &= \frac{\|x\varphi\|+\|\varphi x\|}{2}.
\end{align*}
They are norms when $\varphi$ is faithful.
If $\tau$ is a distinguished trace on $M$, we write $\|\cdot\|_2$ and $|\cdot|_1$ instead of $\|\cdot\|_{\tau}=\|\cdot\|_{\tau}^{\sharp}$ and $|\cdot|_{\tau}=|\cdot|_{\tau}^{\sharp}=\tau(|\cdot|)$, respectively.
For every $x\in M$, we have $\varphi(|x|)\le|x|_{\varphi}\le\|x\|_{\varphi}$ and $\|x\|_{\varphi}^2\le\|x\|\varphi(|x|)$.
Consequently, when $\varphi$ is faithful, norms $\|\cdot\|_{\varphi}$ and $|\cdot|_{\varphi}$ (resp.\ $\|\cdot\|_{\varphi}^{\sharp}$ and $|\cdot|_{\varphi}^{\sharp}$) induce the ultrastrong topology (resp. $*$-ultrastrong topology) on bounded subsets of $M$.
Two commuting subalgebras $A,B\subset M$ are said to be \emph{$\varphi$-orthogonal} if $\varphi(ab)=\varphi(a)\varphi(b)$ for all $a\in A$ and $b\in B$.
In this case, canonically $A\vee B=A\wtensor B\subset M$ and $\varphi|_{A\wtensor B}=\varphi|_A\otimes\varphi|_B$.
When the state $\varphi$ is understood from the context, we simply say that $A$ and $B$ are orthogonal.

\begin{lem}\label{lem:1-norm}
    \begin{enumerate}
        \item Let $A\subset B$ be von Neumann algebras, and $\varphi$ be a normal state on $B$.
        Suppose that there exist a $\varphi$-preserving normal conditional expectation $\mathbb{E}\colon B\to A$.
        Then $|a|_{\varphi|_A}=|a|_{\varphi}$ for $a\in A$.
        \item Let $(X,\mu)$ be a probability space, $M$ a von Neumann algebra, and $\varphi$ a normal state on $M$.
        Then
        \[|f|_{\mu\otimes\varphi}=\int_X|f(x)|_{\varphi}\,d\mu(x)\]
        for $f\in L^{\infty}(X)\wtensor M$.
    \end{enumerate}
\end{lem}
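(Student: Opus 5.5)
For both parts the plan is to rewrite $|x|_\varphi=\|x\varphi\|_{M_\ast}$ by duality, as a supremum of $|\varphi(yx)|$ over $y$ in the unit ball, and then to compare the relevant unit balls.

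For (1), fix $a\in A$. Restricting to test elements in $A$ gives $|a|_{\varphi|_A}\le|a|_\varphi$ at once. For the reverse inequality, take $y\in(B)_1$. Since $\mathbb{E}$ is an $A$-bimodule map and $\varphi\circ\mathbb{E}=\varphi$,
\[\varphi(ya)=\varphi(\mathbb{E}(ya))=\varphi(\mathbb{E}(y)a).\]
Because $\|\mathbb{E}(y)\|\le1$, this gives $|(a\varphi)(y)|\le|a|_{\varphi|_A}$. Taking the supremum over $y$ yields equality. Only positivity (hence contractivity) and the bimodule property of $\mathbb{E}$ are needed; faithfulness is not.

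For (2), identify $L^\infty(X)\wtensor M$ with $L^\infty(X,M)$, the bounded weak$^\ast$-measurable functions, and write $\psi=\mu\otimes\varphi$. By the general facts about $L^\infty(X)\wtensor M$ with separable predual, one has the isometric identification $(L^\infty(X)\wtensor M)_\ast\cong L^1(X,M_\ast)$ (Bochner integrable functions). Under it, the functional $f\psi$ corresponds to $x\mapsto f(x)\varphi$, since
\[(f\psi)(g)=\int_X\varphi(g(x)f(x))\,d\mu(x).\]
The $L^1$-norm of this function is exactly $\int_X|f(x)|_\varphi\,d\mu(x)$, which is the claim. The measurability of $x\mapsto f(x)\varphi$ in norm follows from separability of $M_\ast$ together with weak measurability (Pettis).

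If one prefers to avoid quoting $L^1(X,M_\ast)$, prove the two inequalities directly. The inequality "$\le$" is immediate: for $g\in(L^\infty(X,M))_1$, $|\varphi(g(x)f(x))|\le|f(x)|_\varphi$ pointwise, and then integrate. For "$\ge$", approximate $f$ in the appropriate sense by simple functions $\sum_i 1_{E_i}\otimes a_i$ and choose near-optimal $y_i\in(M)_1$ for each $a_i\varphi$. Then $g=\sum_i 1_{E_i}\otimes y_i$ gives
\[\psi(gf)\approx\sum_i\mu(E_i)|a_i|_\varphi.\]
Alternatively, use a measurable selection of near-norming elements in $(M)_1$, which is possible because $M_\ast$ is separable.

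The main obstacle is this approximation step in (2). We must control $\bigl|\,\|f\psi\|-\|f_n\psi\|\,\bigr|\le\|(f-f_n)\psi\|$, and the right-hand side needs to go to $0$. This is guaranteed if $f_n\to f$ strongly in $\|\cdot\|_\psi$, because $|\cdot|_\psi\le\|\cdot\|_\psi$. We also need $\int_X|f_n(x)-f(x)|_\varphi\,d\mu\to0$. Both follow by choosing $f_n$ so that $f_n(x)\to f(x)$ strongly for a.e.\ $x$, with $\|f_n\|\le\|f\|$. Such $f_n$ exist by density of simple functions in $L^\infty(X)\otimes M$ for the strong topology on bounded sets, for instance via Kaplansky, passing to a subsequence with pointwise a.e.\ convergence. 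Dominated convergence then gives the integral convergence.
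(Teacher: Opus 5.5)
Your proposal is correct and follows the paper's proof. In (1), your identity $\varphi(ya)=\varphi(\mathbb{E}(y)a)$ is the paper's observation $a\varphi=(a\varphi|_A)\circ\mathbb{E}$, combined with contractivity of $\mathbb{E}$ and of restriction; in (2), the paper likewise identifies the predual with $L^1(X)\hat{\otimes}M_{\ast}=L^1(X,M_{\ast})$ and reads off the norm of $x\mapsto f(x)\varphi$. Your simple-function/Kaplansky alternative also works, but it is not needed.
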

\begin{proof}
    (1):
    The standard calculation
    \[\|a\varphi\|_{B_{\ast}}=\|(a\varphi|_A)\circ\mathbb{E}\|_{B_{\ast}}\le\|a\varphi|_A\|_{A_{\ast}}=\|a\varphi\|_{B_{\ast}}\]
    implies that $|a|_{\varphi|_A}=\|a\varphi|_A\|_{A_\ast}=\|a\varphi\|_{B_{\ast}}=|a|_{\varphi}$.

    (2):
    Under the canonical identification $(L^{\infty}(X)\wtensor M)_{\ast}=L^1(X)\hat{\otimes}M_{\ast}=L^1(X,M_{\ast})$, the functional $f\cdot(\mu\otimes\varphi)$ corresponds to the map $x\mapsto f(x)\varphi$.
    Hence
    \[|f|_{\mu\otimes\varphi}=\|f\cdot(\mu\otimes\varphi)\|_{L^1(X)\hat{\otimes}M_{\ast}}=\int_X\|f(x)\varphi\|_{M_{\ast}}\,d\mu(x)=\int_X|f(x)|_{\varphi}\,d\mu(x)\]
    for every $f\in L^{\infty}(X)\wtensor M$.
\end{proof}

We normalize a left Haar measure on the group $G$ so that $|K|=1$.
Let $\pi\colon G\to G/K$ be the canonical quotient map, and choose a section $\sigma\colon G/K\to G$ satisfying $\sigma(K)=1_G$.
These maps are continuous.
We equip the space $G/K$ with the counting measure.
By the normalization, we have $|S|=|\pi^{-1}(S)|$ for every $S\subset G/K$.
For notational convenience, when evaluating a right $K$-invariant expression, we write $s\in G/K$ in place of its representative $\sigma(s)\in G$.
We also denote $\bigcup S$ by the subset $\{g\in G:gK\in S\}=\pi^{-1}(S)$ of $G$.

Recall that a locally compact group $G$ is \emph{amenable} if it admits a (left) invariant mean on $L^{\infty}(G)$.
When $G$ has a compact open subgroup $K<G$, amenability of $G$ is equivalent to the following F\o lner-type condition:
\begin{itemize}
    \item For every compact right $K$-invariant subset $F=FK\subset G$ and $\varepsilon>0$, there exists a nonempty finite subset $S\subset G/K$ that is $K$- and $(F,\varepsilon)$-invariant, in the sense that
    \[KS=S,\quad |S\Delta gS|<\varepsilon|S|\]
    for every $g\in F$.
\end{itemize}
We will always choose $F$ and $S$ so that $1_G\in F$ and $K\in S$, and we set $K_S=\bigcap_{s\in S}sKs^{-1}<K$.
Then $K_S$ is a compact open normal subgroup of $K$.

\subsection{Group actions}

An \emph{action} of a locally compact group $G$ on a von Neumann algebra $M$ is a continuous group homomorphism $\alpha\colon G\to \Aut(M)$.
More generally, a pair of Borel maps $\alpha\colon G\to \Aut(M)$ and $u\colon G\times G\to\U(M)$ is called a \emph{cocycle action} if
\begin{enumerate}
    \item $\alpha_g\circ\alpha_h=\Ad{u_{g,h}}\circ\alpha_{gh}$, and
    \item $u_{g,h}u_{gh,k}=\alpha_g(u_{h,k})u_{g,hk}$
\end{enumerate}
for all $g,h,k\in G$.
We also write $(\alpha,u)\colon G\curvearrowright M$ for such a cocycle action.
Cocycle actions are always assumed to be normalized, meaning that $u_{g,h}=1$ whenever $g=1_G$ or $h=1_G$.

These relations guarantee the existence of the \emph{(cocycle) crossed product} $M\rtimes_{\alpha,u}G$ (or, simply $M\rtimes G$) which is the von Neumann algebra generated by (a copy of) $M$ and unitaries $\lambda^{\alpha,u}_g$, $g\in G$ such that
\[\lambda_g^{\alpha,u}x=\alpha_g(x)\lambda_g^{\alpha,u},\quad\lambda_g^{\alpha,u}\lambda_h^{\alpha,u}=u_{g,h}\lambda_{gh}^{\alpha,u}\]
for $x\in M$ and $g,h\in G$.
If $(\pi,V)$ be a faithful covariant representation of $(\alpha,u)$, then the von Neumann algebra $(\pi(M)\wtensor\mathbb{C}1)\vee (V_g\otimes\lambda_g)_{g\in G}''$ is naturally isomorphic to $M\rtimes_{\alpha,u}G$, where $\lambda$ denotes the left regular representation of $G$.
See, for instance, \cite{Su80, Ta03a} for details.

For actions $\alpha\colon G\curvearrowright M$ and $\beta\colon G\curvearrowright N$, the diagonal action $\alpha\otimes\beta\colon G\curvearrowright M\wtensor N$ is defined by $(\alpha\otimes\beta)_g=\alpha_g\otimes\beta_g\in\Aut(M\wtensor N)$.
For a subset $\Theta$ of $\Aut(M)$, its fixed-point subalgebra is denoted by $M^{\Theta}=\{x\in M:\theta(x)=x\text{ for all }\theta\in\Theta\}$.
If an action $\alpha\colon G\curvearrowright M$ is fixed, we simply write $M^H$ instead of $M^{\alpha(H)}$ for a subgroup $H<G$.

Two cocycle actions $(\alpha,u)$ and $(\alpha',u')$ on $M$ are said to be \emph{cocycle conjugate} (resp.\ \emph{strongly cocycle conjugate}) if there exist an automorphism $\theta\in\Aut(M)$ (resp.\ $\theta\in\clInt(M)$) and a Borel map $v\colon G\to\U(M)$ such that $\theta\circ\Ad{v}\circ\alpha\circ\theta^{-1}=\alpha'$ and $v_g\alpha_g(v_h)u_{g,h}v_{gh}^{\ast}=\theta^{-1}(u_{g,h}')$ for all $g,h\in G$.
If $(\alpha,u)$ and $(\alpha',u')$ are cocycle conjugate, there exists a canonical isomorphism between inclusions $(M\subset M\rtimes_{\alpha,u}G)\cong(M\subset M\rtimes_{\alpha',u'}G)$ determined by $x\mapsto\theta(x)$ for $x\in M$ and $\lambda^{\alpha,u}_g\mapsto\theta(v_g^{\ast})\lambda^{\alpha',u'}_g$ for $g\in G$.
In particuler, the 2-cocycle $u$ is called a \emph{coboundary} if $(\alpha,u)$ is cocycle conjugate to a genuine action.
In other words, $u$ is a coboundary if and only if there exists a Borel map $v\colon G\to\U(M)$ such that $v_g\alpha_g(v_h)u_{g,h}v_{gh}^{\ast}=1$ for all $g,h\in G$.
This is also equivalent to saying that the map $g\mapsto v_g\lambda^{\alpha,u}_g$ becomes a unitary representation of a group $G$.
For an action $\alpha$, an \emph{$\alpha$-cocycle} is a Borel map $v\colon G\to\U(M)$ satisfying the cocycle identity $v_g\alpha_g(v_h)=v_{gh}$.
An $\alpha$-cocycle $v$ is called a \emph{coboundary} if there exists a unitary element $w\in\U(M)$ such that $v_g=w\alpha_g(w^{\ast})$ for all $g\in G$.

\begin{prop}[{\cite[Theorem 4.1.3]{Su80}}]\label{prop:Sutherland}
    Let $(\alpha,u)$ be a cocycle action on a properly infinite von Neumann algebra.
    Then $u$ is a coboundary.
\end{prop}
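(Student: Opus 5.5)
The plan is Sutherland's stabilization argument: absorb a copy of $\mathbb{B}$ to untwist the cocycle, then use proper infiniteness to transport the untwisting back to $M$ itself.

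\textbf{Step 1: untwisting on $M\wtensor\mathbb{B}(L^2(G))$.}
Since $M$ is properly infinite, we may identify $M\cong M\wtensor\mathbb{B}$ with $\mathbb{B}=\mathbb{B}(L^2(G))$. This identification is implemented by a unitary, namely a partial isometry $W\in M\wtensor\mathbb{B}$ with $W^{\ast}W=1\otimes e$ and $WW^{\ast}=1$, where $e$ is a rank-one projection. On $M\wtensor\mathbb{B}(L^2(G))$ we consider the cocycle action $(\alpha\otimes\Ad\rho,\,u\otimes1)$, where $\rho$ is the right regular representation. In a faithful covariant representation, the unitaries $V_g\otimes\lambda_g$ generating the crossed product can be ``untwisted'' as in the Takesaki-type regular representation trick. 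Concretely, let $U\in M\wtensor L^{\infty}(G)$ be the Borel field $(U\xi)(s)=u_{g,s}\xi(s)$, suitably arranged. Then $v_g:=(u_{g,\cdot})^{\ast}\otimes\ldots$ yields a 1-cochain with
\[
v_g(\alpha_g\otimes\Ad\rho_g)(v_h)(u_{g,h}\otimes1)v_{gh}^{\ast}=1.
\]
This is exactly the computation in \cite[\S 4.1]{Su80}: the cocycle identity (2) for $u$ shows that $g\mapsto v_g(u\text{-twisted implementing unitaries})$ is multiplicative. Measurability of $v$ follows from Borelness of $u$.

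\textbf{Step 2: stability of $u$ under tensoring.}
Next we pass from $\alpha\otimes\Ad\rho$ back to $\alpha\otimes\id$. For this I would use that $(\alpha\otimes\Ad\rho,\,u\otimes1)$ and $(\alpha\otimes\id,\,u\otimes1)$ are cocycle conjugate via the 1-cochain $1\otimes\rho_g$. This cochain is a genuine representation, so it does not change the 2-cocycle. Hence $u\otimes1$ is a coboundary for $\alpha\otimes\id$ on $M\wtensor\mathbb{B}$.

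\textbf{Step 3: cutting down by $W$.}
Finally, conjugate by the isometry $W$ to return to $M$. This is the main obstacle, since $(\alpha\otimes\id)_g(W)$ need not equal $W$. We fix it with the 1-cochain $w_g:=W(\alpha_g\otimes\id)(W^{\ast})$, which is unitary in $M\wtensor\mathbb{B}$ transported to $M$. The cocycle action $\Ad W\circ(\alpha\otimes\id)\circ\Ad W^{\ast}$ corrected by $w$ recovers $\alpha$, with 2-cocycle $u$ up to a coboundary. Combining the three cochains then gives a Borel $v\colon G\to\U(M)$ with $v_g\alpha_g(v_h)u_{g,h}v_{gh}^{\ast}=1$. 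The equivalent route via the crossed product is to exhibit, through $W$, a unitary representation $g\mapsto v_g\lambda_g^{\alpha,u}$. Since the statement is cited from \cite[Theorem 4.1.3]{Su80}, it may simply be invoked.
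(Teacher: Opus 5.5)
Your outline is correct. Note how it relates to the paper: the paper gives no argument here. It simply imports Sutherland's theorem, and immediately afterwards uses it in the opposite direction, deducing that the amplification $(\alpha\otimes\id,u\otimes1)$ on $M\wtensor\mathbb{B}$ is cocycle conjugate to a genuine action because $M\wtensor\mathbb{B}$ is properly infinite. Your route proves that stabilized statement first, for an arbitrary $M$, by an elementary regular-representation computation (Steps 1--2). Proper infiniteness enters only in Step 3, to exhibit $(\alpha,u)$ as a cocycle perturbation of its own amplification. This makes the logical dependence transparent, at the cost of carrying out the computation that the citation hides.

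That computation is the one place where your write-up is not yet a proof. The cochain is left as ``$(u_{g,\cdot})^{\ast}\otimes\ldots$'', and the literal choice $v_g(s)=u_{g,s}^{\ast}$ does not satisfy the identity: the second variable has to be translated. Use $\Ad\lambda$, so that $(\alpha_g\otimes\Ad\lambda_g)(F)(s)=\alpha_g(F(g^{-1}s))$ for $F\in M\wtensor L^{\infty}(G)$, and put $v_g(s)=u_{g,g^{-1}s}^{\ast}$. The cocycle identity in the form $\alpha_g(u_{h,t}^{\ast})=u_{g,ht}u_{gh,t}^{\ast}u_{g,h}^{\ast}$, with $t=h^{-1}g^{-1}s$, gives for every $s$
\[v_g(s)\,\alpha_g\bigl(v_h(g^{-1}s)\bigr)\,u_{g,h}=u_{gh,h^{-1}g^{-1}s}^{\ast}=v_{gh}(s).\]
Then $v'_g=v_g(1\otimes\lambda_g)$ kills $u\otimes1$ for $\alpha\otimes\id$, which is your Step 2. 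With $\Ad\rho$ one uses $u_{g,g^{-1}s^{-1}}^{\ast}$ instead.

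Step 3 checks out as you state it. Take $W$ with $WW^{\ast}=1$ and $W^{\ast}W=1\otimes e$; it exists because $1\otimes e\sim1$ in the $\sigma$-finite properly infinite algebra $M\wtensor\mathbb{B}$. Set $w_g=W(\alpha_g\otimes\id)(W^{\ast})$ and $\Phi(x)=W(x\otimes e)W^{\ast}$. Then $\Phi\circ\alpha_g\circ\Phi^{-1}=\Ad w_g\circ(\alpha_g\otimes\id)$ and $\Phi(u_{g,h})=w_g(\alpha_g\otimes\id)(w_h)(u_{g,h}\otimes1)w_{gh}^{\ast}$. Hence $z_g=\Phi^{-1}(v'_gw_g^{\ast})$ is a Borel map $G\to\U(M)$ with $z_g\alpha_g(z_h)u_{g,h}z_{gh}^{\ast}=1$.
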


As an application, for every cocycle action $(\alpha,u)\colon G\curvearrowright M$, the amplification $(\alpha\otimes\id,u\otimes 1)\colon G\curvearrowright M\wtensor\mathbb{B}$ is cocycle conjugate to a genuine action.

If $G\curvearrowright M$ is an action, there exists a canonical embedding of the group von Neumann algebra $\mathrm{L}(G)=(\lambda_g)_{g\in G}''\subset\mathbb{B}(L^2(G))$ into $M\rtimes G$.
For every closed subgroup $H<G$, there is also a canonical inclusion $M\rtimes H\hookrightarrow M\rtimes G$.
Provided that $M$ is a factor and the action is strictly outer, this inclusion has expectation if and only if $H$ is open in $G$ (\cite{BB18}).
In this case, the faithful normal conditional expectation $\mathbb{E}_H\colon M\rtimes G\to M\rtimes H$ is given by
\[\mathbb{E}_H(x\lambda_g)=\begin{cases}
    x\lambda_g&\text{if }g\in H,\\
    0&\text{if }g\notin H
\end{cases}\]
for $x\in M$ and $g\in G$.
For a compact open subgroup $K<G$, there exists a corresponding projection $p_K\in \mathrm{L}(G)\subset M\rtimes G$ defined by
\[p_K=\frac{1}{|K|}\int_K\lambda_k\,dk.\]
The following lemma is well-known to experts.
See, for instance, \cite[Lemma 4.2]{BB18}.

\begin{lem}
    For an action $K\curvearrowright M$ of a compact group $K$, the projection $p_K$ commutes with $M^K$, and $M^Kp_K=p_K(M\rtimes K)p_K$.
\end{lem}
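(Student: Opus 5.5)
We work inside a concrete model of $M\rtimes K$. Take $M\subset\mathbb{B}(H)$ standardly. Realize $M\rtimes K$ on $H\otimes L^2(K)$, generated by $\pi(x)$ and $1\otimes\lambda_k$ for $x\in M$ and $k\in K$. Here $(\pi(x)\xi)(s)=\alpha_s^{-1}(x)\xi(s)$, and the unitaries $\lambda_k$ implement $\alpha_k$ on $\pi(M)$. Since $K$ is compact and $|K|=1$, the element $p_K=\int_K\lambda_k\,dk$ is the projection onto the $\lambda(K)$-invariant vectors. In $H\otimes L^2(K)$ these are the functions constant in the $K$-variable, $H\otimes\mathbb{C}1_K$.

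The first claim is that $p_K$ commutes with $M^K$. This is immediate from the covariance relation. For $x\in M^K$ we have $\lambda_kx\lambda_k^{\ast}=\alpha_k(x)=x$, so $x$ commutes with every $\lambda_k$. Integrating over $K$ then gives $xp_K=p_Kx$.

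For the equality $M^Kp_K=p_K(M\rtimes K)p_K$, the inclusion $\subset$ is clear, since $xp_K=p_Kxp_K$ for $x\in M^K$. For $\supset$, the key relation is $p_K\lambda_k=\lambda_kp_K=p_K$, which follows from invariance of Haar measure. Consider a spanning element $x\lambda_k$ with $x\in M$. It compresses to $p_Kx\lambda_kp_K=p_Kxp_K$. Next we show $p_Kxp_K=E(x)p_K$, where $E(x)=\int_K\alpha_k(x)\,dk\in M^K$ is the canonical conditional expectation. The computation is
\[p_Kxp_K=\int_K\lambda_kx\,dk\,p_K=\int_K\alpha_k(x)\lambda_kp_K\,dk=\int_K\alpha_k(x)\,dk\,p_K=E(x)p_K.\]
Thus the compression of the $*$-algebra spanned by $M\lambda(K)$ lies in $M^Kp_K$. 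This $*$-algebra is $\sigma$-weakly dense in $M\rtimes K$, because the span of $\{x\lambda_f: f\in L^1(K)\}$ and of $x\lambda_k$ is dense. Compression by $p_K$ is $\sigma$-weakly continuous.

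It remains to check that $M^Kp_K$ is $\sigma$-weakly closed. The map $x\mapsto xp_K$ is a normal $*$-homomorphism on $M^K$. It is actually injective, since the central support of $p_K$ in $M\rtimes K$ dominates the image of $M$. Its image is therefore a von Neumann algebra, hence closed. It follows that $p_K(M\rtimes K)p_K=M^Kp_K$.

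The only mildly delicate step is this closedness and the vector-integral manipulation $\int\lambda_kx\,dk$. Both are standard weak*-integral facts, and the closedness is the step one must remember to justify.
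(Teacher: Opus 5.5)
Your argument is correct. The paper gives no proof of its own and simply cites \cite[Lemma 4.2]{BB18}; your computation $p_Kx\lambda_kp_K=p_Kxp_K=\bigl(\int_K\alpha_k(x)\,dk\bigr)p_K$, followed by density and closure, is the standard argument. One small correction in the closure step: you do not need injectivity of $M^K\ni x\mapsto xp_K$, since the range of any normal $*$-homomorphism of a von Neumann algebra is $\sigma$-weakly closed. Your central-support remark also does not by itself establish injectivity. If you want it, it follows at once because each $x\in M^K$ acts on $p_K(H\otimes L^2(K))=H\otimes\mathbb{C}1_K$ as $x\otimes 1$.
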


For a faithful normal semifinite weight $\varphi$ on $M$, there exists a canonical faithful normal semifinite weight $\hat{\varphi}$ on $M\rtimes G$, which is called the \emph{dual weight} $\hat{\varphi}$ of $\varphi$. 
Its modular automorphism is given by
\[\sigma^{\hat{\varphi}}_t(x\lambda_g)=\Delta_G(g)^{it}\sigma^{\varphi}_t(x)[D\varphi\circ\alpha_g^{-1}:D\varphi]_t\lambda_g\]
for $x\in M$, $g\in G$ and $t\in\mathbb{R}$, where $\Delta_G$ is the modular function of $G$ and $[D\varphi\circ\alpha_g^{-1}:D\varphi]_t$ is the Connes cocycle (cf.\ \cite[Chapter X]{Ta03a}).

Suppose that $M$ has a tracial state $\tau$ which is invariant under the action, and $H<\ker{\Delta_G}$ is a closed subgroup (e.g., compact subgroup).
Then $M\rtimes H$ is contained in the centralizer algebra $(M\rtimes G)_{\hat{\tau}}$ with respect to the dual weight $\hat{\tau}$.

There are several known conditions describing the outerness of group actions.
We first compare the conditions needed in this paper.

\begin{defn}[cf. \cite{Va01, BB18}]
    Let $\alpha\colon G\curvearrowright M$ be an action on a factor $M$.
    \begin{enumerate}
        \item The action $\alpha$ is called \emph{faithful} if the map $\alpha\colon G\to\Aut(M)$ is injective.
        \item The action $\alpha$ is called \emph{outer} if $\alpha_g$ is not inner for every $g\neq 1_G$.
        \item The action $\alpha$ is called \emph{minimal} if it is faithful and $(M^G)'\cap M=\mathbb{C}1$.
        \item The action $\alpha$ is called \emph{strictly outer} if $M'\cap (M\rtimes G)=\mathbb{C}1$.
        \item The action $\alpha$ is called \emph{properly outer relative to $K$} if every $g\in G$ which there exists a nonzero $a\in M$ such that $ax=\alpha_g(x)a$ for all $x\in M^K$ belongs to $K$.
    \end{enumerate}
\end{defn}

Faithfulness, outerness, and strict outerness are defined in the same way for cocycle actions.
By definition, properties (1)--(4) passes to the restriction on any closed subgroup.
Note that $\alpha$ is faithful if and only if the family of functions $\{g\mapsto\varphi(\alpha_g(x)):\varphi\in M_{\ast},\ x\in M\}\subset L^{\infty}(G)$ generates $L^{\infty}(G)$ as a von Neumann algebra.

\begin{thm}[{\cite[Remark 3.17]{ILP98}}]
    Let $\gamma\colon K\curvearrowright M$ be a minimal action of a compact group $K$ on a factor $M$.
    If $\theta\in\Aut(M)$ fixes $M^K$ pointwise, then there exists $k\in K$ such that $\theta=\gamma_k$.
\end{thm}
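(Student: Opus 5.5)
Since this result is cited from \cite[Remark 3.17]{ILP98}, the natural route is the Galois correspondence for minimal compact group actions: intermediate subfactors $M^K\subset N\subset M$ correspond bijectively to closed subgroups $H<K$ via $N=M^H$. The idea is to encode $\theta$ as a $K$-equivariant structure and reduce to this correspondence.

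\emph{Step 1: decompose $M$ into spectral subspaces.} For each irreducible representation $\pi$ of $K$, let $M(\pi)$ be the spectral subspace. Minimality gives $(M^K)'\cap M=\mathbb{C}1$. Hence each $M(\pi)$ is a finitely generated $M^K$-bimodule whose Hilbert space of intertwiners $\{V\in M\otimes\overline{H_\pi}:(\gamma_k\otimes\id)(V)=V(1\otimes\pi(k))\}$ carries an $M^K$-valued inner product that is scalar. Since $\theta$ fixes $M^K$ pointwise, $\theta$ maps the relative commutant-type spaces $\{x\in M: ax=xb\ \text{for}\ a,b\in M^K\text{ data}\}$ into themselves. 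Concretely, the space $H_\pi^M$ of $\pi$-eigenoperators (elements $V$ with $\sum V_i^\ast V_i$ scalar and $V^\ast(\cdot)V$ implementing the sector of $M^K$ corresponding to $\pi$) is determined by $M^K\subset M$ alone. Therefore $\theta$ preserves each $H_\pi^M$ and acts on it by a unitary $U_\pi(\theta)$, because $\theta$ preserves the scalar inner products $V^\ast W\in (M^K)'\cap M=\mathbb{C}$.

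\emph{Step 2: show $\theta$ acts compatibly with the tensor category.} Products of eigenoperators satisfy the fusion rules of $\mathrm{Rep}(K)$, with intertwiners realized by scalars. Since $\theta$ is multiplicative and $*$-preserving, the family $(U_\pi(\theta))_\pi$ is a unitary monoidal natural automorphism of the fiber functor $\pi\mapsto H_\pi^M\cong H_\pi$ (this identification uses that $\gamma$ is faithful). By Tannaka--Krein duality (Doplicher--Roberts), monoidal unitary natural automorphisms of the forgetful functor on $\mathrm{Rep}(K)$ are given by elements $k\in K$. Thus $U_\pi(\theta)=\pi(k)$ for all $\pi$.

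\emph{Step 3: conclude.} Then $\theta$ and $\gamma_k$ agree on $M^K$ and on every spectral subspace. The algebraic span of these is $\sigma$-weakly dense, so $\theta=\gamma_k$ by normality.

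The main obstacle is Step 2, specifically checking that $\theta$ genuinely preserves each $\pi$-component and that the induced maps respect the monoidal structure. To get the first point I would characterize $M(\pi)$ intrinsically as the set of $x$ for which $\overline{M^K xM^K}$ is a bimodule of the sector class of $\pi$, which is clearly $\theta$-invariant. The alternative is to bypass the category theory: apply the Galois correspondence to the closed subgroup generated by the fixed-point data of the action $\langle\theta,\gamma(K)\rangle$, showing that $\theta$ normalizes $\gamma(K)$ and then lies in it.
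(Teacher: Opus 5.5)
The paper gives no proof of this statement; it quotes it from \cite[Remark 3.17]{ILP98}. Your Doplicher--Roberts/Tannaka route is a legitimate way to prove it. As written, however, Step~1 assumes the object that carries the whole argument, and it fails outright when $M$ is finite.

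What you need are Roberts Hilbert spaces: elements $v_1,\dots,v_{d_\pi}\in M$ with $v_i^*v_j=\delta_{ij}$, $\sum_iv_iv_i^*=1$ and $\gamma_k(v_j)=\sum_iv_i\pi_{ij}(k)$. This normalization is what makes $\rho_\pi=\sum_iv_i(\cdot)v_i^*$ an endomorphism of $M^K$. It then gives $H_\pi=\{v\in M: va=\rho_\pi(a)v \text{ for all } a\in M^K\}$, because $v_i^*v\in(M^K)'\cap M=\mathbb{C}$. Since $\rho_\pi(a)\in M^K$ is fixed by $\theta$, this is what yields $\theta(H_\pi)=H_\pi$. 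The space $\{V:(\gamma_k\otimes\id)(V)=V(1\otimes\pi(k))\}$ that you wrote down is instead a left $M^K$-module whose inner product $VW^*$ is $M^K$-valued, not scalar.

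The existence of such $H_\pi$ for every $\pi\in\hat K$ is a real theorem. It uses faithfulness to get the full spectrum and proper infiniteness of $M^K$ to normalize supports. If $M$ is finite (for instance $M=R$, the case this paper needs), every isometry is a unitary. So no such space of dimension $\ge 2$ exists, and Step~1 produces nothing for higher-dimensional $\pi$.

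The repair is to pass first to $(M\wtensor\mathbb{B},\gamma\otimes\id,\theta\otimes\id)$:
\begin{itemize}
\item it is still minimal, because $(M^K\wtensor\mathbb{B})'\cap(M\wtensor\mathbb{B})=((M^K)'\cap M)\otimes 1$;
\item $\theta\otimes\id$ still fixes the fixed-point algebra $M^K\wtensor\mathbb{B}$, which is now properly infinite;
\item $\theta\otimes\id=\gamma_k\otimes\id$ forces $\theta=\gamma_k$.
\end{itemize}
You then need to prove or cite the existence lemma in that setting.

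In Step~2 the justification of naturality is misplaced. Intertwiners are not realized by scalars, and multiplicativity of $\theta$ only gives monoidality, via the unitary $H_\pi\otimes H_\sigma\to H_\pi H_\sigma$, $v\otimes w\mapsto vw$. A $K$-equivariant map $T\colon H_\pi\to H_\sigma$ is left multiplication by $\hat T=\sum_jT(v_j)v_j^*$, which lies in $M^K$. The identity $\theta(\hat Tv)=\hat T\theta(v)$ is where the hypothesis on $\theta$ is used a second time. Also, faithfulness is not what identifies $H_\pi$ with $\pi$; its role is in showing that every $\pi\in\hat K$ admits such an $H_\pi$.

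Step~3 is fine once you record that $M(\pi)=M^KH_\pi$: if $\gamma_k(x_j)=\sum_ix_i\pi_{ij}(k)$, then $x_j=(\sum_ix_iv_i^*)v_j$ with $\sum_ix_iv_i^*\in M^K$.

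Finally, the Galois-correspondence alternative is not yet an argument. $M^\theta$ is an intermediate subfactor, hence equal to some $M^H$, but that does not put $\theta$ into $\gamma(K)$. You would need the group of automorphisms fixing $M^K$ pointwise to be compact; injectivity of $H\mapsto M^H$ for that larger compact group would then give the claim.
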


\begin{lem}\label{lem:outerness}
    Let $\alpha\colon G\curvearrowright M$ be an action on a factor $M$.
    Consider the following conditions:
    \begin{enumerate}
        \item The action $\alpha$ is minimal;
        \item The action $\alpha$ is strictly outer;
        \item The action $\alpha$ is outer and $\alpha|_K$ is minimal;
        \item The action $\alpha$ is properly outer relative to $K$ and $\alpha|_K$ is minimal.
    \end{enumerate}
    Then \textup{(1)}$\implies$\textup{(2)}$\iff$\textup{(3)}$\iff$\textup{(4)}.
\end{lem}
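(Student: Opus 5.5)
We prove the implications in the order (1)$\Rightarrow$(2), (2)$\Rightarrow$(3), (3)$\Rightarrow$(4), (4)$\Rightarrow$(2). The common tool is the Fourier decomposition of $M\rtimes G$ along $G/K$. Write $N=M\rtimes G$ and let $\mathbb{E}_K\colon N\to M\rtimes K$ be the expectation. For $x\in M'\cap N$ and $g\in G$, the element $\mathbb{E}_K(x\lambda_g^{\ast})\lambda_g$ again commutes with $M$, because $\mathbb{E}_K$ is $M$-bimodular and $\lambda_g$ implements $\alpha_g$. Moreover, $x$ is determined by the family of these ``coefficients'' indexed by $G/K$; this follows from normality and faithfulness of $\mathbb{E}_K$ applied to $x^{\ast}x$.

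\textbf{(1)$\Rightarrow$(2).} This is the standard fact $(M^G)'\cap(M\rtimes G)=L(G)$-type relative commutant computation for minimal actions. Rather than proving it in general, it suffices to reduce to $K$. Minimality passes to $K$, since $M^K\supset M^G$ and faithfulness restricts. Also $\alpha_g$ for $g\ne1$ is not inner: if $\alpha_g=\Ad u$, then $u$ commutes with $M^G$, so $u\in\mathbb{C}$ and $\alpha_g=\id$, contradicting faithfulness. Hence (1)$\Rightarrow$(3), and it remains to show (3)$\Rightarrow$(2).

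\textbf{(2)$\Rightarrow$(3).} Outerness: if $\alpha_g=\Ad u$ with $g\neq1$, then $u^{\ast}\lambda_g\in M'\cap N$ is not a scalar, since its $\mathbb{E}_{\{1\}}$-type coefficient vanishes when $g\notin K$. When $g\in K$, we instead use faithfulness of the $K$ part inside $M\rtimes K$. For minimality of $\alpha|_K$: strict outerness of $G$ restricts to strict outerness of $K$ via $\mathbb{E}_K$, because $\mathbb{E}_K(M'\cap N)=M'\cap(M\rtimes K)$. For compact $K$ one then uses $p_K$ and the lemma $M^Kp_K=p_K(M\rtimes K)p_K$. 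If $y\in (M^K)'\cap M$, then $yp_K$ commutes with $p_K(M\rtimes K)p_K$. Since $M\rtimes K$ is a factor (its relative commutant of $M$ is trivial, hence its center is), the map $y\mapsto yp_K$ is injective and $p_K(M\rtimes K)p_K$ is a factor, so $y\in\mathbb{C}$. Faithfulness of $\alpha|_K$ follows because if $\alpha_k=\id$, then $\lambda_k\in M'\cap N$.

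\textbf{(3)$\Rightarrow$(4).} Suppose $a\neq0$ satisfies $ax=\alpha_g(x)a$ for all $x\in M^K$. Then $a^{\ast}a$ and $aa^{\ast}$ lie in $(M^K)'\cap M=\mathbb{C}$, so $a$ is a multiple of a unitary $u$ with $\Ad u^{\ast}\circ\alpha_g$ fixing $M^K$. By the cited theorem of \cite{ILP98}, this gives $\Ad u^{\ast}\circ\alpha_g=\alpha_k$ for some $k\in K$, so $\alpha_{gk^{-1}}$ is inner. Outerness then forces $g=k\in K$.

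\textbf{(4)$\Rightarrow$(2).} Take $x\in M'\cap N$ and consider a coefficient $y_g=\mathbb{E}_K(x\lambda_g^{\ast})\in M\rtimes K$, which satisfies $y_g\alpha_g(m)=m y_g$. Compress by $p_K$ and use $p_K(M\rtimes K)p_K=M^Kp_K$ to get elements $a\in M^K$ with $ax=\alpha_g(x)a$-type relations for $x\in M^K$, possibly after inserting $\lambda_k$'s. Proper outerness then kills all coefficients with $g\notin K$, so $x\in M'\cap(M\rtimes K)$. Finally, minimality of $\alpha|_K$ gives $M'\cap(M\rtimes K)=\mathbb{C}$; this is the compact case of (1)$\Rightarrow$(2), which is standard via \cite{ILP98}.

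\textbf{Main obstacle.} The main obstacle is the step just above: extracting, from a nonzero coefficient $y_g\in M\rtimes K$, a nonzero $a\in M$ intertwining $M^K$. The issue is that $p_K y_g p_K$ might vanish. To handle this, I would first try replacing $p_K$ by $\lambda_k p_K$ and $p_K\lambda_k$ for $k\in K$. These translates together span enough of $M\rtimes K$ to detect $y_g$, so some compression $p_K\lambda_k y_g\lambda_{k'}p_K$ is nonzero. That compression lies in $M^Kp_K$, and it intertwines $M^K$ with $\alpha_{k g k'}$.
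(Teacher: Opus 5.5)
There is a genuine gap in (4)$\Rightarrow$(2), at exactly the step you flag, and the proposed remedy does not close it. First, compressing $y_g$ by $p_K$ on both sides does not give an intertwiner of all of $M^K$. For $z\in M^K$ you get $z\,p_Ky_gp_K=p_Ky_g\alpha_g(z)p_K$, and $\alpha_g(z)\in M^{gKg^{-1}}$ commutes with $p_K$ only if it lies in $M^K$. Writing $p_Ky_gp_K=ap_K$ with $a\in M^K$, you therefore only obtain $za=a\alpha_g(z)$ for $z\in M^K\cap\alpha_g^{-1}(M^K)$. That subalgebra is too small for proper outerness relative to $K$ to apply. Second, the translates you propose are all the same element: $\lambda_kp_K=p_K=p_K\lambda_k$ for $k\in K$, so $p_K\lambda_ky_g\lambda_{k'}p_K=p_Ky_gp_K$. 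Nothing prevents this from vanishing while $y_g\neq0$. Since your (1)$\Rightarrow$(2) is routed through (3)$\Rightarrow$(4)$\Rightarrow$(2), it inherits the gap.

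The missing idea is something you already invoke at the end. The fact $M'\cap(M\rtimes K)=\mathbb{C}1$ (from minimality of $\alpha|_K$, \cite[Proposition 6.2]{Va01}) should be applied to the coefficients themselves. From $my_g=y_g\alpha_g(m)$ you get $y_g^{\ast}y_g,\,y_gy_g^{\ast}\in M'\cap(M\rtimes K)=\mathbb{C}1$, so a nonzero $y_g$ is a multiple of a unitary in $M\rtimes K$ normalizing $M$. By \cite[Corollary 3.11]{BB18} this unitary is $b\lambda_k$ with $b\in\U(M)$ and $k\in K$. Hence $\alpha_{kg}=\Ad{b^{\ast}}$ is inner, and outerness forces $g\in K$. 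Note that (4)$\Rightarrow$(3) is immediate: an inner $\alpha_g$ forces $g\in K$, the implementing unitary then lies in $(M^K)'\cap M=\mathbb{C}1$, so $\alpha_g=\id$ and $g=1$ by faithfulness. So this is exactly the paper's route: prove (3)$\Rightarrow$(2) using plain outerness, with no compression by $p_K$.

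Separately, your argument for minimality of $\alpha|_K$ in (2)$\Rightarrow$(3) is incorrect. For $y\in(M^K)'\cap M$, the element $yp_K$ is not in $p_K(M\rtimes K)p_K$ and does not commute with it. The unit $p_K$ of that corner satisfies $p_K(yp_K)=\bigl(\int_K\alpha_k(y)\,dk\bigr)p_K$, which differs from $(yp_K)p_K=yp_K$ unless $y\in M^K$. So factoriality of the corner only yields $\Z(M^K)=\mathbb{C}1$, not $(M^K)'\cap M=\mathbb{C}1$.

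A correct argument passes through the standard form. Represent $M\rtimes K$ by $x\otimes1$ and $U_k\otimes\lambda_k$, where $U$ is the standard implementation of $\alpha|_K$, and compress onto $L^2(M)\otimes\mathbb{C}1$ as in the proof of Lemma \ref{lem:cpt-factrization}. This gives a normal surjection onto $\{M,U_K\}''=(JM^KJ)'$. It is injective because $M\rtimes K$ is a factor, and $M'\cap(JM^KJ)'=J((M^K)'\cap M)J$. The paper simply cites \cite[Proposition 4.4]{BB18} at this point.

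A minor slip in (3)$\Rightarrow$(4): $aa^{\ast}$ lies in $(M^{gKg^{-1}})'\cap M=\alpha_g((M^K)'\cap M)$ rather than in $(M^K)'\cap M$. This algebra is equally trivial, so the argument is unaffected.
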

\begin{proof}
    The implications (1)$\implies$(3) and (4)$\implies$(3) are immediate.
    The proof of (2)$\implies$(3) is the same as \cite[Proposition 4.4]{BB18}.
    We therefore suppose that (3) holds.
    Since $\alpha|_K$ is minimal, it is strictly outer by \cite[Proposition 6.2]{Va01}.
    
    If $g\in G$ and nonzero $a\in M$ satisfy $ax=\alpha_g(x)a$ for all $x\in M^K$, then $a^{\ast}a\in(M^K)'\cap M=\mathbb{C}1$ and $aa^{\ast}\in(M^{gKg^{-1}})'\cap M=\mathbb{C}1$.
    Thus, after scaling, we may assume that $a$ is unitary.
    Since $\Ad{a^{\ast}}\circ\alpha_g\in\Aut(M)$ fixes $M^K$ pointwise, there exists $k\in K$ such that $\Ad{a^{\ast}}\circ\alpha_g=\alpha_k$.
    It follows that $\alpha_{gk^{-1}}=\Ad{a}$.
    Since $\alpha$ is outer, we have $g=k\in K$.
    This proves (3)$\implies$(4).
    
    To show (2), let $x\in M'\cap (M\rtimes G)$.
    For every $g\in G$, the element $x_g=\mathbb{E}_K(\lambda_g^{\ast}x)\in M\rtimes K$ satisfies $x_gy=\alpha_g^{-1}(y)x_g$ for all $y\in M$.
    Then $x_g^{\ast}x_g,x_gx_g^{\ast}\in M'\cap(M\rtimes K)=\mathbb{C}1$.
    If $x_g\neq0$, we may assume $x_g$ is a unitary normalizing $M$.
    By \cite[Corollary 3.11]{BB18}, we have $x_g=b\lambda_k$ for some $b\in\U(M)$ and $k\in K$.
    It follows that $\alpha_g^{-1}=\Ad{x_g}=\Ad{b}\circ\alpha_k$ on $M$.
    Since $\alpha$ is outer, we obtain $g=k^{-1}\in K$.
    Thus, the support of $x$ is contained in $K$ in the sense of \cite[Section 3]{BB18}.
    By \cite[Theorem 3.7]{BB18}, we conclude that $x\in M'\cap(M\rtimes K)=\mathbb{C}1$.
\end{proof}

\begin{prop}[cf.\ {\cite[Proposition 10]{MV23}}]
    Let $\alpha\colon G\curvearrowright M$ and $\beta\colon G\curvearrowright N$ be actions on factors.
    If $\beta$ is strictly outer, then the diagonal action $\alpha\otimes\beta$ is strictly outer.
\end{prop}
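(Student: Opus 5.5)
We reduce the statement to Lemma \ref{lem:outerness}. By the equivalence (2)$\iff$(3), applied both to $\beta$ and to $\alpha\otimes\beta$ (the latter on the factor $M\wtensor N$), it suffices to show two things: that $\alpha\otimes\beta$ is outer, and that $(\alpha\otimes\beta)|_K$ is minimal. Since $\beta$ is strictly outer, Lemma \ref{lem:outerness} already tells us that $\beta$ is outer and $\beta|_K$ is minimal.

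\emph{Outerness.} Suppose $g\neq 1_G$ and $\alpha_g\otimes\beta_g=\Ad{w}$ for some $w\in\U(M\wtensor N)$. The automorphism $\alpha_g\otimes\id$ normalizes $\mathbb{C}\otimes N$ and fixes it pointwise. Then $\Ad{(w^{\ast})}\circ(\alpha_g\otimes\id)=\id\otimes\beta_g^{-1}$ maps $\mathbb{C}\otimes N$ onto itself. We apply the standard tensor-splitting argument: write $w$ in terms of a basis relative to the $N$-factor, or compare relative commutants. For every $y\in N$,
\[w(1\otimes y)w^{\ast}=1\otimes\beta_g(y).\]
Hence $w$ carries $(\mathbb{C}\otimes N)'\cap(M\wtensor N)=M\otimes\mathbb{C}$ onto itself, so $\Ad{w}$ restricts to an automorphism of $M\otimes\mathbb{C}$. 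Because $N$ is a factor, the usual argument (e.g.\ slicing with $\psi\otimes\id$, or \cite[Lemma]{Ta03a}-type results on inner automorphisms of tensor products) gives $w=a\otimes b$ with $a\in\U(M)$ and $b\in\U(N)$ such that $\beta_g=\Ad{b}$. This contradicts the outerness of $\beta$. To make the splitting rigorous, note that $(1\otimes b^{\ast})$-type corrections show $w(1\otimes N)'\dots$; more cleanly, $u=w$ satisfies $u(1\otimes y)=(1\otimes\beta_g(y))u$. Viewing $u$ as an element of $M\wtensor N$ and applying slice maps $\phi\otimes\id$ with $\phi\in M_\ast$, every slice $b_\phi=(\phi\otimes\id)(u)\in N$ intertwines $\id$ and $\beta_g$. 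Some slice is nonzero, and factoriality of $N$ then forces $\beta_g$ to be inner.

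\emph{Minimality of the restriction to $K$.} Faithfulness of $(\alpha\otimes\beta)|_K$ is immediate, because $\beta|_K$ is faithful. For the relative commutant, we use $(M\wtensor N)^K\supset M^{K}\otimes N^{K}$; but $M^K$ may be small, so this inclusion alone is not enough. Instead we argue through strict outerness of $(\alpha\otimes\beta)|_K$, using \cite[Proposition 6.2]{Va01} in the form ``minimal $\iff$ strictly outer'' for compact groups (the converse direction is presumably also in \cite{Va01, ILP98} for faithful actions). This reduces the whole proposition to the compact case $G=K$ with strict outerness. For a compact group, write $(M\wtensor N)\rtimes K$ and an element $x\in(M\wtensor N)'\cap((M\wtensor N)\rtimes K)$. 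Such an $x$ commutes with $M\otimes 1$. The key step is to identify $(M\otimes1)'\cap\bigl((M\wtensor N)\rtimes K\bigr)$ with a subalgebra contained in $\bigl(\mathbb{C}\otimes N\bigr)\rtimes K$ twisted by $\alpha$. Concretely, represent the crossed product on $L^2(M)\otimes L^2(N)\otimes L^2(K)$ and conjugate by the unitary implementing $\alpha$ on the first leg, $W=\int^{\oplus}U^{\alpha}_k\otimes 1\,dk$. This turns $\alpha\otimes\beta$ into $\id\otimes\beta$ on the $N$-part while keeping $M\otimes1$ inside $\mathbb{B}(L^2M)\otimes\dots$. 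The relative commutant of $M$ then becomes $M'\wtensor(N\rtimes_\beta K)$, and commuting with $1\otimes N$ places $x$ in $M'\wtensor(N'\cap N\rtimes K)=M'\otimes\mathbb{C}$. Finally, $x$ also lies in the crossed product, which meets $M'\otimes\mathbb{C}$ only in the scalars because $M$ is a factor.

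\emph{Main obstacle.} The main obstacle is carrying out this Fell-absorption-type conjugation rigorously and checking that the intersection with the crossed product is trivial. If it becomes messy, a fallback is to prove minimality directly. In that approach we show $((M\wtensor N)^K)'\cap(M\wtensor N)=\mathbb{C}$ via $\beta|_K$ having full spectral subspaces: minimality of $\beta|_K$ implies that each irreducible representation of $K$ occurs in $N$ with infinite multiplicity, as in \cite{ILP98}. Tensoring the spectral subspaces of $\alpha$ with the conjugate spectral subspaces of $\beta$ then produces enough fixed points to generate a subalgebra with trivial relative commutant.
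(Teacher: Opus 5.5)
Your argument is correct, but it takes a detour the statement does not need. The paper gives no proof of its own and simply cites \cite[Proposition 10]{MV23}, which holds for arbitrary locally compact groups.

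The Fell-absorption step you run for $K$ is the standard proof of that result, and it never uses compactness. Write $\pi_{\alpha\otimes\beta}$ and $\pi_{\beta}$ for the canonical embeddings into the crossed products, and let $W=\int^{\oplus}U^{\alpha}_g\,dg$ act on the $L^2(M)$ and $L^2(G)$ legs. Conjugation by $W$ sends $\pi_{\alpha\otimes\beta}(a\otimes b)$ to $a\otimes\pi_{\beta}(b)$, and sends $1\otimes 1\otimes\lambda_g$ to $U^{\alpha}_g\otimes 1\otimes\lambda_g$. Hence $W((M\wtensor N)\rtimes G)W^{\ast}\subset\mathbb{B}(L^2(M))\wtensor(N\rtimes_{\beta}G)$ for the whole group $G$.

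Doing your relative-commutant computation there proves the proposition in one step. The reduction via Lemma \ref{lem:outerness}, the separate outerness argument, and the compact-group equivalence between minimality and strict outerness can all be discarded. This also removes any dependence on the standing assumption that $G$ has a compact open subgroup. If you keep your route anyway, two small fixes apply:
\begin{itemize}
\item The implication ``strictly outer $\Rightarrow$ minimal'' for $K$, which you mark as ``presumably'' known, is exactly Lemma \ref{lem:outerness}, (2)$\Rightarrow$(3), applied with $G=K$.
\item In the outerness paragraph, only the slice-map argument is supported as written. Remove the unproved claim $w=a\otimes b$.
\end{itemize}

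Two points in the conjugation argument should be stated explicitly.

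First, the conjugated crossed product is not $M\wtensor(N\rtimes_{\beta}G)$, because the implementing unitaries become $U^{\alpha}_g\otimes 1\otimes\lambda_g$. The relative commutants therefore have to be computed in the ambient algebra $\mathbb{B}(L^2(M))\wtensor(N\rtimes_{\beta}G)$. There, commuting with $M\otimes 1\otimes 1$ gives $M'\wtensor(N\rtimes_{\beta}G)$. Commuting further with $1\otimes\pi_{\beta}(N)$ gives $M'\wtensor(\pi_{\beta}(N)'\cap(N\rtimes_{\beta}G))=M'\otimes\mathbb{C}1$.

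Second, the last step needs more than factoriality of $M$. Suppose $WxW^{\ast}=y\otimes 1$ with $y\in M'$. Then $x$ is the decomposable operator $g\mapsto U^{\alpha\ast}_g yU^{\alpha}_g\otimes 1_N$, which lies in $\mathbb{B}(L^2(M))\otimes\mathbb{C}1\otimes L^{\infty}(G)$. Since also $x\in M\wtensor N\wtensor\mathbb{B}(L^2(G))$, one gets $U^{\alpha\ast}_gyU^{\alpha}_g\in M\cap M'=\mathbb{C}1$ for almost every $g$. So $y$, and hence $x$, is a scalar.

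With this in place, the spectral-subspace fallback is unnecessary; as written it is only a heuristic, not a proof.
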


\subsection{Ultraproduct von Neumann algebras}

Our standard references for ultraproduct algebras are \cite[Chapter 5]{Oc85} and \cite{AH14}.
Let $M$ be a von Neumann algebra with separable predual, and fix a free ultrafilter $\omega$ on $\mathbb{N}$.
Let us consider the following \C-algebras of $\ell^{\infty}(\mathbb{N},M)$:
\begin{align*}
    \mathscr{T}_{\omega}(M) &= \{(x_n)_n:x_n\to 0\text{ $*$-strongly as }n\to\omega\},\\
    \mathscr{C}_{\omega}(M) &= \{(x_n)_n:\|[x_n,\varphi]\|\to 0\text{ as }n\to\omega\text{ for all }\varphi\in M_{\ast}\},\\
    \mathscr{N}_{\omega}(M) &= \{(x_n)_n:(x_n)_n\text{ normalizes }\mathscr{T}_{\omega}(M)\text{ in }\ell^{\infty}(\mathbb{N},M)\}.
\end{align*}
Then $\mathscr{T}_{\omega}(M)\subset\mathscr{C}_{\omega}(M)\subset\mathscr{N}_{\omega}(M)$.
Note that if $M$ is tracial then $\mathscr{N}_{\omega}(M)=\ell^{\infty}(\mathbb{N},M)$.
We call the quotient \C-algebras $M^{\omega}=\mathscr{N}_{\omega}(M)/\mathscr{T}_{\omega}(M)$ (resp.\ $M_{\omega}=\mathscr{C}_{\omega}(M)/\mathscr{T}_{\omega}(M)$) the \emph{ultraproduct algebra} (resp.\ the \emph{central sequence algebra}).
They are in fact von Neumann algebras.
The image of a sequence $(x_n)_n\in\mathscr{N}_{\omega}(M)$ in $M^{\omega}$ is denoted by $(x_n)^{\omega}$.
We regard $M$ and $M_{\omega}$ as von Neumann subalgebras of $M^{\omega}$.
Under these identifications, we have the inclusion $M_{\omega}\subset M'\cap M^{\omega}$.
When $M$ is tracial, equality holds.
All arguments below are independent of the choice of ultrafilter $\omega$.

The formula $\mathbb{E}_{\omega}((x_n)^{\omega})=\wlim_{n\to\omega}x_n$ defines a faithful normal conditional expectation $\mathbb{E}_{\omega}$ from $M^{\omega}$ onto $M$.
Its restriction $\tau_{\omega}\coloneq \mathbb{E}_{\omega}|_{M_{\omega}}$ is a faithful normal trace with values in $\Z(M)$.
In particular, $\tau_{\omega}$ is a tracial state on $M_{\omega}$ if $M$ is a factor.
For $\varphi\in M_{\ast}$, we write $\varphi^{\omega}=\varphi\circ\mathbb{E}_{\omega}\in(M^{\omega})_{\ast}$.
Every automorphism $\theta\in\Aut(M)$ induces $\theta^{\omega}\in\Aut(M^{\omega})$ given by $\theta^{\omega}((x_n)^{\omega})=(\theta(x_n))^{\omega}$.
This restricts $\theta^{\omega}\in\Aut(M_{\omega})$.
We often write $\theta$ instead of $\theta^{\omega}$ when no confusion arises.

\begin{defn}[equicontinuous parts, cf.\ {\cite[Definition 3.1, 3.4 and 3.9]{MT16}}]
    Let $X$ be a Polish space equipped with a Radon measure $\mu$, $M$ a von Neumann algebra with a fixed faithful normal state $\varphi$ and $\omega$ a free ultrafilter on $\mathbb{N}$.
    \begin{enumerate}
        \item A sequence of maps $(f_n\colon X\to M)_n$ is said to be \emph{$\omega$-equicontinuous} if for every $x\in X$ and $\delta>0$, there exists a neighborhood $U$ of $x\in X$ such that
        \[\{n\in\mathbb{N}:\|f_n(x)-f_n(y)\|_{\varphi}^{\sharp}<\delta\text{ whenever }y\in U\}\in\omega.\]
        \item Let $\alpha\colon X\to\Aut(M)$ be a Borel map.
        A sequence $(a_n)_n\in\mathscr{N}_{\omega}(M)$ is said to be \emph{$(\alpha,\omega)$-equicontinuous} if for every Borel set $E\subset X$ with $\mu(E)<\infty$ and $\delta>0$, there exists a compact subset $K\subset E$ such that
        \begin{itemize}
            \item $\alpha|_K$ is continuous;
            \item $\mu(E\setminus K)<\delta$;
            \item the sequence $(K\ni x\mapsto \alpha_x(a_n)\in M)_n$ is $\omega$-equicontinuous.
        \end{itemize}
    \end{enumerate}
    Let $\mathscr{E}_{\alpha,\omega}(M)$ denote the set of all $(\alpha,\omega)$-equicontinuous sequences in $\mathscr{N}_{\omega}(M)$, and then $\mathscr{E}_{\alpha,\omega}(M)$ is a \C-subalgebra containing $\mathscr{T}_{\omega}(M)$.
    We set
    \[M^{\omega}_{\alpha}\coloneq \mathscr{E}_{\alpha,\omega}(M)/\mathscr{T}_{\omega}(M),\quad M_{\omega,\alpha}\coloneq(\mathscr{E}_{\alpha,\omega}(M)\cap\mathscr{C}_{\omega}(M))/\mathscr{T}_{\omega}(M)=M^{\omega}_{\alpha}\cap M_{\omega}.\]
    and we say that $M^{\omega}_{\alpha}$ and $M_{\omega,\alpha}$ are the \emph{$(\alpha,\omega)$-equicontinuous parts} of $M^{\omega}$ and $M_{\omega}$, respectively.
\end{defn}

This definition does not depend on the choice of $\varphi$, and $\|\cdot\|_{\varphi}^{\sharp}$ may be replaced by the 1-norm $|\cdot|_{\varphi}^{\sharp}$.
The \C-subalgebras $M^{\omega}_{\alpha}$ and $M_{\omega,\alpha}$ are von Neumann algebras.
Lusin's theorem implies that $M\subset M^{\omega}_{\alpha}$.
Note that if $\alpha\colon G\to\Aut(M)$ is an action, then $(a_n)_n\in\mathscr{N}_{\omega}(M)$ is $(\alpha,\omega)$-equicontinuous if and only if for every $\delta>0$, there exists a neighborhood $U$ of $1_G\in G$ such that
\[\{n\in\mathbb{N}:\|a_n-\alpha_g(a_n)\|_{\varphi}^{\sharp}<\delta\text{ whenever }g\in U\}\in\omega.\]
In particular, we have $M^{\omega}_{\alpha}=M^{\omega}_{\alpha|_U}$ for every open subgroup $U<G$.
A cocycle action $(\alpha,u)\colon G\curvearrowright M$ induces an action on $M_{\omega,\alpha}$.
See also \cite{MT16, To17}.

\begin{lem}
    Let $\gamma\colon K\curvearrowright M$ be an action of a compact group $K$.
    Then $(M^{\omega}_{\gamma})^K=(M^K)^{\omega}$ and $(M_{\omega,\gamma})^K=M_{\omega}\cap(M^K)^{\omega}=M'\cap(M^K)_{\omega}$ inside $M^{\omega}$.
\end{lem}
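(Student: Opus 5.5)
The plan is to use the conditional expectation $E\colon M^{\omega}\to(M^{\omega})^K$ given by averaging over $K$, and to show it maps the equicontinuous part onto the fixed points. The natural candidate is the $K$-average of a representing sequence. Throughout, let $E_K\colon M\to M^K$ be the normal faithful conditional expectation $E_K(x)=\int_K\gamma_k(x)\,dk$, and fix a faithful normal state $\varphi$ on $M$ that is $K$-invariant. Such a $\varphi$ exists by replacing any faithful normal state with $\varphi\circ E_K$. Then $E_K$ is $\varphi$-preserving and contractive for $\|\cdot\|_\varphi^\sharp$. I would prove the two inclusions of the first equality separately, and then deduce the second equality.

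For $(M^K)^{\omega}\subset(M^{\omega}_{\gamma})^K$, take $(x_n)_n\in\mathscr{N}_{\omega}(M^K)$. Because $E_K$ is a $\varphi$-preserving conditional expectation, the standard ultraproduct theory (cf.\ \cite{AH14}) lets us regard $(M^K)^{\omega}\subset M^{\omega}$ and gives $(x_n)_n\in\mathscr{N}_{\omega}(M)$. The sequence is constant along the orbit, $\gamma_k(x_n)=x_n$, so it is trivially $(\gamma,\omega)$-equicontinuous and $K$-fixed.

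Conversely, let $X=(x_n)^{\omega}\in M^{\omega}_{\gamma}$ be $K$-fixed, and set $y_n=E_K(x_n)\in M^K$. The key estimate is
\[\|x_n-y_n\|_\varphi^\sharp\le\int_K\|x_n-\gamma_k(x_n)\|_\varphi^\sharp\,dk.\]
I would show that the right-hand side tends to $0$ along $\omega$ by combining two facts. First, equicontinuity gives a neighborhood $U$ of $1$ on which $\|x_n-\gamma_k(x_n)\|^\sharp_\varphi<\delta$ for $\omega$-many $n$. Second, $K$-invariance of $X$ gives $\gamma_k(x_n)-x_n\to0$ $*$-strongly along $\omega$ for each fixed $k$. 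By compactness, $K$ is covered by finitely many translates $k_iU$. For $k=k_iu$ with $u\in U$,
\[\|x_n-\gamma_{k_iu}(x_n)\|^\sharp\le\|x_n-\gamma_{k_i}(x_n)\|^\sharp+\|\gamma_{k_i}(x_n-\gamma_u(x_n))\|^\sharp,\]
and the last term equals $\|x_n-\gamma_u(x_n)\|^\sharp_\varphi$ by invariance of $\varphi$. So the integrand is uniformly small for $\omega$-many $n$. Hence $(x_n-y_n)_n\in\mathscr{T}_\omega(M)$, so $X=(y_n)^\omega$. It remains to check that $(y_n)\in\mathscr{N}_\omega(M^K)$. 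This follows because $(y_n)=(x_n)-(\text{trivial sequence})$ normalizes $\mathscr{T}_\omega(M)$, and $\mathscr{T}_\omega(M^K)=\mathscr{T}_\omega(M)\cap\ell^\infty(M^K)$, using the $\varphi$-preserving expectation once more. I expect the uniformity over $K$ to be the main obstacle, and the compactness and translation argument above is how I would handle it.

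For the second statement, intersect the first equality with $M_\omega$. Since $M_{\omega,\gamma}=M^\omega_\gamma\cap M_\omega$ and $M_\omega$ is $\gamma$-invariant, we get $(M_{\omega,\gamma})^K=M_\omega\cap(M^K)^\omega$. For the last equality, the inclusion $M_\omega\cap(M^K)^\omega\subset M'\cap(M^K)_\omega$ is clear, since such a sequence commutes with all of $M_*$, in particular with $(M^K)_*$ via restriction. For the reverse inclusion, take $X\in M'\cap(M^K)^\omega$ with representative $(y_n)\subset M^K$ asymptotically central in $M^K$. One needs $\|[y_n,\psi]\|\to0$ for all $\psi\in M_*$. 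Commutation with $M$ in $M^\omega$ gives this for functionals of the form $\varphi^\omega$-type vectors, via the criterion in \cite{AH14}, namely $X\in M_\omega$ iff $X\in M'\cap M^\omega$ and $X$ commutes with $\varphi^\omega$. Commutation with $\varphi^\omega$ holds because $X\in(M^K)_\omega$ commutes with $(\varphi|_{M^K})^\omega$ and $\varphi^\omega=(\varphi|_{M^K})^\omega\circ E_K^\omega$ by $K$-invariance.
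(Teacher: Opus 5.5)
Your proposal is correct and follows essentially the same route as the paper. You replace a representing sequence by its $K$-average $E_K(x_n)$ and use equicontinuity to see that this does not change the element; your finite-cover estimate is an explicit form of the paper's interchange of the Haar integral with the ultralimit. For the last equality you use the conditional expectation onto $M^K$ as the paper does, spelled out through the Ando--Haagerup description $M_\omega=M'\cap(M^\omega)_{\varphi^\omega}$ with $\varphi$ chosen $K$-invariant.
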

\begin{proof}
    For $x=(x_n)^{\omega}\in(M^{\omega}_{\gamma})^K$, equicontinuity implies that
    \[x=\int_K\gamma_k((x_n)^{\omega})\,dk=\int_K(\gamma_k(x_n))^{\omega}\,dk=\left(\int_K\gamma_k(x_n)\,dk\right)^{\omega}\in (M^K)^{\omega}.\]
    Here, the integral is taken with respect to the normalized Haar measure.
    The reverse inclusion is immediate, hence $(M^{\omega}_{\gamma})^K=(M^K)^{\omega}$.
    This also implies that $(M_{\omega,\gamma})^K=M_{\omega}\cap(M^{\omega}_{\gamma})^K=M_{\omega}\cap(M^K)^{\omega}\subset M'\cap(M^K)_{\omega}$.
    Since there exists a faithful normal conditional expectation from $M$ onto $M^K$, we have $M'\cap(M^K)^{\omega}\subset M_{\omega}$.
    This completes the proof.
\end{proof}

The following elementary compactness argument is an ultrafilter version of the Arzel\`a--Ascoli theorem.
It allows us to interchange ultralimits with integration over compact spaces.

\begin{lem}[cf.\ {\cite[Lemma 3.3]{MT16}}]
    Let $X$ be a compact space and $f_n\colon X\to\mathbb{C}$ be a uniformly bounded $\omega$-equicontinuous sequence of Borel maps.
    Then the convergence $\lim_{n\to\omega}f_n$ is uniform.
\end{lem}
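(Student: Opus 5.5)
The plan is the usual Arzelà--Ascoli argument on a compact space, adapted to the ultrafilter. Write $L=\lim_{n\to\omega}f_n$ pointwise; this exists because the sequence is uniformly bounded, say $|f_n|\le C$, so each $(f_n(x))_n$ lies in a compact disc. The goal is
\[\{n\in\mathbb{N}:\sup_{x\in X}|f_n(x)-L(x)|<\varepsilon\}\in\omega\quad\text{for every }\varepsilon>0.\]

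\textbf{Step 1 (finite cover).} Fix $\varepsilon>0$. For each $x\in X$, $\omega$-equicontinuity gives an open neighborhood $U_x$ of $x$ such that
\[A_x=\{n:|f_n(x)-f_n(y)|<\varepsilon/3\text{ for all }y\in U_x\}\in\omega.\]
By compactness, choose $x_1,\ldots,x_m$ with $X=\bigcup_iU_{x_i}$.

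\textbf{Step 2 (control on the limit).} For each $i$ and each $y\in U_{x_i}$, take the ultralimit over $n\in A_{x_i}\in\omega$ of the inequality $|f_n(x_i)-f_n(y)|<\varepsilon/3$. This gives $|L(x_i)-L(y)|\le\varepsilon/3$. Thus $L$ inherits the same local oscillation bound. Note that $L$ is only known to be a pointwise limit, and this estimate does not need any measurability of $L$.

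\textbf{Step 3 (uniform estimate).} Let
\[B=\bigcap_{i=1}^m\Bigl(A_{x_i}\cap\{n:|f_n(x_i)-L(x_i)|<\varepsilon/3\}\Bigr).\]
This is a finite intersection of sets in $\omega$, so $B\in\omega$. For $n\in B$ and any $y\in X$, pick $i$ with $y\in U_{x_i}$. The triangle inequality gives
\[|f_n(y)-L(y)|\le|f_n(y)-f_n(x_i)|+|f_n(x_i)-L(x_i)|+|L(x_i)-L(y)|<\varepsilon.\]
Hence $\sup_y|f_n(y)-L(y)|\le\varepsilon$ for all $n\in B$, which is the uniform convergence along $\omega$.

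\textbf{Consequence.} The bound in Step 3 shows that $L$ is a uniform limit along $\omega$ of Borel functions. Since $\omega$ is an ultrafilter on $\mathbb{N}$, we can pick $n_k\in B_{1/k}$ to get an honest sequence with $f_{n_k}\to L$ uniformly. So $L$ is Borel, and for any finite Radon measure we may interchange $\lim_{n\to\omega}$ with $\int_X$, which is the application the paper intends.

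\textbf{Main obstacle.} There is no real obstacle. The only point needing care is in Step 1: the neighborhood $U_x$ must be chosen so that a single set of indices $A_x\in\omega$ works simultaneously for all $y\in U_x$. This is exactly what the definition of $\omega$-equicontinuity provides, and it is what makes the finite intersection in Step 3 lie in $\omega$.
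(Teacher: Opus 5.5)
Your argument is correct and is the standard Arzel\`a--Ascoli finite-cover argument behind \cite[Lemma 3.3]{MT16}; the paper gives no proof of its own and simply cites that lemma. One small simplification: Step 2 holds at every point $x$, not only at the $x_i$, so it already shows that $L$ is continuous, and the detour through $n_k\in B_{1/k}$ to get Borel measurability is unnecessary.
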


Suppose now that $M$ is a factor and $\varphi$ is a faithful normal state on $M$, and let $L^{\infty}(X)\subset M_{\omega}$ be an abelian von Neumann subalgebra, where $X$ is a compact metric space.
We consider the probability measure $\mu=\tau_{\omega}|_{L^{\infty}(X)}$ on $X$.

\begin{lem}
    In the above setting, for $a\in L^{\infty}(X)'\cap M^{\omega}$ and $f\in L^{\infty}(X)$, one has $|af|_{\varphi^{\omega}}^{\sharp}\le\|a\||f|_1$.
    If $N\subset M^{\omega}$ is $\varphi^{\omega}$-orthogonal to $L^{\infty}(X)$, then $|F|_{\varphi^{\omega}}^{\sharp}\le\esssup_{x\in X}|F(x)|_{\varphi^{\omega}}^{\sharp}$ for $F\in L^{\infty}(X)\wtensor N\subset M^{\omega}$.
\end{lem}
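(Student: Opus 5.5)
For the first inequality, I use that $\varphi^{\omega}$ restricted to $M_{\omega}$ agrees with the trace $\tau_{\omega}$, because $M$ is a factor and $\tau_{\omega}=\mathbb{E}_{\omega}|_{M_{\omega}}$ is scalar valued. Since $f\in M_{\omega}\subset M'\cap M^{\omega}$, the same holds for $f$ and $|f|$.

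Write $f=v|f|$ (polar decomposition) with $v\in L^{\infty}(X)$, and use that $a$ commutes with $L^{\infty}(X)$. Then
\[af\varphi^{\omega}=av|f|^{1/2}\cdot|f|^{1/2}\varphi^{\omega}.\]
The key observation is that $|f|^{1/2}$ is central for $\varphi^{\omega}$. Indeed, $\varphi^{\omega}=\varphi\circ\mathbb{E}_{\omega}$, and elements of $M_{\omega}$ lie in the centralizer of $\varphi^{\omega}$; this is the standard fact that $\mathbb{E}_{\omega}(xy)=\mathbb{E}_{\omega}(yx)$ for $x\in M_{\omega}$ and $y\in M^{\omega}$.

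Given this, the functional can be rewritten as
\[af\varphi^{\omega}=av|f|^{1/2}\,\varphi^{\omega}(|f|^{1/2}\,\cdot\,|f|^{1/2})\]
in the bimodule sense. Its norm is at most $\|av|f|^{1/2}\|\,\|\psi\|$, where $\psi=|f|^{1/2}\varphi^{\omega}|f|^{1/2}$ is positive. Rather than chase the square root factor, I bound directly:
\[\|af\varphi^{\omega}\|=\sup_{\|y\|\le1}|\varphi^{\omega}(yaf)|.\]
By the centralizer property,
\[\varphi^{\omega}(yav|f|)=\varphi^{\omega}\bigl(|f|^{1/2}yav|f|^{1/2}\bigr).\]
Cauchy--Schwarz for the positive functional $\varphi^{\omega}$ bounds this by
\[\|a\|\,\varphi^{\omega}(|f|)=\|a\|\,\tau_{\omega}(|f|)=\|a\|\,|f|_1.\]
The same argument applied to $\varphi^{\omega}af=\varphi^{\omega}fa$ gives the other half of the $\sharp$-norm. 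This settles the first claim.

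For the second claim, orthogonality gives $L^{\infty}(X)\vee N=L^{\infty}(X)\wtensor N$, with $\varphi^{\omega}$ restricting to $\mu\otimes\varphi^{\omega}|_N$. The main point is that the $1$-norm computed in $M^{\omega}$ is dominated by the $1$-norm computed in the subalgebra. For any von Neumann subalgebra $A\subset B$ and $a\in A$, we have $\|a\varphi\|_{B_*}\ge\|a\varphi|_A\|_{A_*}$, but that inequality points the wrong way. So I would instead invoke Lemma \ref{lem:1-norm}(1), which requires a $\varphi^{\omega}$-preserving conditional expectation onto $L^{\infty}(X)\wtensor N$. This is the main obstacle, since no such expectation is given in general.

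My alternative is to approximate $F$ in norm by simple tensors $F=\sum_i 1_{E_i}\otimes F_i$, with $E_i$ a partition of $X$. Then
\[|F|_{\varphi^{\omega}}\le\sum_i|1_{E_i}F_i|_{\varphi^{\omega}}.\]
For each term I apply the first claim's technique with $a=F_i$: the centralizer argument works because $1_{E_i}\in M_{\omega}$, so it commutes with $F_i$ provided $N$ commutes with $L^{\infty}(X)$. But the first claim gives $\|F_i\|\mu(E_i)$ rather than $|F_i|_{\varphi^{\omega}}\mu(E_i)$. To get the sharper bound, I use orthogonality: by Cauchy--Schwarz with the centralizing projection $1_{E_i}$, $\varphi^{\omega}(y1_{E_i}F_i)$ involves $y$ arbitrary, and orthogonality only controls products in $L^{\infty}(X)\vee N$. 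So I still need the norm to be attained on $L^{\infty}(X)\vee N$.

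I expect this to hold via the $\varphi^{\omega}$-preserving expectation onto $L^{\infty}(X)\wtensor N$, which should exist. The approach would be to show that the subalgebra is globally invariant under $\sigma^{\varphi^{\omega}}$, using that $L^{\infty}(X)$ lies in the centralizer and that $N$ is typically taken inside $M$ or $(M^K)^{\omega}$; then Takesaki's theorem supplies the expectation.

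Once the expectation is available, Lemma \ref{lem:1-norm}(1) and (2) give
\[|F|_{\varphi^{\omega}}=\int_X|F(x)|_{\varphi^{\omega}}\,d\mu\le\esssup_{x\in X}|F(x)|_{\varphi^{\omega}},\]
since $\mu$ is a probability measure. The same argument handles the right-multiplied version, and averaging the two yields the $\sharp$-norm bound. Verifying modular invariance of $N$ is the step I expect to be hardest; failing that, I would try to prove the needed inequality directly via the simple-tensor approximation.
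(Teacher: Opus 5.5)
Your proof of the first inequality is correct. It is a more hands-on version of the paper's argument. The paper uses that $L^{\infty}(X)\subset M_{\omega}$ lies in the centralizer of $\varphi^{\omega}$ to get a $\varphi^{\omega}$-preserving expectation onto $L^{\infty}(X)$, and then $\|af\varphi^{\omega}\|\le\|a\|\,\|f\varphi^{\omega}\|=\|a\|\,|f|_1$ by Lemma~\ref{lem:1-norm}(1). You use the same centralizer property directly, via positivity of $|f|^{1/2}\varphi^{\omega}|f|^{1/2}$.

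The second inequality, however, is not proved. Everything rests on a $\varphi^{\omega}$-preserving normal expectation onto $L^{\infty}(X)\vee N$, and you neither construct it nor could you from the stated hypotheses. That is the only expectation you need: on the right-hand side, $|F(x)|_{\varphi^{\omega}|_N}\le|F(x)|_{\varphi^{\omega}}$ is just the restriction inequality. The simple-tensor fallback fails for the reason you identify, and ``$N$ is typically inside $M$ or $(M^K)^{\omega}$'' is not a hypothesis of the lemma. The paper closes this step by saying one may assume $N$ is globally $\sigma^{\varphi^{\omega}}$-invariant. Then $L^{\infty}(X)\vee N$ is invariant because $\sigma^{\varphi^{\omega}}$ fixes $L^{\infty}(X)$ pointwise, Takesaki's theorem gives the expectation, and Lemma~\ref{lem:1-norm}(1),(2) finish exactly as in your last display.

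That reduction is not harmless. The $\sigma^{\varphi^{\omega}}$-invariant hull of $N$ need not stay orthogonal to $L^{\infty}(X)$, and without invariance the inequality can actually fail. Here is a counterexample.
\begin{itemize}
\item Let $M=\mathbb{M}_2\otimes R$ and $\varphi=\mathrm{Tr}(h\,\cdot\,)\otimes\tau$ with $h=\mathrm{diag}(0.9,0.1)$.
\item Let $p$ be the projection onto $\mathbb{C}(1,1)$, $q=p\otimes1$, and $q'=1\otimes r$ with $r\in R$ a projection of trace $\frac12$.
\item Let $e\in M_{\omega}$ be a projection with $\tau_{\omega}(e)=\frac12$, so $X=\{1,2\}$.
\end{itemize}
Since $\varphi^{\omega}(xy)=\tau_{\omega}(x)\varphi(y)$ for $x\in M_{\omega}$, $y\in M$, and $\varphi(q)=\varphi(q')=\frac12$, the projection $P=eq+(1-e)q'$ commutes with $e$. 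Hence $N=\mathbb{C}P+\mathbb{C}(1-P)$ is $\varphi^{\omega}$-orthogonal to $\{e\}''$.

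Now put $F=eP+c(1-e)=eq+c(1-e)$.
\begin{itemize}
\item Testing against $ey+(1-e)$ with $y\in(M)_1$ gives $|F|_{\varphi^{\omega}}\ge\frac12\|q\varphi\|+\frac c2$.
\item Compute $|eq|_{\varphi^{\omega}}$ in the $\sigma^{\varphi^{\omega}}$-invariant algebra $M\vee M_{\omega}\cong M\wtensor M_{\omega}$, and use that $(1-e)q'$ lies in the centralizer. This gives $|F(1)|_{\varphi^{\omega}}=|P|_{\varphi^{\omega}}\le\frac12\|q\varphi\|+\frac14$.
\item Clearly $|F(2)|_{\varphi^{\omega}}=c$.
\end{itemize}
Since $\|q\varphi\|=\|ph\|_1=\sqrt{0.41}>\frac12$, any $c\in(\frac12,\sqrt{0.41})$ breaks the inequality. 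All elements involved are self-adjoint, so the $\sharp$-versions agree.

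So the missing ingredient is a hypothesis, not an argument. You should assume $N$ is globally $\sigma^{\varphi^{\omega}}$-invariant, or directly that $L^{\infty}(X)\vee N$ is the range of a $\varphi^{\omega}$-preserving expectation. In the applications this can presumably be arranged by making the algebra modular-invariant before producing the orthogonal tower by fast reindexation. With that hypothesis, your plan is complete.
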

\begin{proof}
    Since the modular flow $\sigma^{\varphi^{\omega}}$ leaves $L^{\infty}(X)$ globally invariant, there exists a $\varphi^{\omega}$-preserving normal conditional expectation $\mathbb{E}\colon M^{\omega}\to L^{\infty}(X)$.
    Lemma \ref{lem:1-norm}(1) gives that $|af|_{\varphi^{\omega}}=\|af\varphi^{\omega}\|\le\|a\||f|_{\varphi^{\omega}}=\|a\||f|_1$.
    Since $a$ and $f$ commute, it follows that $|af|_{\varphi^{\omega}}^{\sharp}\le\|a\||f|_1$.

    To show the second assertion, we may assume that $N$ is globally $\sigma^{\varphi^{\omega}}$-invariant.
    Lemma \ref{lem:1-norm}(2) implies
    \[|F|_{\varphi^{\omega}}^{\sharp}=\int_X|F(x)|_{\varphi^{\omega}}^{\sharp}d\mu(x)\le\esssup_{x\in X}|F(x)|_{\varphi^{\omega}}^{\sharp}\]
    for every $F\in L^{\infty}(X)\wtensor N$.
\end{proof}

The following two techniques due to Ocneanu, are standard tools in the analysis of ultraproducts.
We will not use the slow reindexation trick (\cite[Lemma 5.4]{Oc85}).

\begin{lem}[Fast reindexation trick, {cf.\ \cite[Lemma 5.3]{Oc85}}]
    Let $M$ be a factor, let $\alpha\colon G\curvearrowright M$ be an action, and let $N_0\subset M^{\omega}_{\alpha}$ and $N_1\subset M_{\omega,\alpha}$ be countably generated von Neumann subalgebras.
    Suppose that $N_0$ and $N_1$ are globally $\alpha$-invariant.
    Then there exists a normal embedding $\Theta\colon N_0\wtensor N_1\to M^{\omega}_{\alpha}$ such that
    \begin{enumerate}
        \item $\Theta(x\otimes 1)=x$ for all $x\in N_0$;
        \item $\varphi^{\omega}\circ\Theta=\varphi^{\omega}\otimes\tau_{\omega}$ on $N_0\otimes N_1$ for all $\varphi\in M_{\ast}$;
        \item $\alpha_g\circ\Theta=\Theta\circ(\alpha\otimes\alpha)_g$ on $N_0\otimes N_1$ for all $g\in G$.
    \end{enumerate}
\end{lem}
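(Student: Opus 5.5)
We prove the fast reindexation trick by a diagonal (reindexing) argument in the style of Ocneanu's proof of \cite[Lemma 5.3]{Oc85}, adding equicontinuity to the requirements. First fix countable generating data. Choose norm-separable, globally $\alpha$-invariant \C-subalgebras (in fact countable $*$-subalgebras over $\mathbb{Q}+i\mathbb{Q}$) $A_0\subset N_0$ and $A_1\subset N_1$ that are weak$^*$-dense. Choose representing sequences $(x_n)_n\in\mathscr{E}_{\alpha,\omega}(M)$ for $x\in A_0$, and $(y_n)_n\in\mathscr{E}_{\alpha,\omega}(M)\cap\mathscr{C}_{\omega}(M)$ for $y\in A_1$. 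Fix a dense sequence $(\varphi_j)_j$ in $M_{\ast}$ and a dense sequence $(g_j)_j$ in $G$. Invariance is only needed on a countable dense set and then extends by continuity, since the induced actions on $M^{\omega}_{\alpha}$ are continuous.

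We then choose a reindexing map $n\mapsto m(n)$, growing fast along $\omega$. For each $n$, take a finite set $F_n$ of elements of $A_0$ and $A_1$, of functionals $\varphi_j$, and of group elements $g_j$, increasing with union everything. Pick $m(n)$ such that, with tolerance $1/n$, the following hold.
\begin{enumerate}
\item Centrality: $\|[y_{m(n)},\varphi]\|$ is small for all $\varphi$ in the finite collection $\{\varphi_j,\ x_n\varphi_j,\ \varphi_j x_n\}$.
\item Factorization: $\varphi_j(x_n y_{m(n)})$ is close to $\varphi_j(x_n)\tau_{\omega}(y)$. Here we use that for fixed $n$, $\lim_{m\to\omega} \psi(y_m)=\tau_\omega(y)\psi(1)$ for $\psi=\varphi_j x_n$, because $y$ is central and $\mathbb{E}_\omega$ restricts to the scalar trace on $M_{\omega}$ (as $M$ is a factor).
\item Algebra relations: products, sums and adjoints of the $y_{m}$ are close, in $\|\cdot\|^{\sharp}_{\varphi_j}$, to the representatives of the corresponding elements of $A_1$.
\item Covariance: $\|\alpha_{g}(y_{m(n)})-(\alpha_g(y))_{m(n)}\|^{\sharp}_{\varphi_j}$ is small.
\item Equicontinuity: the moduli of continuity of $g\mapsto\alpha_g(y_{m(n)})$ near $1_G$ are controlled at the scale prescribed at stage $n$.
\end{enumerate}
All of these hold on a set of $m$ lying in $\omega$, so such $m(n)$ exist. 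Define $\Theta(x\otimes y)=(x_n y_{m(n)})^{\omega}$.

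To verify the construction, we check the following in order. The sequence $(y_{m(n)})_n$ lies in $\mathscr{C}_\omega(M)\cap\mathscr{E}_{\alpha,\omega}(M)$. The product $(x_n y_{m(n)})_n$ lies in $\mathscr{N}_\omega$, which follows from centrality, and it lies in $\mathscr{E}_{\alpha,\omega}$ as a product of equicontinuous bounded sequences. The map is multiplicative on $A_0\odot A_1$ and commutes with the image of $N_0$. The state formula (2) follows from the factorization choice, using density of $(\varphi_j)$ and the isometric bimodule structure. Covariance (3) holds on $g_j$ and then on all of $G$ by continuity. Condition (1) holds by taking $y=1$ with $y_m=1$.

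Extending $\Theta$ to the von Neumann tensor product is normal because $\varphi^{\omega}\circ\Theta=\varphi^\omega\otimes\tau_\omega$ for a faithful normal state $\varphi$. Indeed, the map is then isometric for the GNS norms, so it extends to a normal $*$-homomorphism on $N_0\wtensor N_1$, and this extension is injective since the state is faithful.

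The main obstacle is the equicontinuity bookkeeping. The uniform-in-$g$ estimates must survive the diagonal choice, so the neighborhood $U$ of $1_G$ cannot depend on $n$ in an uncontrolled way. The plan here is to fix, for each $k$, a neighborhood $U_k$ for $\delta=1/k$ that works on an $\omega$-large set for $y$. When choosing $m(n)$, we additionally require $m(n)$ to lie in the $\omega$-large sets for $k\le n$ and all $y$ in $F_n$. This makes the reindexed sequence $(\alpha,\omega)$-equicontinuous.
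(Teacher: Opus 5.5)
Your proposal is correct. The paper gives no proof of this lemma and only refers to Ocneanu's diagonal reindexation \cite[Lemma 5.3]{Oc85}; your argument is that same argument, and you secure the extra $(\alpha,\omega)$-equicontinuity of $(y_{m(n)})_n$ correctly by forcing $m(n)$ into the $\omega$-large sets attached to $k\le n$ and the finitely many $y\in F_n$. One wording slip: a countable (or norm-separable) subalgebra can in general only be made invariant under a countable dense subgroup of $G$, not under all of $G$, but this is exactly what your continuity remark uses.
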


\begin{lem}[Index selection trick, {cf.\ \cite[Lemma 5.5]{Oc85}}]
    Let $M$ be a von Neumann algebra, $N$ a countably generated von Neumann subalgebra of $M^{\omega}$, and $P$ a countably generated subalgebra of $N^{\omega}$.
    Then there exists a normal embedding $\Theta\colon P\to M^{\omega}$ such that
    \begin{enumerate}
        \item $\Theta(x)=x$ for all $x\in N\cap P$;
        \item $\varphi^{\omega}\circ\Theta=(\varphi^{\omega}|_N)^{\omega}$ on $P$ for all $\varphi\in M_{\ast}$;
        \item $\Theta((x_n)^{\omega})\in M_{\omega}$ for $(x_n)^{\omega}\in P$ if $x_n\in M_{\omega}$ for all $n$;
        \item $\theta^{\omega}\circ\Theta=\Theta\circ(\theta^{\omega}|_N)^{\omega}$ on $P$ if $\theta\in\Aut(M)$ leaves $N$ and $P$ globally invariant.
    \end{enumerate}
    Suppose moreover that $(\alpha,u)\colon G\curvearrowright M$ is a cocycle action and that $N\subset M^{\omega}_{\alpha}$ is globally invariant under $\alpha$.
    If $P\subset N^{\omega}_{\alpha}$, then $\Theta$ provides the embedding $P\hookrightarrow M^{\omega}_{\alpha}$.
\end{lem}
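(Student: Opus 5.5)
We prove the index selection trick with Ocneanu's diagonal argument. The only new part is the final assertion about equicontinuity.

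First fix representatives. Take countable generating sets. Choose a countable $\ast$-subalgebra $P_0$ of $P$, strongly dense and globally invariant under the countably many relevant automorphisms (a countable dense subset of $\alpha(G)$ and the given $\theta$'s). Each $x\in P_0$ is written $x=(x_k)^{\omega}$ with $x_k\in N$. Each $x_k$ is in turn written $x_k=(x_{k,n})^{\omega}$ with $x_{k,n}\in M$, and we take $x_{k,n}\in M_{\omega}$-representatives when $x_k\in M_{\omega}$. Also fix a countable dense family $\{\varphi_j\}$ in $M_\ast$.

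For each finite stage $m$ we have finitely many data:
\begin{itemize}
\item elements $x^{(1)},\dots,x^{(m)}\in P_0$ together with their products, adjoints and images under the automorphisms;
\item functionals $\varphi_1,\dots,\varphi_m$;
\item a tolerance $1/m$.
\end{itemize}
Choose a set $A_m\in\omega$ of indices $k$ on which all the finitely many numerical quantities are within $1/m$ of their $\omega$-limits over $k$. These quantities are $\varphi_j^{\omega}$ of products, the $\sharp$-norms of relations, and commutator norms $\|[x_k,\varphi_j]\|$-type data. For each $k\in A_m$, choose $B_{m,k}\in\omega$ of indices $n$ where the corresponding quantities for $x_{k,n}$ approximate those of $x_k$ within $1/m$. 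Then pick $k(n)$ and stages $m(n)\to\infty$ by the standard diagonal choice: for $n\in B_{m,k}$ with $k\in A_m$, we use $k=k(n)$. Define $\Theta(x)=(x_{k(n),n})^{\omega}$ on $P_0$.

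Next check the properties. The diagonal choice makes all $\ast$-algebraic relations hold modulo $\mathscr{T}_\omega(M)$. It also gives $\varphi^\omega\circ\Theta=(\varphi^\omega|_N)^\omega$, which is item (2). By (2) the $\sharp$-seminorms are preserved, so $\Theta$ extends to a normal embedding of $P$. Membership in $\mathscr{N}_\omega(M)$ follows from the same estimates, since normalization of $\mathscr{T}_\omega$ is tested through $\|x\varphi_j\|$ and $\|\varphi_j x\|$ for $x$ and $x^\ast$.
\begin{itemize}
\item Item (1): for $x\in N\cap P$, use the constant representative $x_k=x$ with fixed $x_{k,n}=x_n$.
\item Item (3): include $\|[x_{k,n},\varphi_j]\|$ among the controlled quantities.
\item Item (4): include $\|\theta(x_{k,n})-(\theta x)_{k,n}\|^\sharp_{\varphi_j}$.
\end{itemize}

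The main obstacle is the equicontinuity assertion. Given $x\in P\subset N^\omega_\alpha$ and $\delta>0$, there is a neighborhood $U$ of $1$ such that, for $\omega$-many $k$, $\sup_{g\in U}\|x_k-\alpha_g(x_k)\|^\sharp_{\varphi^\omega}<\delta$. Each $x_k\in M^\omega_\alpha$, so there is a neighborhood $U_k$ such that, for $\omega$-many $n$, $\|x_{k,n}-\alpha_g(x_{k,n})\|^\sharp_\varphi<\delta$ on $U_k$. The difficulty is that $U_k$ depends on $k$.

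To get uniformity, we use that $\alpha$ induces a continuous action on $N$; this holds because $N\subset M^\omega_\alpha$ is countably generated. Combined with the lemma (Arzel\`a--Ascoli type), the uniform-on-$U$ estimate for $x_k$ together with compactness of $\overline{U}$ passes to its representatives. We pick a countable dense set $\{g_i\}\subset U$ and control $\|x_{k,n}-\alpha_{g_i}(x_{k,n})\|^\sharp_\varphi$ for $i\le m$ in the diagonal step. Then $\omega$-equicontinuity of $(\alpha_g(x_{k,n}))_n$ near each $g_i$, applied at each fixed $k$, is built into the stage-$m$ choice of $B_{m,k}$. Concretely, we shrink to neighborhoods $U_{k,m}$ of the $g_i$ that cover $\overline U$ and require the estimate on them. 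This yields $\sup_{g\in U}\|\Theta(x)_n-\alpha_g(\Theta(x)_n)\|^\sharp_\varphi<3\delta$ for $\omega$-many $n$, so $\Theta(x)\in M^\omega_\alpha$. Since $M^\omega_\alpha$ is a von Neumann algebra, this extends from $P_0$ to all of $P$.
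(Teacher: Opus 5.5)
The paper gives no proof of this lemma (it is quoted from Ocneanu), so I assess your argument on its own. The overall design is the standard one and is fine: double representatives $x_{k,n}$, a diagonal choice $k(n)$ controlling countably many quantities, and, for the last assertion, the Arzel\`a--Ascoli lemma on a compact neighbourhood of $1_G$, which makes the $\alpha$-modulus of continuity uniform in $k$. That last step is correct for genuine actions.

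\textbf{The gap.} Before $\Theta$, item (2) or the extension to $P$ make sense, the diagonal sequences $(x_{k(n),n})_n$ must lie in $\mathscr{N}_{\omega}(M)$. Your justification is that normalizing $\mathscr{T}_{\omega}(M)$ is tested through $\|x\varphi_j\|$ and $\|\varphi_jx\|$. This is false.
\begin{itemize}
\item For every unitary $u$ these numbers all equal $\|\varphi_j\|$.
\item Yet the transpositions $u_n=e_{1n}+e_{n1}+\sum_{i\neq 1,n}e_{ii}$ in $\mathbb{B}$ (where $e_{ij}$ are matrix units) do not normalize $\mathscr{T}_{\omega}(\mathbb{B})$. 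Indeed $(e_{nn})_n\in\mathscr{T}_{\omega}(\mathbb{B})$, but $u_ne_{nn}=e_{1n}$ does not tend to $0$ $\ast$-strongly.
\end{itemize}

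Your other tests do not rule this out either. Take $M=\mathbb{B}\wtensor R$, $N=M$, and $P=\{x\}''$, where $x=(1-e_{11})\otimes 1+e_{11}\otimes s$ with $s\in R_{\omega}$ a trace-zero symmetry. Then $x$ is a symmetry in $M^{\omega}$ with $\mathbb{E}_{\omega}(x)=(1-e_{11})\otimes 1$.
\begin{itemize}
\item Replacing a representative $(x_k)_k$ of $x$ by $x_k=u_k\otimes 1$ for $k$ outside some set in $\omega$ does not change $x$.
\item These indices satisfy $x_k=x_k^{\ast}$ and $x_k^2=1$ exactly, and $\varphi_j(w(x_k))\to\varphi_j(\mathbb{E}_{\omega}(w(x)))$ for every polynomial $w$.
\item So nothing in your choice of $A_m$ stops the diagonal from using only these indices. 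Then $\Theta(x)$ would be represented by $(u_{k(n)}\otimes 1)_n\notin\mathscr{N}_{\omega}(M)$.
\end{itemize}

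\textbf{How to repair it.} Membership in $\mathscr{N}_{\omega}(M)$ is a uniformity condition of the same kind as the equicontinuity you handle at the end, and it can be fixed the same way. By Ando--Haagerup \cite{AH14}, a bounded sequence lies in $\mathscr{N}_{\omega}(M)$ if and only if it is $(\sigma^{\varphi},\omega)$-equicontinuous, and $\sigma^{\varphi^{\omega}}_t=(\sigma^{\varphi}_t)^{\omega}$. So:
\begin{itemize}
\item build $\sup_{|t|\le a_j}\|x_k-\sigma^{\varphi^{\omega}}_t(x_k)\|_{\varphi^{\omega}}^{\sharp}$ into $A_m$;
\item build $\sup_{|t|\le a_j}\|x_{k,n}-\sigma^{\varphi}_t(x_{k,n})\|_{\varphi}^{\sharp}$ into $B_{m,k}$, using Arzel\`a--Ascoli on $[-a_j,a_j]$.
\end{itemize}
The first control needs $(x_k)_k\in\mathscr{N}_{\omega}(N)$ to yield $(\sigma^{\varphi^{\omega}},\omega)$-equicontinuity. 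This holds when $N$ is globally $\sigma^{\varphi^{\omega}}$-invariant, since then $\sigma^{\varphi^{\omega}|_N}=\sigma^{\varphi^{\omega}}|_N$; for example when $N\subset M_{\omega}$, which $\sigma^{\varphi^{\omega}}$ fixes pointwise. The issue does not arise when $M$ is finite, nor in item (3), since $\mathscr{C}_{\omega}(M)\subset\mathscr{N}_{\omega}(M)$.

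\textbf{Two smaller points.}
\begin{itemize}
\item Your $\Theta$ satisfies (4) only for a countable family of automorphisms fixed in advance, not for every $\theta$ leaving $N$ and $P$ invariant as the statement literally says. This is harmless where the paper uses the lemma, which needs $\alpha_g$ only on a dense set plus continuity.
\item The neighbourhood-of-$1_G$ form of equicontinuity you use is valid only for genuine actions. For a cocycle action, the Lusin-type compact sets attached to the sequences $(x_{k,n})_n$ depend on $k$. They must first be intersected, with summable measure defects, before the Arzel\`a--Ascoli step.
\end{itemize}
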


\subsection{Inclusions of von Neumann algebras}

All inclusions considered in this paper are unital.
For an inclusion of von Neumann algebras $(N\subset M)$, the \emph{normalizer} $\N_M(N)$ is the closed subgroup of the unitary group $\U(M)$ defined by
\[\N_M(N)=\{u\in\U(M):uNu^{\ast}=N\}.\]
Then $\U(N)$ is the normal subgroup of $\N_M(N)$.

\begin{defn}
    Let $(N\subset M)$ be an inclusion of von Neumann algebras.
    \begin{enumerate}
        \item The inclusion $(N\subset M)$ is called \emph{regular} if $\N_M(N)''=M$.
        \item The inclusion $(N\subset M)$ is called \emph{irreducible} if $N'\cap M=\mathbb{C}1$.
        \item The inclusion $(N\subset M)$ is said to \emph{have expectation} if there is a faithful normal conditional expectation from $M$ onto $N$.
    \end{enumerate}
\end{defn}

\begin{exam}
    Let $G\curvearrowright M$ be a cocycle action.
    Then the inclusion $(M\subset M\rtimes G)$ is always regular, and it is irreducible if and only if $M$ is a factor and $G\curvearrowright M$ is strictly outer.
\end{exam}

The \emph{relative automorphism group} of an inclusion $(N\subset M)$ is the closed subgroup of $\Aut(M)$ defined by
\[\Aut(N\subset M)=\{\theta\in\Aut(M):\theta(N)=N\}\]
and its \emph{relative inner automorphism group} is
\[\Int(N\subset M)=\{\Ad{u}\in\Aut(M):u\in\U(N)\}.\]
Note that $\Int(N\subset M)\neq\Int(M)\cap\Aut(N\subset M)=\{\Ad{u}:u\in\N_M(N)\}$ in general.
For instance, if $G\curvearrowright M$ is outer, then $\Ad{\lambda_g}\in\Int(M)\cap\Aut(N\subset M)$ while $\Ad{\lambda_g}\notin\Int(N\subset M)$.

Two inclusions $(N_1\subset M_1)$ and $(N_2\subset M_2)$ are said to be \emph{isomorphic} if there exists an isomorphism $\theta\colon M_1\cong M_2$ such that $\theta(N_1)=N_2$.
Recall that cocycle conjugacy of two cocycle actions $(\alpha,u)$ and $(\alpha',u')$ always induces an isomorphism between their crossed product inclusions $(M\subset M\rtimes_{\alpha,u}G)\cong(M\subset M\rtimes_{\alpha',u'}G)$.

\begin{defn}[cf.\ \cite{BB18}]
    A strictly outer action $G\curvearrowright M$ of a locally compact group on a factor $M$ is said to satisfy the \emph{Intermediate Subfactor Property} (in short, \emph{ISP}) if any subfactor of $M\rtimes G$, containing $M$ is of the form $M\rtimes H$ for some closed subgroup $H<G$.
\end{defn}

When the ISP holds, the subgroup $H$ is unique and is recovered from $P$ by $H=\{g\in G:\lambda_g\in P\}$.
Thus $H\mapsto M\rtimes H$ gives a bijection from the closed subgroups of $G$ onto the intermediate subfactors of $M\subset M\rtimes G$.

\begin{thm}\label{thm:ISP}
    Every strictly outer action of a totally disconnected locally compact group on a factor satisfies the ISP.
\end{thm}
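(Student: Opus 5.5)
This is essentially Boutonnet--Brothier's Galois correspondence, and the plan is to reduce to it. By Lemma \ref{lem:outerness}, a strictly outer action $\alpha\colon G\curvearrowright M$ of a totally disconnected group is outer. It is also properly outer relative to every compact open subgroup $K$, and $\alpha|_K$ is minimal. These are exactly the outerness hypotheses under which \cite{BB18} proves the intermediate subfactor property for totally disconnected groups. The statement then follows by checking that the hypotheses match, for every choice of compact open subgroup. Such subgroups exist, and form a neighborhood basis of $1_G$, by van Dantzig's theorem.

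If I had to reproduce the argument rather than cite it, I would proceed as follows. Let $M\subset P\subset M\rtimes G$ be an intermediate subfactor, and set $H=\{g\in G:\lambda_g\in P\}$. This is a closed subgroup, because $g\mapsto\lambda_g$ is strongly continuous and $P$ is weakly closed. Clearly $M\rtimes H\subset P$.

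For the reverse inclusion, take $x\in P$ and consider its Fourier-type coefficients. For each $g\in G$, let $x_g=\mathbb{E}_K(\lambda_g^{\ast}x)\in M\rtimes K$, as in the proof of Lemma \ref{lem:outerness}. Using the support calculus of \cite[Section 3]{BB18}, I want to show that the support of $x$ is contained in $H$. Then \cite[Theorem 3.7]{BB18} gives $x\in M\rtimes H$.

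The key step is the following. If $g$ lies in the support of some $x\in P$, I need to produce $\lambda_{g'}\in P$ with $g'$ in a small neighborhood $gK'$, for arbitrarily small compact open $K'$; closedness of $H$ then gives $g\in H$. To do this, I would compress by the projections $p_{K'}$. By the well-known lemma, $p_{K'}(M\rtimes K')p_{K'}=M^{K'}p_{K'}$, and $M^{K'}$ is a subfactor with trivial relative commutant, since $\alpha|_{K'}$ is minimal. Averaging $x$ over unitaries of $M^{K'}$ then isolates the part of $x$ supported on the double coset $K'gK'$. The resulting element intertwines $M^{K'}$ with its $\alpha_g$-image.

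Proper outerness relative to $K'$, through the argument in the proof of Lemma \ref{lem:outerness} (\cite[Remark 3.17]{ILP98}), forces such intertwiners to be of the form $b\lambda_{g'}$ with $g'\in K'gK'$ and $b\in M$. Since $M\subset P$ and $b$ can be inverted once we know it is unitary up to scaling, this gives $\lambda_{g'}\in P$.

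I expect the main obstacle to be this averaging step. One needs a conditional expectation onto the relevant relative commutant inside $P$, and a nonzero output. Nonvanishing should follow from the support being nonempty near $g$, together with faithfulness of $\mathbb{E}_K$. In practice, I would simply invoke \cite{BB18} once the hypotheses have been verified via Lemma \ref{lem:outerness}.
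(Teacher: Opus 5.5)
Your reduction matches the paper's. Lemma~\ref{lem:outerness} turns strict outerness into two properties: proper outerness relative to a compact open subgroup $K$ (which exists by van Dantzig's theorem), and minimality of $\alpha|_K$. One then quotes a Galois correspondence theorem.

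The gap is your claim that these are exactly the hypotheses of \cite{BB18}. \cite[Theorem A]{BB18} covers only \emph{semifinite} factors $M$, while the statement is about an arbitrary factor. For type III $M$ your citation proves nothing, and the extension needed a separate argument. For instance, you cannot simply pass to continuous cores: an intermediate subfactor $P$ is not known a priori to be invariant under the modular flow of a dual weight. That invariance is a consequence of the ISP, not an input. The paper handles the general case by invoking \cite[Theorem F]{Is25}. Without it, your argument establishes the statement only for semifinite $M$.

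Your fallback sketch does not supply the missing case, and two of its steps fail as written.
\begin{itemize}
\item An element $y\in M\rtimes G$ with $ya=\alpha_g(a)y$ for all $a\in M^{K'}$ need not be of the form $b\lambda_{g'}$. Since $\mathrm{L}(K')\subset(M^{K'})'\cap(M\rtimes G)$, the element $\lambda_g p_{K'}$ is such an intertwiner, but it is not of that form. The argument in Lemma~\ref{lem:outerness} based on \cite[Remark 3.17]{ILP98} only classifies intertwiners lying in $M$.
\item Compressing by $p_{K'}$ destroys membership in $P$, because $p_{K'}$ need not lie in $P$ (take $P=M$ with $G$ nondiscrete). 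So nothing about $\lambda_{g'}\in P$ can be read off from a compressed element.
\end{itemize}
You also leave open the nonvanishing of the average. Together, these mean the sketch does not stand on its own even for semifinite $M$. The fix is to cite \cite[Theorem A]{BB18} for semifinite $M$ and \cite[Theorem F]{Is25} for general factors.
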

\begin{proof}
    Let $\alpha$ be a strictly outer action of a totally disconnected group $G$ on a factor.
    By Lemma \ref{lem:outerness}, the action $\alpha$ is properly outer relative to a compact open subgroup $K<G$ and $\alpha|_K$ is minimal.
    When $M$ is semifinite, this was proven in \cite[Theorem A]{BB18}, and the general case is due to \cite[Theorem F]{Is25}.
\end{proof}

\section{Centrally free actions}\label{sec:2}

\begin{defn}
    A cocycle action $(\alpha,u)\colon G\curvearrowright M$ of a locally compact group $G$ on a factor $M$ is called \emph{centrally free} if the induced action $G\curvearrowright M_{\omega,\alpha}$ is faithful.
\end{defn}

The fast reindexation trick shows that central freeness is equivalent to that $G\curvearrowright (N_0'\cap M_{\omega,\alpha})$ is properly outer for every globally $G$-invariant countably generated subalgebra $N_0\subset M^{\omega}_{\alpha}$.

\subsection{Outerness conditions}

We will compare several outerness conditions with central freeness, and examine their behavior on central sequence algebras.

\begin{lem}[cf. {\cite[Proposition 3.7]{Ni26}}]\label{lem:cpt-Rohlin}
    Let $\gamma\colon K\curvearrowright M$ be an action of a compact group on $K$ a factor.
    Consider the following conditions:
    \begin{enumerate}
        \item The action $\gamma$ has the Rohlin property, that is, there exists a equivariant embedding $L^{\infty}(K)\hookrightarrow M_{\omega,\gamma}$;
        \item The action $\gamma$ is minimal;
        \item The action $\gamma$ is strictly outer.
    \end{enumerate}
    Then \textup{(1)}$\implies$\textup{(2)}$\iff$\textup{(3)}.
    If $M=R$, then all these conditions are equivalent.
\end{lem}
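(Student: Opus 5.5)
We prove the three implications separately and then treat the case $M=R$. The equivalence (2)$\iff$(3) for compact group actions on factors is standard. (2)$\implies$(3) is \cite[Proposition 6.2]{Va01}, already used in the proof of Lemma \ref{lem:outerness}. For (3)$\implies$(2), we use the projection $p_K\in M\rtimes K$ and the identification $M^Kp_K=p_K(M\rtimes K)p_K$ from the lemma quoted after the definition of $p_K$. For $x\in(M^K)'\cap M$, the element $xp_K$ commutes with $p_K(M\rtimes K)p_K$, because $x$ commutes with $M^K$ and with $p_K$ (as $p_K$ commutes with $M^K$ only, we need care here). A cleaner route is via the Jones basic construction: $M\rtimes K$ is the basic construction of $M^K\subset M$, so $(M^K)'\cap M\cong$ a corner of $M'\cap(M\rtimes K)$ up to the standard duality $M^{K\prime}\cap M \hookrightarrow M'\cap (M\rtimes K)$ on the commutant side. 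Hence triviality of $M'\cap(M\rtimes K)$ forces $(M^K)'\cap M=\mathbb{C}1$. Faithfulness follows since a $k\neq1$ acting trivially would give $\lambda_k\in M'\cap(M\rtimes K)$, a nonscalar element.

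For (1)$\implies$(2), let $\iota\colon L^\infty(K)\hookrightarrow M_{\omega,\gamma}$ be the equivariant embedding, with $K$ acting by left translation. Faithfulness is immediate, since the translation action on $L^\infty(K)$ is faithful and $M_{\omega,\gamma}\subset M^\omega$ is acted on by $\gamma^\omega$. For minimality, take $x\in(M^K)'\cap M$ with $\|x\|\le1$. We transport elements of $M$ to $M^K$ approximately: for $y\in M$, consider
\[E(y)=\int_K\gamma_k(y\,\iota(\chi))\,dk\in (M^\omega_\gamma)^K=(M^K)^\omega,\]
using the lemma identifying $(M^\omega_\gamma)^K$ with $(M^K)^\omega$. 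Here $\chi$ is an approximate delta function near $1_K$, or more precisely we use the standard Rohlin averaging with a unitary of the form $\int \bar{\pi}(k)\otimes\iota(\delta_k)$. Then $x$ commutes with $(M^K)^\omega$, since it commutes with $M^K$ and lies in $M$, and central sequences of $M^K$ commute with $M$ in the limit because $M^K\subset M$ has expectation. From this we deduce that $x$ commutes with $y$. The clean version: $M\vee \iota(L^\infty(K))$ inside $M^\omega$ has fixed points under $\gamma$ that, together with $\iota(L^\infty(K))$, generate $M\vee\iota(L^\infty(K))$; this is the standard fact that the crossed product $L^\infty(K)\rtimes K$ is type I and the fixed points of $M\wtensor L^\infty(K)$ under diagonal action are isomorphic to $M$. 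So $x\in M$ commutes with $(M\vee\iota(L^\infty(K)))^K$ and with $\iota(L^\infty(K))\subset M_\omega$, hence with $M$, so $x$ is scalar. The main obstacle is making this generation statement precise in $M^\omega$. I would use that $M\vee\iota(L^\infty(K))\cong M\wtensor L^\infty(K)$ (by $\tau_\omega$-orthogonality and the fast reindexation trick), where the map $F\mapsto(k\mapsto\gamma_k^{-1}(F(k)))$ intertwines the diagonal action with $\id\otimes$translation. The fixed points are thus $\{k\mapsto\gamma_k(y)\}$, and these together with $L^\infty(K)$ generate everything.

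For $M=R$, it remains to show (2)$\implies$(1). Here I would cite \cite[Proposition 3.7]{Ni26}, or argue as follows. A minimal action of a compact group on $R$ is centrally free: by Popa/Ocneanu-type results the relative commutant $R'\cap (R^K)^\omega$ is large, and $K$ acts faithfully on $R_{\omega,\gamma}$. Then Proposition \ref{prop:cpt-cent.free->Rohlin}, stated in the introduction as giving the equivariant embedding from central freeness, yields (1). The hard point is central freeness, which I expect to derive from Proposition \ref{prop:str.outer->cent.free} applied with $G=K$ together with the established equivalence (2)$\iff$(3).
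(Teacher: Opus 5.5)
The paper gives no argument for this lemma; it simply imports it from \cite[Proposition 3.7]{Ni26}. Your proofs of (2)$\iff$(3) and (1)$\implies$(2) are sound once you delete the false starts (the $xp_K$ attempt and the $E(y)$/approximate-delta attempt), but two steps need repair.

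First, $M\rtimes K$ is the basic construction $\langle M,e_{M^K}\rangle$ only when $p_K$ has central support $1$ in $M\rtimes K$; this fails, for example, for the trivial action of a finite group. Under (3) it does hold, because $\mathcal{Z}(M\rtimes K)\subset M'\cap(M\rtimes K)=\mathbb{C}1$. Then $M'\cap\langle M,e_{M^K}\rangle=J\bigl((M^K)'\cap M\bigr)J$, which is all of $(M^K)'\cap M$ transported by $J$, not a corner.

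Second, in (1)$\implies$(2) you need no reindexation to get $M\vee\iota(L^\infty(K))\cong M\wtensor L^\infty(K)$. The expectation $\mathbb{E}_\omega$ is $M$-bimodular and restricts to the scalar trace $\tau_\omega$ on $M_\omega$, so the two algebras are $\varphi^\omega$-orthogonal for every $\varphi$. The only fact you use about $(M^K)^\omega$ is the trivial one that $x\in(M^K)'\cap M$ commutes with it entrywise; the remark about central sequences of $M^K$ plays no role.

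The genuine gap is the case $M=R$, (2)$\implies$(1). Citing \cite[Proposition 3.7]{Ni26} is what the paper does, but the argument you sketch instead is circular. Proposition \ref{prop:str.outer->cent.free} with $G=K$ has no element of $G\setminus K$ to handle. Its entire content is then its first sentence, ``we already know that $\alpha|_K$ is minimal and has the Rohlin property, hence $K\curvearrowright R_{\omega,\alpha}$ is faithful'', which is exactly the $M=R$ case of the present lemma.

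The appeal to `Popa/Ocneanu-type results' does not fill this gap. Largeness of $R'\cap(R^K)^\omega=(R_{\omega,\gamma})^K$ concerns \emph{fixed} central sequences. Central freeness needs, for each $k\neq 1_K$, an equicontinuous central sequence that $\gamma_k$ moves.

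What is missing is an independent proof that a minimal $K$-action on $R$ is centrally free; Proposition \ref{prop:cpt-cent.free->Rohlin} then gives (1). One route uses the uniqueness of minimal actions on $R$, which the paper invokes elsewhere. It makes $\gamma$ conjugate to the (again minimal) diagonal action $\gamma^{\otimes\infty}$ on $R^{\wtensor\mathbb{N}}$. For $k\neq1_K$, pick $x\in R$ with $\gamma_k(x)\neq x$ and let $x^{(n)}$ be its copy in the $n$-th tensor slot. Then:
\begin{enumerate}
\item $(x^{(n)})_n$ is central;
\item it is $(\gamma,\omega)$-equicontinuous, since $\|\gamma_l(x^{(n)})-x^{(n)}\|_2=\|\gamma_l(x)-x\|_2$ does not depend on $n$;
\item $\|\gamma_k(x^{(n)})-x^{(n)}\|_2=\|\gamma_k(x)-x\|_2>0$ for every $n$.
\end{enumerate}
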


To derive the Rohlin property from central freeness, we realize a faithful Gaussian $K$-action inside the central sequence algebra.
Recall that the Gaussian construction associated with a real Hilbert space $\mathcal{H}_{\mathbb{R}}$ consists of an abelian von Neumann algebra $(L^{\infty}(\hat{\mathcal{H}}),\tau)$ and a map $W\colon\mathcal{H}_{\mathbb{R}}\to\U(L^{\infty}(\hat{\mathcal{H}}))$ such that $W(\xi+\eta)=W(\xi)W(\eta)$ and $\tau(W(\xi))=\exp(-\frac12\|\xi\|^2)$.
Writing $W(\xi)=\exp(i\hat{\xi})$, the family $(\hat{\xi})_{\xi\in\mathcal{H}_{\mathbb{R}}}$ is the associated Gaussian process.
See \cite[Section II.1]{Bo14}.

\begin{lem}
    Let $x,y$ be self-adjoint elements in a tracial \C-algebra.
    \begin{enumerate}
        \item $\|e^{ix}e^{iy}-e^{i(x+y)}\|_2\le\frac12\|[x,y]\|_2$.
        \item $\|e^{ix}-1\|_2\le\|x\|_2$.
    \end{enumerate}
\end{lem}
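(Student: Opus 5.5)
Both inequalities are elementary estimates in the tracial $2$-norm $\|z\|_2=\tau(z^{\ast}z)^{1/2}$. The tools are functional calculus and a Duhamel-type integral formula. The only care needed is that $x$ and $y$ do not commute, so I will not try to combine exponentials directly. Instead I will differentiate along a path.

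For (2), I would work in the commutative \C-algebra generated by $x$, where $\tau$ restricts to integration against a probability measure $\mu$ on the spectrum of $x$. Then
\[\|e^{ix}-1\|_2^2=\int|e^{it}-1|^2\,d\mu(t).\]
The pointwise bound $|e^{it}-1|=2|\sin(t/2)|\le|t|$ for real $t$ gives
\[\int|e^{it}-1|^2\,d\mu(t)\le\int t^2\,d\mu(t)=\|x\|_2^2.\]

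For (1), I would set $F(s)=e^{isx}e^{isy}e^{-is(x+y)}$ for $s\in[0,1]$. Since $e^{i(x+y)}$ is unitary, right multiplication by it is $\|\cdot\|_2$-isometric, and so
\[\|e^{ix}e^{iy}-e^{i(x+y)}\|_2=\|F(1)-F(0)\|_2\le\int_0^1\|F'(s)\|_2\,ds.\]
Differentiating in norm gives
\[F'(s)=e^{isx}\bigl(ix\,e^{isy}+e^{isy}\,iy-e^{isy}\,i(x+y)\bigr)e^{-is(x+y)}=ie^{isx}\,[x,e^{isy}]\,e^{-is(x+y)}.\]
Unitary invariance of $\|\cdot\|_2$ on both sides (valid by traciality) then yields
\[\|F'(s)\|_2=\|[x,e^{isy}]\|_2.\]

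The key step is the bound $\|[x,e^{isy}]\|_2\le s\|[x,y]\|_2$. I would prove it by a second Duhamel formula,
\[[x,e^{isy}]=\int_0^s\frac{d}{dr}\bigl(e^{iry}xe^{i(s-r)y}\bigr)\,dr\cdot(-1)=i\int_0^s e^{iry}[x,y]e^{i(s-r)y}\,dr,\]
up to sign. Checking this: the derivative of $e^{iry}xe^{i(s-r)y}$ is $e^{iry}(iyx-ixy)e^{i(s-r)y}$, and the integral telescopes from $xe^{isy}$ at $r=0$ to $e^{isy}x$ at $r=s$. Unitary invariance of the $2$-norm gives each integrand norm $\|[x,y]\|_2$, so the whole integral has norm at most $s\|[x,y]\|_2$. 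Integrating over $s\in[0,1]$ then gives
\[\|e^{ix}e^{iy}-e^{i(x+y)}\|_2\le\int_0^1 s\,ds\;\|[x,y]\|_2=\tfrac12\|[x,y]\|_2.\]

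The main obstacle is justifying the differentiation and the Bochner/Riemann integrals of the \C-valued maps. These maps are norm-smooth because $x$ and $y$ are bounded and the exponentials are norm-convergent power series. The triangle inequality for integrals in $\|\cdot\|_2$ follows because $\|\cdot\|_2\le\|\cdot\|$, so norm-convergent Riemann sums also converge in $\|\cdot\|_2$.
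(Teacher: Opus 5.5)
Your proof is correct. For (2) you argue essentially as the paper does: the paper writes $e^{ix}-1=\int_0^1 ixe^{itx}\,dt$ and takes $2$-norms, and your spectral bound $|e^{it}-1|\le|t|$ integrated against the measure of $x$ is the scalar form of the same estimate.

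For (1) the paper gives no argument of its own. It quotes the two-variable case of \cite[Lemma 4]{Ku05}, a product-formula estimate for several exponentials in a unitarily invariant norm, and applies it to $\|\cdot\|_2$. Your two nested Duhamel formulas, for $F(s)=e^{isx}e^{isy}e^{-is(x+y)}$ and for $[x,e^{isy}]=i\int_0^s e^{iry}[x,y]e^{i(s-r)y}\,dr$, prove exactly that special case directly. The signs check out, and so does the constant $\int_0^1 s\,ds=\tfrac12$.

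Your version makes explicit which properties of $\|\cdot\|_2$ are used. The first is the triangle inequality, carried through norm-convergent Riemann integrals via $\|\cdot\|_2\le\|\cdot\|$. The second is invariance under left and right multiplication by unitaries, and this is exactly where traciality enters. The citation buys brevity and the $n$-fold version, which the paper does not need here.
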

\begin{proof}
    Assertion (1) is the case of two variables in \cite[Lemma 4]{Ku05}, applied to the 2-norm.
    For (2), use
    \[e^{ix}-1=\int_0^1ixe^{itx}\,dt.\]
    and take the 2-norm.
\end{proof}

\begin{lem}\label{lem:Gaussian}
    Let $M$ be a factor and let $\mathcal{H}_{\mathbb{R}}\subset (M_{\omega})_{\sa}\ominus\mathbb{R}1\subset L^2(M_{\omega})_{\mathbb{R}}$ be a separable real pre-Hilbert subspace.
    Then there exists a map $W\colon\mathcal{H}_{\mathbb{R}}\to\U(M_{\omega})$ such that
    \begin{enumerate}
        \item $W(\xi+\eta)=W(\xi)W(\eta)$ for all $\xi,\eta\in\mathcal{H}_{\mathbb{R}}$;
        \item $\tau_{\omega}(W(\xi))=\exp(-\frac12\|\xi\|_2^2)$;
        \item If $\theta\in\Aut(M)$ leaves $\mathcal{H}_{\mathbb{R}}$ globally invariant (in other words, $\theta$ induces an orthogonal transformation on $\mathcal{H}_{\mathbb{R}}$), then $\theta(W(\xi))=W(\theta(\xi))$ for all $\xi\in\mathcal{H}_{\mathbb{R}}$.
    \end{enumerate}
\end{lem}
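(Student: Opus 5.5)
We build $W$ by lifting the Gaussian process to representing sequences and making the Weyl relation exact in the limit, using the two elementary estimates of the preceding lemma. Fix a countable $\mathbb{Q}$-linearly independent family $(\xi_j)_j$ whose $\mathbb{Q}$-span $\mathcal{H}_{\mathbb{Q}}$ is dense in $\mathcal{H}_{\mathbb{R}}$. Choose Gram--Schmidt-free coordinates: on the finite-dimensional space $V_m=\mathrm{span}_{\mathbb{R}}\{\xi_1,\dots,\xi_m\}$, take an orthonormal basis $e_1,\dots,e_m$ (with respect to $\tau_\omega$).

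\textbf{Step 1: central limit realization.} We want, inside $M_\omega$, commuting self-adjoint elements $\hat e_1,\dots,\hat e_m$ which are jointly standard Gaussian. Since $M_\omega$ is tracial and $e_i$ are mean-zero self-adjoint elements with the correct covariance, we produce them by a noncommutative central limit procedure. By the fast reindexation trick (taking $N_0$ the algebra generated by $\mathcal{H}_{\mathbb{R}}$ and the previously constructed elements, $N_1$ the same), we obtain arbitrarily many mutually orthogonal, commuting copies $e_i^{(1)},\dots,e_i^{(L)}$ in $M_\omega$; here commutation comes from $N_1\subset M_\omega$ commuting with $N_0\subset M^\omega$ in the reindexed position. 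Then $s_i=L^{-1/2}\sum_\ell e_i^{(\ell)}$ has covariance $\langle e_i,e_j\rangle$. The moments of $(s_i)$ converge to the Gaussian ones, and $[s_i,s_j]$ has $2$-norm $O(L^{-1/2})$, because only diagonal terms fail to commute.

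Passing to $L\to\infty$ via the index selection trick, we obtain exactly commuting self-adjoint elements with Gaussian joint law. Their unboundedness is handled by working with $W(\xi)=e^{i\hat\xi}$ directly: define $W_L(\xi)=\prod_\ell e^{iL^{-1/2}\xi^{(\ell)}}$. The tensor independence and the estimates of the preceding lemma control $\|W_L(\xi)W_L(\eta)-W_L(\xi+\eta)\|_2\lesssim L\cdot\frac12\|[L^{-1/2}\xi,L^{-1/2}\eta]\|_2=O(1/\sqrt L)\cdot$const. Hmm; this is only $O(1)$. Instead, subdivide each copy further, so that the $\ell$-th factor is a single element and the error is $O(L\cdot L^{-1})$ per term. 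Then take $e^{i\sum_\ell}$ and use Lemma (1) iteratively; the clean route is to define $W_L(\xi)=\exp(iL^{-1/2}\sum_\ell\xi^{(\ell)})$. Here the copies $\xi^{(\ell)}$ for different $\ell$ commute, so $W_L(\xi)W_L(\eta)$ differs from $W_L(\xi+\eta)$ by at most $\tfrac12 L^{-1}\|\sum_\ell[\xi^{(\ell)},\eta^{(\ell)}]\|_2=O(L^{-1/2})$. The trace $\tau_\omega(W_L(\xi))=\prod_\ell\tau(e^{iL^{-1/2}\xi})\to e^{-\|\xi\|_2^2/2}$ by the usual characteristic-function expansion, using that $\tau_\omega(\xi)=0$.

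\textbf{Step 2: equivariance and limit.} Because the fast reindexation embedding $\Theta$ intertwines every automorphism leaving the data invariant (property (3)), and index selection respects $\theta^\omega$ (property (4)), we get $\theta(W_L(\xi))=W_L(\theta\xi)$ whenever $\theta$ preserves $\mathcal{H}_{\mathbb{R}}$. Applying the index selection trick to the sequence $(W_L)_L$ over a countable dense set of $\xi$, we get $W$ on $\mathcal{H}_{\mathbb{Q}}$ satisfying (1)--(3) exactly. By Lemma (2), $\|W(\xi)-W(\eta)\|_2^2=2-2e^{-\|\xi-\eta\|^2/2}$, so $W$ is uniformly continuous and extends to $\mathcal{H}_{\mathbb{R}}$.

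The main obstacle is the equivariance (3) for \emph{all} $\theta$ simultaneously, since $\Aut(M)$ is uncountable. I would try to make each construction functorial in the data, so that no choices depend on $\theta$, and rely on the intertwining properties of the two tricks holding for every $\theta$ leaving the countably generated algebras invariant. Some care is needed to ensure that the algebra generated by $\mathcal{H}_{\mathbb{R}}$ is the invariant one used.
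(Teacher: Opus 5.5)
Once the false starts are removed, your construction is the paper's. You realize independent commuting copies $\xi^{(j)}$ of $\mathcal{H}_{\mathbb{R}}$ in $M_{\omega}$ by reindexation, set $S_L(\xi)=L^{-1/2}\sum_{\ell\le L}\xi^{(\ell)}$, and let $W(\xi)$ be the index-selection image of $(\exp(iS_L(\xi)))^{\omega}$. Then (1) follows from the bound $\frac{1}{2\sqrt{L}}\|[\xi,\eta]\|_2$, and (2) from $\tau_{\omega}(\exp(iS_L(\xi)))=\tau_{\omega}(\exp(iL^{-1/2}\xi))^L$.

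Three corrections to the rest of Step 1. First, the Gaussian part should simply be deleted: Gaussian self-adjoint elements cannot lie in $M_{\omega}$, and index selection does not apply to the unbounded sequence $(s_i)_L$. Second, $\prod_\ell e^{iL^{-1/2}\xi^{(\ell)}}$ is literally equal to $\exp(iS_L(\xi))$, because the copies commute. So the $O(1)$ you found came from the triangle inequality, not from the construction. Third, the $O(L^{-1/2})$ bound needs the terms $[\xi^{(\ell)},\eta^{(\ell)}]$ to be pairwise orthogonal in $L^2$. This holds because $\tau_{\omega}([\xi,\eta])=0$ and the copies are $\tau_{\omega}$-independent.

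The genuine gap is (3), which you leave open, and your detour through $\mathcal{H}_{\mathbb{Q}}$ is exactly where it would fail. An automorphism $\theta$ preserving $\mathcal{H}_{\mathbb{R}}$ need not preserve the $\mathbb{Q}$-span $\mathcal{H}_{\mathbb{Q}}$. So (3) on $\mathcal{H}_{\mathbb{Q}}$ does not even make sense for such $\theta$, and extending by continuity only transports equivariance you already have. What is needed is that every algebra fed into the two tricks is invariant under every such $\theta$, with no choice depending on $\theta$. This is the content of the paper's ``clear by construction''.

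Here is how to arrange it.
\begin{enumerate}
\item Take $N$ to be the von Neumann algebra generated by $\mathcal{H}_{\mathbb{R}}$, so that $\theta^{\omega}(N)=N$.
\item Realize $\bigwtensor_j N$ in $M_{\omega}$ by a reindexation commuting with $\theta^{\omega}$ for all $\theta\in\Aut(M)$ preserving $N$. Note that property (3) of the fast reindexation trick, as stated, only concerns $\alpha_g$.
\item Define $W(\xi)$ for \emph{all} $\xi\in\mathcal{H}_{\mathbb{R}}$ at once from $(\exp(iS_n(\xi)))^{\omega}\in(\bigwtensor_j N)^{\omega}$.
\end{enumerate}
The algebra $P$ generated by these elements is countably generated. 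Indeed, $\|e^{ia}-e^{ib}\|_2\le\|a-b\|_2$ (Duhamel's formula) and $\|S_n(\zeta)\|_2=\|\zeta\|_2$ make $\xi\mapsto(\exp(iS_n(\xi)))^{\omega}$ $2$-norm continuous, uniformly in $n$, on the separable space $\mathcal{H}_{\mathbb{R}}$. Moreover, $P$ is $\theta$-invariant, because $\theta$ maps the generator attached to $\xi$ to the one attached to $\theta(\xi)$. Property (4) of the index selection trick then gives $\theta(W(\xi))=W(\theta(\xi))$ for every such $\theta$ simultaneously, and no extension by continuity is needed.
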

\begin{proof}
    Let $N\subset M_{\omega}$ be a countably generated von Neumann subalgebra containing $\mathcal{H}_{\mathbb{R}}$.
    By reindexation, we may realize $\bigwtensor_{j=1}^{\infty}N$ as a von Neumann subalgebra of $M_{\omega}$.
    Hence there are countably many mutually orthogonal copies of $\mathcal{H}_{\mathbb{R}}$, denoted by $\mathcal{H}_{\mathbb{R}}^{(j)}$, $j\in\mathbb{N}$.
    For $\xi\in\mathcal{H}_{\mathbb{R}}$, we write $\xi^{(j)}\in\mathcal{H}_{\mathbb{R}}^{(j)}$ for its copy.
    Set $S_n(\xi)=\frac1{\sqrt{n}}\sum_{j=1}^n\xi^{(j)}$ for $\xi\in\mathcal{H}_{\mathbb{R}}$.
    Applying the index selection trick, we represent $W(\xi)=(\exp(iS_n(\xi)))^{\omega}\in (\bigwtensor N)^{\omega}$ inside of $M_{\omega}$.
    We will show that this map $\xi\mapsto W(\xi)$ satisfies the desired properties.
    
    Since $\xi^{(j)}$ and $\eta^{(k)}$ commute if $j\neq k$, it follows that $[S_n(\xi),S_n(\eta)]=\frac1n\sum_{j=1}^n[\xi^{(j)},\eta^{(j)}]$ and $\|[S_n(\xi),S_n(\eta)]\|_2^2=\frac1{n^2}\sum_{j=1}^n\|[\xi,\eta]\|_2^2=\frac1n\|[\xi,\eta]\|_2^2$.
    The preceding lemma gives
    \[\|e^{iS_n(\xi)}e^{iS_n(\eta)}-e^{iS_n(\xi+\eta)}\|_2\le\frac1{2\sqrt{n}}\|[\xi,\eta]\|_2.\]
    Thus $W$ satisfies item (1).
    Since $\tau_{\omega}(\xi)=0$, we have $\tau_{\omega}(\exp(iS_n(\xi)))=\tau_{\omega}(\exp(\frac1{\sqrt{n}}i\xi))^n\to\exp(-\frac12\|\xi\|_2^2)$.
    This proves (2), while item (3) is clear by construction.
\end{proof}

\begin{prop}\label{prop:cpt-cent.free->Rohlin}
    A centrally free (cocycle) action $\gamma\colon K\curvearrowright M$ of a compact group $K$ on a factor $M$ has the Rohlin property.
    % Moreover, $(N^K)'\cap N=\Z(N)$.
\end{prop}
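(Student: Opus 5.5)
Since $K$ is compact, the plan is to produce a faithful Gaussian $K$-action inside $M_{\omega,\gamma}$ via Lemma \ref{lem:Gaussian}, and then extract an equivariant copy of $L^{\infty}(K)$ from it by an averaging/ultraproduct argument.

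\emph{Step 1: an orthogonal representation.}
Consider $M_{\omega,\gamma}$, which carries the trace $\tau_{\omega}$ and the induced $K$-action; for a cocycle action the induced action is a genuine action, since $\Ad{u_{g,h}}$ is trivial on $M_{\omega}$. The induced action is faithful by central freeness. By equicontinuity, the Koopman representation of $K$ on $L^2(M_{\omega,\gamma})_{\mathbb{R}}\ominus\mathbb{R}1$ is continuous. Faithfulness gives countably many self-adjoint, trace-zero $K$-finite vectors $x_m\in M_{\omega,\gamma}$ whose orbits separate the points of $K$. To get them, take a countable separating family of matrix coefficients and project onto isotypic components using $\int_K\chi(k)\gamma_k(\cdot)\,dk$, which stays in $M_{\omega,\gamma}$. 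Let $\mathcal{H}_{\mathbb{R}}$ be the real span of the orbits $\gamma_K(x_m)$. It is a separable, $K$-invariant pre-Hilbert subspace of $(M_{\omega})_{\sa}\ominus\mathbb{R}1$ on which $K$ acts faithfully and continuously by orthogonal maps.

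\emph{Step 2: the Gaussian action.}
Apply Lemma \ref{lem:Gaussian} to obtain $W\colon\mathcal{H}_{\mathbb{R}}\to\U(M_{\omega})$. Item (3) gives $\gamma_k(W(\xi))=W(\gamma_k\xi)$, and the construction keeps everything equicontinuous, so the $W(\xi)$ lie in $M_{\omega,\gamma}$. Items (1) and (2) identify $A=\{W(\xi)\}''$ with $L^{\infty}(\hat{\mathcal{H}})$, and the action of $K$ on it is the Gaussian action associated with a faithful orthogonal representation. For compact $K$, a faithful Gaussian action is essentially free. Hence $A\cong L^{\infty}(Y,\nu)$, where $K\curvearrowright(Y,\nu)$ is a free probability-measure-preserving action of the compact group $K$. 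I expect freeness to be the main obstacle. I would try to prove it directly: the stabilizer of a point $y$ is the set of $k$ fixing $\hat{\xi}(y)$ for all $\xi$. For $k\neq1$, $\hat{\xi}-\hat{\xi}\circ k$ is a nonzero Gaussian variable and so vanishes only on a null set. Countability of a dense set of $k$, combined with continuity (stabilizers are closed, and one can use a countable base of neighborhoods of points $k_0\ne 1$), then gives almost-sure triviality.

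\emph{Step 3: the equivariant copy of $L^{\infty}(K)$.}
A free measure-preserving action of a compact second-countable group is measurably isomorphic to $K\times Z$ with $K$ acting by left translation on the first factor. This follows from the existence of a Borel transversal, since orbits are closed and the quotient is standard. The isomorphism yields a $K$-equivariant embedding $L^{\infty}(K)\hookrightarrow L^{\infty}(Y)=A\subset M_{\omega,\gamma}$, which is the Rohlin property as stated in Lemma \ref{lem:cpt-Rohlin}(1).

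If freeness in Step 2 turns out to be delicate, my fallback is to take independent copies via the fast reindexation trick. The tensor product of countably many faithful Gaussian actions is the Gaussian action of $\bigoplus\mathcal{H}_{\mathbb{R}}$, for which freeness follows more easily from a $0$–$1$ law.
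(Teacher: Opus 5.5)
\textbf{Gap in Step 2.} Your claim that the Gaussian action of an arbitrary faithful orthogonal representation of a compact group is essentially free is false. Take $K=SO(3)$ and let $\mathcal{H}_{\mathbb{R}}=\mathbb{R}^3$ be the standard representation. The Gaussian space is then $\mathbb{R}^3$ with the standard Gaussian measure and the linear action, and every $y\neq0$ is fixed by the circle of rotations about $\mathbb{R}y$.

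Nothing in your Step 1 rules out $\mathcal{H}_{\mathbb{R}}$ being a single such copy, namely the span of the orbit of one $K$-finite element. In that case Step 3 is impossible: an equivariant embedding $L^{\infty}(K)\hookrightarrow L^{\infty}(Y)$ is an equivariant factor map $Y\to K$, and this forces almost all stabilizers to be trivial.

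Your direct argument correctly shows that $\mathrm{Fix}(k)$ is null for each single $k\neq1$. However, the non-free set is the uncountable union $\bigcup_{k\neq1}\mathrm{Fix}(k)$. Restricting to a countable dense set of $k$ only shows that almost every point is fixed by no nontrivial element of that countable set. That says nothing about the closed stabilizer: in the example, almost every stabilizer circle meets a given countable set only in $1$. Closedness of stabilizers does not bridge this gap.

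\textbf{What is needed.} The paper builds the missing hypothesis in from the start. It chooses $\mathcal{H}_{\mathbb{R}}\subset(M_{\omega,\gamma})_{\sa}\ominus\mathbb{R}1$ faithful \emph{and of infinite multiplicity}, and invokes essential freeness only in that case. Your fallback, taking independent copies via the fast reindexation trick (i.e.\ the Gaussian of $\bigoplus_n\mathcal{H}_{\mathbb{R}}$), is exactly this, so it has to be the main argument rather than a backup. But a $0$--$1$ law alone only tells you that the non-free set has measure $0$ or $1$.

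A direct argument goes as follows.
\begin{enumerate}
\item Decompose $\mathcal{H}_{\mathbb{R}}$ into finite-dimensional invariant subspaces $V_j$. By faithfulness, $\bigcap_j\ker(K\to O(V_j))=\{1\}$.
\item The Gaussian of $V_j^{\oplus\infty}$ is an i.i.d.\ sequence of Gaussian vectors in $V_j$ with the diagonal linear action. Almost surely the first $\dim V_j$ of these vectors form a basis, so almost every point has stabilizer exactly $\ker(K\to O(V_j))$.
\item Each such Gaussian algebra is a $K$-invariant subalgebra of the big one. Hence almost every stabilizer of the big Gaussian action lies in $\bigcap_j\ker(K\to O(V_j))=\{1\}$.
\end{enumerate}
With essential freeness established this way, your Step 3 is the same disintegration step as in the paper.

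Separately, your statement that $W(\xi)\in M_{\omega,\gamma}$ because ``the construction keeps everything equicontinuous'' should be made concrete. Items (1)--(3) of Lemma~\ref{lem:Gaussian} give $\|\gamma_k(W(\xi))-W(\xi)\|_2=\|W(\gamma_k\xi-\xi)-1\|_2\le\|\gamma_k\xi-\xi\|_2$, which supplies the required continuity.
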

\begin{proof}
    We write $N=M_{\omega,\gamma}$.
    Let $\mathcal{H}_{\mathbb{R}}\subset N_{\sa}\ominus\mathbb{R}1\subset L^2(N)_{\mathbb{R}}$ be a separable real pre-Hilbert subspace such that the induced orthogonal representation $K\curvearrowright\mathcal{H}_{\mathbb{R}}$ is faithful and has an infinite multiplicity.
    Lemma \ref{lem:Gaussian} provides a $K$-equivariant Gaussian map $W\colon\mathcal{H}_{\mathbb{R}}\to\U(M_{\omega})$.
    Then $W(\xi)$ belongs to the equicontinuous part $M_{\omega,\gamma}$ because
    \[\|\gamma_k(W(\xi))-W(\xi)\|_2=\|W(\gamma_k(\xi)-\xi)-1\|_2\le\lim\|S_n(\gamma_k(\xi)-\xi)\|_2=\|\gamma_k(\xi)-\xi\|_2.\]
    Thus, there exists a $K$-equivariant embedding $L^{\infty}(\hat{\mathcal{H}})\hookrightarrow N=M_{\omega,\gamma}$.
    
    Since $K\curvearrowright\hat{\mathcal{H}}$ is essentially free, disintegration of the invariant measure along the orbits identifies $\hat{\mathcal{H}}$ with $K\times K\backslash\hat{\mathcal{H}}$ as a $K$-space.
    Consequently, we obtain a $K$-equivariant embedding $L^{\infty}(K)\hookrightarrow L^{\infty}(\hat{\mathcal{H}})\hookrightarrow N$.
\end{proof}
% This proof was established by ChatGPT.

In particular, every centrally free action $\alpha\colon G\curvearrowright M$ is strictly outer.
Indeed, central freeness implies that $\alpha$ is outer and that $\alpha|_K$ is minimal by Lemma \ref{lem:cpt-Rohlin}, so Lemma \ref{lem:outerness} yields the conclusion.

\begin{lem}
    If $\alpha\colon G\curvearrowright M$ is centrally free, then $G\curvearrowright N$ is strictly outer $N'\cap (N\rtimes G)\subset N$, where $N=M_{\omega,\alpha}$.
    In particular, $(p_K(N\rtimes K)p_K)'\cap (p_K(N\rtimes G)p_K)\subset p_K(N\rtimes K)p_K=N^Kp_K$.
\end{lem}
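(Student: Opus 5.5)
The approach is to treat the compact direction $K$ and the discrete direction $G/K$ separately, reducing both to the Rohlin property of Proposition \ref{prop:cpt-cent.free->Rohlin} and the fast reindexation description of central freeness. Write $N=M_{\omega,\alpha}$ and take $x\in N'\cap(N\rtimes G)$. Since $K$ is compact open, we have the expectation $\mathbb{E}_K\colon N\rtimes G\to N\rtimes K$. For $g\in G$, put $x_g=\mathbb{E}_K(\lambda_g^{\ast}x)\in N\rtimes K$. As in the proof of Lemma \ref{lem:outerness}, these coefficients satisfy
\[x_g y=\alpha_g^{-1}(y)x_g\quad\text{for all } y\in N.\]
The proof then has two steps. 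First, show $x_g=0$ for $g\notin K$. Second, show that an element of $N\rtimes K$ commuting with $N$ already lies in $N$. Together these give $x=x_1\in N$.

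For the compact part, apply Proposition \ref{prop:cpt-cent.free->Rohlin} to $\alpha|_K$, which is centrally free because $M_{\omega,\alpha}=M_{\omega,\alpha|_K}$. This gives a $K$-equivariant embedding $L^{\infty}(K)\hookrightarrow N$. The Rohlin subalgebra should split $N$ equivariantly as $N\cong L^{\infty}(K)\wtensor N^K$, with $K$ acting by translation on the first factor; this is the standard consequence of an equivariant copy of $L^{\infty}(K)$, obtained by the usual averaging/untwisting map. Then $N\rtimes K\cong (L^{\infty}(K)\rtimes K)\wtensor N^K\cong\mathbb{B}(L^2(K))\wtensor N^K$. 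Under this identification the relative commutant of $L^{\infty}(K)$ is $L^{\infty}(K)\wtensor N^K=N$, since $L^{\infty}(K)$ is maximal abelian in $\mathbb{B}(L^2(K))$. Hence $N'\cap(N\rtimes K)\subset L^{\infty}(K)'\cap(N\rtimes K)=N$.

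For $g\notin K$, I would use the same splitting to expand $x_g$ along $\mathbb{B}(L^2(K))$, i.e.\ approximately by finite sums $\sum a_i\lambda_{k_i}$ with $a_i\in N$. Taking $\mathbb{E}_K$-type coefficients of $x_g$ against $L^{\infty}(K)\rtimes K$ then reduces the intertwining relation to elements $a\in N$ satisfying $ay=\alpha_h(y)a$ for all $y\in N$, with $h\in gK$ and hence $h\notin K$. Let $N_0\subset N$ be the countably generated $G$-invariant subalgebra generated by all such $a$; it is countably generated because $G$ is second countable. By fast reindexation, central freeness implies that $G\curvearrowright N_0'\cap M_{\omega,\alpha}$ is properly outer. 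For $y\in N_0'\cap N$ we get $a(y-\alpha_h(y))=0$. Proper outerness of $\alpha_h$ on $N_0'\cap N$ should force the left support of $a$ to vanish, so $a=0$ and therefore $x_g=0$. This step is the main obstacle. The difficulty is making the reduction from $x_g\in N\rtimes K$ to coefficients in $N$ rigorous. It must be checked that the relation $ay=\alpha_h(y)a$ only involves translates $h\in gK$, all outside $K$, and that the proper outerness from fast reindexation gives $a=0$ rather than merely a central condition.

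Finally, for the corner statement, note that $p_K\in N\rtimes K$ and $p_K(N\rtimes K)p_K=N^Kp_K$ by the earlier lemma. Let $z$ commute with $p_K(N\rtimes K)p_K$ inside $p_K(N\rtimes G)p_K$. Since the inclusion $N\rtimes K\subset N\rtimes G$ has expectation, I would extend $z$ to an element of $(N\rtimes K)'\cap(N\rtimes G)$, using that the central support of $p_K$ in $N\rtimes K$ is $1$ (by the Rohlin splitting, $N\rtimes K\cong\mathbb{B}(L^2(K))\wtensor N^K$ and $p_K$ is a rank-one projection in the first factor). Then $z\in N'\cap(N\rtimes G)\subset N$, and $z=p_Kzp_K\in p_K(N\rtimes K)p_K=N^Kp_K$.
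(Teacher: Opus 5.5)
Your compact step rests on a false claim. A $K$-equivariant Rohlin copy $L^{\infty}(K)\subset N$ does not split $N$ as $L^{\infty}(K)\wtensor N^K$. The identity $L^{\infty}(K)'\cap(N\rtimes K)=N$ is false too. Both would force $L^{\infty}(K)$ to be central in $N$, and that fails in general: for the minimal action $\gamma\colon K\curvearrowright R$, $R_{\omega,\gamma}$ is a factor. What the Rohlin copy actually gives is the crossed-product statement $N\rtimes K\cong\mathbb{B}(L^2(K))\wtensor N^K$, via $L^{\infty}(K)\rtimes K\cong\mathbb{B}(L^2(K))$.

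In fact, no argument that uses only a Rohlin copy of $L^{\infty}(K)$ in $N$ can yield $N'\cap(N\rtimes K)\subset N$. Let $K=\mathbb{Z}/2\mathbb{Z}$ act on $N=\mathbb{M}_2$ by $\Ad u$ with $u=e_{12}+e_{21}$. Then $e_{11}$ is a Rohlin projection. But if $\lambda$ implements the generator, $u^{\ast}\lambda$ lies in $N'\cap(N\rtimes K)$ and not in $N$.

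Your $G/K$ step is also incomplete, as you suspected: nothing produces elements $a\in N$ with $ay=\alpha_h(y)a$ out of $x_g\in N\rtimes K$. For infinite $K$ there is no normal conditional expectation from $N\rtimes K$ onto $N$ here; the natural coefficient map is an unbounded operator valued weight. So taking coefficients of $x_g$ is not an available operation.

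The missing idea is to let the faithful piece commute with a countably generated algebra carrying $x$, rather than with all of $N$. This handles $K$ and $G/K$ at once and does not use the Rohlin property. The paper proceeds as follows.
\begin{itemize}
\item Take a countably generated $G$-invariant $N_0\subset N$ with $x\in N_0\rtimes G$.
\item By fast reindexation, take a countably generated $G$-invariant $N_1\subset N$, commuting with and orthogonal to $N_0$, on which $G$ acts faithfully. This is the same input you invoke for proper outerness on $N_0'\cap N$.
\item Realize $N\rtimes G\subset N\wtensor\mathbb{B}(L^2(G))$ with $y\mapsto(g\mapsto\alpha_{g^{-1}}(y))$. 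Then $x\in(N_0\wtensor\mathbb{C}1)\rtimes G$ commutes with $\mathbb{C}1\wtensor N_1$.
\item Slicing over $N_1$ shows that $x$ commutes with the functions $g\mapsto\phi(\alpha_{g^{-1}}(b))$ for $b\in N_1$ and $\phi\in(N_1)_{\ast}$. By faithfulness these generate $L^{\infty}(G)$, so $x\in(N_0\wtensor L^{\infty}(G))\cap(N_0\rtimes G)=N_0$.
\end{itemize}
The same slicing, applied to the intertwining relation for $x_g$ and the indicator function $1_K$, would also give $x_g=0$ for $g\notin K$ in your two-step scheme.

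Your corner argument is fine. $p_K$ is a minimal projection of $L^{\infty}(K)\rtimes K\cong\mathbb{B}(L^2(K))\subset N\rtimes K$, so it has central support $1$ in $N\rtimes K$ and the extension goes through. This needs only the crossed-product isomorphism, not the false splitting of $N$.
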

\begin{proof}
    In the proof, we identify the crossed product $N\rtimes G$ with the von Neumann subalgebra of $N\wtensor\mathbb{B}(L^2(G))$ generated by $N\cong\{g\mapsto\alpha_{g^{-1}}(x):x\in N\}\subset N\wtensor L^{\infty}(G)$ and $\mathbb{C}1\wtensor\mathrm{L}(G)$.

    Let $x\in N'\cap(N\rtimes G)$.
    There exist countably generated, mutually orthogonal, globally $G$-invariant subalgebras $N_0,N_1\subset N$ such that $x\in N_0\rtimes G$ and $G\curvearrowright N_1$ is faithful.
    Inside $(N_0\wtensor N_1)\rtimes G$, we have
    \[x\in (\mathbb{C}1\wtensor N_1)'\cap((N_0\wtensor\mathbb{C}1)\rtimes G)\subset (N_0\wtensor N_1)\rtimes G.\]
    Since $G\curvearrowright N_1$ is faithful, the slice $\bigcup\{\varphi\otimes\id(N_1):\varphi\in(N_1)_{\ast}\}\subset L^{\infty}(G)$ generates $L^{\infty}(G)$.
    Thus
    \[(\mathbb{C}1\wtensor N_1)'\cap((N_0\wtensor\mathbb{C}1)\rtimes G)\subset(N_0\wtensor L^{\infty}(G))\cap (N_0\rtimes G)=N_0.\]
    The second assertion follows easily from this.
\end{proof}

% \begin{prop}\label{prop:cent.free->prop.outer}
%     If $\alpha\colon G\curvearrowright M$ is centrally free, then $\alpha|_K$ is minimal and $\alpha$ is properly outer relative to $K$.
%     In particular, $\alpha$ is strictly outer in this case.
% \end{prop}
% \begin{proof}
%     Since $\alpha|_K$ is centrally free, it has the Rohlin property and hence $\alpha|_K$ is minimal.
%     Suppose $g\in G$ and nonzero $a\in M$ satisfy that $ax=\alpha_g(x)a$ for all $x\in M^K$.
%     This relation $ay=\alpha_g(y)a$ also holds for every $y\in(M_{\omega,\alpha})^K$, thus, $\alpha_g(y)=y$ for every $y\in(M_{\omega,\alpha})^K$.
%     We have $\lambda_g\in (N^K)'\cap (N\rtimes G)$, where $N=M_{\omega,\alpha}$.
%     It follows that $p_K\lambda_gp_K\in N^Kp_K$ and hence $\mathbb{E}_K(p_K\lambda_gp_K)=p_K\lambda_gp_K$, which means $g\in K$.
%     % $p_K\lambda_gp_K\neq0$ for all $g\in G$.
% \end{proof}

\subsection{Actions on \texorpdfstring{$R$}{R}}

In this subsection, we show that every strictly outer action of a (not necessarily amenable) locally compact group $G$ with a compact open subgroup $K<G$ on the hyperfinite $\II_1$ factor $R$ is centrally free.
For discrete group actions, central freeness is equivalent to pointwise-outerness, since every centrally trivial automorphism of $R$ is inner (see \cite{Co77}).
On the contrary, for continuous groups, it is more difficult to see the central freeness.

\begin{lem}
    Let $\gamma\colon K\curvearrowright R$ be \emph{the} minimal action of a compact group.
    Then the induced action $\gamma\colon K\curvearrowright R_{\omega,\gamma}$ is minimal.
\end{lem}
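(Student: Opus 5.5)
The plan is to combine the Rohlin property of $\gamma$, available because $M=R$, with the reindexation tricks. Write $N=R_{\omega,\gamma}$. By Lemma \ref{lem:cpt-Rohlin}, \emph{the} minimal action $\gamma$ on $R$ has the Rohlin property, so there is a $K$-equivariant embedding $L^{\infty}(K)\hookrightarrow N$. In particular $K\curvearrowright N$ is faithful, since $K$ acts faithfully on $L^{\infty}(K)$ by translation. What remains is the relative commutant condition $(N^K)'\cap N=\mathbb{C}1$.

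First step: let $y\in(N^K)'\cap N$, and choose a countably generated, globally $K$-invariant von Neumann subalgebra $N_0\subset N$ containing $y$. The fast reindexation trick and the index selection trick give a $K$-equivariant copy of $L^{\infty}(K)$ inside $N$ that commutes with $N_0$ and is $\tau_\omega$-orthogonal to it. Its product with $N_0$ lies in $N$, since products of central equicontinuous sequences are again central and equicontinuous. We thus get $N_0\wtensor L^{\infty}(K)\subset N$ with the diagonal action. The standard untwisting isomorphism $N_0\wtensor L^{\infty}(K)\cong N_0\wtensor L^{\infty}(K)$, given by $F\mapsto(k\mapsto\gamma_{k}^{-1}(F(k)))$, identifies the fixed-point algebra $(N_0\wtensor L^{\infty}(K))^K$ with $N_0$. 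It also shows that this fixed-point algebra and $\mathbb{C}1\wtensor L^{\infty}(K)$ together generate $N_0\wtensor L^{\infty}(K)$. The element $y$ commutes with the fixed-point algebra, because it lies in $N^K$, and with $L^{\infty}(K)$, by construction. Hence $y$ is central in $N_0\wtensor L^{\infty}(K)$, so $y\in\Z(N_0)$. Since $N_0$ may be enlarged arbitrarily, $y\in\Z(N)$.

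Second step: show $\Z(N)\cap(N^K)'=\mathbb{C}1$, which I expect to be the main obstacle. My first attempt would use the self-absorption of the minimal action. By the preceding proposition on diagonal actions, $\gamma\otimes\gamma$ on $R\wtensor R$ is strictly outer. By Lemma \ref{lem:cpt-Rohlin} it is therefore minimal. By uniqueness of the minimal action on $R$, $\gamma$ is conjugate to the infinite tensor power $\bigotimes_{n}\gamma$. Placing $x\in R$ in the $n$-th tensor slot then yields a $K$-equivariant, trace-preserving, equicontinuous embedding $R\hookrightarrow N$. Reindexing again, we may assume this copy of $(R,\gamma)$ commutes with $N_0\ni y$. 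The plan is then to show that $y$, being central and commuting with the fixed points of the diagonal action on $N_0\wtensor R$, must commute with enough $K$-covariant elements to force it into $\mathbb{C}1$. Concretely, I would compare $y$ with its conditional expectation onto $N^K$ via $\tau_\omega$. I would exploit $(R^K)'\cap R=\mathbb{C}1$ together with the fact that $R_\omega$ is a $\II_1$ factor, R being McDuff. This should show that the $\tau_\omega$-orthogonal complement of the scalars in $\Z(N)$ cannot commute with the fixed-point algebra of the product action. If this direct approach stalls, the fallback is to first prove that $\Z(N)$ is $K$-pointwise fixed. Then $y\in N^K\cap(N^K)'$, and the Rohlin copy of $L^{\infty}(K)$ shows $N^K$ is a factor, since $N^K\supset(N_0\wtensor L^{\infty}(K))^K\cong N_0$ for arbitrarily large $N_0$ inside the factor-like $R_\omega$.
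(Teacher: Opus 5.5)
\textbf{There is a genuine gap: your second step, which carries all of the lemma's content, is not proved, and the mechanisms you sketch for it cannot work.}

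\textbf{What you establish.} Your first step is correct. The reindexed equivariant copy of $L^{\infty}(K)$ commuting with $N_0$, combined with the untwisting isomorphism, gives $(N^K)'\cap N\subset\Z(N)$. Faithfulness does follow from the Rohlin property.

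\textbf{The reduction hides the difficulty.} Since $\Z(N)\subset(N^K)'$ trivially, step 1 gives $(N^K)'\cap N=\Z(N)$. So your condition $\Z(N)\cap(N^K)'=\mathbb{C}1$ is exactly the statement that $N=R_{\omega,\gamma}$ is a factor; intersecting with $(N^K)'$ adds no constraint. This factoriality is the whole content of the lemma: Remark~\ref{rem:need-minimality} shows $R_{\omega,\gamma}$ can even be abelian when $\gamma$ is not minimal.

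\textbf{Why your sketches fail.} A central element $y\in\Z(N)$ commutes with every subalgebra of $N$. That includes the self-absorbed copy of $(R,\gamma)$, the algebra $N_0\wtensor R$, and its fixed points under the diagonal action. More generally, anything you produce by reindexation so that it commutes with $N_0\ni y$ can never witness that $y$ is not scalar.

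The fallback is also unsupported. There is no argument that $\Z(N)$ is $K$-fixed. Moreover, containing copies $(N_0\wtensor L^{\infty}(K))^K\cong N_0$ of arbitrarily large $N_0$ does not make $N^K$ a factor: $L^{\infty}[0,1]\wtensor R$ contains large subalgebras but has nontrivial center, and here the $N_0$ are not even known to be factors.

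Likewise, $(R^K)'\cap R=\mathbb{C}1$ and factoriality of $R_{\omega}$ alone do not suffice. You need $K$-fixed unitaries that are asymptotically central \emph{and} chosen depending on a representing sequence of the given element.

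\textbf{The missing idea (the paper's argument).} Prove directly the stronger statement $(R'\cap(R^K)^{\omega})'\cap R_{\omega}=\mathbb{C}1$, noting that $R'\cap(R^K)^{\omega}=N^K$.
\begin{enumerate}
\item Use the product-type model $R=\bigotimes_j\mathbb{M}_{d_j}$, $\gamma=\bigotimes_j\Ad{v_j}$. Its finite-dimensional pieces $R_n=\bigotimes_{j\le n}\mathbb{M}_{d_j}$ are $\gamma$-invariant, and each tail action $\gamma|_{R_n'\cap R}$ is still minimal.
\item Finite-dimensionality of $R_n$ lets you represent a trace-zero, norm-one central sequence $x$ by $x_n\in R_n'\cap R$. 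This fails for the infinite-dimensional tensor slots of your self-absorption model, because of central sequences of the slots themselves.
\item By tail minimality, the minimal-norm element of $\clco\{ux_nu^{\ast}:u\in\U(R_n'\cap R^K)\}$ is a scalar of trace zero, hence $0$. This yields $u_n\in\U(R_n'\cap R^K)$ with $\|x_n-u_nx_nu_n^{\ast}\|_2\ge 1/2$.
\item Then $(u_n)^{\omega}\in R'\cap(R^K)^{\omega}=N^K$, and it fails to commute with $x$.
\end{enumerate}
This gives $(N^K)'\cap N=\mathbb{C}1$ at once. Your first step then becomes unnecessary, except for faithfulness.
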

\begin{proof}
    We will show the stronger statement $(R'\cap (R^K)^{\omega})'\cap R_{\omega}=\mathbb{C}1$.
    By the uniqueness of minimal actions, we may use the infinite tensor product model $R=\bigotimes_j\mathbb{M}_{d_j}$ and $\gamma=\bigotimes_j\Ad{v_j}$, where $\Ad{v_j}$ is a projective representation on $\mathbb{M}_{d_j}$.
    Let $R_n=\bigotimes_{j\le n}\mathbb{M}_{d_j}$.
    Note that $\gamma|_{R_n'\cap R}$ is also minimal.
    Let $x=(x_n)^{\omega}\in R_{\omega}$ be a nontrivial central sequence.
    We may assume that $x_n\in R_n'\cap R$, $\tau(x_n)=0$, and $\|x_n\|_2=1$.

    We claim that $\sup\{\|x_n-ux_nu^{\ast}\|_2:u\in\U((R_N'\cap R)^K)\}\ge 1$.
    Indeed, the unique element in $\clco\{ux_nu^{\ast}:\U((R_n'\cap R)^K)\}$ of minimal 2-norm is invariant under conjugation by $\U((R_n'\cap R)^K)$, and therefore it belongs to $((R_n'\cap R)^K)'\cap(R_n'\cap R)=\mathbb{C}1$.
    Since $x_n$ has trace zero, it must be zero and we obtain the claim.
    
    We find a unitary $u_n\in (R_n'\cap R)^K=R_n'\cap R^K$ such that $\|x_n-u_nx_nu_n^{\ast}\|_2\ge 1/2$.
    Then the element $u=(u_n)^{\omega}\in R'\cap (R^K)^{\omega}$ does not commute with $x$.
    This completes the proof.
\end{proof}

\begin{rem}\label{rem:need-minimality}
    As a consequence, $R_{\omega,\gamma}$ is a factor.
    This conclusion may fail without minimality.
    Indeed, consider a free ergodic p.m.p.\ action $\mathbb{Z}\curvearrowright X$ on a standard probability space.
    Then the crossed product $L^{\infty}(X)\rtimes\mathbb{Z}$ is isomorphic to $R$, and the dual action $\gamma\colon\mathbb{T}\curvearrowright R$ is not minimal.
    By \cite[Theorem 3.6(2)]{To17}, there exists a canonical isomorphism $R^{\omega}_{\gamma}=L^{\infty}(X)^{\omega}\rtimes\mathbb{Z}$, and $R_{\omega,\gamma}=R'\cap R^{\omega}_{\gamma}=(L^{\infty}(X)^{\omega})^{\mathbb{Z}}$ is not a factor.
    Moreover, although $R$ is McDuff, $R_{\omega,\gamma}$ does not contain a copy of $R$.

    If, for instance, the original action $\mathbb{Z}\curvearrowright X$ is weakly mixing (e.g., Bernoulli shift), then the dual action $\gamma$ is outer.
\end{rem}

\begin{prop}\label{prop:str.outer->cent.free}
    Let $\alpha\colon G\curvearrowright R$ be a strictly outer action of a locally compact group with a compact open subgroup $K<G$.
    Then $\alpha$ is centrally free.
\end{prop}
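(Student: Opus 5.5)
We have to show that the induced action $G\curvearrowright N:=R_{\omega,\alpha}$ is faithful. Since $\alpha$ is strictly outer, Lemma \ref{lem:outerness} says that $\alpha$ is outer and $\gamma:=\alpha|_K$ is minimal. By Lemma \ref{lem:cpt-Rohlin} and the uniqueness of minimal actions of compact groups on $R$, we may use the infinite tensor product model for $\gamma$. The preceding lemma then gives that $\gamma\colon K\curvearrowright R_{\omega,\gamma}$ is minimal, and in particular faithful. Recall also $R_{\omega,\gamma}=R_{\omega,\alpha}=N$, because $K$ is open. So elements of $K\setminus\{1\}$ act nontrivially on $N$. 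The remaining task is to rule out $g\in G\setminus K$ acting trivially on $N$. Since the kernel of $G\to\Aut(N)$ is a closed normal subgroup meeting $K$ trivially, it is discrete.

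Fix $g\notin K$ and suppose, towards a contradiction, that $\alpha_g=\id$ on $N$. In particular $\alpha_g$ fixes $(N)^K=R'\cap(R^K)^{\omega}$ pointwise. The plan is to push this down to a statement about $R$, as in the strategy of the proof of the preceding lemma. First, $\alpha_g$ also fixes $(N)^{gKg^{-1}}=\alpha_g(N^K)$, so it fixes elements of $N^{K}\vee N^{gKg^{-1}}$. Next I pass to the compact open subgroup $L=K\cap gKg^{-1}$ and use that $\gamma|_L$ is still minimal. This holds because $L$ has finite index in $K$, so $R^L$ is a finite-index extension of $R^K$, and it gives $(R^L)'\cap R=\mathbb{C}1$. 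The main tool will be the following claim: if $\theta\in\Aut(R)$ normalizes $R^L$ and $\theta^{\omega}$ is trivial on $R'\cap(R^L)^{\omega}$, then $\theta|_{R^L}$ is approximately inner by unitaries of $R^L$, and indeed centrally trivial on the factor $R^L$. Since $R^L$ is a II$_1$ subfactor of $R$ with $(R^L)'\cap R=\mathbb{C}1$, it is hyperfinite. I would then apply Connes' theorem \cite{Co77} that centrally trivial automorphisms of $R$ are inner, to conclude that $\theta|_{R^L}=\Ad{a}$ for some unitary $a\in R^L$.

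To produce this element concretely I would mimic the convex hull argument of the preceding lemma. I take central sequences $(x_n)\in R'\cap(R^L)^{\omega}$, built in the tails $R_n'\cap R^L$ of the tensor product model, and use them to show that the sequence $R^L\ni y\mapsto\alpha_g(y)$ is asymptotically inner on $R^L$. Once $\alpha_g|_{R^L}=\Ad{a}$ with $a\in R^L$, the automorphism $\Ad{a^{\ast}}\circ\alpha_g$ fixes $R^L$ pointwise. By the theorem of \cite[Remark 3.17]{ILP98} applied to the minimal action $\gamma|_L$, it equals $\alpha_l$ for some $l\in L$. Then $\alpha_{l^{-1}g}$ is inner, and outerness forces $g=l\in K$, which is the desired contradiction.

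I expect the main obstacle to be the claim that triviality of $\alpha_g$ on the relative central sequences $R'\cap(R^L)^{\omega}$ forces $\alpha_g|_{R^L}$ to be inner. Two things are needed for this. First, $R'\cap(R^L)^{\omega}$ must be large enough, namely equal to all of $(R^L)_{\omega}$ modulo something controlled. I would try to obtain this from the tensor product model, where $R^L$ contains a tail $\bigotimes_{j>n}$ of the fixed-point algebras, so that $(R^L)_{\omega}$ sequences can be chosen commuting with $R$. Second, Connes' theorem must apply to $\alpha_g|_{R^L}$, which needs $\alpha_g$ to normalize $R^L$. This normalization is not automatic, so I would first replace $L$ by a smaller finite-index subgroup normalized by $g$ up to perturbation, or else work with $N^{L}$ directly and use $\Ad{}$-approximations there.
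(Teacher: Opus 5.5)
There is a genuine gap. The claim you call your main tool is false, so the route through $\alpha_g|_{R^L}$ cannot work.

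Here is a counterexample. Let $\theta=\Ad v$ with $v\in\N_R(R^L)$ and $v\notin\mathbb{T}\,\U(R^L)$. For instance, if $L\cong\mathbb{Z}/2$, then $R=R^L\rtimes_\beta\mathbb{Z}/2$ and $v$ is the implementing unitary.
\begin{itemize}
\item $\theta$ normalizes $R^L$.
\item $\theta^{\omega}$ is trivial on $R'\cap(R^L)^{\omega}$, because those elements commute with $v\in R$.
\item $\theta|_{R^L}$ is outer on $R^L$: if $\Ad v=\Ad a$ on $R^L$ with $a\in\U(R^L)$, then $a^{\ast}v\in(R^L)'\cap R=\mathbb{C}1$.
\end{itemize}
By Connes' theorem, $\theta|_{R^L}$ is therefore not centrally trivial. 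So $R'\cap(R^L)^{\omega}$ is genuinely smaller than $(R^L)_{\omega}$; in the $\mathbb{Z}/2$ example it is $((R^L)_{\omega})^{\beta}$, a proper subalgebra. Your hoped-for identification ``modulo something controlled'' fails.

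Consequently, triviality on $R'\cap(R^L)^{\omega}$ cannot distinguish $\alpha_g$ from $\Ad v\circ\alpha_g$. No argument using only this information can produce $a\in\U(R^L)$ with $\alpha_g|_{R^L}=\Ad a$, and your appeal to \cite[Remark 3.17]{ILP98} is never reached. The underlying problem is that passing to $N^L$ discards the information the proof needs. What matters is that $\alpha_g$ fixes the non-invariant part of $N$, in particular the Rohlin subalgebra $L^{\infty}(K)\subset N$.

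The normalization issue you flag is easy. For $k\in K\cap g^{-1}Kg$ one has $\alpha_{gkg^{-1}}=\alpha_g\alpha_k\alpha_g^{-1}=\alpha_k$ on $N$, so faithfulness of $K\curvearrowright N$ gives $gkg^{-1}=k$. Your discreteness remark yields the same thing, since elements of a discrete normal subgroup have open centralizers. After shrinking $K$, you may assume $g$ commutes with $K$.

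Then apply Connes' theorem to $\alpha_g\in\Aut(R)$ itself, not to a restriction:
\begin{enumerate}
\item Outerness of $\alpha_g$ gives $(x_n)^{\omega}\in R_{\omega}$ with $\|x_n\|_2=1$ and $\|x_n-\alpha_g(x_n)\|_2>\delta$.
\item Using the Rohlin embedding, define $f_n\in L^{\infty}(K)\wtensor R\subset R^{\omega}_{\alpha}$ by $f_n(k)=\alpha_k(x_n)$. These are $K$-invariant, hence lie in $(R^K)^{\omega}$.
\item Since $\alpha_g$ commutes with $K$ and fixes $L^{\infty}(K)$ pointwise, $[\alpha_g(f_n)](k)=\alpha_k(\alpha_g(x_n))$. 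Hence $\|f_n-\alpha_g(f_n)\|_2=\|x_n-\alpha_g(x_n)\|_2>\delta$.
\item The index selection trick gives $y=(f_n)^{\omega}\in R'\cap(R^K)^{\omega}\subset N$ with $\|y\|_2=1$ and $\|y-\alpha_g(y)\|_2\ge\delta$. This contradicts the triviality of $\alpha_g$ on $N$.
\end{enumerate}
This is the paper's argument. It needs neither hyperfiniteness of $R^L$ nor a Galois-type theorem.
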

\begin{proof}
    We already know that $\alpha|_K$ is minimal and has the Rohlin property, hence $K\curvearrowright R_{\omega,\alpha}$ is faithful.
    Let $g\in G\setminus K$ and suppose for contradiction that $\alpha_g$ acts trivially on $R_{\omega,\alpha}$.
    Since $\alpha_k=\alpha_{gkg^{-1}}$ on $R_{\omega,\alpha}$ for every $k\in K\cap g^{-1}Kg$, we have $k=gkg^{-1}$ and $g$ centralizes $K\cap g^{-1}Kg$ in $G$.
    Replacing $K$ by $K\cap g^{-1}Kg$, we assume that $g$ commutes with $K$.
    
    Since $\alpha_g\in\Aut(R)$ is outer, there exist $x=(x_n)^{\omega}\in R_{\omega}$ and $\delta>0$ such that $\|x\|_2=1$ and $\|x-\alpha_g(x)\|_2>\delta$.
    We may assume that $\|x_n\|_2=1$ and $\|x_n-\alpha_g(x_n)\|_2>\delta$ for every $n$.
    By the Rohlin property of $\alpha|_K$, define $f_n\in L^{\infty}(K)\wtensor R\subset R^{\omega}_{\alpha}$ by $f_n(k)=\alpha_k(x_n)$ for each $n$.
    For $k,l\in K$, we have
    \[[\alpha_k(f_n)](l)=\alpha_k(f_n(k^{-1}l))=\alpha_l(x_n)=f_n(l)\]
    and hence $f_n\in (R^{\omega}_{\alpha})^K=(R^K)^{\omega}$.
    Since $\alpha_g$ fixes $L^{\infty}(K)\subset R_{\omega,\alpha}$ pointwise, we have $\alpha_g(f_n)\in L^{\infty}(K)\wtensor R$ and $[\alpha_g(f_n)](k)=\alpha_g(f_n(k))=\alpha_k(\alpha_g(x_n))$.
    Therefore, it follows that $\|f_n-\alpha_g(f_n)\|_2=\|x_n-\alpha_g(x_n)\|_2>\delta$.
    The index selection trick gives an element $y=(f_n)^{\omega}\in R'\cap(R^K)^{\omega}\subset R_{\omega,\alpha}$ so that $\|y\|_2=1$ and $\|y-\alpha_g(y)\|_2\ge\delta$.
    This contradicts the choice of $g$.
\end{proof}

\begin{rem}
    The preceding proposition holds for strictly outer cocycle actions $(\alpha,u)\colon G\curvearrowright R$ with the same proof, because it is cocycle conjugate to a genuine action on $K$ (\cite[Theorem 4.5]{Ni26}).
\end{rem}

We observe that $\mathbb{C}1\wtensor R_{\omega}\subset (M\wtensor R)_{\omega}$ canonically and the same argument holds for equicontinuous parts.
Therefore we obtain the permanence property for the central freeness of actions on McDuff factors.

\begin{cor}
    Let $G$ be a locally compact group with a compact open subgroup.
    If $\alpha\colon G\curvearrowright R$ is strictly outer and $\beta\colon G\curvearrowright M$ is an arbitrary action, then the diagonal action $\beta\otimes\alpha\colon G\curvearrowright M\wtensor R$ is centrally free.
\end{cor}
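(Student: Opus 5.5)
Faithfulness of the induced action $G\curvearrowright (M\wtensor R)_{\omega,\beta\otimes\alpha}$ will be deduced by exhibiting an equivariant copy of $R_{\omega,\alpha}$ inside it, and then invoking Proposition \ref{prop:str.outer->cent.free}. Concretely, the first step is to check that the canonical embedding $R^{\omega}\ni(x_n)^{\omega}\mapsto(1\otimes x_n)^{\omega}\in(M\wtensor R)^{\omega}$ maps $R_{\omega}$ into $(M\wtensor R)_{\omega}$. For a product functional $\psi\otimes\varphi$ with $\psi\in M_{\ast}$, $\varphi\in R_{\ast}$, we have $\|[1\otimes x_n,\psi\otimes\varphi]\|=\|\psi\|\,\|[x_n,\varphi]\|\to0$ as $n\to\omega$. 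Since the algebraic tensor product $M_{\ast}\odot R_{\ast}$ is norm dense in $(M\wtensor R)_{\ast}$ and $(x_n)_n$ is uniformly bounded, the convergence extends to all normal functionals, so $(1\otimes x_n)_n\in\mathscr{C}_{\omega}(M\wtensor R)$. Likewise $\mathscr{T}_{\omega}(R)$ is mapped into $\mathscr{T}_{\omega}(M\wtensor R)$, so the map is well defined (and injective, being a restriction of the canonical inclusion $R^{\omega}\subset(M\wtensor R)^{\omega}$).

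The second step is equicontinuity. Fix faithful normal states $\psi$ on $M$ and $\tau$ on $R$ and use $\psi\otimes\tau$ on $M\wtensor R$. Then $\|1\otimes x_n-(\beta\otimes\alpha)_g(1\otimes x_n)\|^{\sharp}_{\psi\otimes\tau}=\|x_n-\alpha_g(x_n)\|_2$, so the criterion for actions recalled after the definition of equicontinuous parts (the neighborhood-of-$1_G$ characterization) shows that $(\alpha,\omega)$-equicontinuous sequences go to $(\beta\otimes\alpha,\omega)$-equicontinuous ones. Hence we get an embedding $\iota\colon R_{\omega,\alpha}\hookrightarrow(M\wtensor R)_{\omega,\beta\otimes\alpha}$, and it is clearly $G$-equivariant: $(\beta\otimes\alpha)_g(1\otimes x_n)=1\otimes\alpha_g(x_n)$.

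Finally, if $g\in G$ acts trivially on $(M\wtensor R)_{\omega,\beta\otimes\alpha}$, then by equivariance and injectivity of $\iota$ it acts trivially on $R_{\omega,\alpha}$, and since $\alpha$ is centrally free by Proposition \ref{prop:str.outer->cent.free}, $g=1_G$. The only point needing slight care is the density argument in the first step (and that the definition of the equicontinuous part is independent of the chosen faithful state), which I expect to be routine; no genuine obstacle arises, consistent with the remark preceding the statement.
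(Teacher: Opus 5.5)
Your proposal is correct and is essentially the paper's argument. The paper simply observes that $\mathbb{C}1\wtensor R_{\omega}\subset(M\wtensor R)_{\omega}$ canonically and that the same holds for the equicontinuous parts. It then invokes Proposition~\ref{prop:str.outer->cent.free} to get faithfulness on $R_{\omega,\alpha}$, hence on $(M\wtensor R)_{\omega,\beta\otimes\alpha}$. You have supplied the routine verifications the paper leaves implicit, namely the density of $M_{\ast}\odot R_{\ast}$ and the equality of the $\sharp$-seminorms under $\psi\otimes\tau$.
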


\subsection{Rohlin tower construction}

Throughout this subsection, $(\alpha,u)\colon G\curvearrowright M$ denotes a centrally free cocycle action of an amenable locally compact group $G$ with a compact open subgroup $K<G$, and put $N=M_{\omega,\alpha}$.
We apply Popa's local quantization principle to the inclusion $N^Kp_K=p_K(N\rtimes K)p_K\subset p_K(N\rtimes G)p_K$.
The following formulation follows from the argument of \cite{Po82}.

\begin{prop}
    Let $B\subset A$ be an inclusion of von Neumann algebras with expectation $\mathbb{E}$.
    Let $\tau$ be a faithful normal tracial state on $B$ and set $\varphi=\tau\circ\mathbb{E}$.
    Suppose that $B'\cap A\subset B$.
    For every $\varepsilon>0$ and $x_1,\ldots,x_n\in A\ominus B=\ker{\mathbb{E}}$, there exists a finite partition of unity $(e_j)_{j=1}^m$ in $B$ such that
    \[\sum_j\|e_jx_ie_j\|_{\varphi}^2\le\varepsilon\|x_i\|_{\varphi}^2\]
    for all $i=1,\ldots,n$.
\end{prop}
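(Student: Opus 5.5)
We follow Popa's local quantization argument from \cite{Po82}: first establish a single-step improvement, then iterate it by a maximality/exhaustion argument.

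\textbf{Key lemma.} For every nonzero projection $f\in B$ and every $\delta>0$, there is a nonzero projection $e\le f$ in $B$ such that
\[\|ex_ie\|_{\varphi}^2\le\delta\,\tau(e)\quad\text{for all } i.\]
Since $\varphi=\tau\circ\mathbb{E}$ and $\mathbb{E}(ex_i^*ex_ie)\le\|x_i\|^2\,e$, the normalization by $\tau(e)$ is natural. This makes the ratio $\sum_i\|ex_ie\|_{\varphi}^2/\tau(e)$ small.

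To prove the lemma, argue by contradiction. Suppose every nonzero subprojection $e\le f$ in $B$ has $\sum_i\|ex_ie\|_\varphi^2\ge\delta\,\tau(e)$. Consider the weak$^*$ closed convex hull $\mathcal{K}$ of $\{ux_iu^*:u\in\U(fBf)\}$ (working in $L^2(A,\varphi)$, or with an $n$-tuple). Let $y=(y_i)_i$ be the unique element of minimal $\|\cdot\|_\varphi$-norm. Since $\varphi$ is $\Ad u$-invariant for $u\in\U(B)$ (because $\tau$ is a trace and $\mathbb{E}$ is $B$-bimodular), $y_i$ commutes with $fBf$. Hence
\[y_i\in (fBf)'\cap fAf=f(B'\cap A)f\subset fBf,\]
using the hypothesis $B'\cap A\subset B$ (the reduced relative commutant). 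On the other hand $\mathbb{E}(y_i)=0$, since $\mathbb{E}(ux_iu^*)=u\mathbb{E}(x_i)u^*=0$. Therefore $y_i=0$.

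So $0$ is in the closed convex hull, and we can choose a convex combination $\sum_k c_k u_kfx_ifu_k^*$ of small norm. Expanding $\|\cdot\|_\varphi^2$ shows that the cross terms $\varphi(u_kfx_i^*fu_k^*u_lfx_ifu_l^*)$ have small average. This is the standard trick: the unitaries may be taken from an abelian subalgebra, hence diagonal, and one finds a projection $e$ in the von Neumann algebra generated by a suitable maximal abelian subalgebra of $fBf$ with $\|ex_ie\|$ small. Concretely, we follow Popa's argument: pick a maximal abelian subalgebra $D\subset fBf$, approximate by finite-dimensional diagonal subalgebras with minimal projections $e_1,\dots,e_m$, and use averaging against the unitaries $\sum_j\lambda_je_j$ with $\lambda_j$ random signs or roots of unity. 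The identity
\[\sum_j e_jxe_j=\int u\,x\,u^*\,du\]
(integral over such unitaries) lets us transfer "$0$ is in the convex hull" to "$\sum_j\|e_jx_ie_j\|^2_\varphi$ is small relative to $\tau(f)$". Some $e_j$ then violates the assumed lower bound, which gives the contradiction.

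\textbf{Exhaustion.} With the lemma in hand, take a maximal family of mutually orthogonal projections $(e_j)$ in $B$ each satisfying $\|e_jx_ie_j\|^2_\varphi\le(\varepsilon/n)\,\tau(e_j)\min_i\|x_i\|_\varphi^2$. A better normalization is to use the lemma with $\delta$ chosen relative to $\|x_i\|_\varphi^2$, and, if some $x_i=0$, to simply discard it. By the lemma applied to the complement, maximality forces the family to sum to $1$. The family is countable, and finitely many of its members already carry trace close to $1$. We lump the remainder $r$ into one projection: its contribution is $\|rx_ir\|_\varphi^2\le\|x_i\|^2\tau(r)$, which is arbitrarily small relative to $\varepsilon\|x_i\|_\varphi^2$ when $\|x_i\|_\varphi>0$. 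Summing gives
\[\sum_j\|e_jx_ie_j\|_\varphi^2\le\varepsilon\,\|x_i\|_\varphi^2.\]

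\textbf{Main obstacle.} The hard step is the key lemma. The delicate point is converting "the trace-zero part averages to zero under unitary conjugation" into a spectral projection estimate simultaneously for all $i$. We handle this by applying the minimal-norm argument to the direct sum $A^{\oplus n}$ with the diagonal $B$-action.
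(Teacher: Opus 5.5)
Your exhaustion step is correct, and so is the minimal-norm argument showing $0\in\clco\{ux_iu^{\ast}:u\in\U(fBf)\}$. The genuine gap is the step you call the standard trick, where this conclusion is turned into a partition.

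The unitaries produced by the minimal-norm argument range over all of $\U(fBf)$ and do not commute with one another. The identity $\sum_je_jxe_j=\int uxu^{\ast}\,du$ only expresses a pinching as an average over the unitaries of the abelian algebra spanned by the $e_j$. For an abelian $D\subset fBf$, the point of minimal norm in $\clco\{uxu^{\ast}:u\in\U(D)\}$ is the expectation of $x$ onto $D'\cap fAf$. That is small only if $D'\cap fAf$ is essentially inside $B$, and the hypothesis $B'\cap A\subset B$ does not control $D'\cap A$. Nothing in your argument lets you replace $\U(fBf)$ by $\U(D)$.

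For an arbitrary masa the claim is in fact false. Take $B=L^{\infty}(X)\rtimes_T\mathbb{Z}$ with $T$ a Bernoulli shift, and $\sigma(a\lambda_k)=e^{2\pi ik\theta}a\lambda_k$ with $\theta$ irrational. All nonzero powers of $\sigma$ are outer, since $T$ has no nontrivial eigenfunctions. Hence for $A=B\rtimes_{\sigma}\mathbb{Z}$ we have $B'\cap A=\mathbb{C}1$. Yet the implementing unitary $x$ of $\sigma$ commutes with the masa $D=L^{\infty}(X)$, so $\sum_j\|e_jxe_j\|_{\varphi}^2=\|x\|_{\varphi}^2$ for every partition in $D$. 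Choosing a masa adapted to the $x_i$ amounts to Popa's theorem on masas of $B$ that are maximal abelian in $A$, which is itself proved by local quantization. So this route is circular, and your contradiction hypothesis never actually enters.

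The missing idea is to run the minimal-norm argument on partitions rather than on unitaries.

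1. Because $B$ lies in the centralizer of $\varphi$, each pinching $\Phi_{\mathbf{e}}(x)=\sum_je_jxe_j$ is an orthogonal projection on $L^2(A,\varphi)$. Moreover $\Phi_{\mathbf{e}'}=\Phi_{\mathbf{e}'}\Phi_{\mathbf{e}}$ when $\mathbf{e}'$ refines $\mathbf{e}$.
2. Let $c$ be the infimum over finite partitions $\mathbf{e}$ in $B$ of $\sum_i\|\Phi_{\mathbf{e}}(x_i)\|_{\varphi}^2/\|x_i\|_{\varphi}^2$. Choose $\mathbf{e}$ with value less than $c+\eta$.
3. Then $y_i=\Phi_{\mathbf{e}}(x_i)$ satisfies $\|y_i-\Phi_{\mathbf{e}'}(y_i)\|_{\varphi}^2<\eta\|x_i\|_{\varphi}^2$ for every refinement $\mathbf{e}'$.
4. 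Write a finite-spectrum unitary $u\in\bigoplus_je_jBe_j$ as $\sum_k\mu_kq_k$ with $(q_k)$ refining $\mathbf{e}$. This gives $\|uy_iu^{\ast}-y_i\|_{\varphi}\le2\eta^{1/2}\|x_i\|_{\varphi}$, and by norm approximation the bound holds for all $u\in\U(\bigoplus_je_jBe_j)$.
5. Your minimal-norm argument now applies to $\bigoplus_je_jBe_j$, whose relative commutant in $A$ is $\bigoplus_je_j(B'\cap A)e_j\subset B$. It shows that the minimal-norm point of $\clco\{uy_iu^{\ast}\}$ is $0$ and lies within $2\eta^{1/2}\|x_i\|_{\varphi}$ of $y_i$.
6. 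Hence $c\le4n\eta$ for every $\eta>0$, so $c=0$. This proves the proposition directly, and the exhaustion step is no longer needed.
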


Let $\tau$ denote the canonical trace $\tau_{\omega}$ on $N$ and also the trace on $N^Kp_K$ determined by $\tau(xp_K)=\tau_{\omega}(x)$ for $x\in N^K$.
This trace is invariant under $\alpha$.
The restriction of the conditional expectation $\mathbb{E}_K\colon N\rtimes G\to N\rtimes K$ gives a faithful normal expectation $\mathbb{E}\colon p_K(N\rtimes G)p_K\to N^Kp_K$.
Set a state $\varphi=\tau\circ\mathbb{E}$ on $p_K(N\rtimes G)p_K$.
Equivalently, if $\hat{\tau}$ denotes the dual weight on of $\tau$ on $N\rtimes G$, then $\varphi$ is the restriction $\varphi=\hat{\tau}p_K$ on $p_K(N\rtimes G)p_K$.
Then $\varphi(p_Kxp_K)=\tau(x)$ for every $x\in N$.

For every $s\in (G/K)\setminus\{K\}$ and a projection $e\in N^K$, a direct calculation shows that $\mathbb{E}(p_K\lambda_s^{\ast}p_K\lambda_sp_K)=|K\cap sKs^{-1}|p_K$ and
\begin{align*}
    \|ep_K\lambda_sp_Ke\|_{\varphi}^2 &= \varphi(ep_K\lambda_s^{\ast}ep_K\lambda_sep_K)\\
    &= \varphi(e\mathbb{E}(\lambda_s^{\ast}ep_K\lambda_s)p_K)\\
    &= \varphi(e\alpha_s^{-1}(e)\mathbb{E}(\lambda_s^{\ast}p_K\lambda_s)p_K)\\
    &= |K\cap sKs^{-1}|\varphi(e\alpha_s^{-1}(e)p_K)\\
    &= |K\cap sKs^{-1}|\tau(e\alpha_s(e)).
\end{align*}

Let $S_0\subset (G/K)\setminus\{K\}$ be a nonempty finite subset.
There exists a partition of unity $(e_j)_j$ in $N^K$ such that
\[\sum_j\|e_jp_K\lambda_sp_Ke_j\|_{\varphi}^2\le\varepsilon\|p_K\lambda_sp_K\|_{\varphi}^2\]
for every $s\in S_0$.
This implies that $\sum_j\tau(e_j\alpha_s(e_j))\le\varepsilon=\sum_j\varepsilon\tau(e_j)$ for every $s\in S_0$.
Summing over $s\in S_0$, we find a nonzero projection $e=e_j\in N^K$ such that
\[\tau(e\alpha_s(e))\le |S_0|\varepsilon\tau(e).\]
The same argument applies to any corner with a projection in $N^K$.

\begin{lem}
    Let $S_0\subset (G/K)\setminus\{K\}$ be a finite $K$-invariant subset.
    Then there exists a nonzero projection $e\in N^K$ such that $e\alpha_s(e)=0$ for every $s\in S_0$.
\end{lem}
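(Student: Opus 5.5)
We obtain an exact disjointness $e\alpha_s(e)=0$ from the approximate estimate $\tau(e\alpha_s(e))\le|S_0|\varepsilon\tau(e)$ established just before the statement, by the standard maximality (exhaustion) argument in the finite von Neumann algebra $N^K$. The only inputs are that estimate, its validity in every corner $fN^Kf$ with $f$ a nonzero projection of $N^K$, and the invariance $\tau\circ\alpha_s=\tau$.

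\emph{Step 1 (well-definedness).} Since $S_0$ is $K$-invariant and $e\in N^K$, the projection $\alpha_s(e)$ does not depend on the representative of $s$. Indeed, for $k\in K$ the cocycle action satisfies $\alpha_{sk}=\Ad{u_{s,k}^{\ast}}\circ\alpha_s\circ\alpha_k$, which gives $\alpha_{sk}(e)=\alpha_s(e)$ because $u$ is trivial on $M_{\omega,\alpha}$, or at least acts trivially by conjugation there. Moreover, $K$-invariance of $S_0$ gives $\alpha_k(\alpha_s(e))=\alpha_{ks}(e)$ with $ks\in S_0$. Hence the projection $q=\bigvee_{s\in S_0}\alpha_s(e)$ is $K$-invariant and lies in $N^K$.

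\emph{Step 2 (local step).} Given a nonzero projection $f\in N^K$, apply the corner version of the preceding estimate with $\varepsilon<1/(2|S_0|^2)$. This produces a nonzero projection $e_0\le f$ in $N^K$ with $\tau(e_0\alpha_s(e_0))\le|S_0|\varepsilon\tau(e_0)$ for all $s\in S_0$. To pass from small overlap to zero overlap, set $r=\bigvee_{s\in S_0}\alpha_s(e_0)\wedge e_0$; more carefully, take the spectral projection of $e_0\big(\sum_{s\in S_0}\alpha_s(e_0)\big)e_0$ below $1/2$ and call it $e_1$. Since $\tau\big(e_0\sum_s\alpha_s(e_0)e_0\big)\le|S_0|^2\varepsilon\tau(e_0)<\tau(e_0)/2$, Chebyshev's inequality gives $\tau(e_1)>0$. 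However, $e_1$ only satisfies $\|e_1\alpha_s(e_0)e_1\|\le1/2$, which is still not zero overlap.

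\emph{Step 3 (maximality).} I will therefore use a maximality argument instead. Let $\mathcal{F}$ be the set of projections $e\in N^K$ with $e\alpha_s(e)=0$ for all $s\in S_0$, and take a maximal element $e$ with respect to $\tau$, using separability and $\tau$-completeness of increasing limits. If $1-\bigvee_{s\in S_0\cup S_0^{-1}}\alpha_s(e)\vee e$ were nonzero, I would try to add a small piece of it. This requires zero overlap already at the local step, so local zero overlap is the real content.

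\emph{Step 4 (exact disjointness via iteration).} To get exact disjointness I would iterate the approximate step inside the ultraproduct. For each $n$, pick $e^{(n)}$ with overlap at most $\tau(e^{(n)})/n$. The trace $\tau(e^{(n)})$ may tend to $0$, so first run an exhaustion: take a maximal family of such pieces, whose sum has overlap at most $2|S_0|\varepsilon$ and trace at least $1/(|S_0|+1)$ roughly. The index selection trick applied to $N^\omega\supset(N^K)^\omega$ then yields a projection in $N^K$ of trace bounded below with overlap $0$; the equicontinuity is preserved because everything is $K$-fixed. This reindexation, with its uniform lower trace bound from exhaustion, is the main obstacle I expect.
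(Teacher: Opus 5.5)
Your Step 4 has the right shape, and it is the paper's route: an exhaustion that produces, for each $\delta>0$, a projection in $N^K$ with relative overlap at most $\delta$ and trace bounded below independently of $\delta$, followed by the index selection trick as $\delta\searrow0$. The gap is that the step carrying the argument is only asserted. You claim that a maximal family of pieces with small self-overlap sums to a projection with overlap at most $2|S_0|\varepsilon$ and trace at least roughly $(|S_0|+1)^{-1}$. Nothing you wrote controls the cross terms $\tau(e_i\alpha_s(e_j))$, $i\neq j$, which for unrelated pieces can be as large as $\tau(e_i)$. Nothing you wrote produces the lower trace bound either. Your Step 3 also draws the wrong conclusion: the maximality argument does not need zero overlap at the local step. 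Conversely, the reindexation you flag as the main obstacle is routine, since everything is $K$-fixed and hence automatically equicontinuous.

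The missing idea is that the \emph{approximate} condition is preserved under exactly the disjoint extension you set up in Step 3.
\begin{itemize}
\item Enlarge $S_0$ so that $(\bigcup S_0)^{-1}=\bigcup S_0$. It stays $K$-invariant and avoids $K$, and this only strengthens the conclusion.
\item Fix $\delta>0$. Let $e\in N^K$ be maximal among projections with $\tau(e\alpha_s(e))\le\delta\tau(e)$ for all $s\in S_0$ (Zorn's lemma, using normality of $\tau$).
\item Suppose $f=e\vee\bigvee_{s\in S_0}\alpha_s(e)\neq1$. The local estimate in the corner of $1-f\in N^K$ gives a nonzero $e'\le1-f$ in $N^K$ with $\tau(e'\alpha_s(e'))\le\delta\tau(e')$.
\item The projection $e'$ is orthogonal to $\alpha_s(e)$, and also to $\alpha_s^{-1}(e)$ by symmetry and $K$-invariance of $e$. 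Hence $e'\alpha_s(e)=0=e\alpha_s(e')$.
\item Therefore $\tau((e+e')\alpha_s(e+e'))=\tau(e\alpha_s(e))+\tau(e'\alpha_s(e'))\le\delta\tau(e+e')$, with no factor-$2$ loss. This contradicts maximality, so $f=1$.
\item By $\alpha$-invariance of $\tau$, $1\le\tau(e)+\sum_{s\in S_0}\tau(\alpha_s(e))=(1+|S_0|)\tau(e)$.
\end{itemize}
This $\delta$-independent lower bound is what makes the index-selection limit a nonzero projection in $N^K$ with $\tau(e\alpha_s(e))=0$. Faithfulness of $\tau$ then gives $e\alpha_s(e)=0$.
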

\begin{proof}
    By enlarging $S_0$, we may assume that $S_0$ is symmetric in the sense that $(\bigcup S_0)^{-1}=\bigcup S_0\subset G$.
    Let $\delta>0$ and $e\in N^K$ a maximal projection satisfying $\tau(e\alpha_s(e))\le\delta\tau(e)$ for every $s\in S_0$.
    Suppose that $1\neq f=e\vee\bigvee_{s\in S_0}\alpha_s(e)\in N^K$.
    Applying the local quantization to the corner of $1-f$, we obtain a nonzero projection $e'\in N^K$ such that $e'\le1-f$ and $\tau(e'\alpha_s(e'))\le\delta\tau(e')$ for every $s\in S_0$.
    Since $S_0$ is symmetric, we have $e'\alpha_s(e)=0=e\alpha_s(e')$ for every $s\in S_0$.
    The projection $e+e'$ satisfies the same inequality as $e$, which contradicts the maximality of $e$.
    Therefore, $f=1$ and
    \[1=\tau(f)\le\tau(e)+\sum_{s\in S_0}\tau(\alpha_s(e))=(1+|S_0|)\tau(e).\]

    Using the index selection trick as $\delta\searrow0$, we obtain a projection $e\in N^K$ with $\tau(e)\ge(1+|S_0|)^{-1}>0$ and $\tau(e\alpha_s(e))=0$ for every $s\in S_0$.
    Since $\tau$ is faithful, this means $e\alpha_s(e)=0$.
\end{proof}

Let $F=FK\subset G$ be compact and $\varepsilon>0$, and fix a finite $K$-$(F,\varepsilon)$-invariant subset $S\subset G/K$.
Let $\mathcal{E}$ be the set of all families $E=(e_s)_{s\in S}$ of mutually orthogonal projections in $N^{K_S}$ such that
\begin{itemize}
    \item $\alpha_k(e_s)=e_{ks}$ for all $k\in K$ and $s\in S$;
    \item $[\alpha_g(e_s),e_{s'}]=0$ for all $g\in F$ and $s,s'\in S$.
\end{itemize}
We adopt the convention that $e_s=0$ for $s\notin S$.
For $E\in\mathcal{E}$ and $g\in F$, define
\[a_{g,E}=\sum_{s\in G/K}|\alpha_g(e_s)-e_{gs}|_1,\quad b_E=\sum_s|e_s|_1.\]

\begin{lem}\label{lem:E-to-E'}
    Let $E\in\mathcal{E}$.
    If $b_E<1-\varepsilon^{1/2}$, then there exists $E'\in \mathcal{E}$ such that
    \begin{enumerate}
        \item $0<(\varepsilon^{1/2}/2)\sum_s|e'_s-e_s|_1\le b_{E'}-b_E$;
        \item $a_{g,E'}-a_{g,E}\le 2\varepsilon^{1/2}(b_{E'}-b_E)$ for all $g\in F$.
    \end{enumerate}
\end{lem}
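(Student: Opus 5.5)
Following Ocneanu's tower-extension step (and its continuous analogue in \cite{MT16}), we adapt it to the $K$-equivariant setting of $G/K$. Set $f=1-\sum_s e_s\in N^{K}$; by hypothesis $\tau(f)=1-b_E>\varepsilon^{1/2}$. The plan is to find a fresh $K$-equivariant tower $(f_s)_{s\in S}$ that lives inside $f$ and consists of projections in $N^{K_S}$ commuting with everything relevant. Then put $e'_s=e_s(1-q)+f_s$, where $q$ is a suitable projection commuting with the old tower. In other words, we cut down the old tower under $q$ and replace it there by the new one.

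\textbf{Step 1 (new tower).} Use the fast reindexation trick with $N_0$ the $G$-invariant algebra generated by $E$ and $M$. This produces an orthogonal, $G$-invariant copy of $N$ inside $N_0'\cap N$. Inside it, take a Rohlin projection $e\in N^K$ from the preceding lemma with $e\alpha_s(e)=0$ for $s\in S^{-1}S\setminus\{K\}$. The choice $\tau(e)\ge(1+|S^{-1}S|)^{-1}$ can be arranged by the maximality argument in that lemma. Define $\tilde f_s=\alpha_{\sigma(s)}(e)$. Then $\tilde f_{ks}=\alpha_k(\tilde f_s)$, because $e$ is $K$-fixed and $\sigma(ks)\in k\sigma(s)K$; here the cocycle $u$ is harmless, since $\Ad{u}$ is trivial on $N$. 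Moreover $\tilde f_s\in N^{K_S}$. The $\tilde f_s$ are mutually orthogonal, and $\sum_s\tilde f_s$ has trace $|S|\tau(e)$. Orthogonality to $N_0$ makes $\tilde f_s$ commute with $E$ and with $\alpha_g(E)$, $g\in F$.

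Next set $f_s=\tilde f_s\, r$, where $r$ is a $K$-invariant projection with $r\le f$ commuting with everything. For instance, take $r=f$, which lies in $N_0$ and therefore commutes with the $\tilde f$'s. Also set $q=\sum_s\tilde f_s$ restricted to $f$ (namely $q=f\sum\tilde f_s$), so that $e'_s=e_s+f\tilde f_s$. This keeps $e'_s\ge e_s$ and orthogonal. The $K$-equivariance, the commutation $[\alpha_g(e'_s),e'_{s'}]=0$ and the orthogonality all follow from the orthogonality of $N_0$ and the copy, using $F\subset G$ and the commuting relations for $E$. One would also use the freedom to first shrink $e$ by an independent "coin" projection $z$ of trace $t$ (chosen to be $K$- and $G$-invariant, orthogonal to everything) to tune the size.

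\textbf{Step 2 (estimates).} By orthogonality (trace factorization), $b_{E'}-b_E=\sum_s\tau(f)\tau(\tilde f_s)=\tau(f)\,|S|\tau(e)\,t>0$. Also $\sum_s|e'_s-e_s|_1$ equals the same quantity, and since $\varepsilon^{1/2}/2\le1$ this gives (1). For (2), write $\alpha_g(e'_s)-e'_{gs}=(\alpha_g(e_s)-e_{gs})+(\alpha_g(f\tilde f_s)-f\tilde f_{gs})$ and apply the triangle inequality. The new term splits into $(\alpha_g(f)-f)\alpha_g(\tilde f_s)$ and $f(\alpha_g(\tilde f_s)-\tilde f_{gs})$.

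The second piece is controlled by the tower structure: $\alpha_g\tilde f_s=\alpha_{g\sigma(s)}(e)$, and $g\sigma(s)\in\sigma(gs)K$ gives $\alpha_g\tilde f_s=\tilde f_{gs}$ exactly when $gs\in S$. So it survives only for $s\in S\setminus g^{-1}S$, and its total mass is at most $\tau(f)t\tau(e)|S\Delta gS|<\varepsilon\,\tau(f)t\tau(e)|S|$. The first piece, summed over $s$, is at most $|\alpha_g(f)-f|_1\,t\tau(e)|S|$, and $|\alpha_g(f)-f|_1\le a_{g,E}$. This is the main obstacle, because the bound depends on $a_{g,E}$ rather than on $b_{E'}-b_E$. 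I would handle it Ocneanu-style: instead of $f$, use the defect-corrected region. That is, replace $f$ by the piece where the old tower is already almost invariant, or else absorb the term by dividing by $\tau(f)>\varepsilon^{1/2}$ and choosing $t$ small. The intended bound is $\tau(f)\varepsilon\le\varepsilon^{1/2}\cdot\tau(f)\cdot\varepsilon^{1/2}$, and the claim $a_{g,E'}-a_{g,E}\le 2\varepsilon^{1/2}(b_{E'}-b_E)$ should follow after also cutting the old tower by $1-q$ where needed, in the standard way. The exact bookkeeping of this $f$-noninvariance term is where I expect the real work.
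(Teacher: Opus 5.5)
There is a genuine gap, and it is where you suspect: inequality (2) is never proved, and the construction you actually commit to cannot satisfy it. Take $f=1-\sum_se_s$ and $e'_s=e_s+f\tilde f_s$; the coin $z$ only rescales everything by $t$. Your own estimate then reads
\[a_{g,E'}-a_{g,E}\le\Bigl(\frac{|\alpha_g(f)-f|_1}{\tau(f)}+\varepsilon\Bigr)(b_{E'}-b_E),\]
where $\alpha_g(f)-f=\sum_{s}(e_{gs}-\alpha_g(e_s))$ is the defect of the old tower. The ratio does not depend on $t$, so shrinking $t$ is useless. Dividing by $\tau(f)>\varepsilon^{1/2}$ does not help either. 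For a general $E\in\mathcal{E}$, $|\alpha_g(f)-f|_1$ can be comparable to $b_E$. Even under the Zorn constraint $a_{g,E}\le 2\varepsilon^{1/2}b_E$, the only available bound is $a_{g,E}/\tau(f)\le 2b_E$, not $O(\varepsilon^{1/2})$. This is not just a lossy estimate: new levels placed inside $1-\sum_se_s$ genuinely inherit the non-invariance of that region. A symptom is that your argument uses $b_E<1-\varepsilon^{1/2}$ only through $\tau(f)>0$. Your step does give $\Delta a_g\le(a_{g,E}/(1-b_E)+\varepsilon)\Delta b$, which might drive a Zorn argument with a different invariant, but that is not the lemma stated here.

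The missing idea is the one you announce at the start and then discard by cutting with $f$. The new independent tower should \emph{overwrite} the old one everywhere, not fill the gap. In your notation, take $r=1$, i.e.\ $q=\sum_{s\in S}\tilde f_s$ and $e'_s=e_s(1-q)+\tilde f_s$.

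The Rohlin projection $e$ must satisfy $e\alpha_s(e)=0$ for $s\in(\bigcup FS)^{-1}FS\setminus\{K\}$, not only for $S^{-1}S$. This larger set is needed for $[\alpha_g(e'_s),e'_{s'}]=0$ when $gs\notin S$, which does not follow from independence from $N_0$. It is also needed for the count below.

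Independence gives $\sum_s|e_sq|_1=b_E|S|\tau(e)$. Hence $b_{E'}-b_E=(1-b_E)|S|\tau(e)>\varepsilon^{1/2}|S|\tau(e)$, and this is exactly where the hypothesis is used. Also $\sum_s|e'_s-e_s|_1\le(1+b_E)|S|\tau(e)<2|S|\tau(e)$, which yields (1).

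For $s\in S\cap g^{-1}S$ the new levels cancel exactly, since $\alpha_g(\tilde f_s)=\tilde f_{gs}$. This leaves
\[\alpha_g(e'_s)-e'_{gs}=(\alpha_g(e_s)-e_{gs})(1-q)+\alpha_g(e_s)(q-\alpha_g(q)).\]
So the old defect is not increased. The extra terms sum to at most $|q-\alpha_g(q)|_1=|S\Delta gS|\tau(e)\le\varepsilon|S|\tau(e)$. The boundary indices $s\in S\Delta g^{-1}S$ add at most another $\varepsilon|S|\tau(e)$. Thus $a_{g,E'}-a_{g,E}\le 2\varepsilon|S|\tau(e)<2\varepsilon^{1/2}(b_{E'}-b_E)$, and the non-invariance of $1-\sum_se_s$ never enters.
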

\begin{proof}
    Let $f\in N^K$ be a nonzero projection such that $f\alpha_s(f)=0$ for all $s\in S'$, where $S'=(\bigcup FS)^{-1}FS\setminus\{K\}\subset G/K$.
    By the fast reindexation, the projection $f$ can be chosen to be $\tau$-orthogonal to $(\alpha_g(e_s))_{g\in G,s\in S}''$
    Then the projections $\alpha_s(f)$, $s\in FS$, are mutually orthogonal.
    Let $\tilde{f}=\sum_{s\in S}\alpha_s(f)$ and $e_s'=e_s(1-\tilde{f})+\alpha_s(f)$.
    Then $E'=(e'_s)_{s\in S}$ belongs to $\mathcal{E}$.

    To see (1), we calculate
    \begin{align*}
        \sum_s|e_s'-e_s|_1 &\le \sum_s(|e_s\tilde{f}|_1+|\alpha_s(f)|_1)\\
        &= |(\sum_se_s)\tilde{f}|_1+|S||f|_1\\
        &< ((1-\varepsilon^{1/2})+1)|S||f|_1\le 2|S||f|_1,
    \end{align*}
    and
    \begin{align*}
        b_{E'}-b_E &= \sum_s(-|e_s\tilde{f}|_1+|\alpha_s(f)|_1)\\
        &> -(1-\varepsilon^{1/2})|S||f|_1+|S||f|_1\\
        &= \varepsilon^{1/2}|S||f|_1.
    \end{align*}
    These inequalities prove $0<(\varepsilon^{1/2}/2)\sum_s|e_s'-e_s|_1<\varepsilon^{1/2}|S||f|_1<b_{E'}-b_E$.

    Let $g\in F$.
    For $s\in S\cap g^{-1}S$, we have
    \begin{align*}
        |\alpha_g(e_s')-e_{gs}'|_1 &= |\alpha_g(e_s)(1-\alpha_g(\tilde{f}))-e_{gs}(1-\tilde{f})|_1\\
        &\le |(\alpha_g(e_s)-e_{gs})(1-\tilde{f})|_1+|\alpha_g(e_s)(\tilde{f}-\alpha_g(\tilde{f}))|_1\\
        &\le |\alpha_g(e_s)-e_{gs}|_1+|\alpha_g(e_s)(\tilde{f}-\alpha_g(\tilde{f}))|_1.
    \end{align*}
    For $s\in S\setminus g^{-1}S$,
    \begin{align*}
        |\alpha_g(e_s')-e_{gs}'|_1 &= |\alpha_g(e_s')|_1\\
        &= |e_s(1-\tilde{f})|_1+|\alpha_s(f)|_1\\
        &\le |e_s|_1+|f|_1\\
        &= |\alpha_g(e_s)-e_{gs}|_1+|f|_1.
    \end{align*}
    Similarly, for $s\in g^{-1}S\setminus S$, we have $|\alpha_g(e_s')-e_{gs}'|_1\le|\alpha_g(e_s)-e_{gs}|_1+|f|_1$.
    Note that $|\tilde{f}-\alpha_g(\tilde{f})|_1=|S\Delta gS||f|_1\le\varepsilon|S||f|_1$ by the F\o lner condition.
    Thus, it follows that
    \begin{align*}
        a_{g,E'}-a_{g,E} &\le \sum_{s\in S\cap g^{-1}S}|\alpha_g(e_s)(\tilde{f}-\alpha_g(\tilde{f}))|_1+\sum_{s\in S\Delta g^{-1}S}|f|_1\\
        &\le |\tilde{f}-\alpha_g(\tilde{f})|_1+|S\Delta g^{-1}S||f|_1\\
        &\le 2\varepsilon|S||f|_1\\
        &= 2\varepsilon^{1/2}\cdot \varepsilon^{1/2}|S||f|_1\\
        &< 2\varepsilon^{1/2}(b_{E'}-b_E).
    \end{align*}
    This proves (2).
\end{proof}

\begin{thm}\label{thm:tower}
    Let $(\alpha,u)\colon G\curvearrowright M$ be a centrally free cocycle action of an amenable locally compact group with a compact open subgroup $K<G$ on a factor $M$, and let $S\subset G/K$ be a finite $K$-$(F,\varepsilon)$-invariant subset.
    There exists a partition of unity $E=(e_s)_{s\in S}$ in $(M_{\omega,\alpha})^{K_S}$ such that
    \[a_{g,E}=\sum_{s\in G/K}|\alpha_g(e_s)-e_{gs}|_1\le 4\varepsilon^{1/2}\]
    for every $g\in F$.
\end{thm}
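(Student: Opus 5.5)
The proof is the standard Ocneanu-type exhaustion argument, driven by Lemma \ref{lem:E-to-E'}, followed by a small correction that turns the near-partition into an exact partition of unity.

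\textbf{Step 1: exhaustion.} Start from the empty tower $E_0=(0)_{s\in S}\in\mathcal{E}$, for which $a_{g,E_0}=b_{E_0}=0$. We look for $E\in\mathcal{E}$ with
\[a_{g,E}\le 2\varepsilon^{1/2}b_E\quad(g\in F),\qquad b_E\ge 1-\varepsilon^{1/2}.\]
Consider the set $\mathcal{E}_0$ of all $E\in\mathcal{E}$ satisfying $a_{g,E}\le2\varepsilon^{1/2}b_E$ for all $g\in F$, with the order $E\preceq E'$ iff
\[(\varepsilon^{1/2}/2)\sum_s|e'_s-e_s|_1\le b_{E'}-b_E\quad\text{and}\quad a_{g,E'}-a_{g,E}\le2\varepsilon^{1/2}(b_{E'}-b_E).\]
Transitivity follows from the triangle inequality. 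Along a chain, $b_E$ is increasing and bounded by $1$. The first inequality makes the chain Cauchy in $|\cdot|_1$ for each $s$, so it converges in the strong topology to a family of projections. The defining conditions of $\mathcal{E}$ (orthogonality, $K$-covariance, commutation, membership in $N^{K_S}$), as well as the $a$- and $b$-inequalities, pass to the limit. Hence chains have upper bounds, and Zorn's lemma gives a maximal $E\in\mathcal{E}_0$. If $b_E<1-\varepsilon^{1/2}$, Lemma \ref{lem:E-to-E'} produces $E'$ strictly above $E$ (strict because $b_{E'}>b_E$) that still lies in $\mathcal{E}_0$, contradicting maximality. Alternatively one can avoid Zorn by a countable transfinite/greedy iteration that picks near-maximal increments of $b$; either way the limit step is the only technical point. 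So $b_E\ge1-\varepsilon^{1/2}$, and hence $a_{g,E}\le 2\varepsilon^{1/2}$.

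\textbf{Step 2: completing to a partition of unity.} Put $e=1-\sum_se_s$. Then $\tau(e)\le\varepsilon^{1/2}$, $e$ is a projection in $N^{K_S}$, and $e$ is $K$-invariant because the family is $K$-covariant. Replace $e_K$ by $e_K+e$. This keeps orthogonality and the sum equals $1$. However, $\alpha_k(e_K)=e_K$ only for $k\in K\cap$ the stabilizer, and $\alpha_k(e_K+e)=e_{kK}+e$, which breaks the covariance $\alpha_k(e_s)=e_{ks}$ for the other cosets $s$ in the orbit $KK=\{K\}$. This is harmless, since $K$ fixes the coset $K$ and $e$ is $K$-invariant, so covariance is preserved. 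The commutation condition is not needed in the final statement. For $g\in F$ the change in $a_{g,E}$ comes only from the terms $s=K$ and $s=g^{-1}K$, each contributing at most $|e|_1+|\alpha_g(e)|_1$, so the total increase is at most $2\tau(e)\le2\varepsilon^{1/2}$. This yields $a_{g,E}\le4\varepsilon^{1/2}$.

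\textbf{Main obstacle.} The delicate point is the limit step in Step 1. One must check that the strong limit of a chain still commutes with $\alpha_g(e_s)$ for all $g\in F$. Strong continuity of multiplication on bounded sets handles this, together with the continuity of $\alpha_g$ on $N$ in $|\cdot|_1$, which holds because $\tau$ is $\alpha$-invariant. The inequalities pass to the limit by lower semicontinuity and continuity of $|\cdot|_1$.
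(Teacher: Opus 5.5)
Your proposal is correct and follows the paper's proof: Zorn's lemma on $\mathcal{E}_0$ ordered by the two inequalities of Lemma~\ref{lem:E-to-E'}, maximality forcing $b_E\ge 1-\varepsilon^{1/2}$, and then absorbing $1-\sum_s e_s$ into $e_K$ (this is the paper's $e_K=1-\sum_{s\neq K}e'_s$); your write-up also supplies the inductivity check and the final estimate, which the paper leaves implicit. One wording fix in Step~2: the $s=K$ term grows by at most $|\alpha_g(e)|_1$ and the $s=g^{-1}K$ term by at most $|e|_1$, and nothing changes when $g\in K$ since $\alpha_g(e)=e$; this is what gives the total $2\tau(e)$, whereas ``each at most $|e|_1+|\alpha_g(e)|_1$'' would only give $4\tau(e)$.
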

\begin{proof}
    Let $\mathcal{E}_0$ be the set of all $E\in\mathcal{E}$ such that $a_{g,E}\le 2\varepsilon^{1/2}b_E$ for every $g\in F$.
    This set contains the zero family and hence nonempty.
    We define an order $E\le E'$ on $\mathcal{E}_0$ if $E=E'$ or $E$ and $E'$ satisfy the conditions (1) and (2) in Lemma \ref{lem:E-to-E'}.
    Then $\mathcal{E}_0$ is an inductive ordered set.
    By Zorn's lemma, we find a maximal element $E'=(e'_s)_{s\in S}\in\mathcal{E}_0$.
    Lemma \ref{lem:E-to-E'} implies that $b_{E'}\ge 1-\varepsilon^{1/2}$.
    Define $E=(e_s)_{s\in S}$ by $e_s=e_s'$ for $s\neq K$ and $e_K=1-\sum_{s\neq K}e_s'\ge e_K'$.
    Then the conclusion follows.
\end{proof}

Note that $\sum_{s\in S\setminus gS}|e_s|_1=\sum_{s\in S\setminus gS}|\alpha_g(e_{g^{-1}s})-e_s|_1\le a_{g,E}\le 4\varepsilon^{1/2}$ for $g\in F$.

\section{Cohomology of actions}\label{sec:3}

\subsection{\texorpdfstring{$K$}{K}-invariant cocycles and continuity}

Let $G$ be a locally compact group with a compact open subgroup $K<G$, and let $A$ be a Polish group.
We say that a map $v\colon G\to A$ is \emph{(right) $K$-invariant} if $v_{gk}=v_g$ for all $g\in G$ and $k\in K$, or equivalently, it descends
canonically to a map $G/K\to A$.
For a $\alpha$-cocycle $v$, it is $K$-invariant if and only if $v|_K\equiv 1$ if and only if it is (left) $K$-equivariant, that is, $v_{kg}=\alpha_k(v_g)$ for all $g\in G$ and $k\in K$.
If $v|_K$ is a coboundary, then $v$ is cohomologous to a $K$-invariant one.

Recall that every measurable group homomorphism between Polish groups is continuous.
This fact implies the following standard fact.

\begin{lem}
    Every Borel 1-cocycle is continuous.
\end{lem}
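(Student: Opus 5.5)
The plan is to reduce the statement to the automatic continuity fact recalled just above: every Borel (measurable) homomorphism between Polish groups is continuous. Let $v\colon G\to\U(M)$ be a Borel $\alpha$-cocycle for an action $\alpha\colon G\curvearrowright M$. The cocycle itself is not a homomorphism, so we encode it in a semidirect product. The Polish group we use is $\U(M)\rtimes\Aut(M)$, with the strong$^*$ topology on $\U(M)$ (Polish, since $M$ has separable predual) and the $u$-topology on $\Aut(M)$. The group law is $(a,\theta)(b,\rho)=(a\theta(b),\theta\rho)$.

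First I will check that this semidirect product is a Polish group. It is Polish as a product space. Joint continuity of the multiplication reduces to joint continuity of $(\theta,b)\mapsto\theta(b)$ on $\Aut(M)\times\U(M)$. This holds because the $u$-topology coincides with the topology of pointwise strong$^*$ convergence on $M$, uniformly on bounded sets (the standard equivalence for the $u$-topology, e.g.\ via the standard form). Continuity of the inverse $(a,\theta)^{-1}=(\theta^{-1}(a^*),\theta^{-1})$ follows in the same way. If one prefers to avoid this, it also suffices to note that $\U(M)\rtimes\Aut(M)$ is the closed subgroup of $\U(L^2(M))$ generated by $\U(M)$ and the standard implementations of automorphisms, which are continuous in the $u$-topology.

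Next define $\Phi\colon G\to\U(M)\rtimes\Aut(M)$ by $\Phi(g)=(v_g,\alpha_g)$. The cocycle identity $v_g\alpha_g(v_h)=v_{gh}$, together with $\alpha_g\alpha_h=\alpha_{gh}$, says exactly that $\Phi(g)\Phi(h)=\Phi(gh)$. Hence $\Phi$ is a homomorphism. It is Borel because $v$ is Borel by assumption and $\alpha$ is continuous. Since $G$ is second-countable locally compact, it is Polish, so the automatic continuity theorem makes $\Phi$ continuous. Composing with the continuous projection onto the first coordinate shows that $v$ is continuous.

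The only potentially delicate point is the topological-group structure of the semidirect product, namely joint continuity of the evaluation $(\theta,b)\mapsto\theta(b)$. I will handle it by the standard-form argument above rather than redo it by hand.
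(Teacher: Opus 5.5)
Your proposal is correct and matches the paper, which gives no separate argument and simply derives the lemma from automatic continuity of Borel homomorphisms between Polish groups; encoding the cocycle as the Borel homomorphism $g\mapsto(v_g,\alpha_g)$ into $\U(M)\rtimes\Aut(M)$ is the intended reduction (the homomorphism $g\mapsto v_g\lambda_g\in\U(M\rtimes G)$ would work equally well and avoids the joint-continuity question). One correction: the $u$-topology is \emph{not} strong$^*$ convergence uniform on bounded subsets of $M$ (rotations $\theta_t$ on $L^\infty(\mathbb{T})$ are $u$-continuous, yet $\|\theta_t(z^n)-z^n\|_2=|e^{int}-1|$ is not uniformly small in $n$), so joint continuity of $(\theta,b)\mapsto\theta(b)$ should rest, as you ultimately intend, on $\theta(b)=U_\theta bU_\theta^{\ast}$ with $\theta\mapsto U_\theta$ continuous into the strongly topologized unitary group of the standard form.
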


We say that a 2-cocycle $u\colon G\times G\to A$ (together with $\alpha\colon G\to\Aut(A)$) is \emph{(right) $K$-invariant} if $u_{g,k}=1$ for all $g\in G$ and $k\in K$, or equivalently, it descends to $G\times G/K\to A$.
Note that if a cocycle action $(\alpha,u)$ is $K$-invariant, then $\alpha_g\circ\alpha_k=\alpha_{gk}$ for every $g\in G$ and $k\in K$.

\begin{lem}
    If $u|_K$ is a coboundary, then $u$ is perturbed to a $K$-invariant one.
\end{lem}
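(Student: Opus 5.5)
First, since $u|_K$ is a coboundary, we choose a Borel map $w\colon K\to\U(M)$ with $w_k\alpha_k(w_l)u_{k,l}w_{kl}^{\ast}=1$ for $k,l\in K$. We may take $w_{1_G}=1$, by evaluating at $k=l=1_G$ and using normalization. The plan is to extend $w$ to a Borel map $v\colon G\to\U(M)$ that is compatible with the coset decomposition $G=\bigsqcup_{s\in G/K}\sigma(s)K$. We set
\[v_{\sigma(s)k}=\alpha_{\sigma(s)}(w_k)\quad\text{or more naturally}\quad v_{\sigma(s)k}=w_k\,\tilde{u}_{s,k},\]
where the correction is chosen so that the perturbed cocycle $u'_{g,h}=v_g\alpha_g(v_h)u_{g,h}v_{gh}^{\ast}$ satisfies $u'_{g,k}=1$ for all $g\in G$ and $k\in K$. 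Concretely, we first perturb by $w$ on $K$ (extended by $1$ off $K$ is not enough), so we instead define $v$ in two stages.

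In \textbf{Stage 1}, we replace $(\alpha,u)$ by the perturbation by $v^{(1)}$. Here $v^{(1)}_g=w_g$ for $g\in K$, and for $g\in\sigma(s)K$ with $s\neq K$ we set $v^{(1)}_g=1$. The map $v^{(1)}$ is Borel because $K$ is open. After this step the new cocycle, still denoted $u$, satisfies $u|_{K\times K}\equiv1$, so $\alpha|_K$ is a genuine action. In \textbf{Stage 2}, we set $v_{g}=u_{\sigma(\pi(g)),\,\sigma(\pi(g))^{-1}g}^{\ast}$, that is, $v_{\sigma(s)k}=u_{\sigma(s),k}^{\ast}$. This is Borel since $\sigma$ and $\pi$ are continuous and $u$ is Borel, and it satisfies $v|_K\equiv1$ because $\sigma(K)=1_G$ and $u$ is normalized.

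We then verify $u'_{g,k}=1$. Write $g=\sigma(s)l$ with $l\in K$ and let $k\in K$, so that $gk=\sigma(s)lk$. We have
\[u'_{g,k}=v_g\alpha_g(v_k)u_{g,k}v_{gk}^{\ast}=u_{\sigma(s),l}^{\ast}\,u_{\sigma(s)l,k}\,u_{\sigma(s),lk}.\]
The cocycle identity with the triple $(\sigma(s),l,k)$ gives $u_{\sigma(s),l}u_{\sigma(s)l,k}=\alpha_{\sigma(s)}(u_{l,k})u_{\sigma(s),lk}=u_{\sigma(s),lk}$, using $u_{l,k}=1$ from Stage 1. Hence $u^{\ast}_{\sigma(s),l}u_{\sigma(s)l,k}=u_{\sigma(s),lk}u_{\sigma(s)l,k}^{\ast}\cdots$. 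Here care is needed with the order: the correct choice is $v_{\sigma(s)k}=u_{\sigma(s),k}^{\ast}$ inserted so that the product telescopes, namely
\[u'_{g,k}=u_{\sigma(s),l}^{\ast}u_{\sigma(s)l,k}\,u_{\sigma(s),lk}^{\ast}{}^{\ast}.\]
So the sign convention of $v_{gk}^{\ast}$ must be matched against the identity. If the naive choice fails, we use $v_{\sigma(s)k}=u_{\sigma(s),k}$ instead and check again. One of the two conventions makes the expression reduce to $u_{\sigma(s),l}^{\mp}\,u_{\sigma(s),l}^{\pm}=1$ via the identity above.

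The main obstacle is this bookkeeping. One must get the convention of the perturbation formula $v_g\alpha_g(v_h)u_{g,h}v_{gh}^{\ast}$ to match the cocycle identity, and also check that the perturbed pair remains a normalized cocycle action with $\alpha'_g=\Ad{v_g}\circ\alpha_g$. This last point is automatic from the general theory of perturbation. Measurability is harmless because $G/K$ is countable and discrete.
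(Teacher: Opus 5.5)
Your strategy is the paper's: first use the coboundary on $K$ to make $u|_{K\times K}\equiv 1$, then correct along the coset representatives $\sigma(s)$. In fact the composite of your two perturbations is exactly the paper's single cochain $v_g=\alpha_{\sigma(gK)}(w_{\sigma(gK)^{-1}g})\,u_{\sigma(gK),\sigma(gK)^{-1}g}$. Indeed, for $g=\sigma(s)l$ with $s\neq K$, Stage 1 turns $u_{\sigma(s),l}$ into $\alpha_{\sigma(s)}(w_l)u_{\sigma(s),l}$, and Stage 2 then uses that value.

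However, you never carry out the decisive verification, and the choice you call ``the correct choice'' is wrong. With $v_{\sigma(s)k}=u^{\ast}_{\sigma(s),k}$ you have $v_k=u^{\ast}_{1,k}=1$ and, from the cocycle identity, $u_{\sigma(s)l,k}=u^{\ast}_{\sigma(s),l}u_{\sigma(s),lk}$. Hence $u'_{g,k}=u^{\ast}_{\sigma(s),l}u^{\ast}_{\sigma(s),l}u_{\sigma(s),lk}u_{\sigma(s),lk}$. Already for $l=1_G$ this equals $u_{\sigma(s),k}^2$, which is not $1$ in general. A proof cannot end with ``one of the two conventions works''; this is precisely the step that has to be checked.

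The fix is one line. Take $v_{\sigma(s)k}=u_{\sigma(s),k}$, with no adjoint. Then $v|_K\equiv 1$ still holds, since $\sigma(K)=1_G$ and $u$ is normalized. For $g=\sigma(s)l$ and $k\in K$,
\[u'_{g,k}=u_{\sigma(s),l}\,u_{\sigma(s)l,k}\,u_{\sigma(s),lk}^{\ast}=\alpha_{\sigma(s)}(u_{l,k})\,u_{\sigma(s),lk}\,u_{\sigma(s),lk}^{\ast}=1,\]
by the cocycle identity for the triple $(\sigma(s),l,k)$ and $u_{l,k}=1$ from Stage 1.

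Your other points are fine: Borel measurability, $w_{1_G}=1$, and preservation of normalization. With this correction, your argument is a two-step factorization of the paper's computation. The paper does it in one step with the same cocycle identity, at the cost of carrying the $\alpha_s(v_l)$ factors through the calculation.
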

\begin{proof}
    There exists a Borel map $v\colon K\to A$ such that $v_k\alpha_k(v_l)u_{k,l}v_{kl}^{-1}=1$ for all $k,l\in K$.
    Extend it to a 1-cochain $v\colon G\to A$ by setting
    \[v_g=\alpha_{\sigma(gK)}(v_{\sigma(gK)^{-1}g})u_{\sigma(gK),\sigma(gK)^{-1}g}.\]
    The perturbed cocycle $u'$, defined by $u'_{g,h}=v_g\alpha_g(v_h)u_{g,h}v_{gh}^{\ast}$, is $K$-invariant.
    For notational convenience, we write $g=sl$ where $s=\sigma(gK)$ and $l=\sigma(gK)^{-1}g\in K$.
    For $k\in K$, we calculate
    \begin{align*}
        u'_{g,k} &= v_g\alpha_g(v_k)u_{g,k}v_{gk}^{-1}\\
        &= \alpha_s(v_l)u_{s,l}\cdot\alpha_{sl}(v_k)\cdot u_{sl,k}\cdot u_{s,lk}^{-1}\alpha_s(v_{lk}^{-1})\\
        &= \alpha_s(v_l)\alpha_s(\alpha_l(v_k))\cdot u_{s,l}u_{sl,k}u_{s,lk}^{-1}\cdot\alpha_s(v_{lk}^{-1})\\
        &= \alpha_s(v_l\alpha_l(v_k)u_{l,k}v_{lk}^{-1})=1.
    \end{align*}
    Hence $u'$ is $K$-invariant.
\end{proof}

\begin{lem}
    If a 2-cocycle $u$ is $K$-invariant, then $u$ is continuous.
\end{lem}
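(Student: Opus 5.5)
Since $u$ is right $K$-invariant, it descends to a Borel map $G\times G/K\to A$. Because $G/K$ is discrete, the plan is to show continuity in the first variable only. Joint continuity then follows: $G\times G/K$ is a disjoint union of the open slices $G\times\{s\}$, and $u_{g,h}=u_{g,\sigma(hK)}$ with $\sigma\circ\pi$ locally constant.

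So fix $s\in G/K$, put $h=\sigma(s)$, and consider the Borel map $g\mapsto u_{g,h}$. The cocycle identity with the triple $(k,g,h)$, $k\in K$, reads
\[u_{k,g}u_{kg,h}=\alpha_k(u_{g,h})u_{k,gh}.\]
By $K$-invariance $u_{k,g}$ depends on $k$ and $gK$ only. The same identity with the triple $(g,k,h)$ gives $u_{g,k}u_{gk,h}=\alpha_g(u_{k,h})u_{g,kh}$, that is, $u_{gk,h}=\alpha_g(u_{k,h})u_{g,kh}$, which relates right translates by $K$.

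The cleanest route is to reduce to the 1-cocycle lemma stated just above (every Borel 1-cocycle is continuous). For this, restrict to the compact open subgroup $K$ and use the right $K$-invariance to see that, for fixed $h$, the map $k\mapsto u_{k,h}$ is essentially a 1-cocycle-type object for $K$. Take the triple $(k,l,h)$ with $k,l\in K$:
\[u_{k,l}u_{kl,h}=\alpha_k(u_{l,h})u_{k,lh}.\]
Here $u_{k,l}=1$, and $u_{k,lh}=u_{k,\sigma(lhK)}$ where $lhK$ ranges over the finite orbit $KhK/K$. Thus the finitely many maps $c^{(t)}\colon k\mapsto u_{k,\sigma(t)}$, $t\in KhK/K$, satisfy
\[c^{(t)}_{kl}=\alpha_k\bigl(c^{(l^{-1}\cdot t)}_l\bigr)\ \text{(up to a fixed relabeling)}.\]
This says that $k\mapsto(c^{(t)}_k)_t$ is a Borel 1-cocycle for the action of $K$ on the Polish group $A^{KhK/K}$, given by $\alpha$ twisted by the permutation action of $K$ on the finite set $KhK/K$. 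The twisted action is continuous, since the permutation part factors through the open subgroup $K_S$, where $S=KhK/K$. By the 1-cocycle lemma, $k\mapsto u_{k,h}$ is therefore continuous on $K$. The same argument applies to $u_{k\sigma(r),h}$: this case follows from the triple $(k,\sigma(r),h)$ together with continuity of $k\mapsto u_{k,\sigma(r)}$ and of $k\mapsto\alpha_k$ on $K$, which holds because $\alpha$ is a homomorphism on $K$ modulo the trivial $u|_{K\times K}$.

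The next step is to pass from $K$ to all of $G$. Every $g$ lies in the open set $\sigma(r)K$ for some $r$, and also in $K\sigma(r')$-type neighbourhoods. Writing $g=\sigma(r)k$, continuity of $g\mapsto u_{g,h}$ near $g$ reduces to continuity of $k\mapsto u_{\sigma(r)k,h}=\alpha_{\sigma(r)}(u_{k,h})u_{\sigma(r),kh}$. Here the last factor takes finitely many values, is locally constant in $k$, and depends only on $khK$. Since $K_{\{hK\}}$ is open, it is constant near each point.

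I expect the main obstacle to be verifying carefully that the twisted system is a genuine Borel 1-cocycle, with continuous acting automorphisms on the product Polish group, so that the continuity lemma applies. In particular this needs continuity of $\alpha|_K$, which holds because $\alpha|_K$ is an honest Borel homomorphism into the Polish group $\Aut(A)$.
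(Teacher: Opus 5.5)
Your overall plan is sound: reduce to the first variable, get continuity on a compact open piece from the 1-cocycle lemma, then spread to $G$ via $u_{\sigma(r)k,h}=\alpha_{\sigma(r)}(u_{k,h})u_{\sigma(r),kh}$. The final spreading step is correct. The gap is in the key step, exactly where you suspected it: the displayed relation $c^{(t)}_{kl}=\alpha_k\bigl(c^{(l^{-1}\cdot t)}_l\bigr)$ is false. The triple $(k,l,\sigma(t))$ with $k,l\in K$ gives
\[u_{kl,t}=\alpha_k(u_{l,t})\,u_{k,lt},\]
which has two factors on the right; you dropped $u_{k,lt}$. No fixed relabeling of superscripts rescues the one-factor version. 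At $l=1$ it would force $c^{(t)}_k=\alpha_k(c^{(t)}_1)=1$ for all $k\in K$, since $u$ is normalized. So, as written, you have not exhibited a 1-cocycle, and the continuity lemma cannot be invoked. The twisted idea can be repaired, but the reindexing it needs depends on $k$. Take $S=KhK/K$, $E_k(t)=u_{k,k^{-1}t}^{-1}$ and $(\beta_kE)(t)=\alpha_k(E(k^{-1}t))$. Substituting $t\mapsto(kl)^{-1}t$ in the identity above then gives $E_{kl}=E_k\,\beta_k(E_l)$ in $A^S$.

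The paper avoids the twisted system altogether. Your own identity $u_{gk,h}=\alpha_g(u_{k,h})u_{g,kh}$ already suffices once you restrict $k$ to the open stabilizer $K_h=K\cap hKh^{-1}$, which you use at the very end anyway. There $u_{g,kh}=u_{g,h}$, so $v_g=u_{g,h}^{-1}$ satisfies $v_{gk}=v_g\alpha_g(v_k)$ for all $g\in G$ and $k\in K_h$.
\begin{itemize}
\item Taking $g\in K_h$ shows that $v|_{K_h}$ is a Borel $\alpha|_{K_h}$-cocycle, hence continuous.
\item Taking $g$ arbitrary then gives continuity of $v$ on the open neighbourhood $gK_h$ of $g$.
\end{itemize}
This single identity replaces your system on $A^{KhK/K}$. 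It also replaces the detour through $u_{k\sigma(r),h}$ and the sets $K\sigma(r')$, which you do not need and which need not cover $G$ for an arbitrary section $\sigma$.
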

\begin{proof}
    Obviously $u$ is locally constant in the second variable.
    It suffices to show that $v\colon g\mapsto u_{g,s}^{-1}$ is continuous for each fixed $s\in G/K$.
    Note that the subgroup $K_s=K\cap sKs^{-1}$ is open, and the restriction $\alpha|_{K_s}$ is an action.
    For every $k\in K_s$, we have $ks=s$ and
    \[u_{gk,s}=\alpha_g(u_{k,s})u_{g,ks}\]
    by the 2-cocycle identity.
    Consequently, $v_{gk}=v_g\alpha_g(v_k)$ holds for all $g\in G$ and $k\in K_s$, which means that $v|_{K_s}$ is an $\alpha$-cocycle and therefore is continuous.
\end{proof}

\subsection{Cohomology vanishing}

Fix a faithful normal state $\varphi$ on $M$.
The next lemma is the main averaging device for the cohomology arguments.
It combines the Rohlin tower from Theorem \ref{thm:tower} with $K$-invariance to make a cochain approximately trivial, with the remaining error controlled by a translated defect of the original cochain.

\begin{lem}\label{lem:approx-1coh-vanish}
    Let $(\alpha,u)\colon G\curvearrowright M$ be a centrally free cocycle action such that $u$ is $K$-invariant.
    Let $F=FK\subset G$ be compact, $\varepsilon>0$, and $S\subset G/K$ a finite $K$-$(F,\varepsilon)$-invariant subset.
    Let $V\colon G\to\U(M^{\omega}_{\alpha})$ be a $K$-invariant map.
    Then there exists $W\in\U((M^K)^{\omega})$ such that
    \begin{enumerate}
        \item $|a(WV_g\alpha_g(W^{\ast})-1)b|_{\varphi^{\omega}}^{\sharp}\le 4\varepsilon^{1/2}+\sup_{h\in\bigcup gS}|a(\alpha_{h^{-1}}^{-1}(V_{h^{-1}})V_g\alpha_g(\alpha_{h^{-1}g}^{-1}(V_{h^{-1}g}^{\ast}))-1)b|_{\varphi^{\omega}}^{\sharp}$;
        \item $\|[WV_g\alpha_g(W^{\ast}),\psi^{\omega}]\|\le 8\varepsilon^{1/2}+\sup_{h\in\bigcup gS}\|[\alpha_{h^{-1}}^{-1}(V_{h^{-1}})V_g\alpha_g(\alpha_{h^{-1}g}^{-1}(V_{h^{-1}g}^{\ast})),\psi^{\omega}]\|$;
        \item $\|[W,\psi^{\omega}]\|\le \sup_{h\in\bigcup S}\|[V_{h^{-1}},\alpha_{h^{-1}}(\psi^{\omega})]\|$
    \end{enumerate}
    for all $g\in F$, $a,b\in \U(M)$ and $\psi\in\nS(M)$.
    If $V$ takes values in $\U(M_{\omega,\alpha})$, then $W$ can be chosen in $\U((M_{\omega,\alpha})^K)$.
\end{lem}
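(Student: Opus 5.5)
The plan is the standard Shapiro-type averaging over a Rohlin tower. First, apply Theorem \ref{thm:tower} to get a partition of unity $E=(e_s)_{s\in S}$ in $(M_{\omega,\alpha})^{K_S}$ with $\alpha_k(e_s)=e_{ks}$ and $a_{g,E}\le 4\varepsilon^{1/2}$ for $g\in F$. By the fast reindexation trick, we may choose $E$ inside $N_0'\cap M_{\omega,\alpha}$, where $N_0$ is the countably generated $G$-invariant algebra generated by the values of $V$ and by $\alpha_g(V_h)$. We may also make $E$ $\varphi^{\omega}$-orthogonal to $N_0$ and $M$, so the tower projections commute with everything built from $V$.

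Then define
\[W=\sum_{s\in S}e_s\,\alpha_{s^{-1}}^{-1}(V_{s^{-1}}),\]
written with $h$ a representative of $s$. The $K$-invariance of $V$ and of $u$ makes the summand independent of the representative, via the identity $\alpha_{h}\circ\alpha_k=\alpha_{hk}$. Since the $e_s$ commute with the $V$'s, $W$ is a unitary. Next I would check that $\alpha_k(W)=W$ for $k\in K$. This should follow from $\alpha_k(e_s)=e_{ks}$ together with a cocycle-identity computation rewriting $\alpha_k\alpha_{s^{-1}}^{-1}(V_{s^{-1}})$ as the corresponding term for $ks$; this is where $K$-invariance of $u$ and $V$ is used. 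Then $W\in(M^{\omega}_\alpha)^K=(M^K)^{\omega}$ by the earlier lemma, and if $V$ is valued in $M_{\omega,\alpha}$ we get $W\in (M_{\omega,\alpha})^K$.

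For (1), compute
\[WV_g\alpha_g(W^*)=\sum_{s,t}e_s\alpha_g(e_t)\,X_{s}V_g\alpha_g(X_t^*),\qquad X_s=\alpha_{s^{-1}}^{-1}(V_{s^{-1}}).\]
Replace $\alpha_g(e_t)$ by $e_{gt}$, at a $1$-norm cost bounded by $a_{g,E}\le4\varepsilon^{1/2}$, using the lemma $|af|^\sharp\le\|a\||f|_1$ for $f$ in the tower algebra. What remains is $\sum_s e_s\,(X_sV_g\alpha_g(X_{g^{-1}s}^*))$. With $h$ the representative of $s$ (so $h\in\bigcup gS$ after relabelling), this is exactly the defect term $\alpha_{h^{-1}}^{-1}(V_{h^{-1}})V_g\alpha_g(\alpha_{h^{-1}g}^{-1}(V^*_{h^{-1}g}))$. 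Since the $e_s$ are orthogonal to $N_0$ and form a partition of unity, the second estimate of that lemma (the $L^\infty(X)\bar\otimes N$ bound by the esssup) gives the supremum bound. Multiplying by $a,b\in\U(M)$ is harmless because $e_s$ commutes with $M$.

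For (2), the same decomposition is used together with the commutator estimate $\|[x,\psi^{\omega}]\|$. The error term doubles to $8\varepsilon^{1/2}$, because the $1$-norm perturbation enters on both sides of the commutator. For (3), write $[W,\psi^\omega]=\sum_s e_s[X_s,\psi^\omega]$, using that $e_s$ is central-sequence-like and orthogonal. Then use $\|[\alpha^{-1}_{h^{-1}}(V_{h^{-1}}),\psi^{\omega}]\|=\|[V_{h^{-1}},\alpha_{h^{-1}}(\psi^{\omega})]\|$ by isometry.

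The main obstacle is making the orthogonality and norm estimates rigorous for the non-tracial $\varphi^{\omega}$. Summing over a partition in $M_\omega$ of operators in $N_0$ must give at most the supremum, and this needs the conditional-expectation argument of the earlier lemma applied to the abelian algebra generated by $E$ (realised as $L^\infty(X)$) and to the orthogonal algebra generated by $N_0$ and $M$. The $K$-equivariance check of $W$ is the other delicate, though mechanical, step.
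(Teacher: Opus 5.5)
There is a genuine gap, at the step you pass over quickly: the claim that $X_h=\alpha_{h^{-1}}^{-1}(V_{h^{-1}})$ depends only on the coset $hK$.

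Right $K$-invariance of $u$ and $V$ gives only left equivariance, $\alpha_k(X_h)=X_{kh}$ for $k\in K$. This uses $\alpha_k\circ\alpha_{h^{-1}}^{-1}=\alpha_{h^{-1}k^{-1}}^{-1}$ and $V_{h^{-1}k^{-1}}=V_{h^{-1}}$. Replacing $h$ by $hk$ instead puts the \emph{left} translate $k^{-1}h^{-1}$ into the argument of $V$, and nothing is assumed about that. Already for $G=K\times\mathbb{Z}$ and $u=1$ you get $X_{hk}=\alpha_h\alpha_k(V_{h^{-1}})\neq X_h$ unless $V_{h^{-1}}$ is $K$-fixed.

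If $V$ is a genuine $K$-invariant $\alpha$-cocycle for an action, then $X_h=V_h^{\ast}$ and the problem disappears. But the lemma is needed for non-cocycles, for example $V^{(1)}$ in Lemma~\ref{lem:approx-2coh-vanish} and $\tilde v$ in Theorem~\ref{thm:2coh-vanish}(2). There $X_h=u_{h,h^{-1}}^{\ast}\tilde v_h^{\ast}$ and $u_{hk,(hk)^{-1}}=\alpha_h(u_{k,k^{-1}h^{-1}})u_{h,h^{-1}}$, so right invariance fails.

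This breaks two steps of your argument. With fixed representatives $\sigma(s)$, your $W=\sum_s e_sX_{\sigma(s)}$ satisfies $\alpha_k(W)=\sum_s e_{ks}X_{k\sigma(s)}$ with $k\sigma(s)=\sigma(ks)k'$, so $W$ is not $K$-invariant. And in (1) the leftover term is $X_{\sigma(s)}V_g\alpha_g(X_{\sigma(g^{-1}s)}^{\ast})$. This is not the stated defect, because $g^{-1}\sigma(s)\neq\sigma(g^{-1}s)$ in general. The supremum in the statement runs over $h\in\bigcup gS\subset G$ rather than over cosets, which signals that this $K$-dependence is real.

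The missing ingredient is the Rohlin property of $\alpha|_K$ (Proposition~\ref{prop:cpt-cent.free->Rohlin}). The paper proceeds in three steps:
\begin{enumerate}
\item Take $Q\subset M^{\omega}_{\alpha}$ countably generated, $\alpha$-invariant, and containing $M$ and all $V_g$.
\item By fast reindexation, realize $L^{\infty}(K)\wtensor Q$ $K$-equivariantly (diagonal action) inside $M^{\omega}_{\alpha}$, and choose the tower orthogonal to it.
\item Let each block depend on the point of $K$:
\[W_s(k)=\alpha_{\sigma(s)}^{-1}\bigl(\alpha^{-1}_{k^{-1}\sigma(s)^{-1}}(V_{k^{-1}\sigma(s)^{-1}})\bigr),\qquad W=\sum_s\alpha_{\sigma(s)}(W_s)e_s.\]
\end{enumerate}
In effect, the $S$-tower combined with $L^{\infty}(K)$ is a tower indexed by $\bigcup S\subset G$, on which $h\mapsto X_h$ is well defined.

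$K$-invariance of $W$ then follows from $\alpha_k(e_s)=e_{ks}$ and the equivariance of $L^{\infty}(K)$. In estimate (1), the mismatch $\sigma(gs)^{-1}g\sigma(s)\in K$ is absorbed as a translation in the $L^{\infty}(K)$ variable. So the block at $k$ is exactly the defect at $h=\sigma(gs)k\in\bigcup gS$.

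Once $W$ is defined this way, the rest of your outline goes through: the tower error $a_{g,E}\le 4\varepsilon^{1/2}$, the orthogonality/esssup bound, $\|[x,\psi]\|\le 2|x|^{\sharp}_{\psi}$ for (2), and the isometry for (3).
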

\begin{proof}
    Let $Q$ be a countably generated, globally $\alpha$-invariant von Neumann subalgebra of $M^{\omega}_{\alpha}$ containing $M$ and $V_g$, $g\in G$.
    By the Rohlin property of $\alpha|_K$ and the fast reindexation, we regard $L^{\infty}(K)\wtensor Q\hookrightarrow M^{\omega,\alpha}$ as a $K$-equivariant subalgebra where $K$ acts diagonally.

    Let $(e_s)_{s\in S}$ be a Rohlin tower obtained by Theorem \ref{thm:tower} that is orthogonal to $L^{\infty}(K)\wtensor Q$ in $M^{\omega}_{\alpha}$.
    For each $s\in S$, define $W_s\in L^{\infty}(K)\wtensor Q$ by $W_s(k)=\alpha_{\sigma(s)}^{-1}(\alpha_{k^{-1}\sigma(s)^{-1}}^{-1}(V_{k^{-1}\sigma(s)^{-1}}))$ and set $W=\sum_s\alpha_{\sigma(s)}(W_s)e_s$.
    Observe that $W$ is a unitary.

    Using the cocycle identities and the $K$-invariance of $u$ and $V$, a direct calculation shows $\alpha_k(\alpha_{\sigma(s)}(W_s))=\alpha_{\sigma(ks)}(W_{ks})$ for $k\in K$ and $s\in S$.
    Indeed, we have
    \[\alpha_k(\alpha_{\sigma(s)}(W_s))=\alpha_{\sigma(ks)}(\alpha_{\sigma(ks)}^{-1}(u_{k,s})\alpha_{\sigma(ks)^{-1}k\sigma(s)}(W_s)\alpha_{\sigma(ks)}^{-1}(u_{k,s}^{\ast})),\]
    and
    \begin{align*}
        &\mathrel{\phantom{=}} [\alpha_{\sigma(ks)}^{-1}(u_{k,s})\alpha_{\sigma(ks)^{-1}k\sigma(s)}(W_s)\alpha_{\sigma(ks)}^{-1}(u_{k,s}^{\ast})](l)\\
        &= \alpha_{\sigma(ks)}^{-1}(u_{k,s})\cdot\alpha_{\sigma(ks)^{-1}k\sigma(s)}(W_s(\sigma(s)^{-1}k^{-1}\sigma(ks)l))\cdot\alpha_{\sigma(ks)}^{-1}(u_{k,s}^{\ast})\\
        &= \alpha_{\sigma(ks)}^{-1}(u_{k,s})\cdot\alpha_{\sigma(ks)^{-1}k\sigma(s)}(\alpha_{\sigma(s)}^{-1}(\alpha_{l^{-1}\sigma(ks)^{-1}k}^{-1}(V_{l^{-1}\sigma(ks)^{-1}k})))\cdot\alpha_{\sigma(ks)}^{-1}(u_{k,s}^{\ast})\\
        &= \alpha_{\sigma(ks)}^{-1}(u_{k,s})\cdot\alpha_{k^{-1}\sigma(ks)}^{-1}(\alpha_{l^{-1}\sigma(ks)^{-1}k}^{-1}(V_{l^{-1}\sigma(ks)^{-1}k}))\cdot\alpha_{\sigma(ks)}^{-1}(u_{k,s}^{\ast})\\
        &= \alpha_{\sigma(ks)}^{-1}(u_{k,s})\cdot\alpha_{k^{-1}\sigma(ks)}^{-1}(\alpha_k^{-1}(\alpha_{l^{-1}\sigma(ks)^{-1}}^{-1}(V_{l^{-1}\sigma(ks)^{-1}})))\cdot\alpha_{\sigma(ks)}^{-1}(u_{k,s}^{\ast})\\
        &= \alpha_{\sigma(ks)}^{-1}(\alpha_{l^{-1}\sigma(ks)^{-1}}^{-1}(V_{l^{-1}\sigma(ks)^{-1}}))\\
        &= W_{ks}(l).
    \end{align*}
    Since $\alpha_k(e_s)=e_{ks}$, it follows that $\alpha_k(W)=W$, $W\in \U((M^K)^{\omega})$.

    We next compute the coefficients arising from the tower.
    For $g\in F$, $s\in S$ and $k\in K$, put $h=\sigma(gs)k\in\bigcup gS$.
    We see that
    \[[W_{gs}\alpha_{\sigma(gs)}^{-1}(V_g)\alpha_{\sigma(gs)}^{-1}\circ\alpha_g\circ\alpha_{\sigma(s)}(W_s^{\ast})](k)=\alpha_{\sigma(gs)}^{-1}(\alpha_{h^{-1}}^{-1}(V_{h^{-1}})V_g\alpha_g(\alpha_{h^{-1}g}^{-1}(V_{h^{-1}g}^{\ast})))\]
    by the calculation
    \begin{align*}
        &\mathrel{\phantom{=}} [W_{gs}\alpha_{\sigma(gs)}^{-1}(V_g)\alpha_{\sigma(gs)}^{-1}\circ\alpha_g\circ\alpha_{\sigma(s)}(W_s^{\ast})](k)\\
        &= [W_{gs}\alpha_{\sigma(gs)}^{-1}(V_g)\alpha_{\sigma(gs)}^{-1}(u_{g,s})\alpha_{\sigma(gs)^{-1}g\sigma(s)}(W_s^{\ast})\alpha_{\sigma(gs)}^{-1}(u_{g,s}^{\ast})](k)\\
        &= W_{gs}(k)\cdot\alpha_{\sigma(gs)}^{-1}(V_g)\cdot\alpha_{\sigma(gs)}^{-1}(u_{g,s})\cdot\alpha_{\sigma(gs)^{-1}g\sigma(s)}(W_s(\sigma(s)^{-1}g^{-1}\sigma(gs)k)^{\ast})\cdot\alpha_{\sigma(gs)}^{-1}(u_{g,s}^{\ast})\\
        &= \alpha_{\sigma(gs)}^{-1}(\alpha_{k^{-1}\sigma(gs)^{-1}}^{-1}(V_{k^{-1}\sigma(gs)^{-1}}))\cdot\alpha_{\sigma(gs)}^{-1}(V_g)\cdot\alpha_{\sigma(gs)}^{-1}(u_{g,s})\\
        &\quad\ \cdot\alpha_{\sigma(gs)^{-1}g\sigma(s)}(\alpha_{\sigma(s)}^{-1}(\alpha_{k^{-1}\sigma(gs)^{-1}g}^{-1}(V_{k^{-1}\sigma(gs)^{-1}g}^{\ast})))\cdot\alpha_{\sigma(gs)}^{-1}(u_{g,s}^{\ast})\\
        &= \alpha_{\sigma(gs)}^{-1}(\alpha_{k^{-1}\sigma(gs)^{-1}}^{-1}(V_{k^{-1}\sigma(gs)^{-1}})V_g\alpha_g(\alpha_{k^{-1}\sigma(gs)^{-1}g}^{-1}(V_{k^{-1}\sigma(gs)^{-1}g}^{\ast}))).
    \end{align*}
    We decompose the term $WV_g\alpha_g(W^{\ast})-1$ as
    \begin{align*}
        WV_g\alpha_g(W^{\ast})-1 &= WV_g\sum_s\alpha_g(\alpha_{\sigma(s)}(W_s^{\ast}))(\alpha_g(e_s)-e_{gs})\\
        &\quad\ +\sum_s\alpha_{\sigma(gs)}(W_{gs}\alpha_{\sigma(gs)}^{-1}(V_g)\alpha_{\sigma(gs)}^{-1}\circ\alpha_g\circ\alpha_{\sigma(s)}(W_s^{\ast})-1)e_{gs}.
    \end{align*}
    The item (1) now follows as
    \begin{align*}
        &\quad\ |a(WV_g\alpha_g(W^{\ast})-1)b|_{\varphi^{\omega}}^{\sharp}\\ &\le \sum_s|\alpha_g(e_s)-e_{gs}|_1+\max_s\sup_{h\in\bigcup gS}|\alpha_{\sigma(gs)}^{-1}(a(\alpha_{h^{-1}}^{-1}(V_{h^{-1}})V_g\alpha_g(\alpha_{h^{-1}g}^{-1}(V_{h^{-1}g}^{\ast}))-1)b)|_{\varphi^{\omega}\circ\alpha_{\sigma(gs)}}^{\sharp}\\
        &\le 4\varepsilon^{1/2}+\sup_{h\in\bigcup gS}|a(\alpha_{h^{-1}}^{-1}(V_{h^{-1}})V_g\alpha_g(\alpha_{h^{-1}g}^{-1}(V_{h^{-1}g}^{\ast}))-1)b|_{\varphi^{\omega}}^{\sharp}.
    \end{align*}
    For the commutator estimate, we use $\|[x,\psi]\|\le 2|x|_{\psi}^{\sharp}$ and apply the same calculation.
    The formula defining $W$ provides (3) and the last assertion.
\end{proof}

\begin{rem}
    For a (normalized) 1-cochain $V$, the defect appearing in the preceding lemma is expanded as
    \begin{align*}
        &\mathrel{\phantom{=}}\alpha_{h^{-1}}^{-1}(V_{h^{-1}})V_g\alpha_g(\alpha_{h^{-1}g}^{-1}(V_{h^{-1}g}^{\ast}))\\
        &= u_{h,h^{-1}}^{\ast}\alpha_h(V_{h^{-1}})u_{h,h^{-1}}\cdot V_g\cdot u_{h,h^{-1}g}^{\ast}\alpha_h(V_{h^{-1}g}^{\ast})u_{h,h^{-1}g}\\
        &= u_{h,h^{-1}}^{\ast}V_h^{\ast}[V_h\alpha_h(V_{h^{-1}})u_{h,h^{-1}}-1]V_gu_{h,h^{-1}g}^{\ast}\alpha_h(V_{h^{-1}g}^{\ast})u_{h,h^{-1}g}\\
        &\quad\ +u_{h,h^{-1}}^{\ast}[V_h^{\ast}V_gu_{h,h^{-1}g}^{\ast}\alpha_h(V_{h^{-1}g}^{\ast})-1]u_{h,h^{-1}g}\\
        &\quad\ +u_{h,h^{-1}}^{\ast}u_{h,h^{-1}g}.
    \end{align*}
    In particular, if $u=\partial V$, only the last term $u_{h,h^{-1}}^{\ast}u_{h,h^{-1}g}$ remains.
\end{rem}

\begin{thm}[1-cohomology vanishing]\label{thm:1-coh-vanish}
    Let $\alpha\colon G\curvearrowright M$ be a centrally free action of an amenable locally compact group $G$ with a compact open subgroup $K$ on a factor $M$.
    Then every $\alpha$-cocycle $G\to \U(M^{\omega}_{\alpha})$ (resp.\ $\U(M_{\omega,\alpha})$) is a coboundary in $M^{\omega}_{\alpha}$ (resp.\ $M_{\omega,\alpha}$).
\end{thm}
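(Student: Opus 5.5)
First we reduce to $K$-invariant cocycles. Let $v\colon G\to\U(M^{\omega}_{\alpha})$ be an $\alpha$-cocycle. By Proposition \ref{prop:cpt-cent.free->Rohlin}, $\alpha|_K$ has the Rohlin property, so the known 1-cohomology vanishing for Rohlin actions of compact groups (as in \cite{Ni26}) shows that $v|_K$ is a coboundary in $M^{\omega}_{\alpha}$ (resp.\ $M_{\omega,\alpha}$). To run that argument we average $k\mapsto v_k$ against the $L^{\infty}(K)$ obtained from the Rohlin embedding, after a fast reindexation that makes it orthogonal to a countably generated invariant algebra containing the values of $v$; an index selection then gives an exact coboundary. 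Replacing $v$ by the cohomologous cocycle, we may assume $v|_K\equiv1$. Then $v$ is $K$-invariant and $K$-equivariant, and it is continuous by the lemma on Borel 1-cocycles.

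Next apply Lemma \ref{lem:approx-1coh-vanish} with $u\equiv1$ and $V=v$. For a cocycle, the defect term there is identically $1$. Indeed, by the Remark following that lemma with $u=1$ and $u=\partial V$, only $u^{\ast}_{h,h^{-1}}u_{h,h^{-1}g}=1$ survives. Directly, $\alpha_{h^{-1}}^{-1}(v_{h^{-1}})=v_h^{\ast}$ and $v_g\alpha_g(\alpha_{h^{-1}g}^{-1}(v^{\ast}_{h^{-1}g}))=v_g\alpha_g(v_{g^{-1}h})=v_h$. So for each compact $F=FK$, $\varepsilon>0$ and Følner set $S$, we get $W\in\U((M^K)^{\omega})$ with
\[|a(Wv_g\alpha_g(W^{\ast})-1)b|^{\sharp}_{\varphi^{\omega}}\le 4\varepsilon^{1/2},\qquad \|[Wv_g\alpha_g(W^{\ast}),\psi^{\omega}]\|\le 8\varepsilon^{1/2}\]
for $g\in F$. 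In the $M_{\omega,\alpha}$ case, $W\in\U((M_{\omega,\alpha})^K)$.

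Finally, choose an increasing exhausting sequence $F_n=F_nK$ of compact sets (possible by second countability), $\varepsilon_n\to0$, and corresponding $W_n$. Represent each $W_n$ by a sequence and apply the index selection trick to $(W_n)_n\in (Q)^{\omega}$, where $Q$ is a countably generated $\alpha$-invariant subalgebra of $M^{\omega}_{\alpha}$ containing the $W_n$ and the $v_g$. This yields $W\in\U(M^{\omega})$ with $Wv_g\alpha_g(W^{\ast})=1$ for all $g\in G$, since the estimates hold uniformly on each $F_n$ and in all seminorms $|a\cdot b|^{\sharp}_{\varphi^{\omega}}$ for $a,b$ in a countable dense set of $\U(M)$. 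In the central sequence case, item (3) of the lemma combined with the commutator control keeps $W$ in $M_{\omega}$.

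\textbf{Main obstacle.} It remains to check that $W$ lies in the equicontinuous part. Since $W$ is $K$-fixed and $K$ is open, $\alpha_k(W)=W$ for $k$ in the open neighbourhood $K$ of $1_G$. This gives $\omega$-equicontinuity, hence $W\in M^{\omega}_{\alpha}$, or $W\in M_{\omega,\alpha}$ respectively, via the lemma $(M^{\omega}_{\gamma})^K=(M^K)^{\omega}$. The delicate point is making the index selection compatible with $K$-invariance and the equicontinuity clause. For that I would use the final assertion of the index selection trick with $P\subset N^{\omega}_{\alpha}$: each $W_n$ is $K$-fixed, so the sequence is trivially equicontinuous.
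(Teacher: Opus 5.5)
Your proposal is correct and is essentially the paper's proof: reduce to a $K$-invariant cocycle using the compact-group case from \cite{Ni26}, apply Lemma \ref{lem:approx-1coh-vanish} (whose defect term, as you verify, is identically $1$ for a genuine cocycle), and obtain an exact coboundary by the index selection trick. Your added details (the defect computation, and the use of the final clause of the index selection trick with each $W_n$ being $K$-fixed) spell out what the paper leaves implicit; one small correction is that the $K$-invariant cocycle is continuous simply because it is locally constant, not by the Borel-cocycle lemma, which needs a Polish group and $\U(M^{\omega}_{\alpha})$ is not Polish.
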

\begin{proof}
    On the compact group $K$, this is already know; see the proof of \cite[Theorem 4.1]{Ni26}.
    Thus, we may assume $V$ is $K$-invariant.
    Lemma \ref{lem:approx-1coh-vanish} implies that $V$ is approximated by $W^{\ast}\alpha_g(W)$ for some $W\in\U((M^{\omega}_{\alpha})^K)$.
    Applying the index selection trick, we obtain the conclusion.
\end{proof}

\begin{lem}\label{lem:approx-2coh-vanish}
    Let $(\alpha,u)\colon G\curvearrowright M$ be a $K$-invariant centrally free cocycle action on a \emph{finite} factor $M$.
    Suppose that the following data are given:
    \begin{itemize}
        \item a finite $K$-$(F,\varepsilon)$-invariant subset $S\subset G/K$;
        \item a number $\delta>0$;
        \item a compact subset $E=EK\subset G$.
    \end{itemize}
    Then there exists a $K$-invariant map $v\colon G\to\U(M)$ such that
    \begin{enumerate}[label=\textup{(\alph*)}]
        \item $|v_g\alpha_g(v_h)u_{g,h}v_{gh}^{\ast}-1|_1<\delta$;
        \item $|v_p-1|_1<4\varepsilon^{1/2}+\sup_{h\in \bigcup pS}3|u_{h^{-1},p}-1|_1+\delta$
    \end{enumerate}
    for all $g,h\in E$ and $p\in F$.
\end{lem}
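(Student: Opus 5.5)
We follow the Ocneanu/Masuda--Tomatsu strategy: first make the 2-cocycle trivial in the ultraproduct, then pull the solution back to $M$ with the index selection trick.

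\textbf{Step 1: trivial approximate solution.} Since $M$ is finite, $\varphi=\tau$ is a trace, so $\mathscr{N}_\omega(M)=\ell^\infty(\mathbb{N},M)$. Start from the $K$-invariant 1-cochain $V\equiv 1$, which takes values in $\U(M)\subset\U(M^{\omega}_{\alpha})$. By the Remark following Lemma~\ref{lem:approx-1coh-vanish}, its defect is $u_{h,h^{-1}}^{\ast}u_{h,h^{-1}g}$. We cannot take $V\equiv1$ as $v$ itself, because condition (a) would then require $u\approx1$. Instead we use Lemma~\ref{lem:approx-1coh-vanish} in the spirit of Shapiro's lemma.

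\textbf{Step 2: the Shapiro-type cochain.} Take a Rohlin tower $(e_s)_{s\in S}$ from Theorem~\ref{thm:tower}, chosen $\tau$-orthogonal to $M$ by fast reindexation. Define
\[\tilde v_g=\sum_{s}u_{g,s}\,\text{(suitably twisted by }u_{\sigma(gs),\cdot}\text{)}\,e_{gs},\]
i.e.\ $\tilde v_g=\sum_s u_{\sigma(gs)^{-1}\cdot,\cdot}$-type expressions. These come from inserting $V_h=u_{h^{-1},h}$-type data into the $W$ construction, with $W_s(k)$ built from the cocycle $u$ instead of $V$. This mirrors the definition of $W$ in the proof of Lemma~\ref{lem:approx-1coh-vanish}. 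The $K$-invariance of $u$, which makes $u$ continuous and locally constant in the second variable, ensures that $\tilde v$ is $K$-invariant and lies in $\U(M^\omega_\alpha)$.

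Next we compute $\tilde v_g\alpha_g(\tilde v_h)u_{g,h}\tilde v_{gh}^\ast$. On the tower, the 2-cocycle identity $u_{g,h}u_{gh,k}=\alpha_g(u_{h,k})u_{g,hk}$ makes the diagonal terms cancel exactly. The error comes only from the off-tower parts $\sum_s|\alpha_g(e_s)-e_{gs}|_1$, which are bounded by $a_{g,E}\le4\varepsilon^{1/2}$ for $g$ in $F$. To get (a) for all $g,h\in E$ with an arbitrary $\delta$, not just $\varepsilon$-small errors, we choose a tower for a larger F\o lner set $S'$ that is invariant for $E\cup E^2\cup F$ with tolerance $\varepsilon'\ll\delta$. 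We then apply the index selection trick as $\varepsilon'\to0$, which gives $\tilde v$ in $M^\omega_\alpha$ with $\partial_u\tilde v=1$ exactly on $E$. Estimate (b) comes from evaluating at $p\in F$: on $e_{ps}$ the coefficient of $\tilde v_p-1$ is a product of three $u$-terms of the form $u_{h^{-1},p}$ with $h\in\bigcup pS$. Each contributes at most $|u_{h^{-1},p}-1|_1$, which explains the factor $3$. The tower defect contributes $4\varepsilon^{1/2}$.

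\textbf{Step 3: descend to $M$.} Represent $\tilde v_g=(v^{(n)}_g)^\omega$ with $v^{(n)}_g\in\U(M)$; unitaries lift because $M$ is finite. Since $u$ is continuous and $K$-invariant, only finitely many classes in $E/K$ and $F/K$ matter, which means finitely many $g,h$. The $|\cdot|_1$-norms of the relevant expressions converge along $\omega$ to the corresponding norms in $M^\omega$. Choosing $n$ in a suitable $\omega$-large set then gives a $K$-invariant $v=v^{(n)}$ with strict inequalities, absorbing slack into $\delta$.

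\textbf{Main obstacle.} The hardest step is Step 2: writing down $\tilde v$ precisely, with the correct twists by $\alpha_{\sigma(s)}$ and $u_{\sigma(gs),\cdot}$, and checking both its $K$-invariance and the exact cancellation via the cocycle identity. The computation follows the pattern of the long calculation in Lemma~\ref{lem:approx-1coh-vanish}. The equicontinuity needed for $\tilde v\in M^\omega_\alpha$ follows from the continuity of $u$.
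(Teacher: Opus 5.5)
There is a genuine gap in Step 2. You try to get both (a) and (b) from one cochain built on one tower, and as described this cannot work. For (a) you correctly pass to a tower for a finer set $S'$ that is $K$-$(E,\varepsilon')$-invariant, with $\varepsilon'\to0$. But then the cochain lives on $S'$, and nothing in your construction says how the prescribed $S$ and $\varepsilon$ enter (b).

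Take the natural Shapiro cochain $V^{(0)}_g=\sum_{s\in S'}u_{g,s}^{\ast}\alpha_g(e_s)$, which is presumably what your ``$u_{g,s}$ suitably twisted'' is meant to be. By the cocycle identity it does satisfy $|V^{(0)}_g\alpha_g(V^{(0)}_h)u_{g,h}V^{(0)\ast}_{gh}-1|_1\le4\varepsilon'^{1/2}$ for $h\in E$. However, evaluating it at $p$ only gives $|V^{(0)}_p-1|_1\le\sup_{s\in S'}|u_{p,s}-1|_1$. That bound involves different entries of $u$, over a set that grows as $\varepsilon'\to0$.

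The missing idea is a second, separate step. What the Shapiro cochain controls well is its translated defect:
$|\alpha_{h^{-1}}^{-1}(V_{h^{-1}})V_p\alpha_p(\alpha_{h^{-1}p}^{-1}(V_{h^{-1}p}^{\ast}))-1|_1\le4\varepsilon'^{1/2}+|u_{h^{-1},p}-1|_1$ for $p\in F\subset E$ and all $h$. The paper makes this cochain $K$-invariant (see below) and then applies Lemma \ref{lem:approx-1coh-vanish} to it with the \emph{given} $S$. The substitution $V_g\mapsto WV_g\alpha_g(W^{\ast})$ changes the $u$-coboundary only by conjugation with $W$ (use $\alpha_g\circ\alpha_h=\Ad{u_{g,h}}\circ\alpha_{gh}$), so (a) survives in $|\cdot|_1$. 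Meanwhile Lemma \ref{lem:approx-1coh-vanish}(1) with $a=b=1$ produces exactly $4\varepsilon^{1/2}+\sup_{h\in\bigcup pS}(\cdots)$. The same invariance explains why your Step 1, applying that lemma to $V\equiv1$, achieves nothing: the coboundary stays conjugate to $u$.

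A second gap is $K$-invariance, which Lemma \ref{lem:approx-1coh-vanish} requires of its input. $K$-invariance of $u$ does not make $V^{(0)}$ $K$-invariant. The cocycle identity gives $V^{(0)}_{gk}=V^{(0)}_g\alpha_g(V^{(0)}_k)$ with $V^{(0)}_k=\sum_s u_{k,s}^{\ast}\alpha_k(e_s)$, and $u_{k,s}$ (first variable in $K$) need not be $1$.

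The paper repairs this with the Rohlin property of $\alpha|_K$. The function $k\mapsto V^{(0)}_k$ defines $W^{(0)}\in\U(M^{\omega}_{\alpha})$ with $W^{(0)}\alpha_k(W^{(0)\ast})=V^{(0)}_k$. Then $V^{(1)}_g=W^{(0)\ast}V^{(0)}_g\alpha_g(W^{(0)})$ is right $K$-invariant, and its coboundary is that of $V^{(0)}$ conjugated by $W^{(0)\ast}$.

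This is where the constant $3$ comes from; it is not a product of three $u$-terms on $e_{ps}$. The translated defect of $V^{(0)}$ contributes $|u_{h^{-1},p}-1|_1$ once. The $W^{(0)}$-conjugation contributes $2|u_{h^{-1},p}-1|_1$, because
$\alpha_{h^{-1}}^{-1}(W^{(0)\ast})\alpha_p(\alpha_{h^{-1}p}^{-1}(W^{(0)}))=\alpha_p(\alpha_{h^{-1}p}^{-1}(u_{h^{-1},p}^{\ast}W^{(0)\ast}u_{h^{-1},p}W^{(0)}))$.

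A minor point in Step 3: right $K$-invariance of $v$ does not reduce (a) to finitely many pairs $g,h$, since $\alpha_g$ depends on $g$ and not only on $gK$. You need equicontinuity on the compact set $E$ to choose a common index, as the paper does. This part is easily repaired.
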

\begin{proof}
    We may assume that $E$ contains $F$.
    Choose $\varepsilon'>0$, to be specified below, and let $S'\subset G/K$ be a finite $K$-$(E,\varepsilon')$-invariant subset.
    Let $(e_s)_{s\in S'}$ be a Rohlin tower of $S'$, and define $V^{(0)}\colon G\to\U(M^{\omega}_{\alpha})$ by $V_g^{(0)}=\sum_su_{g,s}^{\ast}\alpha_g(e_s)$.
    Then, for all $g\in G$ and $k\in K$, it follows that
    \begin{align*}
        V_g^{(0)}\alpha_g(V_k^{(0)}) &= (\sum_su_{g,ks}^{\ast}\alpha_g(e_{ks}))(\sum_s\alpha_g(u_{k,s}^{\ast})\alpha_{gk}(e_s))\\
        &= \sum_su_{g,ks}^{\ast}\alpha_g(u_{k,s}^{\ast})\alpha_{gk}(e_s)\\
        &= \sum_su_{gk,s}^{\ast}\alpha_{gk}(e_s)=V_{gk}^{(0)}.
    \end{align*}
    We also have that
    \begin{align*}
        |V_g^{(0)}\alpha_g(V_h^{(0)})u_{g,h}V_{gh}^{(0)\ast}-1|_1 &\le |(\sum_su_{g,hs}^{\ast}(\alpha_g(e_{hs})-\alpha_{gh}(e_s)))\alpha_g(V_h^{(0)})u_{g,h}V_{gh}^{(0)\ast}|_1\\
        &\quad\ + |\sum_s(u_{g,hs}^{\ast}\alpha_g(u_{h,s}^{\ast})u_{g,h}u_{gh,s}-1)\alpha_{gh}(e_s)|_1\\
        &\le \sum_s|e_{hs}-\alpha_h(e_s)|_1+0\le 4\varepsilon'^{1/2}
    \end{align*}
    for $g\in G$ and $h\in E$.
    Moreover,
    \begin{align*}
        &\quad\ |\alpha_{h^{-1}}^{-1}(V_{h^{-1}}^{(0)})V_g^{(0)}\alpha_g(\alpha_{h^{-1}g}^{-1}(V_{h^{-1}g}^{(0)\ast}))-1|_1\\
        &\le |\sum_s\alpha_{h^{-1}}^{-1}(u_{h^{-1},gs}^{\ast})\alpha_{h^{-1}}^{-1}(\alpha_{h^{-1}}(e_{gs}-\alpha_g(e_s)))|_1\\
        &\quad\ +|\sum_s(\alpha_{h^{-1}}^{-1}(u_{h^{-1},gs}^{\ast})u_{g,s}^{\ast}\alpha_g(\alpha_{h^{-1}g}^{-1}(u_{h^{-1}g,s}))-1)\cdot\alpha_g(e_s)|_1\\
        &\le \sum_s|e_{gs}-\alpha_g(e_s)|_1+|\sum_s(\alpha_{h^{-1}}^{-1}(u_{h^{-1},gs}^{\ast}\alpha_{h^{-1}}(u_{g,s}^{\ast})u_{h^{-1},g}u_{h^{-1}g,s}u_{h^{-1},g}^{\ast})-1)\alpha_g(e_s)|_1\\
        &\le 4\varepsilon'^{1/2}+|\alpha_{h^{-1}}^{-1}(u_{h^{-1},g}^{\ast}-1)|_1\\
        &= 4\varepsilon'^{1/2}+|u_{h^{-1},g}-1|_1
    \end{align*}
    for $g\in E$.
    
    Next, using the Rohlin property of $\alpha|_K$, we define $W^{(0)}\in\U(M^{\omega}_{\alpha})$ by the function $K\ni k\mapsto V_k^{(0)}$.
    This unitary satisfies $W^{(0)}\alpha_k(W^{(0)\ast})=V_k^{(0)}$ for every $k\in K$.
    In particular, we have
    \begin{align*}
        V_g^{(0)} &= V_{\sigma(gK)}^{(0)}\alpha_{\sigma(gK)}(V_{\sigma(gK)^{-1}g}^{(0)})\\
        &= V_{\sigma(gK)}^{(0)}\alpha_{\sigma(gK)}(W^{(0)}\alpha_{\sigma(gK)^{-1}g}(W^{(0)\ast}))\\
        &= V_{\sigma(gK)}^{(0)}\alpha_{\sigma(gK)}(W^{(0)})\alpha_g(W^{(0)\ast}).
    \end{align*}
    Define $V^{(1)}_g=W^{(0)\ast}V_{\sigma(gK)}^{(0)}\alpha_{\sigma(gK)}(W^{(0)})=W^{(0)\ast}V_g^{(0)}\alpha_g(W^{(0)})$, which is right $K$-invariant.
    Then
    \begin{align*}
        V_g^{(1)}\alpha_g(V_h^{(1)})u_{g,h}V_{gh}^{(1)\ast} &= W^{(0)\ast}V_g^{(0)}\alpha_g(W^{(0)})\cdot\alpha_g(W^{(0)\ast})\alpha_g(V_h^{(0)})\alpha_g(\alpha_h(W^{(0)}))\\
        &\quad\ \cdot u_{g,h}\cdot \alpha_{gh}(W^{(0)\ast})V_{gh}^{(0)\ast}W^{(0)}\\
        &= W^{(0)\ast}\cdot V_g^{(0)}\alpha_g(V_h^{(0)})u_{g,h}V_{gh}^{(0)\ast}\cdot W^{(0)}
    \end{align*}
    and, combining this with the preceding estimate, we obtain that
    \[|V_g^{(1)}\alpha_g(V_h^{(1)})u_{g,h}V_{gh}^{(1)\ast}-1|_1\le 4\varepsilon'^{1/2}\]
    whenever $h\in E$.
    On the other hand, the cocycle relation gives
    \begin{align*}
        |\alpha_{h^{-1}}^{-1}(W^{(0)\ast})\alpha_g(\alpha_{h^{-1}g}^{-1}(W^{(0)}))-1|_1 &= |\alpha_g(\alpha_{h^{-1}g}^{-1}(u_{h^{-1},g}^{\ast}W^{(0)\ast}u_{h^{-1},g}W^{(0)}-1))|_1\\
        &\le 2|u_{h^{-1},g}-1|_1.
    \end{align*}
    Therefore, it follows that
    \begin{align*}
        &\quad\ |\alpha_{h^{-1}}^{-1}(V_{h^{-1}}^{(1)})V_g^{(1)}\alpha_g(\alpha_{h^{-1}g}^{-1}(V_{h^{-1}g}^{(1)\ast}))-1|_1\\
        &= |\alpha_{h^{-1}}^{-1}(W^{(0)\ast}V_{h^{-1}}^{(0)}\alpha_{h^{-1}}(W^{(0)}))\cdot W^{(0)\ast}V_g^{(0)}\alpha_g(W^{(0)})\cdot\alpha_g(\alpha_{h^{-1}g}^{-1}(\alpha_{h^{-1}g}(W^{(0)\ast})V_{h^{-1}g}^{(0)\ast}W^{(0)}))-1|_1\\
        &= |\alpha_{h^{-1}}^{-1}(W^{(0)\ast})\cdot\alpha_{h^{-1}}^{-1}(V_{h^{-1}}^{(0)})V_g^{(0)}\alpha_g(\alpha_{h^{-1}g}^{-1}(V_{h^{-1}g}^{(0)\ast}))\cdot\alpha_g(\alpha_{h^{-1}g}^{-1}(W^{(0)}))-1|_1\\
        &\le |\alpha_{h^{-1}}^{-1}(V_{h^{-1}}^{(0)})V_g^{(0)}\alpha_g(\alpha_{h^{-1}g}^{-1}(V_{h^{-1}g}^{(0)\ast}))-1|_1+2|u_{h^{-1},g}-1|_1\\
        &\le 4\varepsilon'^{1/2}+3|u_{h^{-1},g}-1|_1
    \end{align*}
    for $g\in E$ and $h\in G$.

    Next, by Lemma \ref{lem:approx-1coh-vanish}, we find $W^{(1)}\in \U((M^{\omega}_{\alpha})^K)$ such that, for $V_g^{(2)}=W^{(1)}V_g^{(1)}\alpha_g(W^{(1)\ast})$, we obtain
    \[|V_g^{(2)}\alpha_g(V_h^{(2)})u_{g,h}V_{gh}^{(2)}-1|_1\le4\varepsilon'^{1/2}\]
    and
    \begin{align*}
        |V_p^{(2)}-1|_1 &\le 4\varepsilon^{1/2}+\sup_{h\in\bigcup pS}|\alpha_{h^{-1}}^{-1}(V_{h^{-1}}^{(1)})V_p^{(1)}\alpha_p(\alpha_{h^{-1}p}^{-1}(V_{h^{-1}p}^{(1)\ast}))-1|_1\\
        &\le 4\varepsilon^{1/2}+4\varepsilon'^{1/2}+\sup_{h\in\bigcup pS}3|u_{h^{-1},p}-1|_1
    \end{align*}
    for $g,h\in E$ and $p\in F$, respectively.
    Choosing $K$-invariant representatives of $V^{(2)}$, equicontinuity on the relevant compact sets allows us to select a common index for which (a) and (b) hold.
\end{proof}

\begin{thm}[2-cohomology vanishing]\label{thm:2coh-vanish}
    Let $(\alpha,u)\colon G\curvearrowright M$ be a centrally free cocycle action of an amenable locally compact group $G$ with a compact open subgroup $K<G$ on a factor.
    \begin{enumerate}
        \item The cocycle $u$ is a coboundary.
        \item Moreover, suppose that $u$ is $K$-invariant.
        Given a finite $K$-$(F,\varepsilon)$-invariant subset $S\subset G/K$, a number $\delta>0$, and a compact subset $\Phi\subset\nS(M)$, then one can choose a $K$-invariant $v\colon G\to\U(M)$ so that $1=v_g\alpha_g(v_h)u_{g,h}v_{gh}^{\ast}$ and
        \[\|[v_p,\psi]\|<8\varepsilon^{1/2}+\sup_{h\in\bigcup pS}\|[u_{h,h^{-1}}^{\ast}u_{h,h^{-1}p},\psi]\|+\delta\]
        for every $p\in F$ and $\psi\in\Phi$.
    \end{enumerate}
\end{thm}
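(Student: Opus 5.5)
We first reduce to the finite case. If $M$ is properly infinite, Proposition \ref{prop:Sutherland} already gives (1), so for (1) we may assume $M$ is a finite factor; for (2) we work with a faithful normal state $\varphi$ and the sharp seminorms, keeping the finite-case structure as a guide. On $K$, centrally free restricts to a centrally free compact action, which has the Rohlin property by Proposition \ref{prop:cpt-cent.free->Rohlin}. The compact-group 2-cohomology vanishing of \cite{Ni26} then makes $u|_K$ a coboundary. By the lemma perturbing $u$ to a $K$-invariant cocycle, we may assume throughout that $u$ is $K$-invariant, so that (1) reduces to the setting of (2).

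The main step is an Ocneanu-type inductive perturbation scheme built on Lemma \ref{lem:approx-2coh-vanish} and Lemma \ref{lem:approx-1coh-vanish}. Fix an increasing exhausting sequence of compact sets $E_n=E_nK$ with $F\subset E_1$, numbers $\delta_n\searrow0$ summable, and Følner data $(F_n,\varepsilon_n)$ with $F_n\supset E_n$ and $\sum\varepsilon_n^{1/2}<\infty$. At the first stage we apply Lemma \ref{lem:approx-2coh-vanish} with the given $S,F,\varepsilon$ to get a $K$-invariant $v^{(1)}$. Then $u^{(1)}=\partial v^{(1)}\cdot u$ is $\delta_1$-close to $1$ on $E_1\times E_1$, and $v^{(1)}_p$ is controlled by the defect $u_{h^{-1},p}$; in the state version we use $u_{h,h^{-1}}^{\ast}u_{h,h^{-1}p}$, via the Remark expansion.

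At stage $n+1$ we apply Lemma \ref{lem:approx-2coh-vanish} to the cocycle action $(\operatorname{Ad}v^{(n)}\circ\alpha, u^{(n)})$ with Følner set $S_{n+1}$ for $(E_n,\varepsilon_{n+1})$. Estimate (b) then bounds the correction $|v^{(n+1)}_p-1|$ for $p\in E_n$ by $4\varepsilon_{n+1}^{1/2}+3\sup|u^{(n)}-1|+\delta_{n+1}$, and this is summable. The perturbed actions are again centrally free, since central freeness is a cocycle-conjugacy invariant. For the cumulative products $w^{(n)}_g=v^{(n)}_g\cdots v^{(1)}_g$, twisted appropriately, to converge we also need the $M$-part to converge. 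For this we enlarge the compact set $\Phi$ of states at each step, adding $\operatorname{Ad}$-translates by the finitely many relevant unitaries. In the finite case, $|\cdot|_1$-Cauchy unitaries converge strongly. The limit $v_g$ then satisfies $\partial v\cdot u=1$ on each $E_n$, hence everywhere. It is $K$-invariant, and its $p$-values obey the first-stage bound plus $\sum_{n\ge2}$ (small), which is less than $\delta$; this gives (2).

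The main obstacle is the properly infinite/general state-norm version of (2), together with converting the $|\cdot|_1$ estimate (b) into the commutator bound $\|[v_p,\psi]\|$ with constant $8\varepsilon^{1/2}$. Here I would first redo Lemma \ref{lem:approx-2coh-vanish} using items (1)–(2) of Lemma \ref{lem:approx-1coh-vanish}, with $a,b$ and $\psi$ in place of the trace. The exact cocycle from Sutherland's theorem would be used only to start, with $u=\partial V$, where the Remark leaves exactly the term $u_{h,h^{-1}}^{\ast}u_{h,h^{-1}p}$. Continuity and $K$-invariance of $u$, from the lemma on $K$-invariant 2-cocycles, ensure the suprema over $\bigcup pS$ are over compact sets, which is what makes the index selection at each step legitimate.
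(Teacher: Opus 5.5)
\textbf{Part (1).} Your argument for (1) is the paper's: Sutherland in the properly infinite case, $K$-invariance of $u$ via \cite{Ni26}, then inductive use of Lemma \ref{lem:approx-2coh-vanish} with summable errors and $|\cdot|_1$-convergence of the products. There is one bookkeeping fix. Estimate (b) at stage $n+1$ involves $u^{(n)}_{h^{-1},p}$ for $h\in\bigcup pS_{n+1}$. So you must fix all F\o lner sets in advance and impose estimate (a) at stage $n$ also on $(\bigcup E_nS_{n+1})^{-1}$, not just on $E_n\times E_n$, as the paper does.

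\textbf{Part (2): the gap.} Your scheme's first stage produces Lemma \ref{lem:approx-2coh-vanish}(b). That is a bound on $|v_p-1|_1$ in terms of $|u_{h^{-1},p}-1|_1$, and it cannot be turned into the required commutator estimate, for two reasons.
\begin{itemize}
\item In the intended use (Lemma \ref{lem:classify_step1}), $u$ is far from $1$; only $u_{h,h^{-1}}^{\ast}u_{h,h^{-1}p}$ nearly commutes with the states.
\item For properly infinite $M$ there is no trace at all.
\end{itemize}
You see this and propose to redo Lemma \ref{lem:approx-2coh-vanish} in state norms, but that is not a routine substitution. Its proof repeatedly uses that $|\cdot|_1$ is invariant under conjugation by unitaries of $M^{\omega}_{\alpha}$ (such as $W^{(0)}$) and under $\alpha_{h^{-1}}^{-1}$. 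For a merely approximate solution, the expansion in the Remark has error terms sandwiched between such unitaries, and $|\cdot|^{\sharp}_{\varphi^{\omega}}$ does not control them. As written, the bound with constant $8\varepsilon^{1/2}$ is never proved.

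\textbf{The missing idea.} Part (2) needs no perturbation scheme at all.
\begin{enumerate}
\item By (1), in either the finite or properly infinite case, there is an exact solution $\tilde v$ with $\tilde v_g\alpha_g(\tilde v_h)u_{g,h}\tilde v_{gh}^{\ast}=1$.
\item Since $u|_{K\times K}=1$, $\tilde v|_K$ is an $\alpha|_K$-cocycle, hence a coboundary by \cite[Theorem 4.1]{Ni26}. So you may take $\tilde v$ to be $K$-invariant, which Lemma \ref{lem:approx-1coh-vanish} requires.
\item For an exact solution, the Remark's expansion of the defect leaves only $u_{h,h^{-1}}^{\ast}u_{h,h^{-1}p}$, with no error terms. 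A single application of Lemma \ref{lem:approx-1coh-vanish}(2) with $V=\tilde v$ then gives $W\in\U((M^K)^{\omega})$ with $\|[W\tilde v_p\alpha_p(W^{\ast}),\psi^{\omega}]\|\le 8\varepsilon^{1/2}+\sup_{h\in\bigcup pS}\|[u_{h,h^{-1}}^{\ast}u_{h,h^{-1}p},\psi]\|$.
\item Pick $w\in\U(M^K)$ from a representing sequence of $W$; this is possible because $F/K$ is finite and $\Phi$ is compact. Set $v_g=w\tilde v_g\alpha_g(w^{\ast})$.
\end{enumerate}
This $v$ is still an exact solution, because $\alpha_g\alpha_h(w^{\ast})u_{g,h}=u_{g,h}\alpha_{gh}(w^{\ast})$. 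It is still $K$-invariant, because $\alpha_{gk}=\alpha_g\alpha_k$ and $w$ is $K$-fixed. Your remark about starting from $u=\partial V$ was close to this. What you missed is that conjugating by a $K$-fixed unitary preserves exactness, which makes the inductive scheme unnecessary.
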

\begin{proof}
    (1):
    If $M$ is properly infinite, the assertion follows from Proposition \ref{prop:Sutherland}.
    Hence, we assume that $M$ is finite and $u$ is $K$-invariant by \cite[Proposition 4.3]{Ni26}.
    Choose an increasing exhaustion $F_n=F_nK$ of $G$ by compact subsets, and $\varepsilon_n>0$, $\delta_n>0$ such that $\sum_n\varepsilon_n^{1/2}<\infty$ and $\sum_n\delta_n<\infty$.
    For each $n$, let $S_n\subset G/K$ be a finite $K$-$(F_n,\varepsilon_n)$-invariant subset.
    
    Set $\alpha^{(0)}=\alpha$ and $u^{(0)}=u$.
    By Lemma \ref{lem:approx-2coh-vanish}, we inductively obtain $K$-invariant maps $v^{(n)}\colon G\to\U(M)$ such that, if we set $\alpha^{(n+1)}=\Ad{v^{(n)}}\circ\alpha^{(n)}$ and $u_{g,h}^{(n+1)}=v_g^{(n)}\alpha_g^{(n)}(v^{(n)}_h)u_{g,h}^{(n)}v_{gh}^{(n)\ast}$, then
    \begin{enumerate}[label=(\alph*)]
        \item $|u_{g,h}^{(n+1)}-1|_1<\delta_n$,
        \item $|v_p^{(n)}-1|_1<4\varepsilon_n^{1/2}+\sup_{h\in\bigcup pS_n}3|u_{h^{-1},p}^{(n)}-1|_1+\delta_n<4\varepsilon_n^{1/2}+3\delta_{n-1}+\delta_n$
    \end{enumerate}
    for all $g,h\in F_{n+1}\cup(\bigcup F_{n+1}S_{n+1})^{-1}$ and $p\in F_n$.
    The summability assumption and the condition (b) imply that the sequence $v^{(n)}\cdots v^{(0)}$ converges to a unitary map $v\colon G\to\U(M)$.
    Since $u^{(n+1)}_{g,h}\to 1$ for every $g,h\in G$, the resulting map satisfies $v_g\alpha_g(v_h)u_{g,h}v_{gh}^{\ast}=1$.
    
    (2):
    By (1), there exists a map $\tilde{v}\colon G\to\U(M)$ such that $\tilde{v}_g\alpha_g(\tilde{v}_h)u_{g,h}\tilde{v}_{gh}^{\ast}=1$ for all $g,h\in G$.
    Since $u$ is $K$-invariant, applying \cite[Theorem 4.1]{Ni26} to the 1-cocycle $\tilde{v}|_K$, we may assume $\tilde{v}$ is $K$-invariant.
    Lemma \ref{lem:approx-1coh-vanish} gives $W\in\U((M^K)^{\omega})$ such that
    \[\|[W\tilde{v}_p\alpha_p(W^{\ast}),\psi^{\omega}]\|\le 8\varepsilon^{1/2}+\sup_{h\in\bigcup pS}\|[u_{h,h^{-1}}^{\ast}u_{h,h^{-1}p},\psi]\|\]
    for $p\in F$ and $\psi\in\Phi$.
    Take $w\in\U(M^K)$ from a representing sequence for $W$ so that $v_g=w\tilde{v}_g\alpha_g(w^{\ast})$ satisfies the desired properties.
\end{proof}

Let $(G_i)_i$ be a countable family of locally compact groups and $K<G_i$ a common compact open subgroup.
Recall that the amalgamated free product $\bigast_KG_i$ carries a natural locally compact group topology under which $K$ is compact and open (see, for instance, \cite{BP93}).
As an analogue of \cite{Po21}, we prove Theorem \ref{main:2-coh-vanish}.

\begin{proof}[Proof of Theorem \ref{main:2-coh-vanish}]
    Theorem \ref{thm:2coh-vanish} implies that every amenable locally compact group with a compact open subgroup belongs to the class $\VC$.
    Let $(G_i)_i\subset\VC$ be a countable family of groups sharing a common compact open subgroup $K$, and let $(\alpha,u)\colon G=\bigast_KG_i\curvearrowright M$ be a centrally free cocycle action.
    
    We may assume that $u$ is $K$-invariant.
    Each $G_i$ is open in $G$, so the restricted cocycle action on $G_i$ is centrally free.
    Hence, there exists a $K$-invariant map $v^{(i)}\colon G_i\to\U(M)$ such that $v_g^{(i)}\alpha_g(v_h^{(i)})u_{g,h}v_{gh}^{(i)\ast}=1$ for all $g,h\in G_i$.
    Then, for every $i$, the map $G_i\ni g\mapsto v_g^{(i)}\lambda_g^{\alpha,u}\in\U(M\rtimes_{\alpha,u}G)$ is a unitary representation.
    By $K$-invariance of $v$ and the universality, they extend to a unitary representation $G\ni g\mapsto v_g\lambda_g^{\alpha,u}$.
    This completes the proof.
\end{proof}

A basic nonamenable example covered by Theorem \ref{main:2-coh-vanish} is $\SL_2(\mathbb{Q}_p)$, which is an amalgam of two copies of $\SL_2(\mathbb{Z}_p)$.
Further examples concerning totally disconnected groups acting on trees are given in the following corollary.
See \cite[Chapter I, Section 5.4]{Se03}.

\begin{cor}\label{cor:on-tree}
    Let $T$ be a locally finite tree and $G<\Aut(T)$ a closed subgroup acting without inversions.
    Suppose that $G\backslash T$ is a finite tree.
    Then for every stcitly outer cocycle action of $G$ on $R$ is cocycle conjugate to a genuine action.
\end{cor}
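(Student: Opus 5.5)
The plan is to reduce the corollary to Theorem \ref{main:2-coh-vanish} via Bass--Serre theory, and to supply central freeness through Proposition \ref{prop:str.outer->cent.free}. The hypotheses say that $G$ acts on the locally finite tree $T$ without inversions with finite quotient graph $Y=G\backslash T$, and $Y$ is a tree. By the structure theorem of \cite[Chapter I, Section 5.4]{Se03}, choose a lift of $Y$ to a subtree $\tilde Y\subset T$ (possible since $Y$ is a tree). Then $G$ is the fundamental group of the associated tree of groups: it is the iterated amalgamated free product of the vertex stabilizers $G_v$, $v\in\tilde Y$, amalgamated along the edge stabilizers $G_e$.

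Since $T$ is locally finite and $G$ is closed in $\Aut(T)$, each stabilizer $G_v$ and $G_e$ is compact and open in $G$. In particular, $G$ is a second-countable locally compact group with a compact open subgroup. One must check that the abstract isomorphism with the amalgam is a topological isomorphism. I would argue this as follows: the amalgam topology makes the vertex groups open, and the natural map is a continuous bijective homomorphism onto $G$, since it is continuous on the open subgroups $G_v$. The open mapping theorem for $\sigma$-compact locally compact groups then shows it is a homeomorphism.

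The next task is to realize $G$ within the class $\VC$. Theorem \ref{main:2-coh-vanish} only covers amalgams over a \emph{common} compact open subgroup, whereas a tree of groups amalgamates over varying edge groups. I would therefore induct on the number of edges of the finite tree $Y$. Remove a leaf edge $e$ with leaf vertex $w$. Then $G=H\ast_{G_e}G_w$, where $H$ is the fundamental group of the smaller tree, and $G_e$ is a compact open subgroup common to both factors. Compact groups lie in $\VC$ by Theorem \ref{main:2-coh-vanish} (they are amenable with themselves as compact open subgroup), and $H\in\VC$ by the induction hypothesis. Since membership in $\VC$ does not depend on any fixed compact open subgroup, the closure property in Theorem \ref{main:2-coh-vanish} applied with amalgamating subgroup $G_e$ gives $G\in\VC$. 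The base case is a single vertex, where $G=G_v$ is compact.

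Finally, let $(\alpha,u)$ be a strictly outer cocycle action of $G$ on $R$. By the remark following Proposition \ref{prop:str.outer->cent.free}, $(\alpha,u)$ is centrally free, with $K$ any vertex stabilizer. Since $G\in\VC$, the cocycle $u$ is a coboundary, which by definition means $(\alpha,u)$ is cocycle conjugate to a genuine action.

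I expect the main obstacle to be the topological identification of $G$ with the iterated amalgam and the bookkeeping for the changing amalgamating subgroups. In particular, it should be confirmed that the class $\VC$ and the closure statement in Theorem \ref{main:2-coh-vanish} can be used with any compact open subgroup of the factors, not only with the fixed $K$.
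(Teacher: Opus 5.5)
Your proposal is correct and follows essentially the same route as the paper. Both decompose $G$ via Bass--Serre as an iterated amalgam of compact open vertex stabilizers, use Theorem~\ref{main:2-coh-vanish} to get $G\in\VC$, and get central freeness from Proposition~\ref{prop:str.outer->cent.free} (with its cocycle remark); your leaf-edge induction over the varying edge groups and the open-mapping check that the amalgam topology agrees with the one $G$ inherits from $\Aut(T)$ just make explicit what the paper leaves implicit.
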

\begin{proof}
    Every vertex stabilizer is compact and open in $G$, and the same holds for every edge stabilizer.
    By the Bass--Serre structure theorem, $G$ is a finite iterated amalgamated free product of its vertex stabilizers over its edge stabilizers.
    The vertex stabilizers are compact and hence amenable, so Theorem \ref{main:2-coh-vanish} shows that $G\in\VC$.
    Since every strictly outer cocycle action on $R$ is centrally free by Proposition \ref{prop:str.outer->cent.free}, the conclusion follows.
\end{proof}

\section{Classification theory}\label{sec:4}

\subsection{Classification of centrally free actions}

In this subsection, we first classify centrally free actions by using Bratteli--Elliott--Evans--Kishimoto intertwining argument following \cite{MT16}.

\begin{lem}\label{lem:classify_step1}
    Let $\alpha,\beta\colon G\curvearrowright M$ be actions of an amenable locally compact group $G$ with a compact open subgroup $K$ on a factor $M$.
    Suppose that $\alpha$ is centrally free and $\alpha|_K=\beta|_K$.
    Fix a faithful normal state $\varphi$ on $M$.
    Let $E=EK\subset G$ be a compact subset, $S\subset G/K$ a finite $K$-$(F,\varepsilon)$-invariant subset, $\delta>0$, and let $D\subset\U(M)$ and $\Phi\subset \nS(M)$ be finite subsets.
    If $\beta_g\circ\alpha_g^{-1}\in\clInt(M)$ for all $g\in G$, then there exist $w\in\U(M^K)$ and a $K$-invariant $\alpha$-cocycle $v\colon G\to\U(M)$ such that
    \begin{enumerate}[label=\textup{(\alph*)}]
        \item $|a(v_p-1)b|_{\varphi}^{\sharp}<4\varepsilon^{1/2}+\delta$;
        \item $\|\Ad(w^{\ast}v_g\alpha_g(w))\circ\alpha_g\circ\beta_g^{-1}(\psi)-\psi\|<\delta$;
        \item $\|[w,\psi]\|<\sup_{s\in S}\|\beta_s\circ\alpha_s^{-1}(\psi)-\psi\|+\delta$
    \end{enumerate}
    for $g\in E$, $p\in F$, $a,b\in D$ and $\psi\in\Phi$.
\end{lem}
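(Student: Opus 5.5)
We follow the first step of the Masuda--Tomatsu intertwining argument. The idea is to approximate $\beta$ by a perturbation of $\alpha$ through an approximately inner automorphism, and then to correct the resulting $\alpha$-cocycle with Lemma \ref{lem:approx-1coh-vanish} so that it is close to $1$ on $F$.

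\textbf{Step 1: an approximating unitary.} Since $\beta_g\circ\alpha_g^{-1}\in\clInt(M)$ and $\alpha|_K=\beta|_K$, the map $g\mapsto\beta_g\circ\alpha_g^{-1}$ is right $K$-invariant. It is also continuous, hence constant on cosets near each point of the compact set $E$. So only finitely many automorphisms $\beta_s\circ\alpha_s^{-1}$ matter, namely for $s$ in the finite set $E/K\cup S$. For each such $s$ we choose a unitary $u_s\in\U(M)$ with $\Ad{u_s}$ close to $\beta_s\circ\alpha_s^{-1}$ on a large finite set of normal functionals (containing $\Phi$ and its translates).

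\textbf{Step 2: assemble an exact cocycle in the ultraproduct.} A sequence of such choices gives a unitary path. Set $\theta=\beta_g\circ\alpha_g^{-1}$ and compare the two actions: $\gamma_g=\theta_g\circ\alpha_g=\beta_g$. We want $U\colon G\to\U(M^{\omega}_{\alpha})$ with $\Ad{U_g}\circ\alpha_g=\beta_g$ asymptotically. Concretely, we choose representing sequences $U_g=(u_{g,n})^{\omega}$ with $u_{g,n}=u_{gK,n}$, where $\Ad{u_{gK,n}}\to\beta_g\alpha_g^{-1}$ in $u$-topology. Then $U_g\alpha_g(U_h)U_{gh}^{\ast}$ commutes with $M$ asymptotically, so $U$ is a $K$-invariant $\alpha$-cocycle modulo $M_{\omega,\alpha}$. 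Using $K$-invariance, $U|_K\equiv1$. The defect $c_{g,h}=U_g\alpha_g(U_h)U_{gh}^{\ast}$ is then a 2-cocycle with values in $M_{\omega,\alpha}$ for the action induced on it. By Theorem \ref{thm:1-coh-vanish} applied in $M_{\omega,\alpha}$ (and the analogue of Lemma \ref{lem:approx-2coh-vanish}), this 2-cocycle can be trivialized. After the trivialization, $U$ becomes a genuine $K$-invariant $\alpha$-cocycle $V\colon G\to\U(M^{\omega}_{\alpha})$ with $\Ad{V_g}\circ\alpha_g=\beta_g$ on $M$.

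\textbf{Step 3: localize near $F$ and choose a representative.} Apply Lemma \ref{lem:approx-1coh-vanish} to $V$ with the given $S$. Because $V$ is an exact cocycle, the defect term in that lemma collapses (see the following Remark, with $u\equiv1$ for genuine $\alpha$) to $0$. This yields $W\in\U((M^K)^{\omega})$ such that $V'_g=WV_g\alpha_g(W^{\ast})$ satisfies $|a(V'_p-1)b|^{\sharp}_{\varphi^{\omega}}\le4\varepsilon^{1/2}$ for $p\in F$. Moreover, item (3) of that lemma gives
\[\|[W,\psi^{\omega}]\|\le\sup_{s}\|[V_{s^{-1}},\alpha_{s^{-1}}(\psi)]\|,\]
which is controlled by $\sup_{s\in S}\|\beta_s\alpha_s^{-1}(\psi)-\psi\|$ since $V$ implements $\beta\alpha^{-1}$. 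Then $\Ad(W^{\ast}V'_g\alpha_g(W))\circ\alpha_g=\Ad{V_g}\circ\alpha_g=\beta_g$, which is (b) in the limit. Finally, we take representing sequences $w_n\in\U(M^K)$ of $W^{\ast}$ and $v_{g,n}$ of $V'$. By the index selection trick together with the 1-cohomology vanishing on $K$ and Lemma~"every Borel 1-cocycle is continuous", each $v_{\cdot,n}$ can be made an honest $K$-invariant $\alpha$-cocycle in $M$. Equicontinuity over the compact sets $E$ and $F$ then lets us pick one index $n$ for which (a), (b), (c) hold up to $\delta$.

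\textbf{Main obstacle.} The hard step is Step 2: producing an \emph{exact} cocycle in $M^{\omega}_{\alpha}$ implementing $\beta\alpha^{-1}$, and then descending it to an exact cocycle in $M$ rather than an approximate one. To descend, I would first try to use Theorem \ref{thm:2coh-vanish}(2) on the finite-index level, applied to the cocycle action $(\Ad{u}\circ\alpha,\partial u)$. Its estimate involves $[u_{h,h^{-1}}^{\ast}u_{h,h^{-1}p},\psi]$, which is small on $\Phi$ once the $u_s$ are chosen close enough to $\beta_s\alpha_s^{-1}$. This route would give the exact cocycle $v$ directly in $M$ with the required bounds.
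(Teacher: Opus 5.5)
The problems are in Step 2 and in the last sentence of Step 3; your last paragraph already names the route that fixes them.

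\textbf{The gap.} Trivializing $c_{g,h}=U_g\alpha_g(U_h)U_{gh}^{\ast}$ is a 2-cohomology vanishing problem with coefficients in $M_{\omega,\alpha}$. That algebra is nonseparable and in general not a factor. Theorem \ref{thm:1-coh-vanish} concerns 1-cocycles, and Lemma \ref{lem:approx-2coh-vanish} and Theorem \ref{thm:2coh-vanish} are proved for cocycle actions on factors with separable predual. So nothing in the paper covers this step. You also never arrange that $(u_{gK,n})_n$ is $(\alpha,\omega)$-equicontinuous, which is needed even to have $U_g\in M^{\omega}_{\alpha}$.

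More decisively, an exact cocycle $V'\colon G\to\U(M^{\omega}_{\alpha})$ does not descend the way you claim. A representing sequence $v_{\cdot,n}$ satisfies the cocycle identity only in the limit $n\to\omega$. Neither the index selection trick, nor 1-cohomology vanishing on $K$, nor automatic continuity of Borel 1-cocycles turns an approximate $\alpha$-cocycle on $G$ into an exact one. That is a genuine stability statement, and it is exactly what Theorem \ref{thm:2coh-vanish} supplies.

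\textbf{The fix.} The route in your last paragraph is precisely the paper's proof, and it makes Step 2 and any descent unnecessary. Take a right $K$-invariant $v^{(0)}\colon G\to\U(M)$ with $v^{(0)}_1=1$, so that $u=\partial v^{(0)}$ is $K$-invariant and $(\Ad v^{(0)}\circ\alpha,u)$ is centrally free. Choose it so that $\Ad v^{(0)}_g$ approximates $\beta_g\circ\alpha_g^{-1}$ uniformly on a suitable compact set of functionals. Your sketch glosses over two points.
\begin{enumerate}
\item Theorem \ref{thm:2coh-vanish}(2) must be applied with an auxiliary $K$-$(E,\varepsilon')$-invariant set $S'$ and small $\varepsilon'$, not with the given $S$. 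Its estimate holds only for $p$ in the set under which the F\o lner set is invariant, and always costs $8\varepsilon^{1/2}$. So the given $(F,\varepsilon)$-invariant $S$ cannot produce (b) on $E$ with arbitrary $\delta$.
\item The defect is controlled through the identity $u_{h,h^{-1}}^{\ast}u_{h,h^{-1}g}=\alpha_h(v^{(0)\ast}_{h^{-1}}v^{(0)}_{h^{-1}g})v^{(0)\ast}_g$. This gives $\|[u_{h,h^{-1}}^{\ast}u_{h,h^{-1}g},\Ad v^{(0)}_g(\psi')]\|<3\delta'$ for $h\in\bigcup gS'$, provided two things hold:
\begin{itemize}
\item the approximation holds at the cosets of $h^{-1}$ and $h^{-1}g$, which are more than the cosets in $E/K\cup S$;
\item it holds on the translates $\alpha_h\beta_k\alpha_l^{-1}(\psi')$ with $\psi'\in\{\alpha_g\beta_g^{-1}(\psi):g\in E,\ \psi\in\Phi\}$.
\end{itemize}
To obtain (b), the commutator must be small on $\Ad v^{(0)}_g(\psi')$, not merely on $\Phi$.
\end{enumerate}

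This yields an exact $K$-invariant $\alpha$-cocycle $v^{(2)}=v^{(1)}v^{(0)}$ in $M$. For $g\in E$ it satisfies:
\begin{itemize}
\item $\|\Ad v^{(2)}_g\circ\alpha_g\circ\beta_g^{-1}(\psi)-\psi\|$ is small;
\item $\|[v^{(2)}_g,\psi]\|$ is at most $\|\beta_g\circ\alpha_g^{-1}(\psi)-\psi\|$ plus a small error.
\end{itemize}
Your Step 3 then applies verbatim to $V=v^{(2)}$. The defect term vanishes. Item (3) together with the cocycle identity gives the bound $\sup_{s\in S}\|[v^{(2)}_s,\psi]\|$, after arranging $\bigcup S\subset E$. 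Take a representative $w\in\U(M^K)$ of $W$ (not of $W^{\ast}$). Then $v=wv^{(2)}\alpha(w^{\ast})$ is automatically an exact $K$-invariant cocycle with $w^{\ast}v_g\alpha_g(w)=v^{(2)}_g$, so (b) is just the estimate for $v^{(2)}$.
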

\begin{proof}
    We may assume that $K\cup(\bigcup S)\subset E$.
    Choose small $\varepsilon',\delta'>0$ such that $8\varepsilon'^{1/2}+6\delta'<\delta$, and let $S'\subset G/K$ be a finite $K$-$(E,\varepsilon')$-invariant subset.
    Set $\Phi'=\{\alpha_g\circ\beta_g^{-1}(\psi):g\in E,\ \psi\in\Phi\}$, and let $v^{(0)}\colon G\to\U(M)$ be a right $K$-invariant map such that $v^{(0)}_1=1$ and
    \[\|\Ad{v_g^{(0)}}\circ\alpha_h\circ\beta_k\circ\alpha_l^{-1}(\psi')-\beta_g\circ\alpha_{g^{-1}h}\circ\beta_k\circ\alpha_l^{-1}(\psi')\|<\delta'\]
    for $g\in G$, $h\in (\bigcup ES')^{-1}$, $k,l\in E$, and $\psi'\in\Phi'$.
    Define a cocycle action $(\tilde{\alpha},u)$ by $\tilde{\alpha}=\Ad{v^{(0)}}\circ\alpha$ and $u_{g,h}=v^{(0)}_g\alpha_g(v_h^{(0)})v_{gh}^{(0)\ast}$.
    Then $(\tilde{\alpha},u)$ is $K$-invariant and centrally free.
    We see that $u_{h,h^{-1}}^{\ast}u_{h,h^{-1}g}=\alpha_h(v_{h^{-1}}^{(0)\ast}v_{h^{-1}g}^{(0)})v_g^{(0)\ast}$ and
    \begin{align*}
        \|[u_{h,h^{-1}}^{\ast}u_{h,h{-1}g},\Ad{v_g^{(0)}}(\psi')]\| &= \|\Ad{\alpha_h(v_{h^{-1}}^{(0)\ast}v_{h^{-1}g}^{(0)})}(\psi')-\Ad{v_g^{(0)}}(\psi')\|\\
        &\le \|\Ad{v_{h^{-1}g}^{(0)}}\circ\alpha_h^{-1}(\psi')-\beta_{h^{-1}g}\circ\alpha_g^{-1}(\psi')\|\\
        &\quad\ +\|\beta_{h^{-1}g}\circ\alpha_g^{-1}(\psi')-\Ad{v_{h^{-1}}^{(0)}}\circ\alpha_h^{-1}\circ\beta_g\circ\alpha_g^{-1}(\psi')\|\\
        &\quad\ +\|\beta_g\circ\alpha_g^{-1}(\psi')-\Ad{v_g^{(0)}}(\psi')\|\\
        &< 3\delta'
    \end{align*}
    for every $g\in E$, $h\in\bigcup gS'$, and $\psi'\in\Phi'$.
    
    Applying Theorem \ref{thm:2coh-vanish} to $(\tilde{\alpha},u)$ and $\Phi''=\{\Ad{v_g^{(0)}}(\psi'):g\in E,\ \psi'\in\Phi'\}$, we obtain a $K$-invariant $v^{(1)}\colon G\to\U(M)$ such that $1=v_g^{(1)}\tilde{\alpha}_g(v_h^{(1)})u_{g,h}v_{gh}^{(1)\ast}$ and
    \[\|[v_g^{(1)},\psi'']\|<8\varepsilon'^{1/2}+\sup_{h\in\bigcup gS'}\|[u_{h,h^{-1}}^{\ast}u_{h,h^{-1}g},\psi'']\|+\delta'\]
    for $g\in E$ and $\psi''\in\Phi''$.
    Then $v^{(2)}=v^{(1)}v^{(0)}$ is a $K$-invariant $\alpha$-cocycle.
    For $g\in E$ and $\psi'\in\Phi'$, it follows that $\|[v_g^{(1)},\Ad{v_g^{(0)}}(\psi')]\|<8\varepsilon'^{1/2}+4\delta'$.
    Therefore, we have
    \begin{align*}
        \|[v_g^{(2)},\psi]\| &\le \|[v_g^{(1)},\Ad{v_g^{(0)}}(\psi)]\|+\|\Ad{v_g}^{(0)}(\psi)-\psi\|\\
        &< 8\varepsilon'^{1/2}+5\delta'+\|\beta_g\circ\alpha_g^{-1}(\psi)-\psi\|
    \end{align*}
    and
    \begin{align*}
        \|\Ad{v_g^{(2)}}\circ\alpha_g\circ\beta_g^{-1}(\psi)-\psi\| &\le \|[v_g^{(1)},\psi'']\|+\|\Ad{v_g^{(0)}}\circ\alpha_g\circ\beta_g^{-1}(\psi)-\psi\|\\
        &< 8\varepsilon'^{1/2}+\sup_{h\in\bigcup gS'}\|[u_{h,h^{-1}}^{\ast}u_{h,h^{-1}g},\psi'']\|+2\delta'\\
        &< 8\varepsilon'^{1/2}+5\delta'
    \end{align*}
    for every $g\in E$ and $\psi\in\Phi$, where $\psi''=\Ad{v_g}^{(0)}\circ\alpha_g\circ\beta_g^{-1}(\psi)$.

    By Lemma \ref{lem:approx-1coh-vanish}, we can take $w\in\U(M^K)$ so that, if we set $v_g=wv_g^{(2)}\alpha_g(w^{\ast})$, then
    \begin{itemize}
        \item $|a(v_p-1)b|_{\varphi}^{\sharp}<4\varepsilon^{1/2}+\delta'$, this is (a), and
        \item $\|[w,\psi]\|<\sup_{h\in \bigcup S}\|[v_{h^{-1}}^{(2)},\alpha_{h^{-1}}(\psi)]\|+\delta'=\sup_{s\in S}\|[v_s^{(2)},\psi]\|+\delta'$
    \end{itemize}
    for $p\in F$, $a,b\in D$, $\psi\in\Phi$.
    Thus, for all $g\in E$ and $\psi\in\Phi$, it follows that
    \[\|[w,\psi]\|<8\varepsilon'^{1/2}+6\delta'+\sup_{s\in S}\|\beta_s\circ\alpha_s^{-1}(\psi)-\psi\|,\]
    this is (c).
    The preceding estimate implies (b) and we finish the proof.
\end{proof}

\begin{proof}[Proof of Theorem \ref{main:classification}]
    By \cite[Theorem 3.4]{Ni26}, we may assume $\alpha|_K=\beta|_K$.
    Fix a faithful normal state $\varphi$ on $M$.
    Let $F_n=F_nK\subset G$ be a compact subset with $K\subset F_n$ and $\bigcup_nF_n=G$.
    Choose $\varepsilon_n,\delta_n>0$ such that $\sum_n\varepsilon_n^{1/2}<\infty$ and $\sum_n\delta_n<\infty$.
    Let $S_n\subset G/K$ be a finite $K$-$(F_n,\varepsilon_n)$-invariant and arrange that $F_n\cup(\bigcup S_n)\subset F_{n+1}$.
    Let $\Phi_n\subset\nS(M)$ be increasing finite subsets with dense union.
    
    Set $\gamma^{(-1)}=\alpha$, $\gamma^{(0)}=\beta$, $\tilde{\Phi}_{-1}=\Phi_0$, and $\tilde{w}^{(-1)}=\tilde{w}^{(-2)}=\tilde{v}^{(-1)}=\tilde{v}^{(-2)}=1$.
    Then, applying Lemma \ref{lem:classify_step1} inductively, there exist $w^{(n)}\in\U(M^K)$ and a $K$-invariant $\gamma^{(n-1)}$-cocycle $v^{(n)}$ such that
    \begin{enumerate}[label=(\alph*)]
        \item $|\tilde{w}^{(n-2)}(v_p^{(n)}-1)\tilde{w}^{(n-2)\ast}\tilde{v}_p^{(n-2)}|_{\varphi}^{\sharp}<4\varepsilon_n^{1/2}+\delta_n$;
        \item $\|\Ad(w^{(n)\ast}v_g^{(n)}\gamma_g^{(n-1)}(w^{(n)}))\circ\gamma_g^{(n-1)}\circ(\gamma_g^{(n))})^{-1}(\psi)-\psi\|<\delta_n$;
        \item $\|[w^{(n)},\psi]\|<\sup_{s\in S_n}\|\gamma_s^{(n)}\circ(\gamma_s^{(n-1)})^{-1}(\psi)-\psi\|+\delta_n$
    \end{enumerate}
    for $g\in F_{n+2}$, $p\in F_n$ and $\psi\in\tilde{\Phi}_n$, where $\tilde{\Phi}_n=\Phi_n\cup\tilde{\Phi}_{n-1}\cup\{\tilde{w}^{(n-1)\ast}\psi\tilde{w}^{(n-1)}:\psi\in\Phi_n\}$.
    Set $\tilde{w}^{(n)}=\tilde{w}^{(n-2)}w^{(n)}$, $\tilde{v}^{(n)}=\tilde{w}^{(n-2)}v^{(n)}\tilde{w}^{(n-2)\ast}\tilde{v}^{(n-2)}$ and $\gamma_g^{(n+1)}=\Ad(w^{(n)\ast}v_g^{(n)}\gamma_g^{(n-1)}(w^{(n)}))\circ\gamma_g^{(n-1)}$.

    For $n\ge 1$ and $\psi\in\Phi_{n-1}$, conditions (b) and (c), together with $\bigcup S_n\subset F_{n+1}$, implies that
    \begin{align*}
        \|\Ad{\tilde{w}^{(n)}}(\psi)-\Ad{\tilde{w}^{(n-2)}}(\psi)\| &= \|[w^{(n)},\psi]\|\\
        &\le \sup_{s\in S_n}\|\gamma_s^{(n)}\circ(\gamma_s^{(n-1)})^{-1}(\psi)-\psi\|+\delta_n\\
        &< \delta_{n-1}+\delta_n,\\
        \|\Ad{\tilde{w}^{(n)\ast}}(\psi)-\Ad{\tilde{w}^{(n-2)\ast}}(\psi)\| &= \|[w^{(n)},\tilde{\psi}]\|\\
        &< \delta_{n-1}+\delta_n,
    \end{align*}
    where $\tilde{\psi}=\tilde{w}^{(n-2)\ast}\psi\tilde{w}^{(n-2)}\in\tilde{\Phi}_{n-1}$, and hence $\Ad{\tilde{w}^{(2n)}}\to\theta_0$, $\Ad{\tilde{w}^{(2n+1)}}\to\theta_1$ in $\Aut(M)$ for some $\theta_0,\theta_1\in\clInt(M)$.
    
    Condition (a) implies $\tilde{v}^{(2n)}$ and $\tilde{v}^{(2n+1)}$ converge pointwise $\ast$-strongly to a $K$-invariant $\alpha$-cocycle $v'$ and a $K$-invariant $\beta$-cocycle $v''$, respectively.
    By construction, 
    \[\gamma^{(2n+1+i)}=\Ad{\tilde{w}^{(2n+i)\ast}}\circ\Ad{\tilde{v}^{(2n+i)}}\circ\gamma^{(i-1)}\circ\Ad{\tilde{w}^{(2n+i)}}\]
    for $i=0,1$.
    Passing to the limit in condition (b) yields $\theta_0^{-1}\circ\Ad{v'}\circ\alpha\circ\theta_0=\theta_1^{-1}\circ\Ad{v''}\circ\beta\circ\theta_1$.
\end{proof}

Moreover, we can choose the cocycle as $K$-invariant.
As a corollary, combining with the result of \cite{KST92}, we have the following.
Recall that, for an AFD factor, an automorphism $\alpha$ is approximately inner if and only if its Connes--Takesaki module $\mod(\alpha)$ is trivial.

\begin{cor}
    Let $\alpha,\beta\colon G\curvearrowright M$ be centrally free actions of an amenable locally compact group $G$ with a compact open subgroup $K$ on an AFD factor $M$.
    Then $\alpha$ and $\beta$ are strongly cocycle conjugate if and only if $\mod(\alpha)=\mod(\beta)$.
\end{cor}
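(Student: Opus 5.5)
The corollary reduces to Theorem \ref{main:classification} together with the Kawahigashi--Sutherland--Takesaki result \cite{KST92} that, on an AFD factor, an automorphism is approximately inner exactly when its Connes--Takesaki module is trivial. The plan is to prove the two implications separately.

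\emph{Necessity.} Suppose $\alpha$ and $\beta$ are strongly cocycle conjugate. Then there are $\theta\in\clInt(M)$ and a Borel map $v\colon G\to\U(M)$ with $\beta_g=\theta\circ\Ad{v_g}\circ\alpha_g\circ\theta^{-1}$ for all $g\in G$. I will use that $\mod$ is a homomorphism from $\Aut(M)$ into the automorphism group of the flow of weights. Inner automorphisms have trivial module, and $\mod$ is continuous in the $u$-topology on an AFD factor, so approximately inner automorphisms also have trivial module. This gives $\mod(\beta_g)=\mod(\theta)\mod(\alpha_g)\mod(\theta)^{-1}=\mod(\alpha_g)$ for every $g$, that is, $\mod(\alpha)=\mod(\beta)$. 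If one prefers to avoid the continuity statement, \cite{KST92} already says $\mod(\theta)=1$ for $\theta\in\clInt(M)$.

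\emph{Sufficiency.} Assume $\mod(\alpha_g)=\mod(\beta_g)$ for all $g\in G$. Then $\mod(\beta_g\circ\alpha_g^{-1})=1$, so the \cite{KST92} characterization gives $\beta_g\circ\alpha_g^{-1}\in\clInt(M)$ for every $g$. Since both actions are centrally free, $G$ is amenable and has the compact open subgroup $K$, and $M$ is a factor, Theorem \ref{main:classification} applies directly and shows that $\alpha$ and $\beta$ are strongly cocycle conjugate.

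\emph{Expected obstacle.} The only delicate point is the scope of the \cite{KST92} result. In the type $\III_1$ and semifinite cases the flow of weights is trivial or essentially trivial, so there one has to check that the statement ``approximately inner iff trivial module'' still holds as quoted for arbitrary AFD factors. I would invoke it exactly as recalled in the remark preceding the corollary. Separability of the predual is part of the paper's standing assumption, so no further hypotheses are needed.
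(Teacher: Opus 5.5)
Your proposal is correct and follows the paper's route. The paper obtains this corollary by combining Theorem~\ref{main:classification} with the \cite{KST92} identification $\clInt(M)=\ker(\mod)$ for AFD factors. Your argument uses that identification in both directions in the same way: sufficiency reduces to the hypothesis $\beta_g\circ\alpha_g^{-1}\in\clInt(M)$ of the theorem, and necessity follows because $\theta$, $\Ad{v_g}\in\ker(\mod)$.
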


Every locally compact group admits a strictly outer actions on $R$, e.g., \cite[Corollary 5.2]{Va05}.
Moreover, there is also a simple construction of \emph{minimal} actions on $R$:
Let $G\curvearrowright R$ be faithful, and consider the diagonal action $G\curvearrowright R^{\wtensor\Gamma}\rtimes\Gamma$, where $\Gamma$ is a discrete amenable ICC group.

\begin{cor}\label{cor:extend-action}
    Let $G$ be a locally compact group, and $H<G$ a closed subgroup.
    Suppose that $H$ is amenable and has a compact open subgroup.
    Then every strictly outer action $\alpha\colon H\curvearrowright R$ admits an $\alpha$-cocycle $v$ such that the action $\Ad{v}\circ\alpha$ extends to a strictly outer action $G\curvearrowright R$.
\end{cor}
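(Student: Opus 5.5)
Fix a strictly outer action $\beta\colon G\curvearrowright R$ (e.g.\ \cite[Corollary 5.2]{Va05}, or the minimal construction mentioned just before the statement). The plan is to show that $\beta|_H$ is cocycle conjugate to $\alpha$, and then transport $\beta$ back along the resulting conjugacy.

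First, $\beta|_H$ is strictly outer, since strict outerness passes to restrictions to closed subgroups (by the remarks after the definition of outerness conditions, or via Lemma \ref{lem:outerness}). Next we need a compact open subgroup of $H$ and its interaction with $\beta$. Let $K_H<H$ be a compact open subgroup. The actions $\alpha$ and $\beta|_H$ are strictly outer actions on $R$, so by Proposition \ref{prop:str.outer->cent.free} (applied to $H$ with $K_H$) both are centrally free. Since $\Aut(R)=\clInt(R)$ for the hyperfinite $\II_1$ factor, the hypothesis $\beta_h\circ\alpha_h^{-1}\in\clInt(R)$ holds automatically. Theorem \ref{main:classification}, applied to the amenable group $H$ with compact open subgroup $K_H$, then yields $\theta\in\Aut(R)$ and an $\alpha$-cocycle $v\colon H\to\U(R)$ with
\[\theta\circ\Ad{v}\circ\alpha\circ\theta^{-1}=\beta|_H.\]

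Define $\tilde\beta_g=\theta^{-1}\circ\beta_g\circ\theta$ for $g\in G$. This is a continuous action of $G$ on $R$, and it is strictly outer because it is conjugate to $\beta$: $\theta$ extends to an isomorphism of crossed products preserving $R$, so relative commutants correspond. By construction $\tilde\beta|_H=\Ad{v}\circ\alpha$, which gives the required extension.

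The only delicate point is checking that the hypotheses of Theorem \ref{main:classification} are met. That theorem is stated for $G$ with a fixed compact open subgroup, and here it is applied to $H$ with its own $K_H$; $H$ is second-countable as a closed subgroup of $G$. There is no real obstacle, since central freeness of both actions comes directly from Proposition \ref{prop:str.outer->cent.free}.
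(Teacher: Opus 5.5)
Your proposal is correct and follows the paper's proof. Both restrict a strictly outer action of $G$ on $R$ to $H$, invoke Theorem \ref{main:classification} to obtain $\theta\in\Aut(R)$ and an $\alpha$-cocycle $v$ with $\Ad{v}\circ\alpha$ conjugate to that restriction, and then conjugate the whole $G$-action by $\theta$. You also spell out hypotheses the paper leaves implicit: that strict outerness passes to $H$, that both actions are centrally free by Proposition \ref{prop:str.outer->cent.free}, and that $\Aut(R)=\clInt(R)$.
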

\begin{proof}
    Let $\gamma\colon G\curvearrowright R$ be a strictly outer action.
    By Theorem \ref{main:classification}, $\gamma|_H$ and $\alpha$ are cocycle conjugate.
    This means that there exist $\theta\in\Aut(R)$ and $\alpha$-cocycle $v$ satisfying $\theta\circ\gamma|_H\circ\theta^{-1}=\Ad{v}\circ\alpha$.
    The action $\theta\circ\gamma\circ\theta^{-1}$ is the desired one.
\end{proof}

\begin{rem}
    When $H$ is compact, the cocycle perturbation above may be omitted.
    For noncompact $H$, however, this is not possible in general.
    Consider the case $H=2\mathbb{Z}<\mathbb{Z}=G$, which reduces to the problem to finding an aperiodic automorphism of $R$ which has no square root.
    By \cite[Theorem 7.7]{Co77}, there exists $\beta\in\Aut(R)$ such that $\beta^4=\id$ and $\beta$ has no square root.
    Let $\theta\in\Aut(R)$ be weakly mixing, for instance a Bernoulli shift, and set $\alpha=\beta\otimes\theta\in\Aut(R\wtensor R)$.
    Any square root of $\beta\otimes\theta$ leaves $(R\wtensor R)^{(\beta\otimes\theta)^4}=R\wtensor\mathbb{C}1$ globally invariant, hence it induces a square root of $\beta\in\Aut(R\wtensor\mathbb{C}1)$, a contradiction.
\end{rem}

\subsection{McDuff-type absorption}

We obtain the following characterization of central freeness of actions on McDuff factors.
We use the dynamical McDuff-type theorem from \cite{SW24}.
The uniqueness theorem also implies that every strictly outer action of an amenable group with a compact open subgroup on $R$ is strongly self-absorbing.

\begin{cor}\label{cor:cent.free-McDuff}
    Let $\beta\colon G\curvearrowright M$ be an action of an amenable locally compact group $G$ with a compact open subgroup on a McDuff factor $M$.
    Then the following conditions are equivalent.
    \begin{enumerate}
        \item The action $\beta$ is centrally free;
        \item The action $\beta$ is cocycle conjugate to $\beta\otimes\alpha$ for every action $\alpha\colon G\curvearrowright R$;
        \item The action $\beta$ is cocycle conjugate to $\beta\otimes\gamma$ for a strictly outer action $\gamma\colon G\curvearrowright R$;
        \item There exists an equivariant embedding $R\hookrightarrow M_{\omega,\beta}$ for every action $G\curvearrowright R$;
        \item There exists an equivariant embedding $R\hookrightarrow M_{\omega,\beta}$ for a strictly outer action $G\curvearrowright R$.
    \end{enumerate}
\end{cor}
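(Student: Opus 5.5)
I will prove the cycle of implications (1)$\Rightarrow$(4)$\Rightarrow$(5)$\Rightarrow$(1) together with (1)$\Rightarrow$(2)$\Rightarrow$(3)$\Rightarrow$(1). Here (4)$\Rightarrow$(5) and (2)$\Rightarrow$(3) are trivial, since strictly outer actions on $R$ exist.

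For (5)$\Rightarrow$(1), let $\iota\colon R\hookrightarrow M_{\omega,\beta}$ be equivariant for a strictly outer $\gamma\colon G\curvearrowright R$. By Proposition \ref{prop:str.outer->cent.free}, $\gamma$ is centrally free. Hence for each $g\neq 1$ some element of $R_{\omega,\gamma}$ is moved by $\gamma_g$. I would realize $R_{\omega,\gamma}$ equivariantly inside $(M_{\omega,\beta})_{\omega,\beta}$ via $\iota$ (sequences of images of equicontinuous central sequences) and then apply the index selection trick to push this back into $M_{\omega,\beta}$. This shows that $\beta_g$ acts nontrivially on $M_{\omega,\beta}$, that is, $G\curvearrowright M_{\omega,\beta}$ is faithful.

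For (3)$\Rightarrow$(1), I would use the canonical inclusion $\mathbb{C}1\wtensor R_{\omega,\gamma}\subset(M\wtensor R)_{\omega,\beta\otimes\gamma}$, which gives that $\beta\otimes\gamma$ is centrally free, as in the corollary preceding this statement. I then check that central freeness is invariant under cocycle conjugacy: an isomorphism $\theta$ transports the equicontinuous central sequence algebras, and perturbing by a cocycle $v$ does not change the induced action on $M_{\omega}$, because $v_g\in M$ commutes with central sequences. Equicontinuity is preserved because $v$ is continuous. This uses the lemma stating that Borel 1-cocycles are continuous.

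For (1)$\Rightarrow$(4), fix an arbitrary $\alpha\colon G\curvearrowright R$. By Proposition \ref{prop:str.outer->cent.free}, the diagonal action $\alpha\otimes\gamma$ with $\gamma$ strictly outer on $R$ is strictly outer, hence centrally free, on $R\wtensor R\cong R$. Theorem \ref{main:classification} then shows it is cocycle conjugate to $\gamma$. So it suffices to embed $(R,\alpha\otimes\gamma)$, and then $(R,\gamma)$ up to cocycle conjugacy. For a single strictly outer $\gamma$, I would build an embedding $R\hookrightarrow M_{\omega,\beta}$ that is equivariant up to a cocycle, and then remove the cocycle with the 1-cohomology vanishing Theorem \ref{thm:1-coh-vanish} in the relative commutant. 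The construction itself goes as follows. Since $M$ is McDuff, $M_{\omega}$ contains matrix algebras. I would average them over the Rohlin towers: use the $K$-equivariant $L^\infty(K)$ from Proposition \ref{prop:cpt-cent.free->Rohlin} together with the tower of Theorem \ref{thm:tower} to induce a model $G$-action on $\mathbb{M}_n\otimes(\text{tower})$. This produces approximately equivariant unital copies of $\mathbb{M}_{2^n}$ with an approximately equivariant copy of $R$ under a model action. These are then stabilized by reindexation.

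Alternatively, and this is what I would try first, I would invoke the dynamical McDuff theorem of \cite{SW24}. It reduces (4) to the existence of approximately equivariant embeddings of $(R,\gamma)$ for the strongly self-absorbing $\gamma$, which the paper notes follows from uniqueness. That theorem also gives (4)$\Rightarrow$(2) directly, since (4) for a given $\alpha$ yields $\beta\simeq\beta\otimes\alpha$. The main obstacle is the existence step in (1)$\Rightarrow$(4): producing an equivariant (not merely cocycle-twisted) copy of $R$ inside $M_{\omega,\beta}$ while keeping equicontinuity. That is where the Rohlin tower, 1-cohomology vanishing, and reindexation must be combined carefully.
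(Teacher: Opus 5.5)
Your peripheral steps are fine. Your (3)$\Rightarrow$(1), via $\mathbb{C}1\wtensor R_{\omega,\gamma}\subset(M\wtensor R)_{\omega,\beta\otimes\gamma}$ and invariance of central freeness under cocycle conjugacy, is a valid alternative to the paper's (3)$\Rightarrow$(4)$\Rightarrow$(5)$\Rightarrow$(1). Your (5)$\Rightarrow$(1) is heavier than necessary: $\gamma$ is already faithful on $R$, so $\beta_g\circ\iota=\iota\circ\gamma_g\neq\iota$ for $g\neq1$ gives faithfulness on $M_{\omega,\beta}$ directly.

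\textbf{The gap: (1)$\Rightarrow$(4).} Both of your cycles depend on this step, and you leave it as the ``main obstacle''.
\begin{itemize}
\item Your first sketch starts from matrix algebras in $M_{\omega}$. McDuffness does not put anything noncommutative into the equicontinuous part $M_{\omega,\beta}$. The example after the final theorem of Section \ref{sec:4} gives a strictly outer action on a McDuff factor whose $M_{\omega,\beta}$ contains no copy of $R$. So central freeness must actually be used here. The steps you then describe (a ``model action'', an embedding ``equivariant up to a cocycle'') are not specified enough to check.
\item Your second route is circular. \cite[Theorem 5.4]{SW24} says that absorbing a strongly self-absorbing $\delta$ is equivalent to $\delta$ embedding equivariantly into $M_{\omega,\beta}$. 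It cannot produce such an embedding from (1). What uniqueness gives is that $\gamma$ is strongly self-absorbing, i.e.\ embeds into its own $R_{\omega,\gamma}$, not into $M_{\omega,\beta}$.
\item A smaller point: (4)$\Rightarrow$(2) via SW24 is not direct for an arbitrary $\alpha$, which need not be strongly self-absorbing. You must pass through $\alpha^{\otimes\infty}$ and use $\alpha\otimes\alpha^{\otimes\infty}\cong\alpha^{\otimes\infty}$.
\end{itemize}

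\textbf{The missing idea: apply Theorem \ref{main:classification} on $M$ itself, not only on $R$.}
\begin{itemize}
\item Take a McDuff isomorphism $M\cong M\wtensor R$ approximately unitarily equivalent to $x\mapsto x\otimes1$, and transport $\beta\otimes\alpha$ to $M$.
\item The transported action is centrally free, since its equicontinuous central sequence algebra contains $M_{\omega,\beta}\otimes1$.
\item It differs pointwise from $\beta$ by approximately inner automorphisms.
\item Theorem \ref{main:classification} then gives (1)$\Rightarrow$(2) at once, with no embedding built by hand.
\end{itemize}
The paper then proves (3)$\Rightarrow$(4) as follows. For any $\alpha$, the action $\gamma\otimes\alpha^{\otimes\infty}$ is strictly outer on $R$, so $\gamma\simeq\gamma\otimes\alpha^{\otimes\infty}$ by uniqueness, and hence $\beta\simeq\beta\otimes\alpha^{\otimes\infty}$. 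Applying \cite[Theorem 5.4]{SW24} to the strongly self-absorbing $\alpha^{\otimes\infty}$ gives an equivariant embedding of $R^{\wtensor\infty}$ into $M_{\omega,\beta}$, and one tensor factor of it gives (4).

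Your hands-on route could be completed instead, and it would avoid the classification theorem. It needs two concrete steps you do not supply:
\begin{enumerate}
\item Average a copy of $R\subset M_{\omega}$ against the Rohlin subalgebra $L^{\infty}(K)$, using $k\mapsto\beta_k(x_n)$ on representing sequences, to get a non-equivariant copy of $R$ inside $(M_{\omega,\beta})^K$.
\item Apply the Rohlin-tower construction from the final theorem of Section \ref{sec:4} to make that copy approximately equivariant, and then exactly equivariant by index selection.
\end{enumerate}
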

\begin{proof}
    First, the implications (2)$\implies$(3) and (4)$\implies$(5)$\implies$(1) are immediate.

    Suppose that (1) holds, and let $\alpha\colon G\curvearrowright R$ be an action.
    There exists a McDuff isomorphism $M\cong M\wtensor R$ that is approximately unitarily equivalent to the canonical embedding $M\ni x\mapsto x\otimes 1\in M\wtensor R$.
    The action $\beta\otimes\alpha$ is centrally free and, under this identification, differs pointwise from $\beta$ by approximately inner automorphisms.
    Theorem \ref{main:classification} therefore proves (2).

    Suppose that (3) holds, and let $\alpha\colon G\curvearrowright R$ be an action.
    Since $\gamma\otimes\alpha^{\otimes\infty}\colon G\curvearrowright R\wtensor R^{\wtensor R}\cong R$ is strictly outer, $\gamma$ and $\gamma\otimes\alpha^{\otimes\infty}$ are cocycle conjugate.
    Together with the assumption (3), this shows that $\beta$ and $\beta\otimes\alpha^{\otimes\infty}$ are cocycle conjugate.
    Apply \cite[Theorem 5.4]{SW24} to the strongly self-absorbing action $\alpha^{\otimes\infty}$.
    Taking one tensor component, we get (4).
\end{proof}

The proof also shows that the cocycle conjugacy may be replaced by the strong cocycle conjugacy in the statement of the corollary.

\begin{cor}
    Let $G\curvearrowright M$ be a centrally free action of an amenable locally compact group $G$ with a compact open subgroup on a McDuff factor $M$.
    \begin{enumerate}
        \item The inclusion $(M\subset M\rtimes G)$ is strongly stable, i.e., isomorphic to $(M\wtensor R\subset (M\rtimes G)\wtensor R)$.
        \item The inclusion $(M^Kp_K\subset p_K(M\rtimes G)p_K)$ is strongly stable.
    \end{enumerate}
\end{cor}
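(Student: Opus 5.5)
For (1), the plan is to use the preceding Corollary \ref{cor:cent.free-McDuff}, in its strong cocycle conjugacy form. Fix a strictly outer action $\gamma\colon G\curvearrowright R$; one exists by \cite[Corollary 5.2]{Va05}. By that corollary, $\beta$ is strongly cocycle conjugate to $\beta\otimes\gamma$ on $M\wtensor R$. Cocycle conjugacy induces a canonical isomorphism of crossed product inclusions, as recalled in Subsection 1.4, so
\[(M\subset M\rtimes_\beta G)\cong(M\wtensor R\subset (M\wtensor R)\rtimes_{\beta\otimes\gamma}G).\]
It then remains to compare $(M\wtensor R)\rtimes_{\beta\otimes\gamma}G$ with $(M\rtimes_\beta G)\wtensor R$. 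These are not obviously the same, because $\gamma$ is nontrivial. I would therefore not use $\gamma$ directly. Instead, apply item (2) of the corollary with the \emph{trivial} action $\alpha=\id\colon G\curvearrowright R$, which is allowed since (2) holds for every action. This makes $\beta$ cocycle conjugate to $\beta\otimes\id_R$, and
\[(M\wtensor R)\rtimes_{\beta\otimes\id}G=(M\rtimes_\beta G)\wtensor R\]
canonically, with $M\wtensor R$ sitting inside as $M\wtensor R$. Composing the two isomorphisms gives strong stability.

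For (2), I would reuse the same cocycle conjugacy $(\theta,v)$ between $\beta$ and $\beta\otimes\id$. The aim is to check that the induced isomorphism
\[\Theta\colon M\rtimes_\beta G\to (M\rtimes_\beta G)\wtensor R\]
maps $p_K$ to $p_K\otimes1$ and $M^K$ onto $(M\wtensor R)^K=M^K\wtensor R$. By the remark after Theorem \ref{main:classification}, the cocycle implementing the cocycle conjugacy can be chosen trivial on $K$. The proof of Corollary \ref{cor:cent.free-McDuff} goes through Theorem \ref{main:classification}, so $v|_K\equiv1$ can be arranged there as well.

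With $v|_K\equiv1$, the map $\Theta$ sends $\lambda_k\mapsto\lambda_k\otimes1$ for $k\in K$, and hence $p_K\mapsto p_K\otimes1$. The relation $\theta\circ\beta_k\circ\theta^{-1}=\beta_k\otimes\id$ shows that $\theta$ carries $M^K$ onto $M^K\wtensor R$. Cutting down by $p_K$ and using $M^Kp_K=p_K(M\rtimes K)p_K$ then yields
\[(M^Kp_K\subset p_K(M\rtimes G)p_K)\cong(M^Kp_K\wtensor R\subset p_K(M\rtimes G)p_K\wtensor R).\]

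The main obstacle is justifying that the strong cocycle conjugacy in Corollary \ref{cor:cent.free-McDuff} can have its cocycle trivial on $K$. In its proof, the McDuff isomorphism $M\cong M\wtensor R$ first transports $\beta\otimes\id$ to an action $\beta'$ on $M$. One must then ensure $\beta'|_K=\beta|_K$ before invoking the $K$-trivial version of Theorem \ref{main:classification}. For this I would use \cite[Theorem 3.4]{Ni26}, which conjugates the $K$-restrictions by an approximately inner automorphism, as in the first line of the proof of Theorem \ref{main:classification}. I would then fold that automorphism into $\theta$. Since the combined $\theta$ is then approximately inner, the cocycle stays trivial on $K$.
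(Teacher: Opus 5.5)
Your proposal is correct and follows the paper's own (one-line) argument. Like the paper, you let $\beta$ absorb the trivial action $\id\colon G\curvearrowright R$ via Corollary \ref{cor:cent.free-McDuff}; for (2) you use that the implementing cocycle can be taken trivial on $K$, so the induced isomorphism sends $p_K\mapsto p_K\otimes1$ and $M^K$ onto $M^K\wtensor R$. Your ``main obstacle'' is already covered by the statement of Theorem \ref{main:classification}, which gives the $K$-trivial cocycle without assuming $\alpha|_K=\beta|_K$, so the extra reduction step is unnecessary, though harmless.
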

\begin{proof}
    By Corollary \ref{cor:cent.free-McDuff}, the action absorbs the trivial action on $R$.
\end{proof}

Let $(N\subset M)$ be an irreducible inclusion of $\II_1$ factors.
The \emph{relative fundamental group} $\mathcal{F}(N\subset M)$ is the subgroup of $\mathbb{R}_+^{\ast}$ consisting of those $t>0$ for which $(N\subset M)$ is isomorphic to the amplified inclusion $(N^t\subset M^t)$.
Recall that $M\rtimes G$ is semifinite factor if $G$ is unimodular, $M$ is finite, and $G\curvearrowright M$ is strictly outer
In this case, $p_K\in M\rtimes G$ is a finite projection

\begin{cor}
    Let $G\curvearrowright M$ be a centrally free action of an amenable locally compact unimodular group $G$ with a compact open subgroup on a McDuff factor of type $\II_1$.
    Then
    \[\mathcal{F}(M^Kp_K\subset p_K(M\rtimes G)p_K)=\mathbb{R}_+^{\ast}.\]
\end{cor}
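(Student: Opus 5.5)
The plan is to combine the strong stability of the compressed inclusion (item (2) of the preceding corollary) with the fact that $R$ has full fundamental group, and to check that amplification commutes with the compression by $p_K$. Write $N_0=M^Kp_K$ and $M_0=p_K(M\rtimes G)p_K$. Since $G$ is unimodular, $M$ is a $\II_1$ factor and the action is strictly outer (centrally free actions are strictly outer), $M\rtimes G$ is a semifinite factor. The projection $p_K$ is finite there, so $M_0$ is a $\II_1$ factor. Also $N_0=p_K(M\rtimes K)p_K$ is a $\II_1$ factor, because $M^K$ is a factor: $\alpha|_K$ is minimal, so $(M^K)'\cap M^K\subset (M^K)'\cap M=\mathbb{C}1$. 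Moreover, $N_0'\cap M_0\subset N_0$ by irreducibility of $M\subset M\rtimes G$ compressed by $p_K$ (the argument of the lemma on $(p_K(N\rtimes K)p_K)'\cap p_K(N\rtimes G)p_K$, applied now to $M$). Hence $N_0\subset M_0$ is an irreducible inclusion of $\II_1$ factors and $\mathcal{F}$ is defined.

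By the preceding corollary (2), $(N_0\subset M_0)\cong(N_0\wtensor R\subset M_0\wtensor R)$. Fix $t>0$ and a projection $q\in R$ with $\tau(q)=t$ when $t\le1$. Then
\[(N_0\subset M_0)^t\cong(N_0\wtensor qRq\subset M_0\wtensor qRq)\cong(N_0\wtensor R\subset M_0\wtensor R)\cong(N_0\subset M_0),\]
using $qRq\cong R$. The amplification of the inclusion is taken with a projection $1\otimes q$ lying in the smaller algebra $N_0\wtensor R$, so it is well defined up to isomorphism; the traces agree since $N_0\subset M_0$ is a unital inclusion of $\II_1$ factors. This gives $(0,1]\subset\mathcal{F}$. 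Since $\mathcal{F}$ is a group, taking inverses gives all of $\mathbb{R}_+^{\ast}$. Alternatively, for $t>1$ one uses $\mathbb{M}_n\otimes R\cong R$ and amplifies by projections in $\mathbb{M}_n(N_0\wtensor R)$.

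The main obstacle I expect is the bookkeeping behind the phrase ``amplified inclusion''. One must check that $(N^t\subset M^t)$ is realized by cutting with a projection of trace $t$ in $N$ (or in matrices over $N$), so that the tensor-with-$R$ picture applies. Verifying that the strong-stability isomorphism of item (2) is an honest inclusion isomorphism (sending $N_0$ onto $N_0\wtensor R$) is immediate from the corollary, so the remaining checks are routine.
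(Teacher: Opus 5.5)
Your proposal is correct and follows essentially the same route as the paper: invoke the strong stability of $(M^Kp_K\subset p_K(M\rtimes G)p_K)$ from the preceding corollary, then amplify on the $R$-tensor factor using $qRq\cong R$ (you cut by $1\otimes q\in N_0\wtensor R$) and the group property of $\mathcal{F}$. Your additional verification that this is an irreducible inclusion of $\II_1$ factors is left implicit in the paper, and it is sound: it uses minimality of $\alpha|_K$, finiteness of $p_K$, and $p_K\in M\rtimes K$ together with $(M\rtimes K)'\cap(M\rtimes G)\subset M'\cap(M\rtimes G)=\mathbb{C}1$.
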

\begin{proof}
    By the preceding corollary, the inclusion is isomorphic to its tensor product with $R$.
    For every $t>0$, amplify on the $R$-tensor factor and use $R^t\cong R$.
    It then follows that the amplified inclusion is isomorphic to the original one.
\end{proof}

For not centrally free actions, we have the following partial analogue of the classification theorem.
It may also be regarded as a locally compact analogue of \cite[Theorem 3.11(ii)]{SW24}.
We use the relative form of the results from Section \ref{sec:2} for the faithful quotient action on $M_{\omega,\alpha}$.

\begin{thm}
    Let $\alpha\colon G\curvearrowright M$ be a strictly outer action of a locally compact group $G$ on a McDuff factor $M$.
    Let $H=\ker(G\curvearrowright M_{\omega,\alpha})<G$ and $\gamma=\gamma^{G/H}\colon G\curvearrowright R$ denote the inflation of a strictly outer action $G/H\curvearrowright R$.
    Suppose that $G/H$ is amenable and has a compact open subgroup.
    Then $\alpha$ is cocycle conjugate to $\alpha\otimes\gamma$ if and only if $M_{\omega,\alpha}$ contains a copy of $R$.
\end{thm}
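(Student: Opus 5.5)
Both directions go through the quotient group $Q=G/H$, which is amenable with a compact open subgroup. The two basic facts are that $H$ acts trivially on $M_{\omega,\alpha}$ and that $Q\curvearrowright M_{\omega,\alpha}$ is faithful.

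The forward direction is short. Suppose $\alpha\simeq\alpha\otimes\gamma$. Then $M_{\omega,\alpha}\cong(M\wtensor R)_{\omega,\alpha\otimes\gamma}$ equivariantly. As observed before the corollary on diagonal actions, the latter contains $\mathbb{C}1\wtensor R_{\omega,\gamma}$. Since $\gamma$ is inflated from a strictly outer action $\gamma^{Q}$ of $Q$, we have $R_{\omega,\gamma}=R_{\omega,\gamma^{Q}}$. By Proposition \ref{prop:str.outer->cent.free} this action is centrally free. Corollary \ref{cor:cent.free-McDuff} (the implication (1)$\implies$(4), applied with $M=R$ and $\beta=\gamma^{Q}$) then embeds $R$ into $R_{\omega,\gamma^{Q}}$. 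Hence $M_{\omega,\alpha}$ contains a copy of $R$.

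For the converse, assume $R\subset M_{\omega,\alpha}$. The key step is to upgrade this to a $G$-equivariant embedding $(R,\gamma)\hookrightarrow M_{\omega,\alpha}$, that is, to statement (4)/(5) of Corollary \ref{cor:cent.free-McDuff} for the quotient action. Once we have it, \cite[Theorem 5.4]{SW24} applied to the strongly self-absorbing action $\gamma^{\otimes\infty}\simeq\gamma$ gives $\alpha\simeq\alpha\otimes\gamma$. Strong self-absorption of $\gamma$ follows from the uniqueness part of Theorem \ref{main:classification} for $Q$. As in the proof of Corollary \ref{cor:cent.free-McDuff}, an equivariant embedding of $\gamma^{\otimes\infty}$ is obtained from one of $\gamma$ by fast reindexation.

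I would build the equivariant embedding by redoing Section \ref{sec:2} in relative form for $Q$, working on $N=M_{\omega,\alpha}$ where $Q$ acts faithfully. The steps are:
\begin{enumerate}
\item The Gaussian argument of Proposition \ref{prop:cpt-cent.free->Rohlin}, applied to the image of $K$ in $Q$, gives a Rohlin embedding of $L^{\infty}$ of that compact open subgroup.
\item Local quantization and Theorem \ref{thm:tower} give Rohlin towers for F\o lner sets of $Q/(KH/H)$. The relative commutant condition in the local quantization holds by the lemma on strict outerness of $G\curvearrowright N$, applied to $Q$.
\end{enumerate}
Next, put $P=N'\cap N^{\omega}$ (via index selection), which contains many copies of $R$ by the hypothesis and fast reindexation. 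Using the Rohlin towers, one averages a copy of $R$ equivariantly, in the style of Ocneanu's model action construction. Concretely, realize $\gamma^{Q}$ as a Rohlin-tower induced model and use the towers to transport matrix units along $Q$, with approximate equivariance errors of size $O(\varepsilon^{1/2})$. Then pass to exact equivariance by index selection. Alternatively, one can first produce $(R\wtensor L^{\infty}(Q\text{-tower}))$-type subalgebras and then apply 1-cohomology vanishing (Theorem \ref{thm:1-coh-vanish}) to correct the approximately equivariant embedding into an exact one.

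The main obstacle is this last step: producing an exactly $Q$-equivariant, equicontinuous copy of the model action from a nonequivariant copy of $R$ together with Rohlin towers. I would try first the approach of \cite[Theorem 3.11]{SW24}, adapted with our towers: tensor the given $R$ with the tower algebra, define the candidate embedding as the induced action on $\bigoplus_{s\in S}e_s(\cdot)$, and control equicontinuity through the $K$-Rohlin embedding as in Lemma \ref{lem:approx-1coh-vanish}.
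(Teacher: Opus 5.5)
Your reduction is the paper's. Both directions come from \cite[Theorem 5.4]{SW24}, and the converse amounts to upgrading a given copy $\theta_1\colon R\hookrightarrow M_{\omega,\alpha}$ to a $\gamma$-equivariant one. This uses the relative Rohlin embedding and Rohlin towers for the faithful action of $\dot G=G/H$ (your $Q$) on $M_{\omega,\alpha}$. Your forward direction via $\mathbb{C}1\wtensor R_{\omega,\gamma}$ and Corollary \ref{cor:cent.free-McDuff} is valid but heavier than needed: \cite[Theorem 5.4]{SW24} already gives an equivariant embedding of $R$, hence in particular some embedding.

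The genuine gap is that the step you call the main obstacle is the entire content of the converse, and you leave it open among three routes. Transporting matrix units of a model action is unnecessary, because the abstract $\gamma$ enters only through formulas. Theorem \ref{thm:1-coh-vanish} does not apply as stated. It corrects exact $\alpha$-cocycles, not approximately equivariant embeddings, and nothing in your sketch produces an exact cocycle intertwiner.

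Your last route is the right one, but you use the $K$-Rohlin embedding only to control equicontinuity. It must in fact be used first, to make the embedding exactly $\dot K$-equivariant.

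\begin{enumerate}
\item Put $Q_1=\bigvee_{g\in G}\alpha_g(\theta_1(R))$ and take the $\dot K$-equivariant copy of $L^{\infty}(\dot K)$ orthogonal to $Q_1$. Define $[\theta_2(x)](\dot k)=\dot\alpha_{\dot k}(\theta_1(\gamma_{k^{-1}}(x)))$. This is a unital embedding $R\hookrightarrow L^{\infty}(\dot K)\wtensor Q_1\subset M_{\omega,\alpha}$ with $\dot\alpha_{\dot k}\circ\theta_2=\theta_2\circ\gamma_k$ for all $\dot k\in\dot K$.
\item Consequently $\dot g\mapsto\dot\alpha_{\dot g}\circ\theta_2\circ\gamma_{g^{-1}}$ is right $\dot K$-invariant. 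Take a tower $(e_{\dot s})_{\dot s\in\dot S}$ orthogonal to $\bigvee_{g\in G}\alpha_g(\theta_2(R))$ and set $\theta_{\dot S}(x)=\sum_{\dot s}\dot\alpha_{\dot s}(\theta_2(\gamma_{s^{-1}}(x)))e_{\dot s}$.
\item This map is well defined by the right $\dot K$-invariance. It is multiplicative because the tower commutes with every translate of $\theta_2(R)$.
\item It is exactly $\dot K$-equivariant because $\dot\alpha_{\dot k}(e_{\dot s})=e_{\dot k\dot s}$, so equicontinuity is uniform in $\dot S$.
\item For $\dot g\in\dot F$ it satisfies $|\dot\alpha_{\dot g}(\theta_{\dot S}(x))-\theta_{\dot S}(\gamma_g(x))|_1\le\|x\|\sum_{\dot s\in\dot G/\dot K}|\dot\alpha_{\dot g}(e_{\dot s})-e_{\dot g\dot s}|_1\le4\varepsilon^{1/2}\|x\|$.
\item Index selection along a F\o lner sequence then gives the exactly equivariant embedding.
\end{enumerate}

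The $\dot K$-step cannot be skipped. If you insert $\theta_1$ directly into the tower formula, the sum depends on the section $\sigma$. Comparing $\dot\alpha_{\dot g}\circ\theta$ with $\theta\circ\gamma_g$ then leaves discrepancies between $\dot\alpha_{k'}\circ\theta_1$ and $\theta_1\circ\gamma_{k'}$, where $k'=\sigma(\dot g\dot s)^{-1}\dot g\sigma(\dot s)\in\dot K$ is arbitrary. No tower estimate controls these. So the $\dot K$-step is needed for approximate equivariance itself, not just for equicontinuity.
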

\begin{proof}
    The subgroup $H$ is closed and normal.
    Put $\dot{G}=G/H$ and take a compact open subgroup $\dot{K}<\dot{G}$.
    Then the induced action $\dot{\alpha}\colon\dot{G}\curvearrowright M_{\omega,\alpha}$ is faithful.
    By \cite[Theorem 5.4]{SW24}, the action $\alpha$ is cocycle conjugate to $\alpha\otimes\gamma$ if and only if there exists a $\gamma$-$\alpha$-equivariant embedding $R\hookrightarrow M_{\omega,\alpha}$.
    Thus, it remains to show that a given embedding $\theta_1\colon R\hookrightarrow M_{\omega,\alpha}$ can be replaced by an equivariant one.
    
    Set $Q=\bigvee_{g\in G}\alpha_g(\theta_1(R))\subset M_{\omega,\alpha}$.
    As in the proof of Proposition \ref{prop:cpt-cent.free->Rohlin}, we have a $\dot{K}$-equivariant embedding $L^{\infty}(\dot{K})\hookrightarrow M_{\omega,\alpha}$ whose image is orthogonal to $Q$.
    Define $\theta_2\colon R\hookrightarrow L^{\infty}(\dot{K})\wtensor Q\subset M_{\omega,\alpha}$ by $[\theta_2(x)](\dot{k})=\dot{\alpha}_{\dot{k}}(\theta_1(\gamma_{k^{-1}}(x)))$ for $x\in R$.
    Then we have
    \[[\dot{\alpha}_{\dot{k}}(\theta_2(x))](\dot{l})=\dot{\alpha}_{\dot{k}}([\theta_2(x)](\dot{k}^{-1}\dot{l}))=\dot{\alpha}_{\dot{l}}(\theta_1(\gamma_{l^{-1}k}(x)))=[\theta_2(\gamma_k(x))](\dot{l})\]
    and hence $\theta_2$ is $\dot{K}$-equivariant.
    Consequently, the map $\dot{G}\ni \dot{g}\mapsto\dot{\alpha}_{\dot{g}}\circ\theta_2\circ\gamma_{g^{-1}}$ is right $\dot{K}$-invariant.

    Let $\dot{F}=\dot{F}\dot{K}\subset\dot{G}$ be compact, $\varepsilon>0$ and $\dot{S}\subset\dot{G}/\dot{K}$ a finite $\dot{K}$-$(\dot{F},\varepsilon)$-invariant subset.
    Choose a Rohlin tower $(e_{\dot{s}})_{\dot{s}\in\dot{S}}$ orthogonal to $\bigvee_{g\in G}\alpha_g(\theta_2(R))$, and define $\theta_{\dot{S}}\colon R\hookrightarrow M_{\omega,\alpha}$ by $\theta_{\dot{S}}(x)=\sum_{\dot{s}}\dot{\alpha}_{\dot{s}}(\theta_2(\gamma_{s^{-1}}(x)))e_{\dot{s}}$.
    This is approximately equivariant on $\dot{F}$, and the usual index selection trick guarantees the desired equivariant embedding.
\end{proof}

By Remark \ref{rem:need-minimality}, the condition $R\hookrightarrow M_{\omega,\alpha}$ may fail even when the action is strictly outer.
Indeed, let $\beta\colon \mathbb{T}\curvearrowright M$ be a minimal on a full factor.
Then $\beta\otimes\gamma\colon G\curvearrowright M\wtensor R$ is strictly outer on the McDuff factor, whereas, $(M\wtensor R)_{\omega,\beta\otimes\gamma}=\mathbb{C}1\wtensor R_{\omega,\gamma}$.
The algebra $R_{\omega,\gamma}$ in this example is abelian and hence contains no copy of $R$.

\section{Crossed product inclusions}\label{sec:5}

\subsection{Relative automorphisms}

In this section, we describe how the group structure of $G$ is encoded by the crossed product inclusion.
We begin with a minor extension of \cite[Corollary 3.11]{BB18} to cocycle actions.

\begin{prop}\label{prop:Weyl-group}
    Let $(\alpha,u)\colon G\curvearrowright M$ be a strictly outer \emph{cocycle} action of a locally compact group on a factor.
    Then $\N_{M\rtimes_{\alpha,u}G}(M)=\{a\lambda_g^{\alpha,u}:a\in\U(M),\ g\in G\}$.
\end{prop}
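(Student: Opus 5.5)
The inclusion $\supseteq$ is immediate: for $a\in\U(M)$ and $g\in G$ we have $a\lambda_g^{\alpha,u}x(a\lambda_g^{\alpha,u})^{\ast}=a\alpha_g(x)a^{\ast}\in M$ for all $x\in M$, and $\alpha_g$ is surjective, so $a\lambda_g^{\alpha,u}$ normalizes $M$. For $\subseteq$, the plan is to reduce to the case of a genuine action, where \cite[Corollary 3.11]{BB18} applies. By Proposition \ref{prop:Sutherland}, the amplification $(\alpha\otimes\id,u\otimes1)\colon G\curvearrowright M\wtensor\mathbb{B}$ is cocycle conjugate to a genuine action $\beta$. Explicitly, there is a Borel $v\colon G\to\U(M\wtensor\mathbb{B})$ with $v_g(\alpha_g\otimes\id)(v_h)(u_{g,h}\otimes1)v_{gh}^{\ast}=1$, so that $\mu_g=v_g\lambda_g^{\alpha\otimes\id,u\otimes1}$ is a unitary representation. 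Moreover $(M\wtensor\mathbb{B})\rtimes_{\alpha\otimes\id,u\otimes1}G\cong(M\rtimes_{\alpha,u}G)\wtensor\mathbb{B}$, with $\lambda_g^{\alpha\otimes\id,u\otimes1}=\lambda_g^{\alpha,u}\otimes1$.

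Next I would check that $\beta$ is strictly outer on the factor $M\wtensor\mathbb{B}$. This holds because $(M\wtensor\mathbb{B})'\cap((M\rtimes G)\wtensor\mathbb{B})=(M'\cap(M\rtimes G))\wtensor\mathbb{C}1=\mathbb{C}1$, and this relative commutant is unchanged under the canonical isomorphism of inclusions induced by cocycle conjugacy. Now take $w\in\N_{M\rtimes G}(M)$. Then $w\otimes1$ normalizes $M\wtensor\mathbb{B}$, so by \cite[Corollary 3.11]{BB18} applied to $\beta$ we get $w\otimes1=b\mu_g=bv_g(\lambda_g^{\alpha,u}\otimes1)$ for some $b\in\U(M\wtensor\mathbb{B})$ and $g\in G$. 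Hence $(w\lambda_g^{\alpha,u\ast})\otimes1=bv_g$ lies in $M\wtensor\mathbb{B}$. Slicing the first tensor factor (applying $\id\otimes\omega$ for a state $\omega$ on $\mathbb{B}$) shows $w\lambda_g^{\alpha,u\ast}\in M$, and it is unitary. Therefore $w=a\lambda_g^{\alpha,u}$ with $a\in\U(M)$.

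The main obstacle is justifying that the cited result covers the general (non-semifinite, possibly nonunimodular) case and that the identification of crossed products under cocycle conjugacy is compatible with the normalizer. If \cite[Corollary 3.11]{BB18} needs hypotheses, I would instead argue directly. For $w$ in the normalizer, $\theta=\Ad w|_M\in\Aut(M)$, and each Fourier coefficient $x_g=\mathbb{E}_K(\lambda_g^{\ast}w)$ intertwines $\theta$ and $\alpha_g$. Strict outerness makes $x_g^{\ast}x_g$ scalar, and the support argument of Lemma \ref{lem:outerness} then shows that $w$ is supported on a single coset $gK$. This reduces the problem to the compact group $K$, where it can be handled via the minimality of $\alpha|_K$ and the ILP98 Galois theorem.
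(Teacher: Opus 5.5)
Your proposal is correct and is essentially the paper's proof. Both amplify by $\mathbb{B}$, use Sutherland's theorem to perturb $(\alpha\otimes\id,u\otimes1)$ to a genuine strictly outer action, apply \cite[Corollary 3.11]{BB18} to $w\otimes1$, and conclude that $(w(\lambda_g^{\alpha,u})^{\ast})\otimes1\in M\wtensor\mathbb{B}$, hence $w(\lambda_g^{\alpha,u})^{\ast}\in\U(M)$; the fallback in your last paragraph is unnecessary, since the paper invokes \cite[Corollary 3.11]{BB18} in exactly this generality.
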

\begin{proof}
    Consider the cocycle action $(\alpha\otimes\id,u\otimes 1)$ on $M\wtensor\mathbb{B}$.
    This cocycle action is perturbated to a strictly outer action $\gamma$, and we have
    \[(M\rtimes_{\alpha,u}G)\wtensor\mathbb{B}=(M\wtensor\mathbb{B})\rtimes_{\alpha\otimes\id,u\otimes1}G=(M\wtensor\mathbb{B})\rtimes_{\gamma}G.\]
    Let $w\in\N_{M\rtimes_{\alpha,u}G}(M)$.
    The unitary $w\otimes1\in\U((M\rtimes_{\alpha,u}G)\wtensor\mathbb{B})$ normalizes $M\wtensor\mathbb{B}$.
    By \cite[Corollary 3.11]{BB18}, there exist $b\in\U(M\wtensor\mathbb{B})$ and $g\in G$ such that $w\otimes 1=b\lambda_g^{\gamma}$.
    Now the unitary $\lambda_g^{\gamma}$ has the form $c(\lambda_g^{\alpha,u}\otimes1)$ for some $c\in\U(M\wtensor\mathbb{B})$.
    Therefore, $(w(\lambda_g^{\alpha,u})^{\ast})\otimes 1=bc\in M\wtensor\mathbb{B}$, which means $a=w(\lambda_g^{\alpha,u})^{\ast}\in\U(M)$.
\end{proof}

\begin{lem}\label{lem:N/U}
    Let $G\curvearrowright M$ be a strictly outer cocycle action of a locally compact group $G$ on a factor $M$.
    Then there exists a canonical isomorphism $\N_{M\rtimes G}(M)/\U(M)\cong G$ of topological groups.
\end{lem}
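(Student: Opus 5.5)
By Proposition \ref{prop:Weyl-group}, every $w\in\N_{M\rtimes G}(M)$ has the form $w=a\lambda_g$ with $a\in\U(M)$ and $g\in G$, where we write $\lambda_g=\lambda_g^{\alpha,u}$. We will show that $g$ is uniquely determined by $w$ and then define $q(w)=g$. For uniqueness, suppose $a\lambda_g=b\lambda_h$. Then $\lambda_g\lambda_h^{\ast}\in M$. Using $\lambda_g\lambda_h^\ast=u_{g,h^{-1}}\lambda_{gh^{-1}}\cdot(\text{unitary in }M)$, we get $\lambda_{gh^{-1}}\in M$, so $\alpha_{gh^{-1}}$ is inner. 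Since strict outerness implies outerness (pass to the genuine action on $M\wtensor\mathbb{B}$ and apply Lemma \ref{lem:outerness}), this forces $g=h$. The cocycle relation $\lambda_g\lambda_h=u_{g,h}\lambda_{gh}$ and $\lambda_g a=\alpha_g(a)\lambda_g$ show that $q$ is a group homomorphism. It is surjective, and its kernel is exactly $\U(M)$. This gives an algebraic isomorphism $\bar q\colon\N_{M\rtimes G}(M)/\U(M)\to G$.

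For the topology, we equip $\N_{M\rtimes G}(M)$ with the strong topology, under which it is a Polish group (closed in $\U(M\rtimes G)$). Since $\U(M)$ is a closed normal subgroup, the quotient is Polish. We first check that $\bar q^{-1}\colon G\to\N/\U(M)$ is Borel. The map $g\mapsto\lambda_g$ is Borel, and for cocycle actions it is even continuous after the amplification trick. Alternatively, take a Borel section and compose with the quotient map; the composite is a homomorphism that is Borel. A Borel homomorphism between Polish groups is automatically continuous, as recalled before the lemma on 1-cocycles. So $\bar q^{-1}$ is a continuous bijective homomorphism between Polish groups. Since $G$ is second-countable locally compact, hence Polish, the open mapping theorem for Polish groups then shows that $\bar q^{-1}$ is a homeomorphism.

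The main obstacle is justifying Borel measurability of $g\mapsto\lambda_g^{\alpha,u}$ (or of $q$ itself), because $(\alpha,u)$ is only assumed Borel. I would handle it in one of two ways. The first is to reduce, via Proposition \ref{prop:Sutherland}, to the genuine action $\gamma$ on $M\wtensor\mathbb{B}$. There $g\mapsto\lambda_g^\gamma$ is strongly continuous, and $\lambda_g^\gamma=c_g(\lambda_g^{\alpha,u}\otimes1)$ with $c_g\in\U(M\wtensor\mathbb{B})$ Borel in $g$. The second is to verify directly that $q$ is continuous. For this we would use the faithful normal conditional expectation onto $M\rtimes K$ when $K$ is compact open, or in general the formula $q(w)=g$ iff $\Ad w|_M\in\Ad\U(M)\circ\alpha_g$. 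Either route yields a Borel homomorphism, and the automatic continuity argument above finishes the proof. The canonicity of the isomorphism is clear, since $q$ is defined intrinsically from $\lambda^{\alpha,u}$ and is unchanged under cocycle conjugacy up to the induced identification.
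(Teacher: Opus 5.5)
Your proposal is correct and follows the paper's argument: Proposition \ref{prop:Weyl-group} makes $g\mapsto\lambda_g\U(M)$ an algebraic isomorphism $G\to\N_{M\rtimes G}(M)/\U(M)$ between Polish groups, and automatic continuity of Borel homomorphisms turns it into a topological isomorphism. The paper applies automatic continuity to the map and to its Borel inverse, whereas you apply it once and then use the open mapping theorem; the Borel measurability of $g\mapsto\lambda_g^{\alpha,u}$ that you flag as the main obstacle is simply taken by the paper from the standard construction of the cocycle crossed product, so your detour through Proposition \ref{prop:Sutherland} is harmless but not needed.
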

\begin{proof}
    The Borel map $G\ni g\mapsto \lambda_g\U(M)\in\N_{M\rtimes G}(M)/\U(M)$ is an algebraic group isomorphism.
    Both groups $G$ and $\N_{M\rtimes G}(M)/\U(M)$ are Polish, so the inverse map is also Borel.
    Since these maps are Borel homomorphism between Polish groups, they are continuous.
\end{proof}

Consequently, the cocycle action $G\curvearrowright M$ is cocycle conjugate to an action (the 2-cocycle is a coboundary) if and only if the following short exact sequence of topological groups splits:
\[1\longrightarrow\U(M)\longrightarrow\N_{M\rtimes G}(M)\longrightarrow G\longrightarrow1.\]

\begin{proof}[Proof of Theorem \ref{main:short-exact}]
    Lemma \ref{lem:N/U} provides a canonical homomorphism $\Aut(R\subset R\rtimes G)\to\Aut(\N_{R\rtimes G}(R)/\U(R))=\Aut(G)$.
    To prove surjectivity, let $\kappa\in\Aut(G)$.
    Since the actions $\alpha$ and $\alpha\circ\kappa$ are cocycle conjugate, there exist $\theta\in\Aut(R)$ and an $\alpha$-cocycle $v\colon G\to\U(R)$ such that $\theta\circ\Ad{v}\circ\alpha\circ\theta^{-1}=\alpha\circ\kappa$.
    Define $\tilde{\kappa}\in\Aut(R\subset R\rtimes G)$ by
    \[\tilde{\kappa}(x\lambda_g)=\theta(xv_g^{\ast})\lambda_{\kappa(g)}\]
    for $x\in R$ and $g\in G$.
    This automorphism $\tilde{\kappa}$ induces $\kappa\in\Aut(G)$.
    Therefore the canonical homomorphism $\Aut(R\subset R\rtimes G)\to\Aut(G)$ is surjective.
    
    Let $\theta$ be an automorphism in the kernel of $\Aut(R\subset R\rtimes G)\to\Aut(G)$.
    Then the map $v\colon G\to\U(R)$ given by $v_g=\theta(\lambda_g)\lambda_g^{\ast}$ is an $\alpha$-cocycle and satisfying $\theta|_R\circ\alpha\circ\theta|_R^{-1}=\Ad{v}\circ\alpha$.
    By perturbing $\theta$ by an element of $\Int(R\subset R\rtimes G)$, we may assume that $v$ is $K$-invariant.
    It follows that $\theta(R^K)=R^K$ and $\theta|_R\circ\alpha_k=\alpha_k\circ\theta|_R$ for every $k\in K$. 
    Since $\Aut(R)=\clInt(R)$, there exists a sequence of unitaries $w_n\in\U(R)$ such that $\Ad(w_n)\to\theta|_R$.
    Using $L^{\infty}(K)\wtensor R\subset R^{\omega}_{\alpha}$, take $W_n\in\U(L^{\infty}(K)\wtensor R)\subset\U(R^{\omega}_{\alpha})$ so that $W_n(k)=\alpha_k(w_n)$.
    Then we obtain that $W_n\in\U((R^K)^{\omega})$ and $\Ad{W_n}|_R\to\theta|_R$ because
    \[[W_nxW_n^{\ast}](k)=\alpha_k(w_n)x\alpha_k(w_n^{\ast})\to\alpha_k\circ\theta|_R\circ\alpha_k^{-1}(x)=\theta(x)\]
    for $x\in R$ and uniformly for $k\in K$.
    By choosing $w_n$ from a representing sequence of $W_n$, we assume that $w_n\in\U(R^K)$ and $\Ad{w_n}\to\theta|_R$.

    Let $W=(w_n)^{\omega}\in\U((R^K)^{\omega})$.
    For all $g\in G$, we have
    \[\Ad[W^{\ast}v_g\alpha_g(W)]|_R=\theta|_R^{-1}\circ\Ad{v_g}\circ\alpha_g\circ\theta|_R\circ\alpha_g^{-1}=\id.\]
    Thus, $V_g=W^{\ast}v_g\alpha_g(W)$ belongs to $\U(R_{\omega,\alpha})$, and $V\colon G\to\U(R_{\omega,\alpha})$ is an $\alpha$-cocycle.
    By Theorem \ref{thm:1-coh-vanish}, there exists a unitary $W'\in\U((R_{\omega,\alpha})^K)$ such that $V_g=W'\alpha_g(W'^{\ast})$.
    It follows that $\Ad{WW'}|_R=\theta|_R$ and $WW'\alpha_g((WW')^{\ast})=v_g$.
    This means that $WW'$ implements $\theta$ on $R\rtimes G$ in the ultrapower.
    Here, we used the identification $WW'\in R^{\omega}_{\alpha}\subset(R\rtimes G)^{\omega}$ by \cite{To17}.
    Taking a suitable representing sequence of $WW'$ shows that $\theta\in\clInt(R\subset R\rtimes G)$.
    The reverse inclusion in the kernel is clear, which completes the proof.
\end{proof}

For $\theta\in\Aut(R\subset R\rtimes G)$, let $\kappa_{\theta}\in\Aut(G)$ denote the induced automorphism.
By construction, we see that $\theta(R\rtimes H)=R\rtimes\kappa_{\theta}(H)$ for every closed subgroup $H<G$.
Thus, the Galois correspondence (Theorem \ref{thm:ISP}) intertwines the natural actions on the lattice of closed subgroups of $G$ and on the lattice of intermediate subfactors.

\begin{cor}% [Naturality of the Galois correspondence]
    Let $G$ be an amenable totally disconnected locally compact group, and $G\curvearrowright R$ a strictly outer action.
    For a locally compact group $H$, the following conditions are equivalent.
    \begin{enumerate}
        \item There exists a topological group embedding $\kappa\colon H\hookrightarrow G$;
        \item For a strictly outer action $H\curvearrowright R$, there exists an embedding $\theta\colon R\rtimes H\hookrightarrow R\rtimes G$ such that $\theta(R)=R$.
    \end{enumerate}
\end{cor}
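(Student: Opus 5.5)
For (1)$\implies$(2), I would realize $H$ as a closed subgroup $\kappa(H)<G$ and use the Galois picture. Since $\kappa$ is a topological group embedding, $\kappa(H)$ is a closed subgroup of $G$. It inherits from $G$ a compact open subgroup, namely $\kappa(H)\cap K$, because $G$ is totally disconnected and so has compact open subgroups by van Dantzig. It is also amenable, being a closed subgroup of an amenable group. The restriction of the given action $\alpha\colon G\curvearrowright R$ to $\kappa(H)$ is strictly outer, since this property passes to closed subgroups. The canonical inclusion $R\rtimes\kappa(H)\hookrightarrow R\rtimes G$ is then an embedding fixing $R$. Given an arbitrary strictly outer action $\beta\colon H\curvearrowright R$, both $\beta$ and $\alpha\circ\kappa$ are strictly outer actions of $H$ on $R$. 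By Theorem \ref{main:classification} they are cocycle conjugate, and Proposition \ref{prop:str.outer->cent.free} supplies the central freeness this theorem requires. Cocycle conjugacy gives an isomorphism $(R\subset R\rtimes_\beta H)\cong(R\subset R\rtimes_{\alpha\circ\kappa}H)$ of inclusions. Composing with $R\rtimes_{\alpha\circ\kappa}H\cong R\rtimes\kappa(H)\subset R\rtimes G$ produces the required $\theta$ with $\theta(R)=R$.

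For (2)$\implies$(1), let $\theta\colon R\rtimes H\hookrightarrow R\rtimes G$ be an embedding with $\theta(R)=R$. The image $P=\theta(R\rtimes H)$ is an intermediate subfactor $R\subset P\subset R\rtimes G$; it is a factor because $R'\cap(R\rtimes H)=\mathbb{C}$ for a strictly outer action. By Theorem \ref{thm:ISP}, which applies since $G$ is totally disconnected, $P=R\rtimes L$ for a unique closed subgroup $L<G$. So $\theta$ is an isomorphism of inclusions $(R\subset R\rtimes H)\cong(R\subset R\rtimes L)$ and carries normalizers onto normalizers: $\theta(\N_{R\rtimes H}(R))=\N_{R\rtimes L}(R)$ and $\theta(\U(R))=\U(R)$. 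It therefore induces a topological group isomorphism $\N_{R\rtimes H}(R)/\U(R)\cong\N_{R\rtimes L}(R)/\U(R)$; continuity holds because $\theta$ is a normal isomorphism onto $P$, so it is a homeomorphism for the strong topologies on unitary groups. Lemma \ref{lem:N/U} identifies the two sides canonically with $H$ and $L$. This gives a topological group isomorphism $H\cong L$, and composing with the closed inclusion $L\hookrightarrow G$ yields the embedding $\kappa$.

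I expect the main obstacle to be the topological bookkeeping rather than the algebra. First, Lemma \ref{lem:N/U} is stated for second-countable (Polish) groups, so I implicitly assume $H$ is second countable, as the paper's standing conventions do. Second, in (1)$\implies$(2), Theorem \ref{main:classification} is needed for the group $H$ itself, so I must check that $H\cong\kappa(H)$ is amenable and has a compact open subgroup; I handle this by transporting the structure from $\kappa(H)<G$ as above. Third, the normalizer quotient must be taken with the quotient topology from the strong topology on $\N_{R\rtimes L}(R)$ computed inside $R\rtimes L$. This agrees with the one computed inside $R\rtimes G$, because $R\rtimes L\subset R\rtimes G$ is a von Neumann subalgebra and the strong topologies on its unitary group coincide. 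With these points checked, both directions reduce to the cited results.
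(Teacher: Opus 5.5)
Your proposal is correct and follows the paper's proof. The direction $(2)\implies(1)$ is exactly the paper's argument via Theorem~\ref{thm:ISP} and Lemma~\ref{lem:N/U}, and for $(1)\implies(2)$ you apply the uniqueness part of Theorem~\ref{main:classification} directly to $H$ with the restricted action $\alpha\circ\kappa$, which is what the paper's combination of Corollary~\ref{cor:extend-action} with uniqueness of strictly outer $G$-actions amounts to, minus the detour through uniqueness for $G$.
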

\begin{proof}
    Suppose that (1) holds and identify $H$ with the closed subgroup $\kappa(H)<G$.
    The condition (2) follows by combining Corollary \ref{cor:extend-action} with the uniqueness of strictly outer actions of $G$ on $R$.

    Conversely, we suppose that (2) holds.
    Since $\theta(R\rtimes H)$ is an intermediate subfactor of $(R\subset R\rtimes G)$, Theorem \ref{thm:ISP} implies that there exists a closed subgroup $H'<G$ with $\theta(R\rtimes H)=R\rtimes H'\subset R\rtimes G$.
    By Lemma \ref{lem:N/U}, it follows that $H\cong H'<G$.
\end{proof}

\subsection{Characterization of crossed products}

In this subsection, we characterize crossed-product inclusions arising from strictly outer actions in terms of regularity, irreducibility, and the topology of the normalizer quotient.
We first treat compact normalizer quotients and then pass to the general case through an expected intermediate factor.
This is viewed as a locally compact analogue of Choda's characterization of crossed-product factrizations (\cite{Ch79}).

\begin{lem}\label{lem:cpt-factrization}
    Let $(M\subset M_1)$ be a regular and irreducible inclusion of factors with separable predual.
    If $K=\N_{M_1}(M)/\U(M)$ is compact, then there exists a strictly outer cocycle action $(\alpha,u)\colon K\curvearrowright M$ such that $M_1=M\rtimes_{\alpha,u}K$.
\end{lem}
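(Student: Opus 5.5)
The plan is to produce a cocycle action from a Borel section of the quotient, and then to show that the resulting covariant system generates $M_1$ as the full crossed product. This uses a conditional expectation obtained by averaging over the compact group $K$.

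\emph{Step 1: the cocycle action.} Let $q\colon\N_{M_1}(M)\to K$ be the quotient map. The normalizer $\N_{M_1}(M)$ is a closed subgroup of $\U(M_1)$, hence Polish, and $\U(M)$ is a closed normal subgroup. So $K$ is a Polish group and $q$ admits a Borel section $g\mapsto w_g$ with $w_1=1$. Put $\alpha_g=\Ad{w_g}|_M$ and $u_{g,h}=w_gw_hw_{gh}^{\ast}$. Then $u_{g,h}\in\N_{M_1}(M)$ and $q(u_{g,h})=1$, so $u_{g,h}\in\U(M)$. The cocycle identities hold automatically, giving a normalized Borel cocycle action $(\alpha,u)\colon K\curvearrowright M$. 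We also have:
\begin{itemize}
\item If $\alpha_g=\Ad{a}$ with $a\in\U(M)$, then $a^{\ast}w_g\in M'\cap M_1=\mathbb{C}1$ by irreducibility, so $g=1$. Hence $\alpha$ is outer.
\item Regularity gives $M_1=\{aw_g:a\in M,\ g\in K\}''$.
\end{itemize}

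\emph{Step 2: averaging.} Since $g\mapsto w_gxw_g^{\ast}$ depends only on $g$ for $x\in M_1$, and $g\mapsto\lambda_g$ is continuous from $K$ to $\N_{M_1}(M)/\U(M)$, the map $\N_{M_1}(M)\ni w\mapsto\Ad{w}\in\Aut(M_1)$ descends to a continuous action $\beta\colon K\to\Aut(M_1)/\Int$. That alone is not enough, so I would use the dual approach instead: construct a faithful normal conditional expectation $\mathbb{E}\colon M_1\to M$. Since $M'\cap M_1=\mathbb{C}1$, any such expectation is unique if it exists. For each $x\in M_1$ and $g\in K$, set $x_g=\mathbb{E}(xw_g^{\ast})$. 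Then the crossed product structure should follow from the Fourier-type expansion $x\sim\int_K x_gw_g\,dg$.

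\emph{Main obstacle: constructing $\mathbb{E}$.} I would use the amplification trick from Proposition \ref{prop:Weyl-group}. Tensor with $\mathbb{B}$: by Proposition \ref{prop:Sutherland} the perturbed family $w_g\otimes 1$ can be corrected by $c_g\in\U(M\wtensor\mathbb{B})$ to a genuine unitary representation $W_g=c_g(w_g\otimes1)$ of $K$ in $M_1\wtensor\mathbb{B}$. This requires that $W$ can be chosen Borel, hence continuous in the strong topology; I would obtain continuity by the automatic continuity of Borel homomorphisms between Polish groups. Averaging $\Ad{W_k}$ over $K$ with the Haar probability measure gives a faithful normal conditional expectation from $M_1\wtensor\mathbb{B}$ onto the fixed points $(M_1\wtensor\mathbb{B})^{\Ad W}$. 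Composing with the standard expectation of the fixed-point algebra onto $M\wtensor\mathbb{B}$ should give an expectation onto $M\wtensor\mathbb{B}$. In the irreducible situation, that second step comes from the Jones/Takesaki criterion: the modular group of a suitable weight leaves $M$ invariant because $\sigma$ commutes with $\Ad{W_k}$. Cutting down by a minimal projection of $\mathbb{B}$ yields the expectation $\mathbb{E}\colon M_1\to M$.

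\emph{Step 3: identification.} Form the covariant representation $(\id_M,\ W)$ of $(\alpha\otimes\id,u\otimes1)$ perturbed to $\gamma$ on $M\wtensor\mathbb{B}$. The canonical map
\[
(M\wtensor\mathbb{B})\rtimes_{\gamma}K\to M_1\wtensor\mathbb{B},\qquad x\lambda_k\mapsto xW_k,
\]
is a surjective normal homomorphism onto $M_1\wtensor\mathbb{B}$, since regularity and Step 1 give generation. It intertwines the dual-weight expectation with the expectation built from $\mathbb{E}$, and this makes it injective by the standard Haagerup–Takesaki argument for compact groups. Hence $M_1\wtensor\mathbb{B}\cong(M\wtensor\mathbb{B})\rtimes_\gamma K$ over $M\wtensor\mathbb{B}$. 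Removing the amplification gives $M_1=M\rtimes_{\alpha,u}K$. Strict outerness then follows from irreducibility, as in the Example in Section \ref{sec:1}.
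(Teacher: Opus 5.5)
Your Step 1 matches the paper (Borel lift, cocycle action, outerness from irreducibility, generation from regularity). Steps 2--3, however, rest on an object that does not exist.

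For infinite $K$ there is no faithful normal conditional expectation $M_1\to M$ at all. The conclusion you are proving, $M_1=M\rtimes_{\alpha,u}K$ with a strictly outer action, rules it out. As recalled in Section \ref{sec:1} from \cite{BB18}, $M\rtimes H\subset M\rtimes G$ has expectation only when $H$ is open, and $\{1\}$ is open in $K$ only if $K$ is finite.

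The expansion $x\sim\int_K\mathbb{E}(xw_g^{\ast})w_g\,dg$ is a discrete-group device. Already for $x=w_h$ the ``coefficients'' would be $\delta_{g,h}$, a Haar-null function, so the integral is $0$.

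The construction of $\mathbb{E}$ also fails on its own terms. $\Ad{W_k}$ restricts to $\gamma_k$ on $M\wtensor\mathbb{B}$, which is outer for $k\neq1$. So $M\wtensor\mathbb{B}$ is not contained in the fixed-point algebra $\{W_k:k\in K\}'\cap(M_1\wtensor\mathbb{B})$, and there is nothing to compose with. Likewise, in Step 3 there is no normal expectation of $(M\wtensor\mathbb{B})\rtimes_{\gamma}K$ onto $M\wtensor\mathbb{B}$ to intertwine, for the same reason. Your injectivity argument therefore has nothing to run on.

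What the lemma actually requires is two things.
\begin{enumerate}
\item The map $x\lambda_k\mapsto xw_k$ must give a normal surjection $\Phi\colon M\rtimes_{\alpha,u}K\to M_1$, and this needs an argument. The paper represents $M\rtimes_{\alpha,u}K$ on $L^2(M_1)\otimes L^2(K)$ by $x\otimes1$ and $w_k\otimes\lambda_k$. It then compresses by $1\otimes p_K$, where $p_K$ is the projection onto constants; this projection commutes with the representation.
\item $\Phi$ must be injective, and this is the real content of the lemma.
\end{enumerate}
Outerness of $\alpha$ alone cannot give injectivity. Outer compact actions need not be minimal (Remark \ref{rem:need-minimality}), and then $M'\cap(M\rtimes K)$ is nontrivial.

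The paper's argument for injectivity runs as follows. Let $z$ be the central support of $\Phi$. Then $(M'\cap(M\rtimes_{\alpha,u}K))z\cong M'\cap M_1=\mathbb{C}$, so $z$ is a minimal projection of this relative commutant. After stabilizing by $\mathbb{B}$ to a genuine outer action $\gamma$, the relative commutant $(M\wtensor\mathbb{B})'\cap((M\wtensor\mathbb{B})\rtimes_{\gamma}K)=(M'\cap(M\rtimes_{\alpha,u}K))\wtensor\mathbb{C}1$ is therefore not diffuse. \cite[Lemma 4.3 and Theorem 4.1]{BMO20} then force $\gamma$ to be minimal and $z=1$.

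The missing idea in your proposal is an input of this type, which upgrades a non-diffuse relative commutant to minimality of an outer compact action. Once $M_1=M\rtimes_{\alpha,u}K$ is established, strict outerness of $(\alpha,u)$ follows from irreducibility, as you say.
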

\begin{proof}
    Let $v\colon K\to\N_{M_1}(M)$ be a Borel lift and set $\alpha_k=\Ad{v_k}|_M\in\Aut(M)$, $u_{k,l}=v_kv_lv_{kl}^{\ast}\in\U(M)$.
    Then $(\alpha,u)$ is a cocycle action of $K$ on $M$.
    Moreover, it is outer by irreducibility.

    We claim that there exists a normal homomorphism $M\rtimes_{\alpha,u}K\to M_1$ such that $x\lambda^{\alpha,u}_k\mapsto xv_k$ for $x\in M$ and $k\in K$.
    Indeed, $M\rtimes_{\alpha,u}K$ is represented on the Hilbert space $L^2(M_1)\otimes L^2(K)$ by $x\mapsto x\otimes1$ and $\lambda^{\alpha,u}_g\mapsto v_k\otimes\lambda_k$.
    Let $p_K$ be the projection of $L^2(K)$ onto the constant functions.
    Since $1\otimes p_K$ commutes with this representation $M\rtimes_{\alpha,u}K$, compression by $1\otimes p_K$ gives the homomorphism.
    Surjectivity follows from the regularity of the inclusion.
    
    Let $z\in\Z(M\rtimes_{\alpha,u}K)$ be the support projection of this homomorphism.
    The irreducibility of $(M\subset M_1)$ implies that $z$ is the minimal projection in $M'\cap (M\rtimes_{\alpha,u}K)$.
    After tensoring with $\mathbb{B}$, the cocycle action $(\alpha\otimes\id,u\otimes 1)$ on $M\wtensor\mathbb{B}$ is cocycle conjugate to an action $\gamma\colon K\curvearrowright M\wtensor\mathbb{B}$.
    Note that $\gamma$ is outer since $\alpha$ is outer.
    We identify
    \[(M\rtimes_{\alpha,u}K)\wtensor\mathbb{B}=(M\wtensor\mathbb{B})\rtimes_{\gamma}K\]
    and
    \[(M\wtensor\mathbb{B})'\cap((M\wtensor\mathbb{B})\rtimes_{\gamma}K)=(M'\cap (M\rtimes_{\alpha,u}K))\wtensor\mathbb{C}1.\]
    The projection $z\otimes 1$ in that relative commutant is minimal, and hence the relative commutant is not diffuse.
    By \cite[Lemma 4.3 and Theorem 4.1]{BMO20}, the action $\gamma$ is minimal and $z=1$.
    Therefore we have $M_1=M\rtimes_{\alpha,u}K$ and $(\alpha,u)$ is strictly outer.
\end{proof}

\begin{lem}[cf.\ {\cite[Theorem 5.3]{BB18}}]\label{lem:regular-irreducible}
    Let $M\subset M_1\subset M_2$ be factors.
    Suppose that $(M\subset M_2)$ is irreducible and regular, and $(M_1\subset M_2)$ has expectation.
    \begin{enumerate}
        \item If $M_2=M\rtimes G$ for a strictly outer cocycle action $G\curvearrowright M$ of a locally compact group $G$, then there exists an open subgroup $U<G$ such that $M_1=M\rtimes U$.
        \item If $M_1=M\rtimes U$ for a strictly outer cocycle action $U\curvearrowright M$ of a locally compact group $U$, then there exists a group $G$ containing $U$ as an open subgroup, and an extension of the cocycle action to $G\curvearrowright M$ such that $M_2=M\rtimes G$.
    \end{enumerate}
\end{lem}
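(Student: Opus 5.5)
For (1), I will use the Galois correspondence for intermediate subfactors. After tensoring with $\mathbb{B}$ and applying Proposition \ref{prop:Sutherland}, I will replace the cocycle action by a genuine strictly outer action $\gamma$ on $M\wtensor\mathbb{B}$. Then $(M_1\wtensor\mathbb{B})$ is an intermediate subfactor of $(M\wtensor\mathbb{B}\subset(M\wtensor\mathbb{B})\rtimes_\gamma G)$ with expectation. Theorem \ref{thm:ISP} needs total disconnectedness, so in general I would instead follow \cite[Theorem 5.3]{BB18}. Set $U=\{g\in G:\lambda_g\in M_1\}$, which is a closed subgroup. Let $\mathbb{E}$ be the expectation onto $M_1$. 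For $g\in G$, the element $\mathbb{E}(\lambda_g)$ satisfies $\mathbb{E}(\lambda_g)x=\alpha_g(x)\mathbb{E}(\lambda_g)$ for $x\in M$. By irreducibility, $\mathbb{E}(\lambda_g)^\ast\mathbb{E}(\lambda_g)\in M'\cap M_2=\mathbb{C}1$, so $\mathbb{E}(\lambda_g)$ is either $0$ or a scalar multiple of a unitary normalizing $M$. By Proposition \ref{prop:Weyl-group}, that unitary has the form $a\lambda_h$; outerness of $\alpha_g\alpha_h^{-1}$ forces $h=g$, and hence $\lambda_g\in M_1$.

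I then need to show $M_1=(M\cup\{\lambda_g:g\in U\})''=M\rtimes U$. One inclusion is clear. For the other, I would use regularity together with the fact that $\mathbb{E}$ maps $M_2$ onto $M_1$ and sends the normalizer $\{a\lambda_g\}$ into $\{a\lambda_g:g\in U\}\cup\{0\}$, which is $M$-bimodular. Since spans of the $a\lambda_g$ are weakly dense in $M_2$, normality of $\mathbb{E}$ gives $M_1=\mathbb{E}(M_2)\subset M\rtimes U$.

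Openness of $U$ is the delicate point, and I expect it to be the main obstacle. The map $g\mapsto\mathbb{E}(\lambda_g)$ is weakly continuous and takes values in $\{\lambda_g\}\cup\{0\}$ with $\|\mathbb{E}(\lambda_g)\|\in\{0,1\}$. I would evaluate a faithful normal state $\phi$ on $\lambda_g^\ast\mathbb{E}(\lambda_g)$, which gives the indicator function of $U$ times $1$. The function $g\mapsto\phi(\lambda_g^\ast\mathbb{E}(\lambda_g))$ is continuous, and it is $1$ on $U$ and $0$ off $U$, so $U$ is open. Alternatively, I could use the \cite{BB18} fact that $M\rtimes H\subset M\rtimes G$ has expectation only for open $H$.

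For (2), the plan mirrors Lemma \ref{lem:cpt-factrization}. Put $G=\N_{M_2}(M)/\U(M)$, which is Polish. Proposition \ref{prop:Weyl-group} applied to $M\rtimes U$ identifies $U$ with $\N_{M_1}(M)/\U(M)$, viewed as a subgroup of $G$. I choose a Borel lift $v\colon G\to\N_{M_2}(M)$ extending the given $\lambda^{U}$ on $U$, and define $\alpha_g=\Ad v_g$ and $u_{g,h}=v_gv_hv_{gh}^\ast$. This gives a cocycle action extending the one on $U$, and it is outer by irreducibility. The expectation onto $M_1$ sends $v_g$ to $0$ for $g\notin U$, by the argument of (1). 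Hence the elements $v_g$ are $\mathbb{E}$-orthogonal across distinct cosets of $U$. This lets me build $M\rtimes_{\alpha,u}G\to M_2$ by gluing the pieces $M_1v_s$ over $s\in G/U$ with a Hilbert-space decomposition $L^2(M_2)=\bigoplus_s L^2(M_1)v_s$. That decomposition should also show that $G/U$ is countable, so $U$ is open and $G$ is locally compact. The map is surjective by regularity and injective because of the orthogonal decomposition. Strict outerness then follows from irreducibility, as in the Example.
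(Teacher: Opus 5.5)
Your proposal is correct and is essentially the paper's argument: for (1) the paper simply cites \cite[Theorem 5.3]{BB18}, noting that its proof works verbatim for cocycle actions (so your stabilization by $\mathbb{B}$ is optional, and if you use it you need the routine descent $(M\wtensor\mathbb{B})\rtimes_\gamma U=(M\rtimes_{\alpha,u}U)\wtensor\mathbb{B}$), while for (2) it uses exactly your ingredients --- $G=\N_{M_2}(M)/\U(M)$, a Borel lift extending $\lambda$ on $U$, the dichotomy $\mathbb{E}(v)\in\{v,0\}$ for $v\in\N_{M_2}(M)$, and the coset decomposition, written there as $L^2(M_2)=\bigoplus_{s\in G/U}v_{\sigma(s)}L^2(M_1)$ with left multiplication (your $L^2(M_1)v_s$ would have to be indexed by right cosets $U\backslash G$). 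The one variation is that you deduce openness of $U$ from countability of $G/U$ plus Baire category (valid, since $\N_{M_1}(M)=\N_{M_2}(M)\cap M_1$ is closed), whereas the paper reads it off directly from the continuity of $v\mapsto v^{\ast}\mathbb{E}(v)\in\{0,1\}$ on $\N_{M_2}(M)$, which is the same device you already used in (1).
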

\begin{proof}
    (1):
    This is \cite[Theorem 5.3]{BB18}.
    Its proof applies without change to cocycle actions.

    (2):
    Let $\mathbb{E}\colon M_2\to M_1$ be a faithful normal conditional expectation.
    For $v\in\N_{M_2}(M)$, we see that $v^{\ast}\mathbb{E}(v)\in M'\cap M_2=\mathbb{C}1$ and hence
    \[\mathbb{E}(v)=\begin{cases}
        v&\text{if }v\in\N_{M_1}(M),\\
        0&\text{if }v\notin\N_{M_1}(M).
    \end{cases}\]
    It follows that $\N_{M_1}(M)$ is open in $\N_{M_2}(M)$, which implies that $U=\N_{M_1}(M)/\U(M)$ is the open subgroup of $G=\N_{M_2}(M)/\U(M)$.
    In particular, $G$ is locally compact.

    Choose section $\sigma\colon G/U\to G$ with $\sigma(U)=1_G$ and a Borel lift $v\colon G\to\N_{M_2}(M)$ satisfying $v_g=v_{\sigma(gU)}\lambda_{\sigma(gU)^{-1}g}$.
    This defines a cocycle action of $G$ on $M$ extending the original cocycle action of $U$.
    We obtain $\mathbb{E}(v_{\sigma(s)}^{\ast}v_{\sigma(s')})=0$ whenever $s\neq s'$.
    There exist canonical orthogonal decompositions
    \[L^2(M\rtimes G)=\bigoplus_{s\in G/U}\lambda_{\sigma(s)}L^2(M\rtimes U),\quad L^2(M_2)=\bigoplus_{s\in G/U}v_{\sigma(s)}L^2(M_1).\]
    Since $M_1=M\rtimes U$, the unitary sending $\lambda_{\sigma(s)}\xi$ to $v_{\sigma(s)}\xi$ intertwines the two representations.
    Thus, the correspondence $x\lambda_g\mapsto xv_g$ provides an isomorphism $M\rtimes G\cong M_2$.
    Finally, strict outerness follows from the irreducibility of $(M\subset M_2)$.
\end{proof}

\begin{proof}[Proof of Theorem \ref{main:factrization}]
    This is the direct consequence of Lemma \ref{lem:cpt-factrization} and \ref{lem:regular-irreducible}.
    It is not immediate that $\N_{q^{-1}(K)''}(M)/\U(M)=K$, but the same proof as for Lemma \ref{lem:cpt-factrization} applies.
\end{proof}

\begin{exam}
    Let $\alpha\colon G\curvearrowright R$ be a strictly outer action of an amenable locally compact group $G$ with a compact open subgroup $K$.
    By the modular formula for the dual weight and the factoriality of its centralizer $R\rtimes\ker{\Delta_G}$, the S-invariant of $R\rtimes G$ is given by $\mathrm{S}(R\rtimes G)=\overline{\Delta_G(G)}$.
    For $g\in G$, comparing the size of $K$ and $g^{-1}Kg$, we have
    \[\Delta_G(g)=\frac{|g^{-1}Kg|}{|K|}=\frac{[g^{-1}Kg:K\cap g^{-1}Kg]}{[K:K\cap g^{-1}Kg]}\in\mathbb{Q}.\]
    It follows that if $R\rtimes G$ is of type $\III_{\lambda}$ for $\lambda\in(0,1)$, then $\lambda\in(0,1)\cap\mathbb{Q}$.

    Conversely, every $\lambda\in(0,1)\cap\mathbb{Q}$ occurs in this way.
    Consider the amenable, totally disconnected group $G_{\lambda}=\mathbb{A}_{\mathbb{Q},\mathrm{f}}\rtimes_{\lambda}\mathbb{Z}$, where $\mathbb{A}_{\mathbb{Q},\mathrm{f}}$ is the finite adele ring and $\mathbb{Z}$ acts by multiplication by $\lambda$.
    This group satisfies $\Delta_{G_{\lambda}}(G_{\lambda})=\lambda^{\mathbb{Z}}$ and hence $R\rtimes G_{\lambda}$ is isormorphic to the hyperfinite factor of type $\III_{\lambda}$ for every strictly outer action $G_{\lambda}\curvearrowright R$.
\end{exam}

\begin{rem}
    Fix an irrational number $\lambda\in(0,1)\setminus\mathbb{Q}$ and put $\Gamma=\bigoplus_{\mathbb{N}}\mathbb{Z}/2\mathbb{Z}$.
    Let $\beta\colon\Gamma\curvearrowright R$ be an outer action.
    On $X=\{0,1\}^{\mathbb{N}}$ equipped with the product measure $\mu_{\lambda}=(\frac1{1+\lambda},\frac\lambda{1+\lambda})^{\mathbb{N}}$, let $\rho\colon\Gamma\curvearrowright X$ be the action by coordinatewise addition.
    We can see that the inclusion $R\cong R\rtimes_\beta\Gamma\subset(R\wtensor L^\infty(X))\rtimes_{\beta\otimes\rho}\Gamma\cong R_{\lambda}$ is regular and irreducible.
    Consequently, there exists a regular irreducible inclusion $(R\subset R_\lambda)$ that is not isomorphic to $(R\subset R\rtimes G)$ for any locally compact group $G$ with a compact open subgroup.
\end{rem}

\begin{cor}
    Let $G$ be an amenable locally compact group with a compact open subgroup $K$.
    Then the inclusion of von Neumann algebras $(R\subset Q)$, where $R$ is the hyperfinite $\II_1$ factor, is isomorphic to $(R\subset R\rtimes G)$ for some (hence every) strictly outer action $G\curvearrowright R$ if and only if the following conditions hold.
    \begin{enumerate}
        \item The inclusion $(R\subset Q)$ is regular and irreducible.
        \item There exists a topological group isomorphism $\N_Q(R)/\U(R)\cong G$.
        \item The inclusion $(q^{-1}(K)''\subset Q)$ has expectation, where $q\colon\N_Q(R)\to\N_Q(R)/\U(R)=G$ denotes the canonical quotient.
    \end{enumerate}
\end{cor}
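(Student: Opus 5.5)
The plan is to combine Theorem~\ref{main:factrization} with the existence and uniqueness results for actions on $R$. The ``only if'' direction is immediate. If $Q=R\rtimes_\alpha G$ for a strictly outer action $\alpha$, then the inclusion is regular and irreducible (the Example in Section~\ref{sec:5}). Lemma~\ref{lem:N/U} gives the topological isomorphism $\N_Q(R)/\U(R)\cong G$. Proposition~\ref{prop:Weyl-group} identifies $q^{-1}(K)=\{a\lambda_k:a\in\U(R),\ k\in K\}$, so $q^{-1}(K)''=R\rtimes K$, and $\mathbb{E}_K$ is a faithful normal conditional expectation onto it. The parenthetical ``some (hence every)'' follows from Theorem~\ref{main:classification}: any two strictly outer actions of $G$ on $R$ are centrally free by Proposition~\ref{prop:str.outer->cent.free}, and $\Aut(R)=\clInt(R)$, so they are cocycle conjugate. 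Cocycle conjugacy yields isomorphic crossed product inclusions.

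For the ``if'' direction, I would first apply Theorem~\ref{main:factrization} with $M=R$ and $M_1=Q$. Conditions (1)--(3) here are exactly its hypotheses, so there is a strictly outer cocycle action $(\alpha,u)\colon G\curvearrowright R$ with $(R\subset Q)\cong(R\subset R\rtimes_{\alpha,u}G)$. Next I would remove the cocycle. By the Remark after Proposition~\ref{prop:str.outer->cent.free}, $(\alpha,u)$ is centrally free. Since $G$ is amenable with a compact open subgroup, Theorem~\ref{thm:2coh-vanish}(1) (equivalently, $G\in\VC$ by Theorem~\ref{main:2-coh-vanish}) shows that $u$ is a coboundary. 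Hence $(\alpha,u)$ is cocycle conjugate to a genuine action $\beta\colon G\curvearrowright R$, and there is a canonical isomorphism $(R\subset R\rtimes_{\alpha,u}G)\cong(R\subset R\rtimes_\beta G)$. The action $\beta$ is strictly outer, because strict outerness is the irreducibility of the crossed product inclusion and is preserved under this isomorphism. Finally, by the uniqueness in Theorem~\ref{main:classification}, $\beta$ is cocycle conjugate to any prescribed strictly outer action of $G$ on $R$, and this gives the claim for every such action.

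The main point to check is that the 2-cohomology vanishing really applies. This needs central freeness of the \emph{cocycle} action, not just strict outerness. The paper covers it only by the remark that the proof of Proposition~\ref{prop:str.outer->cent.free} goes through once $(\alpha,u)|_K$ is perturbed to a genuine action via \cite[Theorem 4.5]{Ni26}. I would verify this carefully. Alternatively, I would first make $u$ $K$-invariant using \cite[Proposition 4.3]{Ni26}, so that $\alpha|_K$ is a genuine action, and then run the Rohlin argument of Proposition~\ref{prop:str.outer->cent.free} verbatim for $g\in G\setminus K$. This works because the induced action on $R_{\omega,\alpha}$ does not see $u$.
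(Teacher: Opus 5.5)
Your proposal is correct and follows the same route as the paper, which states this corollary without a separate proof and justifies it in the introduction. Theorem~\ref{main:factrization} yields a strictly outer cocycle action of $G$ on $R$. That action is centrally free by Proposition~\ref{prop:str.outer->cent.free} and the remark after it, so Theorem~\ref{thm:2coh-vanish} removes the $2$-cocycle, and Theorem~\ref{main:classification} (with $\Aut(R)=\clInt(R)$) gives uniqueness. Your extra check that the cocycle action is centrally free, by first making $u$ $K$-invariant so that $\alpha|_K$ is a genuine action, is a sound way to make the paper's one-line remark rigorous.
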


\end{document}